\pdfoutput=1
\pdfoutput=1
\pdfoutput=1
\pdfoutput=1
\pdfoutput=1
 \documentclass[a4paper,10pt]{article}
 \usepackage{calc}
 \usepackage[all]{xy}
 \usepackage[centertags]{amsmath}
 \usepackage{latexsym}
 \usepackage{amsfonts}
 \usepackage{graphicx}
 \usepackage{tikz}
 \usepackage{cases}
 \usepackage{amssymb}
 \usepackage{amsthm}
 \usepackage{color}
 \usepackage{fancyhdr}
 \usepackage[dvips]{epsfig}
 \usepackage{newlfont}
 \usepackage[latin1]{inputenc}
 \usepackage[latin1]{inputenc}
 \usepackage[english,french]{babel}
 \usepackage{graphicx}
 \usepackage{t1enc}
 \usepackage[english,french]{babel}
 \usepackage{fancybox}
 \usepackage{graphicx}
 \usepackage{t1enc}
 \usepackage[french2]{minitoc}
 \usepackage{mathrsfs}
 \usepackage{amsfonts}
 \usepackage{wasysym}
 \usepackage{hyperref}
 \allowdisplaybreaks
 \pagestyle{fancy}
 \usepackage{float}
 \fancyhf{} \fancyfoot[C]{\bfseries\thepage}
\numberwithin{equation}{section} 
\theoremstyle{plain} 
\newtheorem{theo}{Theorem}[]
\newtheorem{theodefi}{Theorem Definition}[section]
\newtheorem{lem}[theodefi]{Lemma}
\newtheorem{rem}[theodefi]{Remark}

\newtheorem{prop}[theodefi]{Proposition}

\newtheorem{fact}[theodefi]{Fact}

\newtheorem{defi}[theodefi]{Definition}

\newtheorem{cor}[theodefi]{Corollary}

\newtheorem{nota}[theodefi]{Notation}
\newtheorem{nota def}[theodefi]{Notations and Definitions}

\newcommand{\po}{{\textbf{Poin}}}

\newcommand{\cro}{{\textbf{Cr}}}
\newcommand{\cri}{{\textbf{CRI}}}
\newcommand{\D}{{ \mathcal D}}
\newcommand{\eg}{{{\ell_1} }}
\newcommand{\ed}{{{\ell_2} }}

\newcommand{\RN}{{\mathcal R}}

\newcommand{\R}{{\mathbb R}}
\newcommand{\Ce}{{\mathcal S}^1}
\newcommand{\N}{{\mathbb N}}
\newcommand{\Z}{{\mathbb Z}}

  \makeatletter
 \newcommand{\thechapterwords}
 { \ifcase \thechapter\or 1\or 2\or 3\or 4\or 5\or
 	6\or 7\or 8\or 9\or 10\or 11\fi}
 \def\thickhrulefill{\leavevmode \leaders \hrule height 2ex \hfill \kern \z@}
 \def\@makechapterhead#1{%
 	\vspace*{15\p@}%
 	{\parindent \z@ \centering \reset@font
 		\thickhrulefill\quad
 		\scshape  {\chapnumfont \@chapapp{}}{\chapnumfont \thechapterwords}
 		\quad \thickhrulefill
 		\par\nobreak
 		\vspace*{15\p@}%
 		\interlinepenalty\@M
 		\hrule
 		\vspace*{15\p@}%
 		\huge {\bfseries  #1}\par\nobreak
 		\par
 		\vspace*{15\p@}%
 		\hrule
 		\vskip 15\p@
 	}}
 	\def\@makeschapterhead#1{%
 		\vspace*{15\p@}%
 		{\parindent \z@ \centering \reset@font
 			\thickhrulefill
 			\par\nobreak
 			\vspace*{15\p@}%
 			\interlinepenalty\@M
 			\hrule
 			\vspace*{15\p@}%
 			\Huge \bfseries #1\par\nobreak
 			\par
 			\vspace*{15\p@}%
 			\hrule
 			\vskip 30\p@
 		}}
 		
 		\DeclareFixedFont{\chapnumfont}{T1}{phv}{b}{n}{20pt}
 		\DeclareFixedFont{\chapchapfont}{T1}{phv}{b}{n}{16pt}
 		\DeclareFixedFont{\chaptitfont}{T1}{phv}{b}{n}{24.88pt}
 		\def\@makechapterhead#1{%
 			\vspace*{15\p@}%
 			{\parindent \z@ \centering \reset@font
 				\thickhrulefill\quad
 				\scshape {\chaptitfont\color[rgb]{0.00,0.50,1.00}\@chapapp{}}
 				{\chapnumfont \thechapterwords}
 				\quad \thickhrulefill
 				\par\nobreak
 				\vspace*{15\p@}%
 				\interlinepenalty\@M
 				\hrule
 				\vspace*{15\p@}%
 				{\Large\bfseries #1}\par\nobreak
 				\par
 				\vspace*{15\p@}%
 				\hrule
 				\vskip 30\p@
 			}}%
 			

 			\begin{document}
 				\selectlanguage{english}
 	\title{ Rigidity of Fibonacci Circle Maps with a Flat Piece and  Different Critical Exponents}
 		\author{ \large{TANGUE NDAWA Bertuel }
 			\\
	bertuelt@yahoo.fr\\https://orcid.org/0000-0001-8995-9522
	\vspace{0.5cm}
 		\\\today}
				\date{ }
 				\maketitle
		\selectlanguage{english}

 \paragraph{Abstract}
 We consider order preserving $C^3$ circle maps with a flat piece, Fibonacci rotation number, critical exponents $(\ell_1, \ell_2)$ and negative shwarzian derivative.
 			 
 This paper treat the geometry characteristic of the non-wondering  (cantor (fractal)) set from a map of our class. We prove that, for  $(\ell_1, \ell_2)$ in   $(1,2)^2$, the geometry of system is degenerate (double exponentially
 fast). As consequences,  the  renormalization diverges and the geometric (rigidity) class depends  on the three  couples $(c_u(f), c'_u(f) )$, $( c_+(f), c'_+(f))$ and $(c_s(f), c'_s(f) )$.\vspace{0.5cm}
 

  
\textbf{\underline{Key words}}: Circle map, Flat piece, Critical exponent, Geometry, Renormalization and Rigidity.\vspace{0.5cm}

	\section{Introduction}
	We are interested at in certain class of weakly order preserving, non-injective (on an interval exactly; called flat piece) circle maps which appear naturally in the study of Cherry flows on the two dimensions torus (see \cite{MSM},\cite{Livi2013}), non-invertible circle continuous  maps (see \cite{Mi}) and of the dependence of the rotation interval on the parameter value for one-parameter families of circle continuous   maps (see \cite{2}). We write $S^1=\R/\Z$ for the circle. There is the natural projection $\pi: \R\longrightarrow S^1.$ This provides a unique (up to integer translation) lift of a map $f$ of our class to a real continuous map $F$.	An important characteristic of such a map is rotation number defined as follows: 
\begin{equation*}
\rho (f):=\lim_{n\rightarrow\infty }\dfrac{F^n(x)-x}{n}(mod\:1).
\end{equation*}
When the rotation number $\rho (f)$ is irrational (which is more interesting in the study of dynamics, see \cite{de Melo and van Strien} (p.19-36). Also, a map of our class is infinitely renormalizable when its rotation number is irrational, see point 1 of \textbf{Remark~\ref{nth renormalization}}), the denominators of the nearest rational approximants of $\rho (f)$ are defined as follows: 
 $$q_0=1, \;q_1=a_0 \mbox{ and } q_{n+1}=a_nq_{n}+q_{n-1} ,\quad n\geq 2$$ with 
\begin{equation*}\label{i1}
\rho (f)=[a_0a_1\cdots ]:=\dfrac{1}{a_0+\dfrac{1}{a_1+\dfrac{1}{ \ddots}}};\quad 1\leq a_i<\infty.
\end{equation*}
We say that the rotation number $\rho (f)$ is Fibonacci or golden mean if
$a_n=1$ for all $n\in\N$. A circle map with a Fibonacci rotation number will simply be called a Fibonacci circle map. For technical reasons (on the asymptotic behavior of the renormalization operator) the main theorem is proved for Fibonacci circle maps with a flat piece. What prompted the title of this paper.

The purpose of this paper is to describe the rigidity (characteristic geometric) of certain weakly order preserving circle maps with a flat interval and Fibonacci rotation number. In the theory of low-dimensional dynamics, rigidity consists to study (specify) the geometry class under topological class. Being given a such map $f$, its topological class is defined by: $\{h\circ f\circ h^{-1},\; h \mbox{ is homeomorphism}\} $.
For circle maps with a flat interval, geometry depends on the so-called critical exponents $(\ell_1,\ell_2)$  the degrees of the singularities close to the boundary points of the flat interval; the geometry class in the symmetric case $(\ell,\ell)$ with $\ell\in (1,2)$ is treated in \cite{ML}. For a fixed irrational rotation number $\rho$, there is exactly one topological class for circle map with a flat piece. More precisely, two continuous circle maps $f$ and $g$  with a flat piece and same irrational rotation number  are topologically conjugate (that is,  there exists a homeomorphism $h$ such that $h\circ f=g\circ h$), see \cite{MSM}. $h$ is called the topological conjugation or conjugacy between $f$ and $g$.    Let us note that, if the choice of the topological conjugation could be arbitrary inside 
$U_f$ (the flat piece of $f$; that is, $f(U_f)$ is a point), it still uniquely defined on   $K_f:=\Ce\setminus \bigcup _{i=0}^{\infty} f^{-i}(U_f)$, the attractor of $f$. As a consequence, the   class of  regularity of $h$ on $K_f$ is  very interesting. The geometry of $K_f$ and $K_g$ ($f$ and $g$) are  closed respect to this regularity.  In particular,  when  $h$ is $C^{1+\beta},\;\beta>0 $ diffeomorphism, $K_f$ and $K_g$ have the same geometry, the geometry of two systems is rigid, it is not possible to modify it on asymptotical small scales. This result is very surprising according to the class of conjugacy. Indeed, the rigidity has been studied for circle diffeomorphisms \cite{Yo} \cite{He}, critical circle homeomorphisms \cite{FM}, \cite{Ya}, \cite{GMM},  \cite{AV} unimodal maps \cite{Mc}, \cite{MP}, \cite{FMP}, circle maps with breakpoints \cite{KK}, \cite{KT}, for Kleinian groups \cite{MO}. And, in all these cases, it turned out that  the conjugations are  differentiable. 

Before continuing to explain our results, we define our class and fix some notations.

\paragraph{The class of functions:}
We consider $S^1$  as an interval $[a, 1]$ where we identify $a$ with 1.
We consider the class $\mathscr{L}^{X} $ described as follows:

We fix $\ell_1, \ell_2>1$ and denote:\\
-by  $\Sigma^{X} $ the set
\begin{equation*}
\Sigma^{X}=\{(x_1,x_2,x_3,x_4,s)\in \R^5|
x_1<0,\;x_3<x_4<1,\;0<x_2,\;s<1\},
\end{equation*}
- for $r\in\N$, by $Diff^r([0,1])$ the space of $c^r$ orientation
preserving
diffeomorphisms of $[0,1]$. 

The space of $C^3$ circle maps with a flat interval is denoted by:
\begin{equation*}
\mathscr{L}^{X}=\Sigma^{X}\times(Diff^3([0,1]))^3.
\end{equation*}
A point
\begin{equation*}
f:=(x_1,x_2,x_3,x_4,s,\varphi,\varphi^l,\varphi^r)\in
\mathscr{L}^{X}
\end{equation*}
is defined as follows:\\
$f:[x_1,1]\longrightarrow[x_1,1]$ (by identifying $x_1$ with 1)
\begin{equation*}
f(x)=\begin{cases}
\begin{array}{l}f_-(x)= \begin{array}{lcl}f_1(x)=(1-x_2)q_s\circ\varphi\left(\dfrac{x_1-x}{x_1}\right) +x_2 & \mbox{if} & x\in [x_1,0[\end{array}\\
f_+(x)= \begin{cases} \begin{array}{lcl}f_2(x)=
x_1\left(\varphi^{l}\left(\dfrac{x_3-x}{x_3}\right)\right)^{\ell_1}  & \mbox{if} & x\in ]0,x_3]\\f_3(x)=
0 & \mbox{if} & x\in [x_3,x_4]\\f_4(x)=
x_2\left(\varphi^{r}\left(\dfrac{x-x_4}{1-x_4}\right)\right)^{\ell_2 }&  \mbox{if} & x\in [x_4,1]\end{array}
\end{cases}
\end{array}
\end{cases}
\end{equation*}
with 
\begin{equation*}
q_s (x)= \dfrac{[(1-s)x+s]^{\ell_2}-s^{\ell_2} }{1-s^{\ell_2}}.
\end{equation*}
Note that the dynamics of $f_-$ and $f_+$ are respectively defined
by $f^{q_0}$ and $f^{q_1}$.

These kinds of maps can be represented as follows:
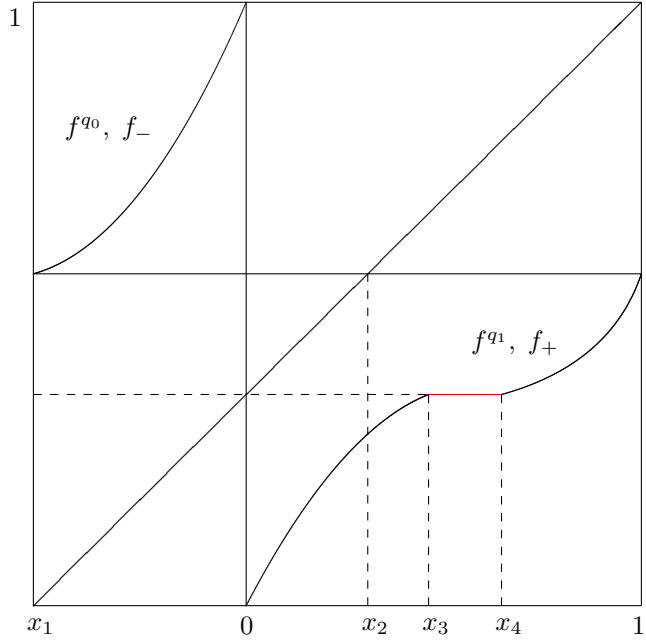
\begin{figure}[H]	
	\centering
	\setlength{\unitlength}{8cm}
	\begin{picture}(1, 1)
	\put(0,0){\line(0,1){1}}
	\put(0,0){\line(1,0){1}}
	\put(1,0){\line(0,1){1}}
	\put(0,1){\line(1,0){1}}
	\put(0,0){\line(1,1){1}}
	\put(0,0.55){\line(1,0){1}} 
	\put(0.35,0){\line(0,1){1}} 
	
	\multiput(0.55,0)(0,0.02825){20}
	{\line(0,1){0.013755}} 
	
	\qbezier(0.35,0)(0.5,0.29)(0.65,0.35)
	\put(0.65,0.35){\color{red}{\line(1,0){0.12}}} 
	\qbezier(0.77,0.35)(0.95,0.4)(1,0.55)
	\put(0.052,0.78){$f^{q_0},\;f_- $}
	\put(0.72,0.425){$f^{q_1},\;f_+ $}
	\multiput(0,0.35)(0.0275,0){24}
	{\line(1,0){0.013755}}
	\multiput(0.65,0)(0,0.02795){13}
	{\line(0,1){0.013755}} 
	\multiput(0.77,0)(0,0.02795){13}
	{\line(0,1){0.013755}} 
	\qbezier(0,0.55)(0.18,0.6)(0.35,1)
	\put(-0.01,-0.04){$x_1$}
	\put(0.34,-0.0475){$0$}
	\put(0.54,-0.04){$x_2$}
	\put(0.64,-0.04){$x_3$}
	\put(0.76,-0.04){$x_4$}
	\put(0.985,-0.0475){$1$}
	\put(-0.041,0.96){$1$}
	
	\end{picture}
	
	\vspace{0.5cm}
	
	\caption{A map $f$ in $\mathscr{L}^{X}$ with $x_2<x_3$} 
	\label{a map 1}    
\end{figure}
or
\begin{figure}[H]
	
	\centering
	\setlength{\unitlength}{8cm}
	\begin{picture}(1, 1)
	\put(0,0){\line(0,1){1}}
	\put(0,0){\line(1,0){1}}
	\put(1,0){\line(0,1){1}}
	\put(0,1){\line(1,0){1}}
	\put(0,0){\line(1,1){1}}
	\put(0.35,0){\line(0,1){1}}
	
	\put(0.052,0.9){$f^{q_0},\;f_- $}
	\put(0.6,0.45){$f^{q_1},\;f_+ $}
	
	\put(0,0.8)	{\line(1,0){1}}  
	
	\qbezier(0,0.8)(0.25,0.85)(0.35,1)
	
	\qbezier(0.67,0.35)(0.90,0.48)(1,0.805)
	\multiput(0.8,0)(0,0.0272){30}
	{\line(0,1){0.013755}} 
	
	\multiput(0,0.35)(0.02775,0){20}
	{\line(1,0){0.013755}}  
	\put(0.55,0.35){\color{red}{\line(1,0){0.12}}}  
	
	\multiput(0.54,0)(0,0.0275){13}
	{\line(0,1){0.013755}} 
	\multiput(0.67,0)(0,0.0275){13}
	{\line(0,1){0.013755}} 
	
	\qbezier(0.35,0)(0.5,0.35)(0.55,0.35)
	
	\put(-0.01,-0.04){$x_1$}
	\put(0.3325,-0.0425){$0$}
	\put(0.7875,-0.04){$x_2$}
	\put(0.5275,-0.04){$x_3$}
	\put(0.6525,-0.04){$x_4$}
	\put(0.985,-0.0425){$1$}
	\put(-0.035,0.975){$1$}
	
	\end{picture}
	\vspace{0.5cm}
	\caption{A map $f$ in $\mathscr{L}^{X}$  with $x_4<x_2$} 
	\label{a map 2}    
\end{figure}
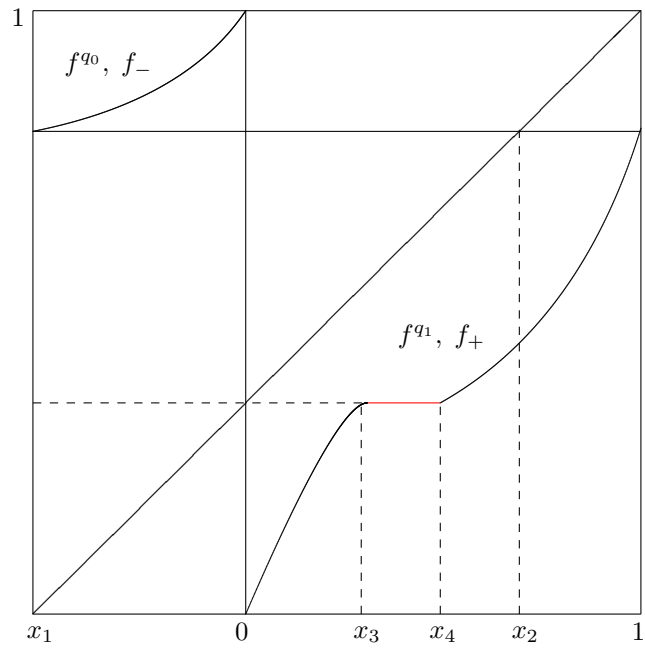

The different branches of a map $f$ in  $\mathscr{L}^{X}$ are described as follows:

Let $[a,b]$ be an interval. 	$A_{[a,b]}: [a,b]\longrightarrow [0,1] $ is the map defined by	$x \longmapsto{(x-a)}/{(b-a)}$ and 	$B_{[a,b]}: [a,b]\longrightarrow [0,1] $ is the map	defined	$x \longmapsto{(x-b)}/{(a-b)}$. 
	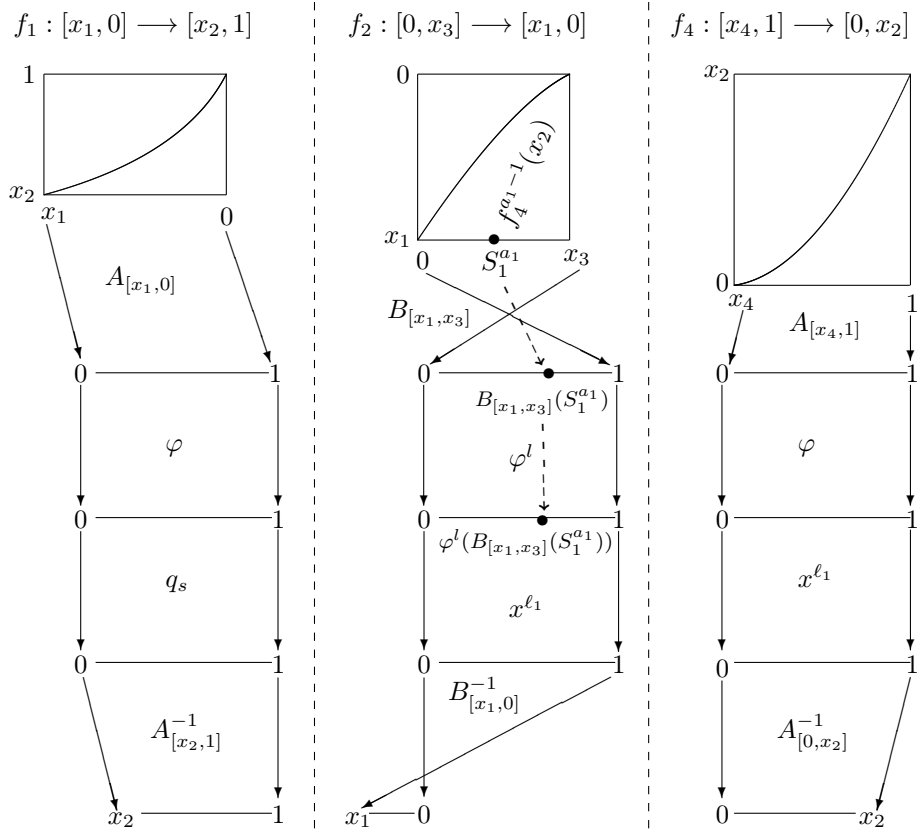
\begin{figure}[H]
	\centering
			\vspace{1cm}
	\setlength{\unitlength}{8cm}

	\hspace{-0.25cm}	
	\begin{picture}(1.25, 1.25)
	\begin{picture}(0.1, 1.25)
	\put(-0.1,1.32){$f_1: [x_1,0]\longrightarrow [x_2,1]$}
	\multiput(0.395,0)(0,0.02825){49}
	{\line(0,1){0.013755}} 
	\put(-0.05,1.25){\line(1,0){0.3}} 
	\put(-0.05,1.05){\line(1,0){0.3}}  
	\put(-0.05,1.05){\line(0,1){0.2}} 
	\put(0.25,1.05){\line(0,1){0.2}} 
	\qbezier(-0.05,1.05)(0.18,1.1125)(0.25,1.25)
	\put(-0.085,1.235){$1$}
	\put(-0.1075,1.045){$x_2$} 
	\put(0.24,0.999995){0}
	\put(-0.055,1.01){$x_1$}
	\put(-0.045,1){\vector(1,-4){0.055}} 
	\put(0.25,0.99){\vector(1,-3){0.0725}} 
	\put(0.0,0.74){0}
	\put(0.32,0.74){1}
	\put(0.035,0.755){\line(1,0){0.285}} 
	\put(0.05,0.9){$A_{[x_1,0]}$}
	\put(0.0105,0.735){\vector(0,-1){0.2}} 
	\put(0.335,0.735){\vector(0,-1){0.2}} 
	\put(0.0,0.4975){0}
	\put(0.325,0.4975){1}
	\put(0.035,0.515){\line(1,0){0.29}} 
	\put(0.15,0.625){$\varphi$}
	\put(0.0105,0.4925){\vector(0,-1){0.1975}} 
	\put(0.335,0.495){\vector(0,-1){0.2}} 
	\put(0.0,0.2575){0}
	\put(0.325,0.2575){1}
	\put(0.035,0.275){\line(1,0){0.29}} 
	\put(0.15,0.4){$q_s$}
	\put(0.015,0.25){\vector(1,-4){0.055}} 
	\put(0.335,0.25){\vector(0,-1){0.2}} 
	\put(0.055,0.008){$x_2$}
	\put(0.11,0.025){\line(1,0){0.215}} 
	\put(0.325,0.009){1}
	\put(0.125,0.15){$A^{-1}_{[x_2,1]}$}
	\end{picture}
	
	
	\hspace{-0.4cm}\begin{picture}(0.1, 1.25)	
	\put(0.385,1.32){$f_2: [0,x_3]\longrightarrow [x_1,0]$}
	\multiput(0.88,0)(0,0.02825){49}
	{\line(0,1){0.013755}} 
	\put(0.5,1.25){\line(1,0){0.25}} 
	\put(0.5,0.975){\line(1,0){0.25}}  
	\put(0.5,0.975){\line(0,1){0.275}} 
	\put(0.75,0.975){\line(0,1){0.275}} 
	\qbezier(0.5,0.975)(0.65,1.2)(0.75,1.25)
	\put(0.445,0.97){$x_1$}
	\put(0.74,0.94){$x_3$}  
	\put(0.465,1.235){$0$} 		
	\put(0.6,0.98){ \rotatebox{70}{{ $f_4^{a_1-1}(x_2)$}}}
	\put(0.6,0.965){ $ \bullet$}
	\put(0.59,0.925){ $ S_1^{a_1}	$}
	\put(0.4975, 0.9275){0}
	\put(0.6275,0.825){\rotatebox{115}{ - - - } }
	\put(0.655,0.77){ \rotatebox{115}{$\dashleftarrow$}}
	\put(0.515,0.92){\rotatebox{7}{\vector(3,-2){0.28}}} 
	\put(0.5925,0.9255){\rotatebox{-39}{\vector(-1,-3){0.092}}} 
	\put(0.5,0.74){0}
	\put(0.82,0.74){1}
	\put(0.535,0.755){\line(1,0){0.285}} 
	\put(0.45,0.845){$B_{[x_1,x_3]}$}
	\put(0.69,0.7425){ $ \bullet$}
	\put(0.58,0.7){ {\footnotesize $ B_{[x_1,x_3]}(S_1^{a_1})	$}}
	\put(0.696,0.585){\rotatebox{92}{ - - - } }
	\put(0.6825,0.525){ \rotatebox{92}{$\dashleftarrow$}}
	
	\put(0.51,0.735){\vector(0,-1){0.2}} 
	\put(0.8275,0.735){\vector(0,-1){0.2}} 
	\put(0.5,0.4975){0}
	\put(0.82,0.4975){1}
	\put(0.68,0.5){ $ \bullet$}
	\put(0.52,0.465){ {\footnotesize $ \varphi^l(B_{[x_1,x_3]}(S_1^{a_1}))	$}}
	
	\put(0.535,0.515){\line(1,0){0.29}} 
	\put(0.65,0.6){$\varphi^l$}
	\put(0.5105,0.4925){\vector(0,-1){0.1975}} 
	\put(0.8305,0.4925){\vector(0,-1){0.2}} 
	\put(0.5,0.2575){0}
	\put(0.82,0.2575){1}
	\put(0.535,0.275){\line(1,0){0.29}} 
	\put(0.65,0.35){$x^{\ell_1}$}
	
	\put(0.5105,0.25){\vector(0,-1){0.1975}} 
	\put(0.8175,0.251){\rotatebox{14}{\vector(-4,-1){0.45}}} 
	\put(0.38,0.008){$x_1$}
	\put(0.42,0.025){\line(1,0){0.075}} 
	\put(0.5,0.009){0}
	\put(0.55,0.2125){$B^{-1}_{[x_1,0]}$}
	\end{picture}
	
	
	\hspace{-1.35cm}	\begin{picture}(0.1, 1.25)	
	\put(0.955,1.32){$f_4: [x_4,1]\longrightarrow [0,x_2]$}
	\put(1.06,1.25){\line(1,0){0.289}} 
	\put(1.06,0.9){\line(1,0){0.289}} 
	\put(1.06,0.9){\line(0,1){0.35}} 
	\put(1.35,0.9){\line(0,1){0.35}} 
	\qbezier(1.06,0.9)(1.2,0.92)(1.35,1.25)
	\put(1.01,1.245){$x_2$}
	\put(1.05,0.865){$x_4$} 
	\put(1.03,0.89){0}
	\put(1.3425,0.855){$1$}
	\put(1.075,0.8575){\vector(-1,-4){0.022}} 
	\put(1.35,0.85){\vector(0,-1){0.075}} 
	\put(1.03,0.74){0}
	\put(1.3425,0.74){1}
	\put(1.06,0.755){\line(1,0){0.285}} 
	\put(1.15,0.825){$A_{[x_4,1]}$}
	\put(1.04,0.735){\vector(0,-1){0.2}} 
	\put(1.35,0.735){\vector(0,-1){0.2}} 
	\put(1.03,0.4975){0}
	\put(1.3425,0.4975){1}
	\put(1.06,0.515){\line(1,0){0.29}} 
	\put(1.165,0.625){$\varphi$}
	\put(1.04,0.4925){\vector(0,-1){0.1975}} 
	\put(1.35,0.495){\vector(0,-1){0.2}} 
	\put(1.03,0.2515){0}
	\put(1.3425,0.2575){1}
	\put(1.06,0.275){\line(1,0){0.29}} 
	\put(1.165,0.4){$x^{\ell_1}$}
	\put(1.35,0.25){\vector(-1,-4){0.055}} 
	\put(1.04,0.245){\vector(0,-1){0.195}} 
	\put(1.265,0.008){$x_2$}
	\put(1.06,0.025){\line(1,0){0.2}} 
	\put(1.03,0.009){0}
	\put(1.13,0.15){$A^{-1}_{[0,x_2]}$}
	\end{picture}


	
\end{picture}

%
%

\caption{Branches of a map $f$ in $\mathscr{L}^{X}$ } 
\label{branches of f}    
\end{figure}

\paragraph{Notations.}
\subparagraph{Common notations.}
	Let $f\in\mathscr{L}^X$ with $U$ as its flat piece.
	\begin{enumerate}
		\item For every $i\in\Z$, instead of $f^i(U)$ we will simply write $\underline{i} $. For example, $\underline{0}=U $. $|\underline{i}| $ stands for the length of the interval $\underline{i} $. So,  $|\underline{i}|  =0$ when $i>0$.  $|(\underline{i},\underline{j} )|$ denotes the distance between the closest endpoints
		of these two intervals while  $|[\underline{i},\underline{j} )|$ stands for $|\underline{i}|+|(\underline{i},\underline{j} )|$. 
		\item Let $I$ be an interval. We write   $ \bar{I} $ to mean the closure of $I$.
		\item The scaling rations $\alpha_n, \,n\in\N$ are defined by 
		\begin{equation*}
		\alpha_n:=\dfrac{|(f^{-q_n}(U), U)|}{|(f^{-q_n}(U), U)|+|f^{-q_n}(U)|}=\dfrac{|(\underline{-q_n}, \underline{0})|}{|[\underline{-q_n}
			, \underline{0})|}, \,n\in\N.
		\end{equation*}
	
		\item 
		For any sequence $\Gamma_n$  and for any  real $d$ we have:
		\begin{equation*}
		\Gamma_n^{d(\ell_1,\ell_2)}:=\begin{cases}\begin{array}{lcl}
		\Gamma_n^{d\ell_1} & \mbox{if}& n\equiv 0[2]\\
		\Gamma_n^{d\ell_2} &\mbox{if} & n\equiv 1[2]
		\end{array}
		\end{cases}  \Gamma_n^{d(\frac{1}{\ell_1},\frac{1}{\ell_2})}:=\begin{cases}\begin{array}{lcl}
		\Gamma_n^{d\frac{1}{\ell_1}} & \mbox{if}& n\equiv 0[2]\\
		\Gamma_n^{d\frac{1}{\ell_2}} &\mbox{if} & n\equiv 1[2].
		\end{array}
		\end{cases}
		\end{equation*}	
		For example, $\alpha_2^{\ell_{1},\ell_{2}}= \alpha_2^{\ell_{1}}$.
		\item Let $x_n$ and $y_n$ be two sequences of positive numbers. We say that $x_n$ is of the order of $y_n$ if there exists a uniform constant $k>0$ such that, for $n$ big enough $x_n<ky_n$. We will use the notation
		\begin{equation*}
		x_n= O(y_n).
		\end{equation*}
	\end{enumerate}	
\subparagraph{Parameters  frequently used.}
	Let $f\in\mathscr{L}^X$  with $\ell_1,\ell_2\in(1,2)$  and Fibonacci rotation number.
\begin{enumerate}
	\item The geometrical (rigidity) characteristics $(c_u(f), c'_u(f) )$, $( c_+(f), c'_+(f))$ and $(c_s(f), c'_s(f) )$ are defined in \textbf{Proposition~\ref{main Proposition}} and \textbf{Lemma~\ref{wn en fct de cu,alpha0}}. Moreover,
	\begin{enumerate}
		\item[-]  $c^{*}_{\iota}(f)=c^{}_{\iota}(f) \mbox{ or }c'_{\iota}(f),\; \iota=s,u,+ $. For more precision, see \textbf{Proposition~\ref{main Proposition}} or \textbf{Notation~\ref{notation cus+}}; 
		\item[-] $\underline{c}_u(f)=\min\{c_u(f),c'_u(f)\}$ and  $\overline{c}_u(f)=\max\{c_u(f),c'_u(f)\}$.		
	\end{enumerate}
	\item For every $n\in\N$, $S_{i,n}$, $y_{i,n}\,i=1,2,3,4,5$ are defined  in the subsection \ref{Sub Asym Renor} respectively in   (\ref{change variable x S}) and  (\ref{change variable S y}).
	\item The vectors $e_i^{\iota}; \,i=2,3,4,5;\,\iota=s,u,+$ are defined in  \textbf{Proposition~\ref{super formular}}. 
	
\end{enumerate}

We consider $f\in\mathscr{L}^X$  with $\ell_1,\ell_2\in(1,2)$  and  Fibonacci rotation number.
The study of the  
renormalization of $f$ generates the geometrical characteristics $(c_u(f), c'_u(f) )$, $( c_+(f), c'_+(f))$ and $(c_s(f), c'_s(f) )$ of $K_f$. This study  depends on the following result:
\begin{lem}\label{main lemma}
	Let  $f $  be a $C^3$ circle  map with a flat piece $U$ and Fibonacci rotation number. The scaling rations $\alpha_{n}$  go to zero double exponentially fast when  $(\ell_1,\ell_2)$ belongs to $(1,2)^2$.
\end{lem}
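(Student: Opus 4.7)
The plan is to establish a recursive inequality of the form
\begin{equation*}
\alpha_{n+1} \leq C\,\alpha_n^{\theta}
\end{equation*}
with $\theta=\theta(\ell_1,\ell_2)>1$, from which double-exponential decay of $\alpha_n$ follows by iteration.

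\textbf{Setup.} I would first use the Fibonacci recursion $q_{n+1}=q_n+q_{n-1}$ to identify the dynamical link between consecutive pre-images of $U$. The iterate $f^{q_{n-1}}$ carries a neighborhood of $\underline{-q_{n+1}}$ diffeomorphically onto a neighborhood of $\underline{-q_n}$, and $f^{q_n}$ then sends a neighborhood of $\underline{-q_n}$ onto a neighborhood of $\underline{0}=U$. Tracking the scaling ratio $\alpha_{n+1}$ through the composition $f^{q_n}\circ f^{q_{n-1}}$ relates it to $\alpha_n$.

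\textbf{Power-law step.} The branches $f_2$ and $f_4$ of $f$ behave like $y\mapsto y^{\ell_1}$ near $x_3$ and $y\mapsto y^{\ell_2}$ near $x_4$ respectively, up to pre- and post-composition with smooth diffeomorphisms. Consequently, a single visit of the Fibonacci orbit to the critical region compresses a small interval by a polynomial of degree $\ell_1$ or $\ell_2$. Combining this with bounded distortion on the remaining hyperbolic portion of the orbit -- available thanks to the $C^3$ regularity and the negative Schwarzian hypothesis, via the Koebe principle -- gives a recursion of the form $\alpha_{n+1}\leq C\alpha_n^{\ell_i}$, where $i\in\{1,2\}$ depends on the parity of $n$. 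Taking $\theta=\min(\ell_1,\ell_2)>1$ (or grouping two consecutive Fibonacci steps to obtain $\theta=\ell_1\ell_2$) suffices.

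\textbf{Iteration.} Setting $\beta_n=C^{1/(\theta-1)}\alpha_n$ transforms the recursion into $\beta_{n+1}\leq\beta_n^{\theta}$, so that $\beta_n\leq\beta_{n_0}^{\theta^{n-n_0}}$. Provided $\beta_{n_0}<1$ for some $n_0$, which is guaranteed once the scaling ratio is eventually small -- a consequence of the classical real bounds for this class -- this delivers the double-exponential rate $\alpha_n\leq \lambda^{\theta^n}$ for some $\lambda<1$ and $\theta>1$.

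\textbf{Main obstacle.} The crux is making the single-step recursion rigorous with uniform constants. This requires distortion control for the iterates $f^{q_n}$ on appropriate Koebe-type neighborhoods of the return intervals, which rests on real-bounds estimates that must be proved first. A subtle technical point is that the critical exponent encountered along the Fibonacci orbit alternates between $\ell_1$ and $\ell_2$; one must either argue that each step individually contributes a factor strictly greater than one in the exponent, or (more typically) combine two consecutive Fibonacci steps. Extracting a uniform $\theta>1$ in the boundary regime where one of the $\ell_i$ approaches $1$ is the most delicate part of the argument.
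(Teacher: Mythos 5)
Your high-level plan (a super-linear recursion plus iteration) is the same as the paper's, but the recursion you propose is not the one that holds, and the mechanism you give for it would prove too much. When the configuration defining $\alpha_n$ is pushed through the critical point, the power law acts on the \emph{current} ratio: since both the gap and the gap-plus-preimage are distances to the boundary of $U$, one gets $\dfrac{|(\underline{-q_n+1},\underline{1})|}{|[\underline{-q_n+1},\underline{1})|}\approx \alpha_n^{\ell_i}$, i.e.\ the exponent lands on the left-hand side of the eventual inequality. The recursion actually proved in the paper (\textbf{Proposition~\ref{recurrence formula}}) is $\alpha_{2n}^{\ell_1}\leq M_{2n}(\ell_1)\,\alpha_{2n-2}^{2}$ (and its odd analogue), so the gain per two Fibonacci steps is the exponent $2/\ell_i>1$, not $\ell_i$, and combining two steps gives $4/(\ell_1\ell_2)$, not $\ell_1\ell_2$. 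The square on the right comes from cross-ratio (Poincar\'e) inequalities applied to the long iterates $f^{q_{n-1}-1}$ and $f^{q_{n-2}}$, not from Koebe-type bounded distortion, and this is precisely where the hypothesis $\ell_1,\ell_2<2$ enters: for $\ell_i\geq 2$ the inequality $\alpha_n^{\ell_i}\leq M_n\alpha_{n-2}^2$ no longer forces decay. Your claimed step $\alpha_{n+1}\leq C\alpha_n^{\min(\ell_1,\ell_2)}$ uses $(1,2)^2$ only ``for uniform constants'' and would therefore yield double-exponential degeneration for \emph{all} critical exponents $>1$; this contradicts the known phase transition to bounded geometry for large exponents (\cite{GJSTV}, \cite{LB}), so no distortion argument can deliver that recursion.

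A second, smaller gap: the smallness needed to start the iteration is not a ``classical real bound'' in the flat-piece setting. The a priori estimates (\textbf{Proposition~\ref{asymptotically}}, with the explicit thresholds on $\alpha_n^{\ell_1/2,\ell_2/2}$) are a substantial part of the proof; they are obtained from the cross-ratio inequality together with \textbf{Lemma~\ref{use additional condition}} (which is where the negative Schwarzian assumption and the asymptotics of \textbf{Fact~\ref{fact}} are used), and they are also needed afterwards to control the constants $M_n$ (through $s_n$ and $\sigma_n$) so that the product of the normalized factors $W_k'$ tends to zero. Without these two ingredients --- the correct two-step recursion with exponent $2/\ell_i$ and the a priori bounds feeding the control of $M_n$ --- the iteration step of your argument cannot be carried out.
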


We will additionally assume in the proof of \textbf{Lemma~\ref{main lemma} }   that,  $f$ has a negative Schwarzian derivative. That means, \begin{equation*}\label{add assumpt 1}
Sf(x):= \dfrac{D^3f(x)}{Df(x)}-\frac{3}{2}\left(\dfrac{D^2f(x)}{Df(x)}\right)^2<0;\quad\forall\:x,\; Df(x)\neq 0.
\end{equation*}

 As a consequence of \textbf{Lemma~\ref{main lemma} }, the renormalization  diverges with three quantitative aspects of the  asymptotic  divergence: an unstable part, a stable part, and a neutral part. The precise formulation is as follows:
\begin{prop}\label{main Proposition}
	Let $(\ell_1,\ell_2)\in (1,2)^2$. There exist $\lambda_u>1$, $\lambda_s\in (0,1)$ such that for $f$ a Fibonacci circle map with a flat piece and critical exponents $(\ell_1,\ell_2)$ the following holds.  There are three geometrical invariant couples  $(c_u(f), c'_u(f) )$, $( c_+(f), c'_+(f))$ and $(c_s(f), c'_s(f) )$ such that for all $n\in \N$
	\begin{equation*}
	\ln\alpha_{n} \asymp c^{*}_u(f) \lambda_u^{p_n}+c^{*}_s(f) \lambda_s^{p_n}+c^{*}_+(f)
	\end{equation*}
	where $p_n$ is the integer part of $n$, $c^{*}_{\iota}(f)=c^{}_{\iota}(f),\; \iota=s,u,+ $ if $n$ is even and $c^{*}_{\iota}(f)=c'_{\iota}(f),\; \iota=s,u,+$ if $n$ is odd. That is, 
	\begin{equation*}
	\ln\alpha_{2p_n} \asymp c_u(f) \lambda_u^{p_n}+c_s(f) \lambda_s^{p_n}+c_+(f)
	\end{equation*}
	and
	\begin{equation*}
	\ln\alpha_{2p_n+1}\asymp c'_u(f) \lambda_u^{p_n}+c'_s(f) \lambda_s^{p_n}+c'_+(f).
	\end{equation*}
\end{prop}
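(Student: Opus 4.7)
The plan is to derive \textbf{Proposition~\ref{main Proposition}} from \textbf{Lemma~\ref{main lemma}} by linearizing the renormalization dynamics in its asymptotic regime and reading off the three coefficients from the eigenspaces of the resulting linear operator.

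First, using the change of variables $(S_{i,n})_i$ and $(y_{i,n})_i$ introduced in the subsection on asymptotic renormalization and already exploited in the proof of \textbf{Lemma~\ref{main lemma}}, I would package the relevant geometric data into a finite-dimensional vector $\mathbf{v}_k$ encoding $\ln\alpha_{2k}$ (together with a neighboring ratio and an affine slot) and rewrite one renormalization step as
\begin{equation*}
\mathbf{v}_{k+1}=A\,\mathbf{v}_k+R(\mathbf{v}_k),
\end{equation*}
where $A$ is a $3\times 3$ matrix depending only on $(\ell_1,\ell_2)$ and $R$ is a nonlinear remainder which, along the true orbit, is super-exponentially small by \textbf{Lemma~\ref{main lemma}}. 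Because the two critical exponents play asymmetric roles, the natural dynamical object is the second renormalization $\mathcal{R}^2$, and the recurrence has to be set up separately for the even- and odd-indexed subsequences of $(\alpha_n)$.

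Next I would analyse the spectrum of $A$: its characteristic polynomial factors as $(X-1)$ times a quadratic, the unit root corresponding to the affine slot and providing the neutral direction. I would then show that the quadratic has one root $\lambda_u>1$ and one root $\lambda_s\in(0,1)$, the expansion coming from the critical branch of exponent $\ell_i>1$ and the contraction from the diffeomorphic composition on the opposite side. The three associated eigenvectors are precisely the vectors $e_i^{\iota}$, $\iota=s,u,+$, of \textbf{Proposition~\ref{super formular}}. Since $A$ is hyperbolic transverse to the neutral direction and $R(\mathbf{v}_k)$ is super-exponentially small along the orbit, the standard variation-of-constants summation for $\mathbf{v}_k$ converges absolutely and produces constants $c_u(f),c_s(f),c_+(f)$ with
\begin{equation*}
\mathbf{v}_k=c_u(f)\,\lambda_u^{k}\,e_u+c_s(f)\,\lambda_s^{k}\,e_s+c_+(f)\,e_+ +o(1).
\end{equation*}
Reading the first coordinate gives $\ln\alpha_{2k}\asymp c_u(f)\lambda_u^{k}+c_s(f)\lambda_s^{k}+c_+(f)$. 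Repeating the same argument on $(\alpha_{2k+1})_k$ produces the primed constants $c'_u(f),c'_s(f),c'_+(f)$, with the same eigenvalues $\lambda_u,\lambda_s$ since these depend only on $(\ell_1,\ell_2)$. Each coefficient is well-defined as the eigenspace projection of the initial data and is a genuine invariant of $f$ because the whole construction is natural in $f$.

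The main obstacle is the last step: controlling the accumulation of the nonlinear remainder along the unstable direction. Individually each $R(\mathbf{v}_k)$ is negligible, but in order to extract a single clean coefficient $c_u(f)$ one has to show that $\sum_{k}\lambda_u^{-k}R(\mathbf{v}_k)$ converges. This is precisely where the double-exponential rate obtained in \textbf{Lemma~\ref{main lemma}} is indispensable: mere exponential decay would not dominate $\lambda_u^{-k}$ uniformly in $k$. The parity asymmetry forced by $\ell_1\neq\ell_2$ is also what compels the even/odd splitting rather than a single-step renormalization analysis, and this is where the primed and unprimed families of invariants genuinely diverge.
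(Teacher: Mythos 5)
Your proposal follows essentially the same route as the paper: there, too, one linearizes the doubled renormalization $\mathcal{R}^2$ in logarithmic coordinates (the vector $w_n=(y_{2,n},y_{3,n},y_{4,n},y_{5,n})^t$ of \textbf{Proposition~\ref{recurrence on Wn}}), diagonalizes the resulting affine recursion with eigenvalues $\lambda_u>1$, $\lambda_s\in(0,1)$ and a neutral direction, and sums the super-exponentially small remainders supplied by \textbf{Lemma~\ref{main lemma}} to obtain the invariants, exactly as in \textbf{Lemma~\ref{wn en fct de cu,alpha0}} and \textbf{Proposition~\ref{super formular}}, with the even/odd splitting giving the primed constants. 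The only cosmetic differences are that the paper works with a four-dimensional vector and removes the inhomogeneity by subtracting the fixed point $w_{fix}$ rather than adding an affine slot, and it recovers $\ln\alpha_n$ a posteriori from $\alpha_n=O(S_{2,n}S_{3,n})$ (\textbf{Lemma~\ref{alpha_n lambda u}}) instead of carrying $\ln\alpha_{2k}$ as a coordinate.
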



Now, we can formulate the result on rigidity as follows:
\paragraph{Main Theorem:}
Let $(\ell_{1},\ell_{2})\in(1,2)^2$. There exists $\beta=\beta(\ell_{1},\ell_{2})\in(0,1)$ such that 
if   $f$ and $g$ are two circle maps with  Fibonacci rotation number,  critical exponents $(\ell_{1},\ell_{2})$ and if  $h$ is the topological conjugation between $f$ and $g$ then
\begin{align*}
h \mbox{ is Holder homeo};\qquad &  \\
h \mbox{ is a bi-lipschitz homeo}\Longleftrightarrow& c^*_u(f)=c^*_u(g),  c_+(f)+c'_+(f)=c_+(g)+c'_+(g);\\
h   \mbox{ is a } C^{1+\beta} \mbox{  diffeo} \Longleftrightarrow&  c^*_u(f)=c^*_u(g), c_+(f)+c'_+(f)=c_+(g)+c'_+(g),\\  &  c^*_s(f)=c^*_s(g).
\end{align*}

The rigidity  is now completely described.
This paper is organized as follows: in \S \ref{Section 2} and \ref{Section 3}  
we introduce the basic concepts and well-known results used in this paper. \S \ref{Section 4} is devoted to the renormalization under \textbf{Lemma~\ref{main lemma}}: the scaling ratios $\alpha_{n} $ go to zero double exponentially fast when $(\ell_1,\ell_2)$ belongs to $(1,2)^2$. The asymptotic study of $\alpha_{n} $ does not use the main ideas of this paper thus will be postponed until the last section, \S \ref{Section 6}. The main theorem is proved in \S \ref{Section 5}. 


\section{Technical tools}\label{Section 2}

 	
 \begin{rem}~\label{critical exponent and  renormalization}
 	\begin{enumerate}
 		\item The sequences used in this paper are  $((\ell_1, \ell_2),(\ell_1, \ell_2))$ equivalent on the parity in the following sense: Let $\eta_{n}$ be a sequence.  $\eta_{2n}$ dependents on $(\ell_1, \ell_2)$ by a function 
 	$\Psi_{2n}$ if only if $\eta_{2n+1}$ dependents on $(\ell_2, \ell_1)$ by the same function. That is,
 	\begin{equation*}
 	\eta_{2n}=\Psi_{2n}(\ell_1, \ell_2)\Leftrightarrow \eta_{2n+1}=\Psi_{2n+1}(\ell_2, \ell_1).
 	\end{equation*}
 As a consequence, a statement or  proof presented for $n$  even  deduces by himself the case $n$ odd  and vice-versa. 
\item The renormalization operator $\RN$ maps $(\ell_1, \ell_2)$ to $(\ell_2, \ell_1)$. Namely, for a fixed $f\in\mathscr{L}^{X} $ with critical exponents $(\ell_1, \ell_2)$, if $\RN f$ exists, then it belongs to $\mathscr{L}^{X} $ with   critical exponents $(\ell_2, \ell_1)$. Thus, because the asymptotic study of the operator $\RN$ on $f$ is equivalent to that   of $\RN^2$ on $(f,\RN(f) )$, then we are interested at the operator $\RN^2$ which preserves the critical exponents.
\end{enumerate}
 \end{rem}
 
\begin{fact}\label{fact} Let $f\in\mathscr{L}^X$ with $U$ as its flat piece.
	Let $l(U)$ and $r(U)$ be the left and right endpoints of $U$ respectively. There are a left-sided neighborhood $I^{l_1}$ of $l_1(U)$, a	right-sided neighborhood $I^{l_2}$ of $l_2(U)$ and  three positive constants $K_1, K_2, K_3$ such that the following holds: 
	
	\begin{enumerate}
		\item Let  $y\in I^{l_i} $; $i=1,2$. Then
		\begin{equation*}
		\begin{array}{c}	
		K_{1}|l_i(U)-y|^{l_i}  \leq  |f(l_i(U))-f(y) | 	\leq  K_{2}|l_i(U)-y|^{l_i},\\
		K_{1}|l_i(U)-y|^{l_i-1}  \leq \dfrac{df}{dx}(y)  	\leq  K_{2}|l_i(U)-y|^{l_i}.
		\end{array}
		\end{equation*}	
		\item \label{rl4}	Let three points with $y$ between $x$ and $z$ be arranged so that, of the three, the point $z$ is the closest to the flat piece. If the interval $(x,z)$ does intersect the flat piece $U$ (that is, $f$ is a diffeomorphism on $(x,z)$), then
		\begin{equation*}
		\dfrac{|f(x)-f(y)|}{|f(x)-f(z)|}\leq  K_3 \dfrac{|x-y|}{|x-z|}. 
		\end{equation*}.
	\end{enumerate}
\end{fact}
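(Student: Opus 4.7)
The plan is to exploit the explicit form of $f$ near the endpoints of the flat piece and reduce the distortion estimate to a model computation with $t\mapsto t^{\ell}$.

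\textbf{Part (1).} Near the left endpoint $l_1(U) = x_3$, the map is given explicitly by $f_2(x) = x_1 \bigl[\varphi^l((x_3-x)/x_3)\bigr]^{\ell_1}$. Since $\varphi^l \in \mathrm{Diff}^3([0,1])$ with $\varphi^l(0) = 0$ and $(\varphi^l)'(0) > 0$, I would first write $\varphi^l(u) = u\,\psi^l(u)$ where $\psi^l$ is $C^2$ and bounded between two positive constants on a neighborhood of $0$. Substituting gives
\begin{equation*}
|f(l_1(U)) - f(y)| = |x_1|\, x_3^{-\ell_1}(x_3 - y)^{\ell_1}\bigl[\psi^l((x_3-y)/x_3)\bigr]^{\ell_1},
\end{equation*}
so the two-sided bound on $\psi^l$ yields the claimed inequalities with exponent $\ell_1$. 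Differentiating and applying the same factorization gives the analogous bound on $f_2'$ with exponent $\ell_1 - 1$. A parallel analysis near $l_2(U) = x_4$ based on $f_4$ and $\varphi^r$ handles the right endpoint with exponent $\ell_2$.

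\textbf{Part (2).} Since $(x,z)$ does not meet $U$, the restriction of $f$ coincides with a single smooth branch among $f_1, f_2, f_4$. On any compact subset bounded away from the critical points, $f'$ is bounded between positive constants, so a routine mean-value argument gives the inequality with a uniform constant. The delicate case is when $[x,z]$ is near $l_1(U)=x_3$ (the case of $l_2(U)$ being symmetric). Writing $f_2 = \mathrm{Aff}\circ \Phi\circ \varphi^l\circ L$ with $L$ affine, $\Phi(u)=u^{\ell_1}$, and the outer $\mathrm{Aff}$ affine, set $u = \varphi^l(L(x)) > v = \varphi^l(L(y)) > w = \varphi^l(L(z)) > 0$. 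Bounded distortion of the $C^1$ diffeomorphisms $L$ and $\varphi^l$ on a compact interval shows that $(u-v)/(u-w)$ and $|x-y|/|x-z|$ agree up to a bounded multiplicative constant, reducing the proof to the model inequality
\begin{equation*}
\frac{u^{\ell_1} - v^{\ell_1}}{u^{\ell_1} - w^{\ell_1}} \;\leq\; \ell_1\cdot\frac{u-v}{u-w},\qquad u>v>w>0.
\end{equation*}

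\textbf{Main obstacle.} The heart of the argument is the model inequality above. A naive use of the mean value theorem leaves a factor $(u/w)^{\ell_1-1}$ which blows up as $w\to 0^+$, so a finer argument is needed. Setting $t = v/u$ and $s = w/u$ with $0<s<t<1$, one has
\begin{equation*}
\frac{(u^{\ell_1}-v^{\ell_1})/(u^{\ell_1}-w^{\ell_1})}{(u-v)/(u-w)} \;=\; \frac{g(t)}{g(s)},\qquad g(\tau) := \frac{1-\tau^{\ell_1}}{1-\tau}.
\end{equation*}
A direct computation of $g'$ shows that $g$ is monotone increasing on $[0,1)$ with $g(0)=1$ and $\lim_{\tau\to 1^-}g(\tau)=\ell_1$, hence $g(t)/g(s)\leq \ell_1$. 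This yields the stated bound with $K_3$ depending only on $\ell_1,\ell_2$ and on the $C^1$ norms of $\varphi^l,\varphi^r$.
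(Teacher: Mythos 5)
The paper states \textbf{Fact~\ref{fact}} without any proof (it is the standard local analysis of the class, of the kind found in Graczyk--Jonker--\'Swiatek--Tangerman--Veerman), so there is no argument of the paper to compare yours against; judged on its own terms, your proposal is essentially correct. Part (1) is exactly the right computation: writing $\varphi^l(u)=u\,\psi^l(u)$ with $\psi^l$ pinched between positive constants gives $|f(x_3)-f(y)|\asymp|x_3-y|^{\ell_1}$ and $f'(y)\asymp|x_3-y|^{\ell_1-1}$ (note you thereby silently correct a typo in the statement, whose upper bound for $df/dx$ should carry the exponent $\ell_i-1$, not $\ell_i$), and the case of $x_4$ with $\varphi^r$, $\ell_2$ is identical. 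In Part (2) the reduction to the model inequality and its proof via the monotonicity of $g(\tau)=(1-\tau^{\ell})/(1-\tau)$ are sound: the numerator of $g'$ equals $1+(\ell-1)\tau^{\ell}-\ell\tau^{\ell-1}$, which is positive on $(0,1)$, so $1=g(0)\leq g(s)\leq g(t)\leq\ell$ and the ratio is at most $\ell$; the comparison of $(u-v)/(u-w)$ with $|x-y|/|x-z|$ needs both an upper and a lower bound on $(\varphi^l)'$, which you have since $\varphi^l\in Diff^3([0,1])$.

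One caveat: your assertion that $f$ restricted to $(x,z)$ coincides with a single branch $f_1,f_2,f_4$ is not forced by the hypothesis, since $(x,z)$ may straddle the junction points $0$ or $x_1\equiv 1$ while still avoiding $U$ (the only requirement is that $(x,z)$ misses the flat piece). This is not a fatal flaw, because the degenerate behaviour only occurs at the end near $z$, which lies in a power-law branch, while the remaining piece has derivative bounded between positive constants for the fixed map (here one uses $s>0$, so that $q_s'$ is bounded away from zero). Splitting at the junction, applying your single-branch estimate to the sub-triple with anchor at the junction point, and using a lower bound of the form $|f(x)-f(z)|\geq c\,|x-z|$ obtained from the two monotone pieces, one recovers the inequality with a slightly larger $K_3$; your write-up should either exclude this configuration explicitly or add this short patch.
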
 
\begin{rem}~\label{add assump 2}
		
	\begin{enumerate}
\item	The first part of  \textbf{Fact~\ref{fact}} implies that, for $i=1,2$;  $f_{|I^{l_i}}(x) \approx k_i x^{\ell_i} $.
\item Let us note that, \textbf{Fact~\ref{fact}} has also been applied in  control theory, see \cite{YY}. 
\end{enumerate}
\end{rem}

\section{Basic results}\label{Section 3}
 Let $f\in\mathscr{L}^X$ with $U$ as its flat piece.
	\begin{prop}\label{partition of f}
	Let $ n\geq1$.
	\begin{enumerate}
		\item[$ \bullet $] The set of ``long'' intervals consists of the intervals
		\begin{equation*}		
		\mathcal{A}_n:=\{(\underline{q_{n-1}+i+1},\underline{i+1});\;0\leq i <q_{n} \}.
		\end{equation*} 
		\item[$ \bullet $]  The set of ``short'' intervals consists of the intervals
		\begin{equation*}		
		\mathcal{B}_n:=\{(\underline{i+1},i+1+\underline{q_{n}});\;0\leq i <q_{n-1} \}.
		\end{equation*}
	\end{enumerate}
	The set $\mathcal{P}_n:=\mathcal{A}_n\cup \mathcal{B}_n$ covers the circle modulo the end points and is called $n^{th}$ dynamical partition.	The dynamic partition produced by the first $\underline{q_{n+1}+q_n} $ pre-images of $U$ is denoted  $\mathcal{P}^{n} $. It consists of 
	\begin{equation*}		
	\wp_n:=\{\underline{-i};\;0\leq i \leq q_{n-1}+q_{n}-1 \}
	\end{equation*}
	together with the gaps between these sets. As in the case of $\mathcal{P}_n $ there are two kinds of gaps, ``long'' and ``short'':
	\begin{enumerate}
		\item[$ \bullet $] The set of ``long'' intervals consists of the intervals
		\begin{equation*}		
		\mathcal{A}^{n}:=\{ I_i^{n-1}:=(\underline{-i},\underline{-q_{n-1}-i});\;0\leq i < q_{n-1} \}.
		\end{equation*} 
		\item[$ \bullet $]  The set of ``short'' intervals consists of the intervals
		\begin{equation*}		
		\mathcal{B}^{n}:=\{I_i^{n}:=(\underline{-q_{n}-i},\underline{-i});\;0\leq i< q_{n-1} \}.
		\end{equation*}
	\end{enumerate}
\end{prop}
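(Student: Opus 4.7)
The plan is to reduce the assertion to the well-known combinatorial description of the dynamical partition for the rigid rotation, and then transfer it back via the semi-conjugacy $h \circ f = R_{\rho(f)} \circ h$ that is available because $\rho(f)$ is irrational. The map $h$ is continuous, monotone, of degree one; it collapses the flat piece $U$ and each of its pre-images $\underline{-k}$ to a single point, and sends each forward iterate $\underline{k}$ (already a point since $|\underline{k}|=0$ for $k>0$) to $R_\rho^k(c)$, where $c := h(U)$. Outside of the collapsed intervals $h$ is strictly monotone, so it induces a cyclic-order isomorphism between the family $\{\underline{k}\}_{k \in \Z}$ on $S^1$ and the orbit $\{R_\rho^k(c)\}_{k \in \Z}$.

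For the rigid rotation $R_\rho$, the arrangement of the orbit segment $\{R_\rho^k(c);\, 1 \leq k \leq q_n + q_{n-1}\}$ on the circle is classical: its complementary arcs split into two combinatorial classes, with $q_n$ arcs of the first (\emph{long}) type and $q_{n-1}$ of the second (\emph{short}) type, and the endpoints are indexed exactly by the formulas defining $\mathcal{A}_n$ and $\mathcal{B}_n$ (see, e.g., \cite{de Melo and van Strien}). Pulling this description back through $h$ yields the claim for $\mathcal{P}_n$. The statement for $\mathcal{P}^n$ follows by the same argument applied to the backward orbit $\{R_\rho^{-k}(c);\, 0 \leq k \leq q_n + q_{n-1} - 1\}$; the only difference is that each backward orbit point lifts through $h^{-1}$ to an entire wandering interval $\underline{-k}$ rather than a single point, which is exactly why $\mathcal{P}^n$ is the union of $\wp_n$ with the long and short wandering gaps $\mathcal{A}^n$ and $\mathcal{B}^n$.

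The main obstacle is the bookkeeping that the indices in the defining formulas really match the cyclic order of the orbit (not merely that the total counts are right). I would handle this by induction on $n$, using the recursion $q_{n+1} = a_n q_n + q_{n-1}$ together with the alternating-sides property of the best rational approximants: $\underline{q_{n+1}}$ approaches $\underline{0}$ from the side opposite that of $\underline{q_n}$. This alternation determines which gaps of $\mathcal{P}_n$ are subdivided at step $n+1$ and which survive unchanged: each short interval of $\mathcal{P}_n$ becomes a long interval of $\mathcal{P}_{n+1}$, while each long interval of $\mathcal{P}_n$ is cut into $a_n$ long intervals and one short interval of $\mathcal{P}_{n+1}$, which exactly reproduces the indexing of $\mathcal{A}_{n+1}$ and $\mathcal{B}_{n+1}$ and gives the expected totals $q_{n+1} = a_n q_n + q_{n-1}$ long gaps and $q_n$ short gaps. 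The dual argument on pre-images delivers the corresponding refinement relation between $\mathcal{P}^n$ and $\mathcal{P}^{n+1}$, completing the induction.
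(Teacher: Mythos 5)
Your argument is correct, but it is worth noting that the paper itself does not prove this proposition at all: its ``proof'' is a one-line citation to \S 1.4 of \cite{GJSTV}, so what you have written is essentially a reconstruction of the content of that reference rather than a parallel to an argument in the text. Your route --- build the Poincar\'e semi-conjugacy $h\circ f=R_{\rho}\circ h$ (which exists because $f$ is continuous, weakly order preserving, of degree one, with irrational rotation number), observe that $h$ collapses each $\underline{-k}$ to the point $R_{\rho}^{-k}(c)$ and sends $\underline{k}$, $k>0$, to $R_{\rho}^{k}(c)$, and that monotonicity plus injectivity of the rotation orbit forces the cyclic order of the intervals $\underline{k}$ to agree with that of the orbit of $R_{\rho}$ --- is exactly the standard mechanism by which such combinatorial statements are reduced to the three-distance/renormalization combinatorics of the rigid rotation, and your inductive bookkeeping (each short gap of $\mathcal{P}_n$ becomes a long gap of $\mathcal{P}_{n+1}$, each long gap splits into $a_n$ long gaps and one short gap, giving $q_{n+1}=a_nq_n+q_{n-1}$ long and $q_n$ short gaps) is the correct and essential step where the index formulas are actually verified. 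Two small points deserve attention if you write this out in full: first, you should check that the index ranges you derive by induction really reproduce the ranges in the statement (as stated in the paper, the range $0\le i<q_{n-1}$ for the long gaps $\mathcal{A}^n$ of the pre-image partition is inconsistent with the count $q_n$ of long gaps your induction produces, and is presumably a typo to be flagged rather than matched); second, the semi-conjugacy step should explicitly use that $f$ is constant on $U$ and monotone, so that $h$ is constant on every $\underline{-k}$ and distinct intervals $\underline{j}$, $\underline{k}$ have distinct images, which is what licenses the cyclic-order transfer. With those details spelled out, your proof is a complete and legitimate substitute for the citation.
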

\begin{proof}
This comes from \S 1.4 in \cite{GJSTV}.
\end{proof}
\begin{prop}\label{qn go to zero} 
	The sequence  $|( \underline{0},\underline{q_n})| $
	tends to zero uniformly and at least exponentially fast.
\end{prop}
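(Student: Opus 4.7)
The plan is to combine the structural information from \textbf{Proposition~\ref{partition of f}} with a Denjoy-type disjointness argument enhanced by bounded distortion coming from the negative Schwarzian derivative. Concretely, I would identify $(\underline{0}, \underline{q_n})$ as, up to one iterate of $f$, the short interval of $\mathcal{B}_n$ corresponding to $i=0$, namely $(\underline{1}, \underline{q_n+1})$, and then exploit the disjointness of its forward iterates.

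The key observation is that the $q_{n-1}$ images $f^i\big((\underline{0}, \underline{q_n})\big)$ for $0 \leq i \leq q_{n-1} - 1$ coincide, up to indexing, with the short intervals of $\mathcal{B}_n$ listed in \textbf{Proposition~\ref{partition of f}}, and are therefore pairwise disjoint and contained in a circle of length $1$. Provided one can establish a uniform comparison $|f^i\big((\underline{0}, \underline{q_n})\big)| \asymp |(\underline{0}, \underline{q_n})|$ valid for $0 \leq i \leq q_{n-1} - 1$, summing yields
\begin{equation*}
q_{n-1} \cdot |(\underline{0}, \underline{q_n})| \leq C,
\end{equation*}
and since for a Fibonacci rotation number $q_{n-1}$ grows like $\phi^{n-1}$ with $\phi = (1+\sqrt{5})/2$, this produces $|(\underline{0}, \underline{q_n})| = O(\phi^{-n})$, the required exponential rate. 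The statement ``uniformly'' in the proposition refers to the fact that the implicit constant $C$ depends only on the distortion bounds, not on the point of the circle.

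The main obstacle is precisely this uniform comparison step. For it, I would apply the cross-ratio inequality provided by the negative Schwarzian hypothesis to the iterate $f^{q_{n-1}-1}$ restricted to a Koebe neighborhood of $(\underline{0}, \underline{q_n})$. The intermediate iterates do not cross the flat piece $U$, because of the combinatorics of the Fibonacci rotation number (no proper subsum of $q_n$'s reaches $q_n$ before index $q_{n-1}$). Combined with \textbf{Fact~\ref{fact}}, which controls the behavior of $f$ near the boundary of $U$ and so prevents nonlinearity from accumulating there, this furnishes distortion bounded by a universal constant independent of $n$. This is essentially the construction carried out in \S\,1.4 of \cite{GJSTV}, and the proof will amount to checking that its hypotheses hold for our class $\mathscr{L}^X$.
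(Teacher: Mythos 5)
Your combinatorial setup is fine: $f\bigl((\underline{0},\underline{q_n})\bigr)=(\underline{1},\underline{q_n+1})$, its iterates $f^i\bigl((\underline{1},\underline{q_n+1})\bigr)=(\underline{i+1},\underline{q_n+i+1})$, $0\le i<q_{n-1}$, are exactly the short intervals of $\mathcal{B}_n$ in \textbf{Proposition~\ref{partition of f}}, hence pairwise disjoint with total length at most $1$, and $q_{n-1}$ grows at least like $\phi^{\,n-1}$ for every irrational rotation number. The problem is the step you yourself flag as the main obstacle: the uniform comparison $|f^i(I)|\asymp |I|$ for $0\le i<q_{n-1}$ with constants independent of $n$. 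This is a genuine gap, and it cannot be closed by the tools you invoke. The Koebe principle (\textbf{Proposition~\ref{koebe principle}}) and the cross-ratio inequality control \emph{relative} distortion, i.e.\ ratios $Df^i(x)/Df^i(y)$ or ratios of lengths of subintervals sharing Koebe space; they never bound the absolute ratio $|f^i(I)|/|I|$, which is governed by $Df^i$ itself. Worse, in this class the comparability you need is simply false: by \textbf{Fact~\ref{fact}}, $|f\bigl((\underline{0},\underline{q_n})\bigr)|\approx K|(\underline{0},\underline{q_n})|^{\ell}$, and during the first $q_{n-1}$ iterates the orbit of $(\underline{1},\underline{q_n+1})$ passes at distance of order $|(\underline{0},\underline{q_{n-2}})|$ from the flat piece (around time $q_{n-2}$), where the derivative is of order $|(\underline{0},\underline{q_{n-2}})|^{\ell-1}\to 0$; so some images are smaller than $|I|$ by factors that degenerate with $n$. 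Indeed the whole point of the paper (\textbf{Lemma~\ref{main lemma}}, $\alpha_n\to 0$ double exponentially) is that bounded geometry fails for these maps, so an equidistribution argument resting on comparable interval sizes cannot give the inequality $q_{n-1}|I|\le C$, and the conclusion $|(\underline{0},\underline{q_n})|=O(\phi^{-n/\ell})$ does not follow from what you have established.

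For comparison, the paper does not reprove this statement at all: it refers to the proof of Proposition~1 in \cite{GJSTV}. The mechanism there is not a Denjoy-type counting argument but a scale-by-scale estimate: one compares $|(\underline{0},\underline{q_n})|$ with the corresponding quantity at earlier levels by applying the cross-ratio inequality to well-chosen configurations of orbit points around $U$ (in the spirit of \textbf{Lemma~\ref{cn and the other side}} and of \S\ref{Section 6} of the present paper), obtaining a definite contraction factor per step, uniform in $n$ and in the map; exponential decay then follows without ever needing the sizes of the intervals of $\mathcal{B}_n$ to be mutually comparable. If you want a self-contained proof consistent with the paper's toolkit, that is the route to take.
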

\begin{prop}\label{preimage and gaps adjacent} 
	If $A$ is a pre-image of $U$ belonging to $\mathcal{P}^{n} $ and if $B$ is one of the gaps adjacent to $A$, then $|A|/|B|$ is bounded away from zero by a constant that does not depend on $n$, $A$ or $B$.
\end{prop}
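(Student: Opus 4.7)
The plan is to transport the pair $(A,B)$ by a suitable iterate of $f$ down to a \emph{base level} where the ratio $|A|/|B|$ becomes an explicit comparison near the flat piece, and then to pull the estimate back with bounded distortion.

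Writing $A=\underline{-j}$ for some $0\le j\le q_{n-1}+q_{n}-1$, the adjacent gap $B$ is bounded by two consecutive preimages of $U$ from $\wp_{n}$, whose indices differ from $j$ by $q_{n-1}$ or $q_{n}$ according to whether $B$ is of long or short type. The iterate $f^{j}$ then sends $A$ onto $U$ and $B$ onto a gap $f^{j}(B)$ of some $\mathcal{P}^{m}$ with $m$ bounded (at worst $m\le 1$), i.e.\ onto one of finitely many explicit intervals adjacent to $U$. The length $|f^{j}(B)|$ is therefore comparable to $|U|$ by the power-law behavior of $f$ at $\partial U$ given in Fact~\ref{fact}(1); in particular $|U|/|f^{j}(B)|$ is bounded below by a constant depending only on $K_{1},K_{2},\ell_{1},\ell_{2}$.

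The remaining step is to transfer this base-level estimate to $(A,B)$, which requires bounded distortion for $f^{j}$ on an interval containing $A\cup B$. By the partition structure of Proposition~\ref{partition of f}, the intermediate iterates $f^{k}(A\cup B)$, $0\le k<j$, are disjoint from $U$, so $f^{j}$ restricts to a diffeomorphism on a neighborhood of $A\cup B$. Applying Fact~\ref{fact}(2) inductively along the orbit produces a one-sided cross-ratio bound; combined with the negative Schwarzian hypothesis and a Koebe space furnished by the neighboring long/short intervals of $\mathcal{P}_{n}$, this yields a uniform distortion bound for $f^{j}$ on $A\cup B$. Putting this together with the previous step gives $|A|/|B|\ge c>0$ independent of $n$, $A$ and $B$.

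The part I expect to be the main obstacle is the uniform distortion/Koebe step: a priori, pulling back through up to $q_{n}$ iterates could accumulate unbounded distortion. What saves us is that $f^{k}(A\cup B)$ comes close to $\partial U$ only at the closest-return times $k=q_{i}$, of which there are boundedly many in $[0,j]$, and at each such time the adjacent partition element (itself of comparable length by the combinatorics of $\mathcal{P}_{n}$) supplies the Koebe room needed. Verifying this schedule of returns and showing that the negative Schwarzian controls the remaining distortion is the technical heart of the argument; both are standard for this class of maps and are essentially the content of \S 1.4 of \cite{GJSTV}, so in practice I would invoke that reference after establishing the combinatorial identification $f^{j}(B)\in\mathcal{P}^{m}$.
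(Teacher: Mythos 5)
The paper itself gives no argument for this proposition: immediately after the statement it says the proofs are in \cite{GJSTV} (Proposition 2 there), so the only meaningful comparison is between your sketch and that reference, which you also fall back on at the end. The difficulty is that the sketch you build around that citation is not a correct outline of a proof. First, the combinatorial identification is wrong: with $A=\underline{-j}$, the two adjacent gaps of $\mathcal{P}^{n}$ are $I^{n-1}_{j}=(\underline{-j},\underline{-q_{n-1}-j})$ and $I^{n}_{j}=(\underline{-q_{n}-j},\underline{-j})$, and $f^{j}$ sends them to $(\underline{0},\underline{-q_{n-1}})$ and $(\underline{-q_{n}},\underline{0})$, i.e.\ to the \emph{level-$n$} gaps adjacent to $U$, not to gaps of a partition of bounded level. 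These intervals depend on $n$ and shrink as $n\to\infty$ (the closest-return preimages accumulate on $\partial U$), so they are in no way comparable to $|U|$, and \textbf{Fact~\ref{fact}}(1) plays no role at this stage. The ``base-level'' bound $|U|/|f^{j}(B)|\geq c$ is then trivial, and the entire content of the proposition is concentrated in the pull-back step, which is exactly the step your plan does not support.

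That pull-back step, as you formulate it, would fail. A uniform two-sided distortion bound for $f^{j}$ on $A\cup B$ is not available for maps with a flat interval, and it would prove far too much: it would give $|B|/|A|\asymp |f^{j}(B)|/|U|$ uniformly, forcing degenerate geometry for every choice of exponents, whereas the proposition (taken from \cite{GJSTV}) holds also in the bounded-geometry regime of large exponents. The Koebe space you propose to borrow from ``neighboring long/short intervals of comparable length'' is essentially the comparability of preimages and adjacent gaps that is being proved, so the argument is circular; and the hypotheses of \textbf{Proposition~\ref{koebe principle}} (bounded total length of the iterates of an enlarged interval, definite space on both sides of the image) cannot be verified a priori here. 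Iterating \textbf{Fact~\ref{fact}}(2) along the orbit only yields a constant of size $K_{3}^{j}$, and the number of closest-return times $q_{i}\leq j$ is of order $n$, not bounded, so ``boundedly many near approaches to $\partial U$'' is also false; note too that the proposition is stated for the whole class, while the negative Schwarzian is an extra assumption the paper reserves for \textbf{Lemma~\ref{main lemma}}. The proof in \cite{GJSTV} avoids distortion estimates altogether: it is a one-sided argument run through the cross-ratio machinery (the \cri\ recalled in \S\ref{Section 3} and \textbf{Proposition~\ref{CRI}}), placing the preimage as the middle interval of suitable quadruples, using expansion of cross-ratios along iterates whose middle intervals avoid $U$ together with bounded intersection multiplicity, and propagating the bound inductively through the dynamical partitions (the proof of \textbf{Lemma~\ref{cn and the other side}} in this paper is a small-scale model of that mechanism). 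If you want a self-contained argument, restructure it around those tools rather than around Koebe.
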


The proofs of all these results can be found in \cite{GJSTV} (proof of \textbf{Proposition 1} and \textbf{Proposition 2} respectively).

\paragraph{Cross-Ratio Inequality (\cri) }
The cross ratio inequality was introduced and proved by several authors, see \cite{2, de Melo and Van Strien 89, Yo1}. We shall adapt the one introduced in \cite{2} and redefined in \cite{Gra} to our needs. 

Let  $f\in \mathscr{L}^X $ with  $U$ as its flat piece and negative Schwarzian derivative. 	Let $I$ and $J$ be two intervals of finite and nonzero lengths such that $\bar{I}\cap \bar{J}=\emptyset=(\bar{J}\cup\bar{I})\cap  \overline{U}$  and  $J$ is on the right of $I$. We define their cross-ratio as	
	\begin{equation*}
	\cro\:(I,J):=\dfrac{|I||J|}{|[I,J)||(I,J]|}.
	\end{equation*}
	The distortion $\D\cro$\, of the cross-ratio by $f$ is given by
	\begin{equation*}
	\D\cro\:(I,J; f):=\dfrac{\cro\:(f(I),f(J))}{\cro\:(I,J)}.
	\end{equation*} 
	When there is no ambiguity we write $\D\cro\:(I,J)$ instead of $\D\cro\:(I,J; f)$.
	
	Let $n\in \N$ such that	
	\begin{enumerate}
			\item $f^n$ is a diffeomorphism on $(I,J)$;\label{hypothesis 2 of cri}
		\item Each point of the circle belongs to at most $k$ intervals $f^i([I,J])$.
	\end{enumerate}
	Then,
	\begin{equation*}
	\D\cro\:(I,J; f^n)=\prod_{i=0}^{n-1}\D\cro\:(f^i(I),f^i(J); f)<1.\end{equation*}

\begin{rem}\label{diffeo on an interval}
	Let $f\in\mathscr{L}^X$ with $U$ as its flat piece. Given  $n\geq1$,  and  $T$  be an interval. We have
	$f^{n}:T\longrightarrow f^{n}(T)$ is diffeomorphism if only if, for all $0\leq i\leq n-1$, $f^{i}(T)\cap \overline{U}=\emptyset$.
\end{rem}

 	\begin{lem}\label{cn and the other side} 
 	For all $i=1,..., q_{n-1}-1 $, the parameter sequence
 	\begin{equation*}
 	\varrho_n(i):=\dfrac{	|\underline{-q_{n}+i}|}{|(\underline{q_{n-1}+i}, \underline{-q_{n}+i}]|}
 	\end{equation*}
 	is bounded away from zero.
 \end{lem}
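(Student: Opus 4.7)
My plan is to reduce the general index $i$ to the base case $i=0$ by a bounded distortion argument, and then to verify the base case using the combinatorics of the Fibonacci dynamical partition together with Proposition~3.3.

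First, for $1 \leq i \leq q_{n-1}-1 < q_n$, I would argue that $f^{i}$ is a diffeomorphism on the closed interval $[\underline{q_{n-1}},\underline{-q_n}]$. By the structure of the $n$-th dynamical partition (Proposition~3.1) and the Fibonacci identity $q_n=q_{n-1}+q_{n-2}$, the iterates $f^{j}([\underline{q_{n-1}},\underline{-q_n}])$ for $0 \leq j < q_n$ remain disjoint from $\overline{U}$; Remark~3.7 then yields
\[
(\underline{q_{n-1}+i},\underline{-q_n+i}\,]=f^{i}\!\left((\underline{q_{n-1}},\underline{-q_n}\,]\right).
\]

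Next, since the Schwarzian derivative of $f$ is negative and each point of $S^{1}$ belongs to at most a bounded number of the iterates $\{f^{j}([\underline{q_{n-1}},\underline{-q_n}])\}_{0 \leq j < i}$ (again by the partition structure and $i<q_{n-1}<q_n$), the Cross--Ratio Inequality provides a uniform constant $K>0$ with
\[
\frac{|(\underline{q_{n-1}+i},\underline{-q_n+i})|}{|\underline{-q_n+i}|}\;\leq\;K\cdot\frac{|(\underline{q_{n-1}},\underline{-q_n})|}{|\underline{-q_n}|},
\]
so it suffices to bound $\varrho_n(0)$ below uniformly in $n$. For this base case I would invoke Proposition~3.3: the preimage $\underline{-q_n}\in\wp_n$ is adjacent (in $\mathcal{P}^{n}$) to a gap whose length is comparable to $|\underline{-q_n}|$. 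The Fibonacci combinatorics then locates the orbit point $\underline{q_{n-1}}$ either inside this gap or at comparable distance from $\underline{-q_n}$, giving $|(\underline{q_{n-1}},\underline{-q_n})|=O(|\underline{-q_n}|)$ as required.

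The principal obstacle is this last step, namely the combinatorial identification of the gap of $\mathcal{P}^{n}$ adjacent to $\underline{-q_n}$ on the side opposite to $U$, and the verification that $\underline{q_{n-1}}$ lies within bounded distance of it in the scale of $|\underline{-q_n}|$. This reduces to tracking the cyclic order of preimages and forward orbit points of $U$ near the flat piece, which is routine but requires careful bookkeeping with the Fibonacci identities. A second minor subtlety is the bounded multiplicity of coverage needed to apply the Cross--Ratio Inequality, which likewise follows from the standard partition combinatorics available in Proposition~3.1.
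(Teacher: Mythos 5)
Your reduction of the general index $i$ to the base case $i=0$ is where the argument breaks down, and it is exactly the step the paper's proof is built to avoid. The diffeomorphism and bounded-multiplicity claims for $f^{i}$ on $[\underline{q_{n-1}},\underline{-q_n}]$ are fine, but the Cross-Ratio Inequality is one-sided: for these maps forward iteration expands the cross-ratio $\po$ and hence contracts $\cro\,(I,J)=\tfrac{|I||J|}{|[I,J)||(I,J]|}$. Transporting the time-$0$ configuration forward by $f^{i}$ therefore only yields an inequality of the form $\cro(\mbox{time } i)\leq \cro(\mbox{time } 0)$, i.e.\ it says the gap can only become relatively \emph{larger} compared with the preimage; it cannot deliver your claimed bound
$\frac{|(\underline{q_{n-1}+i},\underline{-q_n+i})|}{|\underline{-q_n+i}|}\leq K\,\frac{|(\underline{q_{n-1}},\underline{-q_n})|}{|\underline{-q_n}|}$,
which is a lower bound on a $\cro$-type quantity at the \emph{later} time. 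Nor can you fall back on the Koebe principle (Proposition~\ref{koebe principle}): its third hypothesis demands a priori space around the image of the middle interval, which is precisely the kind of control the lemma is supposed to produce, and since the geometry degenerates ($\alpha_n\to 0$) there is no bounded-distortion reason for gap-to-preimage ratios to be comparable along a cycle. In short, your reduction inequality is essentially equivalent to the lemma itself and is not obtainable from the tools you invoke.

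The paper argues in the opposite time direction. For $0\leq i<q_{n-2}$ it bounds $\varrho_n(i)$ below by the cross-ratio $\cro\,([\underline{-q_{n-2}+i},\underline{q_{n-1}+i}),\underline{-q_n+i})$ and then applies $f^{q_{n-2}-i}$, pushing the configuration \emph{forward} onto one anchored at the flat piece: the left interval becomes $[\underline{0},\underline{q_{n}})\supset U$, whose definite length supplies the uniform constant, and the remaining factor $|\underline{-q_{n-1}}|/|(\underline{q_{n}},\underline{-q_{n-1}}]|$ is handled by Proposition~\ref{preimage and gaps adjacent} because $\underline{q_{n}}$ lies in the partition gap $(\underline{-q_{n+1}},\underline{-q_{n-1}})$ adjacent to $\underline{-q_{n-1}}$. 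For $q_{n-2}\leq i\leq q_{n-1}-1$ no cross-ratio is needed at all: writing $i=q_{n-2}+j$, the gap in question sits inside the partition gap $(\underline{-q_{n+1}+j},\underline{-q_{n-1}+j})$ and Proposition~\ref{preimage and gaps adjacent} applies directly. Your base case $i=0$ is correct (indeed $(\underline{q_{n-1}},\underline{-q_n})\subset(\underline{-q_{n-2}},\underline{-q_n})$, a gap of the dynamical partition), but that is the easy part; what you label the "principal obstacle" (the combinatorial bookkeeping) is not the obstacle, whereas the reduction step, as you set it up, uses the cross-ratio machinery in the wrong direction and leaves the lemma unproved for all $i\geq 1$.
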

 \begin{proof}
 	Observe that	if $0\leq i<q_{n-2}$, then  $\varrho_n(i)$ is  larger than
 	\begin{equation*}
 	\cro([\underline{-q_{n-2}+i}, \underline{q_{n-1}+i}),\underline{-q_{n}+i} )
 	\end{equation*}	
 	which by \cri\, with $f^{q_{n-2}-i} $ is greater than
 	\begin{equation*}
 	\dfrac{|\underline{-q_{n-1}}|}{|(\underline{q_{n}},\underline{-q_{n-1}}]|}
 	\end{equation*}	
 	times a uniform constant. Moreover, the above ratio is bigger than
 	\begin{equation*}
 	\dfrac{|\underline{-q_{n-1}}|}{|(\underline{-q_{n+1}},\underline{-q_{n-1}}]|}
 	\end{equation*}	
 	which  by \textbf{Proposition~\ref{preimage and gaps adjacent}} go away from zero.
 	
 	Now, suppose that $q_{n-2}\leq i:=q_{n-2}+j\leq q_{n-1}-1$. Then,
 	\begin{equation*}
 	\varrho_n(i)\geq\dfrac{	|\underline{-q_{n-1}+j}|}{|(\underline{-q_{n+1}+j}, \underline{-q_{n-1}+j}]|}
 	\end{equation*} 
 	which  by \textbf{Proposition~\ref{preimage and gaps adjacent}} go away from zero.
 	
 This completes the proof.
 \end{proof}
A proof of the following Proposition can be found in \cite{de Melo and van Strien} \textbf{theorem:3.1 p.285}.
\begin{prop}[Koebe principle]\label{koebe principle}
	Let	$ f\in \mathscr{L}^X $. For
	every $\varsigma,\,\alpha>0$ there exists a constant $\zeta(\varsigma,\alpha)>0$ such that  the following holds. Let $T$ and $M\subset T$ be
	two intervals and let $S,\;D$ be the left and the right component
	of $T\setminus M$ and $n\in\N$. Suppose that:
	\begin{enumerate}
		\item $\sum_{i=0}^{n-1}f^i(T)<\varsigma $,
		\item $f^{n}: T\longrightarrow f^n(T)$ is a diffeomorphism,
		\item $\dfrac{|f^n(M)|}{|f^n(S)|}, \;\dfrac{|f^n(M)|}{|f^n(D)|}<
		\alpha.$
	\end{enumerate}
	
	Then,
	\begin{equation*}
	\dfrac{1}{\zeta(\varsigma,\alpha)}\leq\dfrac{	Df^n(x)}{	Df^n(y)}\leq\zeta(\varsigma,\alpha), \quad \forall x,y\in M.
	\end{equation*}
	That is,
	\begin{equation*}
	\dfrac{1}{\zeta(\varsigma,\alpha)}\cdot\dfrac{|A|}{|B|}\leq\dfrac{	f^n(A)}{	f^n(B)}\leq\zeta(\varsigma,\alpha)\cdot\dfrac{|A|}{|B|}, \quad \forall A, B\,\mbox{(intervals)}\subseteq M
	\end{equation*}
	where
	\begin{equation*}
	\zeta(\varsigma,\alpha)=\dfrac{1+ \alpha}{\alpha}e^{C\varsigma}
	\end{equation*}
	and $C\geq 0$ only depends on $f$.
\end{prop}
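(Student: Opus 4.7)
The plan is to follow the classical cross-ratio iteration argument (as in de Melo--van Strien), using the Cross-Ratio Inequality recalled earlier in the paper as the main engine. Set $T_k := f^k(T)$, $M_k := f^k(M)$, $S_k := f^k(S)$, $D_k := f^k(D)$ for $0 \leq k \leq n$; hypothesis~(2) says each $f^k$ is a diffeomorphism on $T$, so all these intervals are well-defined and cross-ratios of nested subintervals behave well. The overall strategy is to write $Df^n(x)/Df^n(y)$ for $x,y \in M$ as a limit of products of cross-ratio distortions, and then bound that product via \cri{} together with the Koebe space supplied by hypothesis~(3).

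\textbf{Reduction and iteration.} Given $x<y$ in $M$, consider infinitesimal subintervals $A\subset M$ near $x$ and $B\subset M$ near $y$. A standard algebraic manipulation writes the ratio $(|f^n(A)|/|A|)/(|f^n(B)|/|B|)$ as a product of cross-ratio distortions $\D\cro(I,J;f^n)$ where $I,J$ are built from $A,B,S,D$. Since each iterate $f^k(T)$ stays inside the orbit of $T$ and by hypothesis~(2) avoids the flat piece $\overline{U}$, the $C^3$ quantitative form of the CRI applies: for each $k$, $\log\D\cro(f^k(I),f^k(J);f)\leq C\,|f^k(T)|$, where $C$ depends only on the $C^3$ norm of $f$ on the complement of a neighborhood of $U$. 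Telescoping over $k=0,\dots,n-1$ and invoking hypothesis~(1), $\sum_{k=0}^{n-1}|f^k(T)|<\varsigma$, yields a total cross-ratio distortion bounded by $e^{C\varsigma}$.

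\textbf{Koebe space and conclusion.} Hypothesis~(3) asserts that $M_n$ sits centrally inside $T_n=S_n\cup M_n\cup D_n$ with definite room on both sides, quantified by $\alpha$; a direct cross-ratio computation converts this into the elementary multiplicative factor $(1+\alpha)/\alpha$ when passing from the cross-ratio bound to a pointwise derivative bound (this is essentially the observation that $\cro(A,B)\asymp |A||B|/(|T|^2)$ up to the Koebe prefactor). Combining with the $e^{C\varsigma}$ factor from the previous step produces the stated constant $\zeta(\varsigma,\alpha)=\frac{1+\alpha}{\alpha}\,e^{C\varsigma}$, and the length-ratio version follows by integrating the derivative bound over pairs of subintervals $A,B\subseteq M$.

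\textbf{Main obstacle.} The key technical point is the upgrade of the bare CRI (which in the paper is stated only as $\prod\D\cro<1$ under negative Schwarzian derivative) to the quantitative one-step estimate $\log\D\cro(f^k(I),f^k(J);f)\leq C\,|f^k(T)|$, valid for any $C^3$ diffeomorphism on an interval avoiding the critical locus. This requires expanding $\log\D\cro$ to second order in terms of $D^3f/Df$ and $(D^2f/Df)^2$ and carefully controlling the remainder on intervals whose closures do not meet $\overline{U}$; the bookkeeping is standard but must be done uniformly over $k$ so that the accumulated exponent $C\varsigma$ depends only on the total length bound, not on $n$ or on the individual iterates.
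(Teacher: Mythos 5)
The paper offers no proof of this proposition at all: it is quoted verbatim with a pointer to de Melo--van Strien (Theorem 3.1, p.~285), so the only meaningful comparison is with the classical argument there, and your outline does follow that classical skeleton (telescoping cross-ratio distortion over the orbit of $T$, then converting the Koebe space of hypothesis (3) into the prefactor $\frac{1+\alpha}{\alpha}$). The gap sits exactly at the point you yourself flag as the ``main obstacle'', and the fix you propose does not work as stated. You claim a one-step estimate $\log \D\cro\,(f^k(I),f^k(J);f)\leq C\,|f^k(T)|$ with $C$ depending ``only on the $C^3$ norm of $f$ on the complement of a neighborhood of $U$''. But the hypotheses only require the iterates $f^k(T)$ to avoid $\overline{U}$, not to stay a definite distance from it; in every application in this paper the endpoints of these intervals are preimages of $U$ and accumulate on the endpoints of the flat piece, where $Df$ vanishes with exponents $\ell_1,\ell_2$ and the nonlinearity $D^2f/Df$ blows up. There is therefore no fixed neighborhood of $U$ to excise, the constant in your one-step bound is not uniform in $k$, and the accumulated exponent is not controlled by $\varsigma$ alone.

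The standard repair, and what the cited source actually does, is to exploit the structure of $f$ at the endpoints of $U$ rather than raw $C^3$ bounds: near each endpoint write $f$ as the power map $x\mapsto x^{\ell_i}$ composed with a diffeomorphism whose nonlinearity is bounded up to the boundary (this is precisely the content of \textbf{Fact~\ref{fact}} and the definition of the class $\mathscr{L}^X$); the power factor has negative Schwarzian derivative, so its cross-ratio distortion works in the favorable direction and contributes no loss, while only the diffeomorphic factor contributes a term of size $O(|f^k(T)|)$, which summed over $k$ gives $e^{C\varsigma}$ with $C$ depending on $f$ alone. Alternatively, under the paper's standing negative-Schwarzian assumption one may bypass the summability argument entirely and deduce the bounded distortion on $M$ directly from the space condition (3). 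Your conversion of hypothesis (3) into the factor $\frac{1+\alpha}{\alpha}$ and the passage from the derivative bound to the ratio statement for subintervals $A,B\subseteq M$ are fine; it is the justification of the uniform one-step estimate near the critical points that must be replaced by the non-flatness (or negative-Schwarzian) argument.
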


\section{Renormalization}\label{Section 4}
The key of this part is to set up for $n$ large enough a right algebraic relation between the $(n+1)$th renormalization and  $n$th renormalization of the form: $R^{n+1} f =L R^{n}f +C_0(f)$ with  $f$ a map our class with Fibonacci rotation number. The technique used requires a double change  of variables on $\Sigma^{X} $. Let us mention once again that \textbf{Lemma~\ref{main lemma}} (the scaling ratios $\alpha_{n} $ go to zero double exponentially fast when $(\ell_1,\ell_2)$ belongs to $(1,2)^2$) will be used frequently throughout this part.

\subsection{Renormalization operator}
Let $f\in \mathscr{L}^X$ with $[a_0,a_1,\dots]$ as its rotation number. If $1\leq a_0,a_1 < \infty$, then $f$ can be renormalized in the following sense:
 let  $x_{1,1}=f^{a_1-1}(x_2)/x_1$, let $h: [x_1,f_4^{a_1-1}(x_2))\longrightarrow [x_{1,1},1)$ be the map $x\longmapsto {x}/{x_1} $ and let $\RN f :  [x_{1,1},1)\longrightarrow [x_{1,1},1)$
  be the map defined by 
\begin{equation}\label{renormalization definition}
\RN f (x)=\begin{cases}
\begin{array}{lcl}
\RN f_-(x)=h\circ f_+\circ h^{-1}(x) &\mbox{if} & x \in [x_{1,1},0)\\
\RN f_+(x)=h\circ f_+^{a_1}\circ f_-\circ h^{-1}(x) &\mbox{if} & x \in (0,1)
\end{array}
\end{cases} 
\end{equation} 
More precisely,
\begin{equation*}\label{renormalization definition explicit}
\RN f (x)=\begin{cases}
\begin{array}{lcl}
\RN f_-(x)=h\circ f_2\circ h^{-1}(x) &\mbox{if} & x \in [x_{1,1},0)\\
\RN f_+(x)=h\circ f_2\circ f_3^{a_1-1}\circ f_1\circ h^{-1}(x) &\mbox{if} & x \in (0,1]
\end{array}
\end{cases} 
\end{equation*} 

Observe that $$\RN f=h\circ p\RN f\circ h^{-1}$$ where
\begin{equation*}\label{prerenormalization definition}
p\RN f (x)=\begin{cases}
\begin{array}{lcl}
\RN f_-(x)=f_+(x) &\mbox{if} & x \in (x_1,0)\\
\RN f_+(x)=f_+^{a_1}\circ f_-&\mbox{if} & x \in (0, f_4^{a_1-1}(x_2)]
\end{array}
\end{cases} 
\end{equation*} 
Observe that  $\RN f$ is nothing else than $p\RN$ rescaled and flipped.
The function $h$ is  a normalization operator, $p\RN$ is the first return (Poincaré) map of $f$ to the interval $(x_1, f_4^{a_1-1}(x_2)]$  and $\RN f$ is called the first renormalization of $f$.

  By $\mathscr{L}_0^X $ we denote the subset of $\mathscr{L}^X$ consisting of maps renormalizable.  

 Before explaining more precisely $\RN f$, we introduce the zoom map  on an interval $[a,b]$.
 $Z_{[a,b]}: Diff([0,1]) \longrightarrow  Diff([0,1])$ is the map $\psi \longmapsto A_{[\psi(a),\psi(b)]}\circ\psi\circ
A^{-1}_{[a,b]} $ where		$A_{[a,b]}: [a,b]\longrightarrow [0,1] $ is the map		$x \longmapsto{(x-a)}/{(b-a)}$. 

Now, we are ready to describe more precisely the function $\RN f$ for a fixed $f\in\mathscr{L}_0^X $.
\begin{prop}\label{first renormalization on x}
Let $f\in\mathscr{L}_0^X$ with $[a_0,a_1,\dots]$ as its rotation number.
If $1\leq a_0,a_1\leq\infty$, then $\RN f$ is a map of $\mathscr{L}$. More precisely, 
\begin{enumerate}
	\item  $\rho(\RN f)=[a_1,a_2,\dots]$,
	\item $\RN f:=(x_{1,1} ,x_{2,1} ,x_{3,1} ,x_{4,1} ,s_1,\varphi_1,\varphi^l_1,\varphi^r_1)$	
	with
\begin{align*}
		x_{1,1}&=\dfrac{f_4^{a_1-1}(x_2)}{x_1}, \nonumber\\\vspace{0.1cm}
		x_{2,1}  &=\left[\varphi^l\left(\dfrac{x_3-f_4^{a_1-1}(x_2)}{x_3}\right)\right]^{\ell_1} , \nonumber\\\vspace{0.1cm}
		x_{3,1}&=1-\varphi^{-1}\circ q^{-1}_s\left(\dfrac{f_4^{-a_1+1}(x_4)-x_2 }{1-x_2} \right), \nonumber\\\vspace{0.1cm}
		x_{4,1}  &=1- \varphi^{-1}\circ q^{-1}_s\left(\dfrac{f_4^{-a_1+1}(x_3)-x_2 }{1-x_2}\right), \\\vspace{0.1cm}
		s_1&=\varphi^l\left(\dfrac{x_3-f_4^{a_1-1}(x_2)}{x_3}\right), \nonumber\\\vspace{0.1cm}
		\varphi_1&=Z_{\left[1-f_4^{a_1-1}(x_2)/x_3,1\right]}(\varphi^l), \nonumber \\\vspace{0.1cm}
		\varphi^l_1&=Z_{\left[{(x_2-x_4)}/{(1-x_4)},1\right]}\left(\varphi^r\right)^{a_1-1}\circ Z_{\left[1-x_{3,1},1\right]}(q_s\circ\varphi), \nonumber \\\vspace{0.1cm}
		\varphi^r_1&=Z_{\left[0,\dfrac{x_3-f_4^{a_1-1}(x_2)}{x_3}\right]}(\varphi^l)\circ Z_{\left[\dfrac{x_2-x_4}{1-x_4},1\right]}\left(\varphi^r\right)^{a_1-1}\circ
		Z_{\left[0,1-x_{4,1}\right]}(q_s\circ\varphi).\nonumber
	\end{align*}	
 
\end{enumerate} 

\end{prop}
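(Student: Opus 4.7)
The proof proceeds by explicit computation, unwinding the identity $\RN f = h\circ p\RN f\circ h^{-1}$ with $p\RN f$ the first--return map of $f$ to $[x_1, f_4^{a_1-1}(x_2)]$, and then reading off each of the eight parameters from the resulting piecewise expressions. Part (1) is standard: the first--return operation applies the Gauss shift to the continued fraction expansion of $\rho(f)$, since the renormalization replaces the original orbit structure by its induced dynamics on a fundamental domain and the denominators of the best rational approximants shift by one index. This is the same mechanism used implicitly in \textbf{Proposition~\ref{partition of f}} and may be quoted from \cite{MSM, GJSTV}.

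For part (2), I would treat the five real parameters first and then the three diffeomorphisms. The new left endpoint $x_{1,1}$ is read off directly from the normalization $h(x)=x/x_1$ applied to $f_4^{a_1-1}(x_2)$. To get $x_{3,1}$ and $x_{4,1}$, one locates the new flat piece, which is the connected component of $(p\RN f)^{-1}(0)$ on the positive side: using $f_1(x)=(1-x_2)q_s\circ\varphi((x_1-x)/x_1)+x_2$ and inverting through $\varphi^{-1}\circ q_s^{-1}$, the preimages of the endpoints of $[x_3,x_4]$ under $f_4^{-a_1+1}$ produce exactly the displayed formulas. The parameter $x_{2,1}$ is the value $\RN f_-(0)=h\circ f_2(0)$, and $s_1$ is identified as the internal argument of $q_s$ appearing inside the composition $f_2\circ f_4^{a_1-1}\circ f_1$ evaluated at the new left critical endpoint, giving $s_1=\varphi^l((x_3-f_4^{a_1-1}(x_2))/x_3)$.

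For the three diffeomorphisms, the plan is to write each branch of $\RN f$ as a composition of affine rescalings (absorbed into the zoom operators $Z_{[a,b]}$), pure power terms $x^{\ell_i}$, and the underlying diffeomorphisms $\varphi,\varphi^l,\varphi^r$, and then to collect the non--affine and non--power factors into the required $Z$--conjugates. The branch $\RN f_-=h\circ f_2\circ h^{-1}$ yields $\varphi_1=Z_{[1-f_4^{a_1-1}(x_2)/x_3,1]}(\varphi^l)$ immediately from the definition of $f_2$. The branch $\RN f_+=h\circ f_2\circ f_4^{a_1-1}\circ f_1\circ h^{-1}$ is split into its two sub--branches around the new flat piece, one giving $\varphi_1^l$ and the other $\varphi_1^r$; in both cases there is one copy of $q_s\circ\varphi$ contributed by $f_1$, $a_1-1$ copies of $\varphi^r$ contributed by $f_4^{a_1-1}$, and one copy of $\varphi^l$ contributed by $f_2$, each composed with the appropriate affine normalizations encoded in the $Z$--operators.

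The main obstacle is purely bookkeeping: one must match the domain of each zoomed diffeomorphism to the image of the previous one so that the composition genuinely defines an element of $\mathrm{Diff}^3([0,1])$, and one must verify that after all the zooms and affine rescalings the critical exponents attached to the two sides of the new flat piece are $(\ell_2,\ell_1)$ rather than $(\ell_1,\ell_2)$ (consistent with point 2 of \textbf{Remark~\ref{critical exponent and renormalization}}). This exponent swap is forced by the fact that $\RN f_+$ passes through $f_1$ (which carries the power $\ell_2$ via $q_s$) near the left side of the flat piece and through $f_2$ (which carries $\ell_1$) near the right side, reversing the assignment present in $f$. Once these matchings are verified, the explicit formulas and the membership $\RN f\in\mathscr{L}^X$ both follow.
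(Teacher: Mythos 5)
Your proposal is correct and is essentially the paper's own argument: the paper likewise treats point 1 as a standard fact (the Gauss shift of the continued fraction under first return) and obtains point 2 by combining the definition \textbf{(\ref{renormalization definition})} with the branch decomposition of Figure~\ref{branches of f} and the first-return picture of Figure~\ref{first return map}, which is exactly the unwinding of $h\circ p\RN f\circ h^{-1}$ that you carry out in more detail. One bookkeeping slip to fix: $x_{2,1}$ is the image of the new endpoint, $x_{2,1}=\RN f_{-}(x_{1,1})=h\circ f_{2}\circ f_{4}^{a_1-1}(x_2)$, not $h\circ f_{2}(0)$ (the latter equals $1$); with that correction your reading of the parameters yields the displayed formulas.
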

\begin{proof}~
	\begin{enumerate}
		\item The point 1 is a general result on the renormalization of circle maps.
		\item The  graph of $p\RN$ is described in the quadrant in blue dashes of the following figure.
	
		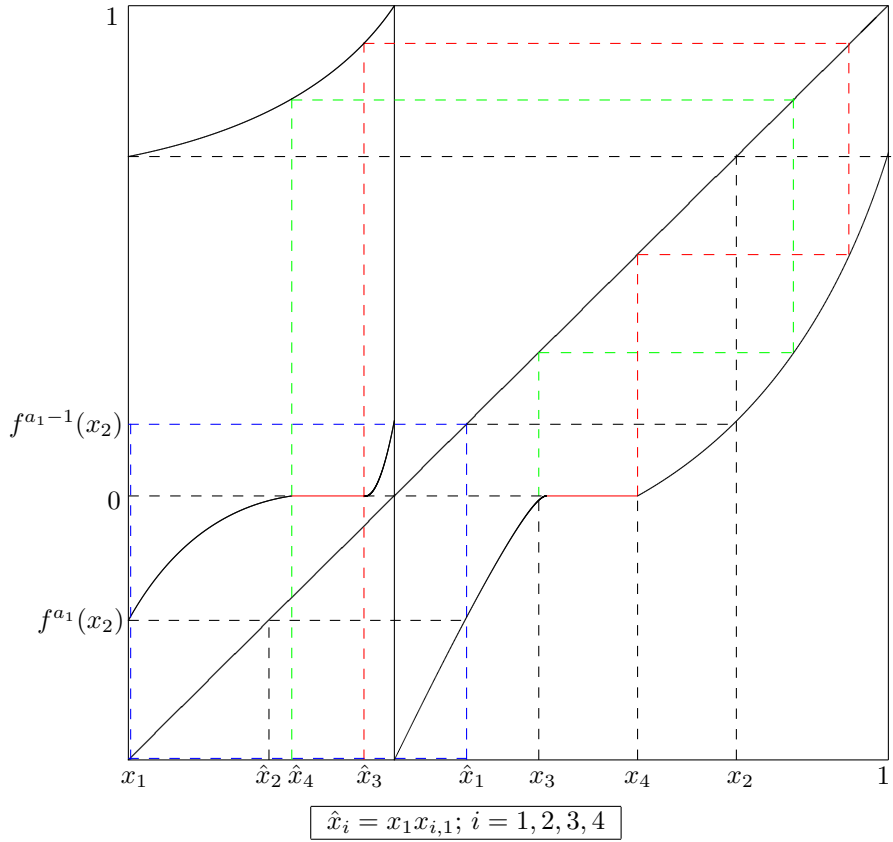
\begin{figure}[H]
			
			\centering
			\setlength{\unitlength}{10cm}	
			\hspace{1cm}
			\begin{picture}(1, 1)
			\put(0,0){\line(0,1){1}}
			\put(0,0){\line(1,0){1}}
			\put(1,0){\line(0,1){1}}
			\put(0,1){\line(1,0){1}}
			\put(0,0){\line(1,1){1}}
			\put(0.35,0){\line(0,1){1}}

			\qbezier(0, 0.185)(0.08,0.33)(0.215,0.35)
			
			\qbezier(0.31,0.35)(0.33,0.345)(0.35,0.45)
			\multiput(0,0.185)(0.0265,0){17}
			{\line(1,0){0.013755}} 
			\multiput(0.185,0)(0,0.027){7}
			{\line(0,1){0.013755}} 
			\put(-0.12,0.175){$f^{a_1}(x_2)$}
			
			
			\put(0.215,0.35){\color{red}{\line(1,0){0.095}}}  
			
			\multiput(0.215,0)(0,0.027){33}
			{\color{green}{\line(0,1){0.013755}}} 
			
			\multiput(0.31,0)(0,0.0268){36}
			{\color{red}{\line(0,1){0.01377}}} 

			\multiput(0.215,0.875)(0.027,0){25}
			{\color{green}{\line(1,0){0.013755}}} 
			\multiput(0.31,0.95)(0.027,0){24}
			{\color{red}{\line(1,0){0.013755}}} 
			
			\multiput(0.875,0.54)(0,0.027){13}
			{\color{green}{\line(0,1){0.013755}}} 
			\multiput(0.948,0.67)(0,0.0267){11}
			{\color{red}{\line(0,1){0.013755}}} 
			
			\multiput(0.54,0.54)(0.029,0){12}
			{\color{green}{\line(1,0){0.013755}}} 
			\multiput(0.67,0.67)(0.029,0){10}
			{\color{red}{\line(1,0){0.014}}} 
			
			\multiput(0,0.8)(0.0275,0){37}
			{\line(1,0){0.013755}}  

			\multiput(0.445,0)(0,0.027){17}
			{\color{blue}{\line(0,1){0.015}}} 
			\multiput(0,0.445)(0.027,0){17}
			{\color{blue}{\line(1,0){0.013755}}} 
			
			\put(-0.1575,0.435){$f^{a_1-1}(x_2)$}

			\qbezier(0,0.8)(0.25,0.85)(0.35,1)
			
			\multiput(0.003,0)(0,0.027){17}
			{\color{blue}{\line(0,1){0.013755}}} 
			\multiput(0,0.002)(0.027,0){17}
			{\color{blue}{\line(1,0){0.013755}}} 

			\multiput(0.445,0.445)(0.0275,0){13}
			{\line(1,0){0.013755}}  

			\multiput(0.54,0.35)(0,0.0255){8}
			{\color{green}{\line(0,1){0.012}}} 
			\multiput(0.67,0.35)(0,0.0255){13}
			{\color{red}{\line(0,1){0.013755}}} 
			\multiput(0.80,0)(0,0.0275){30}
			
			\qbezier(0.67,0.35)(0.90,0.48)(1,0.805)
			\multiput(0.8,0)(0,0.0272){30}
			{\line(0,1){0.013755}} 
			
			\put(0.55,0.35){\color{red}{\line(1,0){0.12}}}  
			
			\multiput(0.54,0)(0,0.0275){13}
			{\line(0,1){0.013755}} 
			\multiput(0.67,0)(0,0.0275){13}
			{\line(0,1){0.013755}} 
			
			\qbezier(0.35,0)(0.52,0.35)(0.55,0.350)
			
			\multiput(0,0.35)(0.0275,0){8}
			{\line(1,0){0.013755}}  
			\multiput(0.31,0.35)(0.0275,0){9}
			{\line(1,0){0.013755}}  

			
			\put(0.16725,-0.03){$\hat{x}_2$}
			\put(0.435,-0.03){$\hat{x}_1$}
			\put(0.21,-0.03){$\hat{x}_4$ }
			\put(0.3,-0.03){$\hat{x}_3$}
			
			
			\put(-0.01,-0.03){$x_1$}
			\put(0.7875,-0.03){$x_2$}
			\put(0.5275,-0.03){$x_3$}
			\put(0.6525,-0.03){$x_4$}

			\put(-0.0275, 0.3325){$0$}
			\put(0.985,-0.0325){$1$}
			\put(-0.03,0.975){$1$}
			
			\end{picture}
			\vspace{0.5cm}\\
			\begin{tabular}{|c|}
				\hline
				$\hat{x}_i=x_1x_{i,1}$; $i=1,2,3,4$\\\hline
			\end{tabular}
			\caption{First return map of a map $f$ in $\mathscr{L}^{X}$} 
			\label{first return map}    
		\end{figure}
		
		By combining \textbf{(\ref{renormalization definition})}, the descriptions of branches of $f$ (see, Figure~\ref{branches of f}) and the graph of $p\RN$ (see, Figure~\ref{first return map}) the point 2 follows.
	\end{enumerate}

\end{proof}

\begin{defi}\label{infintely renormalizable map of our class}
Let $f\in \mathscr{L}^X$. $f$ is infinitely renormalizable if for every $n\in\N$, $R^nf\in \mathscr{L}_0^X$. The class of   infinitely renormalizable functions will be denoted by $\mathcal{W}^X $. 
\end{defi}

The following observation explain more the $n$th renormalization, for every $n$ in $\N$. 
\begin{rem}\label{nth renormalization} Let $f$ be a map in $\mathcal{W}^X $ with rotation number $\rho(f)=[a_0,a_1,\dots]$ and  critical exponents $(\ell_1,\ell_2)$. For every $n\in\N$, we have:
\begin{enumerate}
\item  $\RN^nf$ belongs to $\mathcal{W}^X $ with $\rho(\RN^nf)=[a_n,a_{n+1},\dots]$. As a consequence, $\mathcal{W}^X $ corresponds to the subset of $\mathscr{L}^X$ with irrational rotation number.	
\item   The  critical exponents of  $\RN^nf$ is $(\ell_1,\ell_2)$ if $n$ is even and $(\ell_2,\ell_1)$ if $n$ is odd. For example, if $n$ is even, 
$$	\RN^n f:=(x_{1,n} ,x_{2,n} ,x_{3,n} ,x_{4,n} ,s_n,\varphi_n,\varphi^l_n,\varphi^r_1)\in
\mathcal{W}^{X}$$
and 
\begin{equation*}
\RN^nf(x)=\begin{cases}
\begin{array}{lcl}\RN^nf_-(x) &\mbox{ if } & x\in [x_{1,n},0)\\
\RN^nf_+(x) &\mbox{ if } & x\in (0,1]
\end{array}
\end{cases}
\end{equation*}	
with  
\begin{align}
		\RN^nf_-(x)&=f_{1,n}(x)\nonumber\\\label{explicite renormazation function 0}
		&=(1-x_{2,n})q_{s_n}\circ\varphi_n\left(\dfrac{x_{1,n}-x}{x_{1,n}}\right) +x_{2,n} \mbox{ if }  x\in [x_{1,n},0)
		\end{align}
		and 
		$$\RN^nf_+(x)= \begin{cases} \begin{array}{lcl}f_{2,n}=
		x_{1,n}\left(\varphi_n^{l}\left(\dfrac{x_{3,n}-x}{x_{3,n}}\right)\right)^{\ell_1}  &\mbox{ if }& x\in ]0,x_{3,n}]\\f_{3,n}=
		0&\mbox{ if } & x\in [x_{3,n},x_{4,n}]\\f_{4,n}=
		x_{2,n}\left(\varphi_n^{r}\left(\dfrac{x-x_{4,n}}{1-x_{4,n}}\right)\right)^{\ell_2}&  \mbox{ if }& x\in [x_{4,n},1]\end{array}
		\end{cases}	$$
		
		\item The dynamic of $\RN^nf$ is controlled by the couple $(q_{n-1},q_n)$ in the following sens:	$\RN^nf_{|[x_{1,n}, 0)}$ and $\RN^nf_{|(0,1)}$ are respectively a rescaled version of $f^{q_{n-1}} $ and  $f^{q_{n}} $. Namely, the quadruplet $(x_{1,n}, x_{2,n}, x_{3,n}, x_{4,n} )$ corresponds to 
		$( \underline{q_{n}+1},\underline{q_{n+1}+1}, r(\underline{-q_{n}+1}), l(\underline{-q_{n}+1}) )$ if $n$ is even and corresponds to 
		$( \underline{q_{n}+1},\underline{q_{n+1}+1}, l(\underline{-q_{n}+1}), r(\underline{-q_{n}+1}) )$ if n is odd.
	\end{enumerate}
\end{rem}	
\subsection{Asymptotic Distortions} 
This part is devoted to prove the following result. 
\begin{prop}\label{asymptotic distortion}
Let $ f\in\mathcal{W}^X $.  For every $n$,
\begin{equation*}
dist(\varphi_{n}^l)=O\left(\alpha_{n-1}^{\frac{1}{\ell_{1}}, \frac{1}{\ell_{2}}}\right),\;	dist(\varphi_{n})=O\left(\alpha_{n-2}^{\frac{1}{\ell_{1}}, \frac{1}{\ell_{2}}}\right),\;
dist(\varphi_{n}^r)=O\left(\alpha_{n}^{\frac{1}{\ell_{1}}, \frac{1}{\ell_{2}}}\right)
\end{equation*}
\end{prop}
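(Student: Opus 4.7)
The plan is to proceed by induction on $n$, exploiting the explicit recursive formulas for $\varphi_n$, $\varphi_n^l$, $\varphi_n^r$ given in Proposition~\ref{first renormalization on x}, specialized to the Fibonacci case where $a_n = 1$ (which eliminates the $(\varphi^r)^{a_n-1}$ factor). I rely on three analytic tools: the zoom composition identity $Z_{[a,b]}(\psi_1\circ\psi_2) = Z_{[\psi_2(a),\psi_2(b)]}(\psi_1)\circ Z_{[a,b]}(\psi_2)$, so that distortion is subadditive over factors; the elementary bound that for a $C^3$ diffeomorphism $\psi:[0,1]\to[0,1]$ with uniformly bounded $\|\log\psi'\|_{C^1}$ one has $dist(Z_{[a,b]}(\psi))=O(|b-a|)$, because the distortion equals $\sup_{x,y\in[a,b]}|\log\psi'(x)-\log\psi'(y)|$; and Lemma~\ref{main lemma}, which provides the double-exponential decay of $\alpha_n$.

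First I would read off the lengths of the relevant zoom intervals from Proposition~\ref{first renormalization on x}. In the Fibonacci case these simplify to: an interval of length $x_{2,n}/x_{3,n}$ for $\varphi_{n+1}$; an interval of length $x_{3,n+1}$ for $\varphi_{n+1}^l$; and, for $\varphi_{n+1}^r$, two intervals of lengths $(x_{3,n}-x_{2,n})/x_{3,n}$ and $1-x_{4,n+1}$. Each piece is a zoom of one of $\varphi_n^l$, $\varphi_n$, or $q_{s_n}\circ\varphi_n$, whose $C^2$-norms I treat as controlled by a companion induction.

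Next I would translate these lengths into powers of $\alpha_n$. Using the dynamical identification in Remark~\ref{nth renormalization}(3), the $x_{i,n}$ correspond, after the normalizing rescaling, to endpoints of the intervals $\underline{q_n+1}$, $\underline{q_{n+1}+1}$, $\underline{-q_n+1}$ on the circle. Ratios such as $x_{2,n}/x_{3,n}$ therefore encode length ratios of pre-images of the flat piece in the dynamical partition, and applying Fact~\ref{fact} (which says $f_+(x)\asymp k_i x^{\ell_i}$ near the flat-piece endpoints) to invert the critical branch produces the exponent $1/\ell_i$. The parity-dependent choice between $\ell_1$ and $\ell_2$ is forced by Remark~\ref{critical exponent and  renormalization}(2).

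Combining these with subadditivity of distortion and the zoom-length bounds yields the three claimed estimates. The main obstacle will be the bookkeeping: the subscripts $n-2$, $n-1$, $n$ on the right-hand side arise because $\varphi_n$ is built from a zoom of $\varphi_{n-1}^l$ (which itself inherited a zoom of $\varphi_{n-2}$), $\varphi_n^l$ from $q_{s_{n-1}}\circ\varphi_{n-1}$, and $\varphi_n^r$ directly from the current level. Alongside the main induction I would run a companion induction checking that the $C^2$-norms of all three diffeomorphism components stay uniformly bounded from step to step: this follows once distortion is small, because then $\log\psi'$ is uniformly close to a constant, and the $C^3$ bound on $f$ propagates via the zoom composition formula to a $C^2$ bound on each factor.
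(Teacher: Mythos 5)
Your plan is genuinely different from the paper's argument, and as written it has a gap that I do not see how to close. The paper does not induct on the renormalization recursion at all: it proves the estimate directly, by writing each of $\varphi_n^l$, $\varphi_n$, $\varphi_n^r$ as a zoom $Z_M f^{q_n-1}$ (resp. $Z_M f^{q_{n-1}-1}$) of a long iterate of $f$ restricted to an explicit interval of the dynamical partition, checking via \textbf{Proposition~\ref{partition of f}} and \textbf{Remark~\ref{diffeo on an interval}} that this iterate is a diffeomorphism with disjoint intermediate images, and then applying the Koebe principle (\textbf{Proposition~\ref{koebe principle}}); the exponent $1/\ell_i$ appears because the Koebe-space ratio is estimated after applying one more $f$ through the critical branch and pulled back using part 1 of \textbf{Fact~\ref{fact}}, together with \textbf{Proposition~\ref{preimage and gaps adjacent}}. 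In particular the paper's proof works for any $f\in\mathcal{W}^X$ (general $a_n$) and never needs \textbf{Lemma~\ref{main lemma}}, since the bound is relative to $\alpha_n$ rather than a decay statement.

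The concrete gap in your route is the treatment of the factor $q_{s_n}\circ\varphi_n$. Your two analytic tools are subadditivity of distortion and the bound $dist(Z_{[a,b]}(\psi))=O(|b-a|)$ for $\psi$ with uniformly bounded $\|\log\psi'\|_{C^1}$, propagated by a companion induction on $C^2$ norms. But $q_s(x)=\frac{[(1-s)x+s]^{\ell_2}-s^{\ell_2}}{1-s^{\ell_2}}$ satisfies $(\log q_s')'(x)=(\ell_2-1)\frac{1-s}{(1-s)x+s}\sim (\ell_2-1)/s$ near $x=0$, and $s_n=S_{5,n}=O(\alpha_n)\to 0$ (point 4 of \textbf{Lemma~\ref{asymptotic coordonat in x}}), so the $C^2$/log-derivative norms of this factor are \emph{not} uniformly bounded: $q_{s_n}$ degenerates to the power map $x^{\ell_2}$, and its distortion on an interval $[0,\delta]$ is of order $(\ell_2-1)\log(1+\delta/s_n)$, not $O(\delta)$. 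This is exactly the factor occurring in $\varphi^r_{n+1}=Z_{[0,S_{1,n}]}(\varphi_n^l)\circ Z_{[0,\varphi_n^{-1}\circ q_{s_n}^{-1}(S_{1,n}S_{2,n}S_{3,n})]}(q_{s_n}\circ\varphi_n)$, whose zoom interval abuts the near-singular endpoint $t=0$; subadditivity then only gives an upper bound by a quantity that is not of the order $\alpha_{n+1}^{1/\ell}$, so the induction does not close for $\varphi^r$. A second problem is circularity in the bookkeeping: to convert the zoom-interval lengths (ratios of the $x_{i,n}$, $S_{i,n}$) into the required powers $\alpha^{1/\ell_i}$ you would need the asymptotic relations of \textbf{Proposition~\ref{renormazation nth on S}}, \textbf{Corollary~\ref{corollary renormalization on S}} and \textbf{Lemma~\ref{xn asymptotic coordinate}}, but in the paper these are all derived \emph{from} the present proposition; deriving the needed length estimates independently from the dynamical partition essentially reproduces the paper's Koebe argument. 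Finally, your specialization to $a_n=1$ restricts the scope, while the statement (and the paper's proof, which carries the $(a_n-1)q_n$ shifts in the intervals) covers all of $\mathcal{W}^X$.
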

\begin{proof}
	It is based on the Koebe principle (\textbf{Proposition~\ref{koebe principle}}). 
	\paragraph{Distortion of $\varphi_n^l$: }\label{distortion phi_nl}
	\begin{itemize}
		\item[-] $T=[ \underline{-q_{n+1}+1+(a_n-1)q_n},\underline{-q_n+1}]$,  
		\item[-] $M=( \underline{1},\underline{-q_n+1} ), $ 
		\item[-] $S=[ \underline{-q_{n+1}+1+(a_n-1)q_n}, \underline{1})$, 
		\item[-] $D= \underline{-q_n+1} $.  
	\end{itemize}

	Observe that
	\begin{equation*}
	\varphi_n^l=Z_M f^{q_{n}-1}.
	\end{equation*}
	We  claim that
	\begin{enumerate}
		\item\label{h11} The members of the family $\{f^i(T),\; 0\leq i\leq q_n-2\} $ are pairwise disjoint,
		\item\label{h12}  $f^{q_{n}-1}:T\longrightarrow f^{q_{n}-1}(T)$ is diffeomorphism,
		\item \label{h13} $ \dfrac{|f^{q_{n}-1}(M)|}{|f^{q_{n}-1}(S)|}, \dfrac{|f^{q_{n}-1}(M)|}{|f^{q_{n}-1}(D)|}=O\left(\alpha_{n-1}^{\frac{1}{\ell_{1}}, \frac{1}{\ell_{2}}}\right). $
	\end{enumerate}
	
	Points  \textbf{\ref{h11}} and  \textbf{\ref{h12}} come respectively from  \textbf{Proposition~\ref{partition of f}}  and  \textbf{Remark~\ref{diffeo on an interval}}. For point \textbf{\ref{h13}}, observe that  by  \textbf{Proposition~\ref{preimage and gaps adjacent}} and  point \textbf{2} of  \textbf{Fact~\ref{fact}}, 
	\begin{equation*}
	\dfrac{|f^{q_{n}}(M)|}{|f^{q_{n}}(S)|}=	
	\dfrac{|( \underline{q_{n}+1}, \underline{1}|}{|[ \underline{-q_{n-1}+1},\underline{q_n+1})|}<	\dfrac{1}{K}\cdot\dfrac{|( \underline{-q_{n-1}+1}, \underline{1}|}{|[ \underline{-q_{n-1}+1},\underline{1})|} <\dfrac{1}{K}\cdot\alpha_{n-1}.
	\end{equation*}
	Therefore,
	\begin{equation*}
	\dfrac{|f^{q_{n}-1}(M)|}{|f^{q_{n}-1}(S)|}	
	=O(\alpha^{\frac{1}{\ell_{1}}, \frac{1}{\ell_{2}}}_{n-1}).
	\end{equation*}
	Thus, since 
	\begin{equation*}
	\dfrac{|f^{q_{n}-1}(M)|}{|f^{q_{n}-1}(D)|}<\dfrac{|f^{q_{n}-1}(M)|}{|f^{q_{n}-1}(S)|}
	\end{equation*}
	then by  \textbf{Proposition~\ref{koebe principle}}  we get the desired distortion estimate for  $\varphi_n^l$.
	
	\paragraph{Distortion of $\varphi_n$:}\label{distortion phi_n}
	\begin{itemize}
		\item[-] $T=[ \underline{-q_{n-1}+1},\underline{-q_n+(a_{n-1}-1)q_{n-1}+1}],$  
		\item[-] $M=(\underline{q_n+(a_{n-1}-1)q_{n-1}+1}, \underline{1} ),$  
		\item[-] $S=[ \underline{-q_{n-1}+1},\underline{q_n+(a_{n-1}-1)q_{n-1}+1}), $  
		\item[-] $D= (\underline{1},\underline{-q_n+(a_{n-1}-1)q_{n-1}+1}]$. 
	\end{itemize}
	Observe that
\begin{equation*}
\varphi_n=Z_M f^{q_{n-1}-1}.
\end{equation*}
We  claim that
\begin{enumerate}
	\item\label{h111} The members of the family $\{f^i(T),\; 0\leq i\leq q_n-2\} $ are pairwise disjoint,
	\item\label{h122}  $f^{q_{n}-1}:T\longrightarrow f^{q_{n}-1}(T)$ is diffeomorphism,
	\item \label{h133} $ \dfrac{|f^{q_{n-1}-1}(M)|}{|f^{q_{n-1}-1}(S)|}, \dfrac{|f^{q_{n-1}-1}(M)|}{|f^{q_{n-1}-1}(D)|}
	=O\left(\alpha_{n-1}^{\frac{1}{\ell_{1}}, \frac{1}{\ell_{2}}}\right). $
\end{enumerate}

The point  \textbf{\ref{h111}} and  \textbf{\ref{h122}} come respectively of the \textbf{Proposition~\ref{partition of f}}  and the \textbf{Remark~\ref{diffeo on an interval}}. For the point \textbf{\ref{h133}}, observe that  by the \textbf{Proposition~\ref{preimage and gaps adjacent}} and the point \textbf{3} of the \textbf{Fact~\ref{fact}}, 	
\begin{equation*}
\dfrac{|f^{q_{n-1}}(M)|}{|f^{q_{n-1}}(D)|}=	
\dfrac{|( \underline{q_{n+1}+1}, \underline{q_{n-1}+1}|}{|( \underline{q_{n-1}+1},\underline{-q_{n-2}+1})|}<	\dfrac{1}{K}\cdot\dfrac{|( \underline{1}, \underline{-q_{n-2}+1} |}{|(\underline{1},\underline{-q_{n-2}+1}]|} <\dfrac{\alpha_{n-2}}{K}.
\end{equation*}
Therefore, 
\begin{equation*}
\dfrac{|f^{q_{n-1}-1}(M)|}{|f^{q_{n-1}-1}(D)|}= O(\alpha^{\frac{1}{\ell_{1}}, \frac{1}{\ell_{2}}}_{n-2}).
\end{equation*}
Thus, since 
\begin{equation*}
\dfrac{|f^{q_{n-1}-1}(M)|}{|f^{q_{n-1}-1}(S)|}<\dfrac{|f^{q_{n-1}-1}(M)|}{|f^{q_{n-1}-1}(D)|},
\end{equation*}
then by the  \textbf{Proposition~\ref{koebe principle}} we get the desired distortion estimate for  $\varphi_n$.	The Distortion  of $\varphi^r_n$ are obtained by doing similar calculations with:

	\paragraph{Distortion of $\varphi^r_n$: }
	\begin{itemize}
		\item[-] $T=[ \underline{-q_{n}+1},\underline{-q_{n-2}+1}),$  
		\item[-] $M=(\underline{-q_{n}+1},\underline{(a_{n}-1)q_{n}+q_{n-1}+1} ) ,$ 
		\item[-] $S=\underline{-q_{n}+1}$,  
		\item[-] $D=( \underline{(a_{n}-1)q_{n}+q_{n-1}+1},\underline{-q_{n-2}+1}],$  
	\end{itemize}

	Observe that
	\begin{equation*}
	\varphi_n=Z_M f^{q_{n}-1}.
	\end{equation*}
\end{proof}	
\textbf{Proposition~\ref{asymptotic distortion}} shows that $\varphi_{n}^l$, $\varphi_{n}$ and  $\varphi_{n}^r$ go to $Id_{[0,1]} $ when $n$ goes to infinity. As a consequence, the asymptotic behavior of $\RN^nf$ depends only on  $\Sigma^{X} $.

\paragraph{Fibonacci circle maps with a flat piece.}
Let $f\in \mathcal{W}^X$ with rotation number $\rho(f)=[a_0,a_1,\dots]$. Let us recall that for every $n\in\N$, 
$$	\RN^n f:=(x_{1,n} ,x_{2,n} ,x_{3,n} ,x_{4,n} ,s_n,\varphi_n,\varphi^l_n,\varphi^r_1)\in
\mathcal{W}^{X}.$$
By \textbf{(\ref{renormalization definition})}, Figure~\ref{a map 1} and Figure~\ref{a map 2}, it is easy to prove  that
$$x_{2,n} <x_{3,n}\Longleftrightarrow a_n=1.$$

In the rest of the paper we consider $\mathcal{W}^X_{[1]} $ the subclass  of $\mathcal{W}^X$ consisting of  maps  with Fibonacci rotation number.

\subsection{The Asymptotic of Renormalization}\label{Sub Asym Renor}
The set $\Sigma^{X} $ can be redefined as follows:
\paragraph{changes of variables}


\begin{description}
	\item[] $(X)\longrightarrow (S)$. Let
	\begin{equation} \label{change variable S x}\begin{array}{l}
	
	S_{1,n}=\dfrac{x_{3,n}-x_{2,n}}{x_{3,n}},\;S_{2,n}= \dfrac{1-x_{4,n}
	}{1-x_{2,n}},\\\;S_{3,n}=\dfrac{x_{3,n}}{1-x_{4,n}},\;S_{4,n}=-\dfrac{x_{2,n}}{x_{1,n}},\;S_{5,n}=s_n.
	\end{array}
	\end{equation}
	That is,
	\begin{equation}\label{change variable x S}
	\begin{cases}
	\begin{array}{lcl}x_{1,n} &=& \dfrac{S_{3,n}(1-S_{1,n})S_{2,n}}{(1+S_{3,n}(1-S_{1,n})S_{2,n})S_{4,n}} \\
	x_{2,n} &=& \dfrac{S_{3,n}(1-S_{1,n})S_{2,n}}{1+S_{3,n}(1-S_{1,n})S_{2,n}}\\
	x_{3,n} &=& \dfrac{S_{3,n}S_{2,n}}{1+S_{3,n}(1-S_{1,n})S_{2,n}}\\
	x_{4,n}&=& 1-\dfrac{S_{2,n}}{1+S_{3,n}(1-S_1)S_{2,n}}
	\end{array}
	\end{cases}
	\end{equation}

	\item[]$(S) \longrightarrow (Y)$. Let
	\begin{equation}\label{change variable S y}
	y_{1,n}=S_{1,n},\;y_{2,n}=\ln S_{2,n},\;y_{3,n}=\ln S_{3,n},\;y_{4,n}=\ln S_{4,n},\;y_{5,n}=\ln S_{5,n}.
	\end{equation}
\end{description}
\begin{nota} Considering the respective changes of variables $(X)\longrightarrow (S)$ and $(S) \longrightarrow (Y)$, the space $\mathscr{L}^X$ becomes
	successively $\mathscr{L}^{S}$  and $\mathscr{L}^Y$. It will be clear which parametrization of our space	we are using. The space will then be simply denoted by  $\mathscr{L}$ instead of $\mathscr{L}^X$, $\mathscr{L}^{S}$  or $\mathscr{L}^Y$. Similarly, we will denote by 	$\mathcal{W}_{[1]}$ the class of circle maps  with a flat piece and  Fibonacci rotation number.
\end{nota}
\begin{prop}\label{renormalization S}
	Let $f\in\mathcal{W}_{[1]} $. For every $n\in 2\N$,
	\begin{equation*}
	R(S_{1,n},S_{2,n},S_{3,n},S_{4,n},S_{5,n},\varphi_n,\varphi^l_n,\varphi^r_n)
	\end{equation*}
corresponds to
	\begin{equation*}
(S_{1,n+1},S_{2,n+1},S_{3,n+1},S_{4,n+1},S_{5,n+1},\varphi_{n+1},\varphi^l_{n+1},\varphi^r_{n+1})
	\end{equation*}
	where
\begin{equation}
	\begin{array}{rcl}
		S_{1,n+1}&=&1- \dfrac{\left(\varphi_n^l(S_{1,n})\right)^\eg}{\ed(1-\varphi_n^{-1}\circ q^{-1}_{s_n}(1-S_{2,n}))}, \vspace{0.1cm}\\
		S_{2,n+1}  &=&\dfrac{ s^{\ell_2-1}_5\varphi_n^{-1}\circ q^{-1}_{s_n}(S_{1,n} S_{2,n} S_{3,n})}{1-(\varphi_n^l(S_{1,n}))^{\ell_{1}}}, \vspace{0.1cm}\\
		S_{3,n+1} &=&\dfrac{1-\varphi_n^{-1}\circ q^{-1}_{s_n}(1-S_{2,n}))}{\varphi_n^{-1}\circ q^{-1}_{s_n}(S_{1,n} S_{2,n} S_{3,n})},\vspace{0.1cm}\\
		S_{4,n+1} &=&\dfrac{\left[\varphi_n^l(S_{1,n})\right]^\eg}{S_{4,n}},\vspace{0.1cm} \\
		S_{5,n+1}&=& \varphi_n^l(S_{1,n}), \vspace{0.1cm}\\
		\varphi_{n+1}&=&Z_{[S_{1,n},1]}(\varphi_n^l), \vspace{0.1cm} \\
		\varphi^l_{n+1} &=&\varphi_n^r\circ Z_{[\varphi_n^{-1}\circ q^{-1}_{s_n}(1-S_{2,n}),1]}(q_{s_n}\circ\varphi_n), \vspace{0.1cm} \\
		\varphi^r_{n+1} &=&Z_{[0,S_{1,n}]}(\varphi_n^l)\circ Z_{[0,\varphi_n^{-1}\circ q^{-1}_{s_n}(S_{1,n} S_{2,n} S_{3,n})]}(q_{s_n}\circ\varphi_n).
	\end{array}	
\end{equation}	
\end{prop}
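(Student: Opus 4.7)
The proposition expresses $\RN$ in $(S)$-coordinates, so the natural strategy is to start from Proposition~\ref{first renormalization on x} (which already gives $\RN$ in $(X)$-coordinates for an arbitrary partial quotient $a_n$) and push every output through the change of variables (\ref{change variable S x})--(\ref{change variable x S}). Two preliminary simplifications will drive the computation. First, since $f\in\mathcal{W}_{[1]}$ every partial quotient equals $1$, so setting $a_n=1$ in Proposition~\ref{first renormalization on x} kills the factor $f_{4,n}^{a_n-1}=\mathrm{Id}$ and the power $(\varphi_n^r)^{a_n-1}=\mathrm{Id}$, collapsing the eight output parameters to much shorter expressions. Second, I would record the elementary identities
\[
\frac{x_{3,n}-x_{2,n}}{x_{3,n}}=S_{1,n},\qquad \frac{x_{4,n}-x_{2,n}}{1-x_{2,n}}=1-S_{2,n},\qquad \frac{x_{3,n}-x_{2,n}}{1-x_{2,n}}=S_{1,n}S_{2,n}S_{3,n},
\]
which follow directly from (\ref{change variable S x}) and let every argument appearing inside $\varphi_n$, $q_{s_n}$ or $\varphi_n^l$ in the $(X)$-coordinate formulas be rewritten as a product of $S_{i,n}$'s.

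After this rewriting one reads off from Proposition~\ref{first renormalization on x} the $(X)$-coordinate outputs $x_{1,n+1}=x_{2,n}/x_{1,n}$, $x_{2,n+1}=[\varphi_n^l(S_{1,n})]^{\ell_1}$, $1-x_{3,n+1}=\varphi_n^{-1}\circ q_{s_n}^{-1}(1-S_{2,n})$, $1-x_{4,n+1}=\varphi_n^{-1}\circ q_{s_n}^{-1}(S_{1,n}S_{2,n}S_{3,n})$, and $s_{n+1}=\varphi_n^l(S_{1,n})$. The formulas for $S_{i,n+1}$ then follow by substituting these into
\[
S_{1,n+1}=1-\frac{x_{2,n+1}}{x_{3,n+1}},\quad S_{2,n+1}=\frac{1-x_{4,n+1}}{1-x_{2,n+1}},\quad S_{3,n+1}=\frac{x_{3,n+1}}{1-x_{4,n+1}},\quad S_{4,n+1}=-\frac{x_{2,n+1}}{x_{1,n+1}},\quad S_{5,n+1}=s_{n+1}.
\]
The formulas for $\varphi_{n+1}$, $\varphi^l_{n+1}$, $\varphi^r_{n+1}$ are inherited directly from Proposition~\ref{first renormalization on x} once the endpoints of the zoom intervals $Z_{[\cdot,\cdot]}$ are re-expressed in $(S)$-coordinates using the same three identities; the key observation is that $Z_{[a,b]}$ depends only on the endpoints $a,b$, which read off cleanly from the Step~1 expressions.

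\textbf{Main obstacle.} No new dynamical content is required; the argument is entirely algebraic. The genuine difficulty lies in the bookkeeping: carrying compositions of $\varphi_n$, $q_{s_n}$, $\varphi_n^l$, $\varphi_n^r$ through the nested zoom operators, tracking which of the exponents $\ell_1,\ell_2$ occurs at each stage (a parity question addressed by Remark~\ref{critical exponent and  renormalization}), and checking that the arguments of the outer maps remain in the correct unit intervals once everything is reorganised. Provided these substitutions are performed carefully, the eight formulas claimed in the proposition drop out directly.
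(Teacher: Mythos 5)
Your route is the paper's own route: the paper's proof of this proposition is literally the one--line remark that it follows from the change of variables \textbf{(\ref{change variable S x})} together with \textbf{Proposition~\ref{first renormalization on x}}, and your substitutions are the correct way to carry this out. Setting $a_n=1$, the identities $\frac{x_{3,n}-x_{2,n}}{x_{3,n}}=S_{1,n}$, $\frac{x_{4,n}-x_{2,n}}{1-x_{2,n}}=1-S_{2,n}$, $\frac{x_{3,n}-x_{2,n}}{1-x_{2,n}}=S_{1,n}S_{2,n}S_{3,n}$ and the outputs $x_{1,n+1}=x_{2,n}/x_{1,n}$, $x_{2,n+1}=[\varphi^l_n(S_{1,n})]^{\ell_1}$, $1-x_{3,n+1}=\varphi_n^{-1}\circ q_{s_n}^{-1}(1-S_{2,n})$, $1-x_{4,n+1}=\varphi_n^{-1}\circ q_{s_n}^{-1}(S_{1,n}S_{2,n}S_{3,n})$, $s_{n+1}=\varphi^l_n(S_{1,n})$ give $S_{3,n+1},S_{4,n+1},S_{5,n+1}$ exactly as claimed. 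Note that your computation yields $S_{1,n+1}$ and $S_{2,n+1}$ \emph{without} the factors $\ed$ and $s_5^{\ell_2-1}$ displayed in the statement; those factors are defects of the statement itself, not of your argument (the factored expressions listed after \textbf{Proposition~\ref{super formular}} multiply out to your formulas, and $\ell_2$, $s_{5,n}^{\ell_2-1}$ only arise later when $q_{s_n}$ is linearized in \textbf{Proposition~\ref{renormazation nth on S}}).

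The one step of your plan that would fail as written is $\varphi^l_{n+1}$. You propose to put $a_n=1$ in \textbf{Proposition~\ref{first renormalization on x}}, so that $(\varphi^r_n)^{a_n-1}=\mathrm{Id}$, and then ``inherit'' the formula; that produces $\varphi^l_{n+1}=Z_{[1-x_{3,n+1},1]}(q_{s_n}\circ\varphi_n)$ with no $\varphi^r_n$ at all, which contradicts the formula $\varphi^l_{n+1}=\varphi^r_n\circ Z_{[\varphi_n^{-1}\circ q_{s_n}^{-1}(1-S_{2,n}),1]}(q_{s_n}\circ\varphi_n)$ you are asked to prove. The factor $\varphi^r_n$ is genuinely present: in the Fibonacci case $x_{2,n}<x_{3,n}$, so the $f_{1,n}$-branch sends the part of $(0,1)$ adjacent to the new flat interval on its left onto $(x_{4,n},1)$, and the first-return composition defining the left branch of $\RN^{n+1}f$ is $h\circ f_{4,n}\circ f_{1,n}\circ h^{-1}$ — it passes through the $\varphi^r_n$-branch exactly once — while the right branch is $h\circ f_{2,n}\circ f_{1,n}\circ h^{-1}$ (this is also why the critical exponents swap). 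Hence $\varphi^l_{n+1}$ cannot be read off the general-$a$ formula by formally specializing the exponent $a_n-1$; you must redo the branch decomposition of the first-return map in the configuration of Figure~\ref{a map 1}/Figure~\ref{first return map}. This is a small but necessary extra step your plan omits; your treatment of $\varphi_{n+1}$ and $\varphi^r_{n+1}$, by contrast, does come out of the specialization exactly as you describe.
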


\begin{proof}
It follows from equalities in \textbf{(\ref{change variable S x})} and \textbf{Proposition~\ref{first renormalization on x}}.
\end{proof}

\begin{lem}	\label{S1,n goes to zero}
$S_{1,n}=\dfrac{x_{3,n}-x_{2,n}}{x_{3,n}}=O(\alpha_{n+1})$.
\end{lem}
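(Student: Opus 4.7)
The plan is to interpret $S_{1,n}=(x_{3,n}-x_{2,n})/x_{3,n}$ as a length ratio in the dynamics of $\RN^n f$, and then to bound it by $\alpha_{n+1}$ via the Koebe principle combined with \textbf{Proposition~\ref{preimage and gaps adjacent}}.

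First, since $\RN^n f_-$ is an increasing $C^3$ diffeomorphism from $[x_{1,n},0]$ onto $[x_{2,n},1]$, and since $x_{2,n}<x_{3,n}$ for Fibonacci maps (as $a_{n}=1$), the flat piece $[x_{3,n},x_{4,n}]$ lies in the image of $\RN^n f_-$. Its pre-image is an interval $[z_{3},z_{4}]\subset(x_{1,n},0)$ with $\RN^n f_-(z_{i})=x_{i,n}$ for $i=3,4$. This pre-image plays the role of $\underline{-1}$ in the dynamics of $\RN^n f$, and the identification of scaling ratios under renormalization (the renormalization being a linear rescaling, which preserves length ratios) yields
\[
\alpha_{n+1}=\frac{x_{3,n}-z_{4}}{x_{3,n}-z_{3}}.
\]

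Second, $\RN^n f_-$ carries $[x_{1,n},z_{3}]$ onto $[x_{2,n},x_{3,n}]$ and $[z_{3},z_{4}]$ onto $[x_{3,n},x_{4,n}]$. By \textbf{Proposition~\ref{asymptotic distortion}}, the factors $\varphi_{n}$ and $\varphi_{n}^{l}$ that compose $\RN^n f_-$ have distortion tending to $1$, so $\RN^n f_-$ has bounded distortion on $[x_{1,n},0]$ uniformly in $n$. Applying \textbf{Proposition~\ref{koebe principle}} to compare the two restrictions above gives the comparability
\[
\frac{x_{3,n}-x_{2,n}}{x_{4,n}-x_{3,n}}=O\!\left(\frac{z_{3}-x_{1,n}}{z_{4}-z_{3}}\right).
\]

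Third, \textbf{Proposition~\ref{preimage and gaps adjacent}} applied to the pre-image partition of $\RN^n f$ bounds the ratio between each pre-image of the flat piece at depth $\leq 1$ and its adjacent gaps; in particular the four lengths $x_{4,n}-x_{3,n}$, $x_{3,n}-z_{4}$, $z_{4}-z_{3}$ and $z_{3}-x_{1,n}$ are pairwise comparable up to uniform constants, and $x_{3,n}$ is comparable to $-x_{1,n}=(z_{3}-x_{1,n})+(z_{4}-z_{3})+(-z_{4})$, hence to $x_{3,n}-z_{3}$. Combining this with the Koebe comparison of the previous step yields $x_{3,n}-x_{2,n}=O(x_{3,n}-z_{4})$, from which
\[
S_{1,n}=\frac{x_{3,n}-x_{2,n}}{x_{3,n}}=O\!\left(\frac{x_{3,n}-z_{4}}{x_{3,n}-z_{3}}\right)=O(\alpha_{n+1}).
\]

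The main obstacle is in the third step, namely the careful bookkeeping that combines the Koebe comparability with \textbf{Proposition~\ref{preimage and gaps adjacent}} to absorb all the length factors into a single factor of $\alpha_{n+1}$; in particular the verification that $x_{3,n}$ is of the order of $x_{3,n}-z_{3}$ (equivalently of $|x_{1,n}|$), which relies on the boundedness of the renormalization parameters in $\Sigma^{X}$ and repeated application of \textbf{Proposition~\ref{preimage and gaps adjacent}} at the correct scale.
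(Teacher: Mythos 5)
Your proposal has genuine gaps, and they sit exactly where the content of the lemma lies. The central identity of your first step, $\alpha_{n+1}=\frac{x_{3,n}-z_4}{x_{3,n}-z_3}$, is false. Under the identification of renormalization coordinates with the original dynamics (point 3 of \textbf{Remark~\ref{nth renormalization}}), the flat piece $[x_{3,n},x_{4,n}]$ of $\RN^n f$ is an affine copy of $\underline{-q_n+1}$, its preimage $[z_3,z_4]$ under $\RN^n f_-$ an affine copy of $\underline{-q_{n+1}+1}$, and the point $0$ corresponds to $\underline{1}$ --- not to $\underline{-q_{n+1}}$ and $\underline{0}$ themselves. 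Hence $\frac{x_{3,n}-z_4}{x_{3,n}-z_3}$ equals $\frac{|(\underline{-q_{n+1}+1},\underline{-q_n+1})|}{|[\underline{-q_{n+1}+1},\underline{-q_n+1})|}$, which differs from $\alpha_{n+1}$ by one application of $f$ across the flat piece; because $f$ has power-law singularities of exponents $\ell_1,\ell_2$ at $\partial U$, such ratios are not preserved (the paper itself uses $\alpha_n^{\ell_1}=\frac{|(\underline{-q_n+1},\underline{1})|}{|[\underline{-q_n+1},\underline{1})|}$ in the proof of \textbf{Proposition~\ref{recurrence formula}}). What your ratio actually equals is, up to constants, $\alpha_{n+1}^{\ell_1,\ell_2}$ plus an extra contribution coming from the term $x_{3,n}=|(\underline{1},\underline{-q_n+1})|$ sitting on the other side of $\underline{1}$; showing that this quantity is $O(\alpha_{n+1})$ is not a rescaling identity but an estimate of the same nature as the lemma itself.

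The second and third steps do not close this gap. $\RN^n f_-=(1-x_{2,n})\,q_{s_n}\circ\varphi_n\bigl(\tfrac{x_{1,n}-x}{x_{1,n}}\bigr)+x_{2,n}$ with $s_n=S_{5,n}=O(\alpha_n)\to 0$, so near $x_{1,n}$ it degenerates to a power map of exponent $\ell_2$; \textbf{Proposition~\ref{asymptotic distortion}} controls only $\varphi_n,\varphi_n^l,\varphi_n^r$, not $q_{s_n}$, so $\RN^n f_-$ does \emph{not} have uniformly bounded distortion on $[x_{1,n},0]$, and the interval $[x_{1,n},z_3]$ you want to compare sits precisely at that singularity; moreover \textbf{Proposition~\ref{koebe principle}} requires definite space around the compared intervals, which is unavailable since the flat piece occupies almost all of $[0,1]$. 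Likewise, \textbf{Proposition~\ref{preimage and gaps adjacent}} is one-sided (preimages of $U$ dominate adjacent gaps); it does not yield that $x_{4,n}-x_{3,n}$, $x_{3,n}-z_4$, $z_4-z_3$, $z_3-x_{1,n}$ are pairwise comparable, and in the degenerate geometry they are not, nor is $x_{3,n}\asymp -x_{1,n}$ (by \textbf{Lemma~\ref{xn asymptotic coordinate}} their ratio carries the factor $e^{-c_u\lambda_u^{p_n}e_4^u-c_+}$, so comparability would force $e_4^u=0$). You flag this step yourself as the main obstacle, and it is where the proof breaks. The paper's argument avoids all of this by working in the original dynamics: it writes $S_{1,n}$ as the ratio of the gap $(\underline{q_{n+1}+1},\underline{-q_n+1})$ to the gap $(\underline{1},\underline{-q_n+1})$, pushes it forward by $f^{q_n}$ (a diffeomorphism there, with Koebe applicable thanks to \textbf{Proposition~\ref{preimage and gaps adjacent}}), and reads off the bound $O(\alpha_{n+1})$ from the resulting gaps adjacent to $\underline{1}$.
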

\begin{proof} Observe that,
\begin{equation*}
S_{1,n}=\dfrac{|(\underline{-q_{n+1}+1},\underline{q_{n+1}+1})|}{|(\underline{-q_{n}+1},\underline{1})|}=O\left(\dfrac{|(\underline{1},\underline{q_{n+2}+1})|}{|(\underline{1},\underline{q_{n}+1}]|}\right)=O(\alpha_{n+1})
\end{equation*} 
where we use \textbf{Proposition~\ref{koebe principle}} 
and  \textbf{Proposition~\ref{preimage and gaps adjacent}}.
\end{proof}

Because $\alpha_n$ goes to zero, then by \textbf{Lemma~\ref{S1,n goes to zero}} it follows that  $ y_{1,n}=S_{1,n} $ goes to zero. As consequence, the asymptotic behavior of renormalization ultimately depends on $\;y_{2,n},\;y_{3,n},\;y_{4,n}$ and $y_{5,n}$. Let
\begin{equation*}
w_n(f):=\left(\begin{array}{c} y_{2,n}\\ y_{3,n}\\ y_{4,n}\\
y_{5,n}
\end{array}\right).
\end{equation*}
We have the following result.
\begin{prop}\label{super formular}
	Let $(\eg,\ed)\in(1,2)^2$. Then there exists $\lambda_u>1,\;\lambda_s\in(0,1),\;
	E^u,\;E^s$, $E^+,$  $w_{fix}\in\R^4$ such that the following
	holds. Given $ f\in \mathcal{W}_{[1]} $ with critical exponents  $(\eg,\ed) $, there exists $c_u(f), c'_u(f)<0$, $c_s(f), c'_s(f)$, $c_+(f)$ and $c'_+(f)$ such that for
	all $n:=2p_n\in\N^*$,
	\begin{equation*}
	w_{n}(f)=c_u(f) \lambda_u^{p_n}E^u+c_s(f)
	\lambda_s^{p_n}E^s+c_+(f) E^+ + w_{fix}+O(e,c_u(f),\lambda_u,n)
	\end{equation*}
	and 
	\begin{equation*}
	w_{n+1}(f)=c'_u(f) \lambda_u^{p_n}E^u+c'_s(f)
	\lambda_s^{p_n}E^s+c'_+(f) E^+ + w_{fix}+O(e,c'_u(f),\lambda_u,n)
	\end{equation*}
	where $O(e,c_u(f),\lambda_u,n)$ is the vector whose components are  
	\begin{equation*}
	O\left((e^{c_u(f) \lambda_u^{p_{n-4}}})^{1/\overline{\ell}}\right)
	\end{equation*}
	with
	\begin{equation*}
	\overline{\ell}:=\max\{\ell_1,\ell_2\}.
	\end{equation*}
	Also,
	\begin{equation*}\begin{array}{c}
	dist(\varphi_n)=O\left(e^{\frac{c_u(f) \lambda_u^{p_{n-2}}}{\ell_{1}}}\right),\\
	dist(\varphi_n^l)=O\left(e^{\frac{c_u(f) \lambda_u^{p_{n-1}}}{\ell_{2}}}\right),\\
	dist(\varphi_n^r)=O\left(e^{\frac{c_u(f)
			\lambda_u^{p_{n}}}{\ell_{1}}}\right).
	\end{array}\end{equation*}
\end{prop}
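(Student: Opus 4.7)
The plan is to reduce the study of the renormalization dynamics to a hyperbolic affine recurrence on $\R^4$, with a doubly exponentially small error, and then extract the decomposition via linear algebra. By point~2 of \textbf{Remark~\ref{critical exponent and renormalization}}, $\RN^2$ preserves critical exponents, so I work throughout with $\RN^2$ on the variables $w_n = (y_{2,n}, y_{3,n}, y_{4,n}, y_{5,n})^{T}$.

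\textbf{Step 1: Linear limit recurrence.} Applying \textbf{Proposition~\ref{renormalization S}} twice produces an explicit formula for $w_{n+2}$ in terms of $w_n$, $y_{1,n}$, $y_{1,n+1}$, and the diffeomorphism factors $\varphi_n, \varphi_n^l, \varphi_n^r$ and $\varphi_{n+1}, \varphi_{n+1}^l, \varphi_{n+1}^r$. By \textbf{Lemma~\ref{S1,n goes to zero}} combined with \textbf{Lemma~\ref{main lemma}}, $y_{1,n} = S_{1,n} = O(\alpha_{n+1})$ vanishes doubly exponentially, and \textbf{Proposition~\ref{asymptotic distortion}} forces all the distortion factors to tend to the identity at a comparable rate. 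Freezing $y_{1,n} \equiv 0$ and replacing each $\varphi$--type factor by the identity in the formulas of \textbf{Proposition~\ref{renormalization S}} yields an affine map
\begin{equation*}
w_{n+2} = L\, w_n + C_0,
\end{equation*}
where $L$ is an explicit $4 \times 4$ matrix whose entries depend only on $(\ell_1, \ell_2)$, extracted by taking logarithms in the formulas for $S_{2,\cdot}, S_{3,\cdot}, S_{4,\cdot}, S_{5,\cdot}$. The size of the discrepancy between the true recurrence and its linearization is $O(y_{1,n}) + O(\mathrm{dist}(\varphi_n^{\cdot}))$, which by the above is $O(\alpha_{n-1}^{1/\overline{\ell}})$.

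\textbf{Step 2: Spectral analysis and decomposition.} I would diagonalize $L$ by direct computation. The expected structure is two hyperbolic eigenvalues $\lambda_u > 1$ and $\lambda_s \in (0,1)$ with eigenvectors $E^u, E^s$, together with a two-dimensional kernel of $L - \mathrm{Id}$ (neutral subspace) producing, after solving $(\mathrm{Id} - L)\, w_{\mathrm{fix}} = C_0$ on the hyperbolic part, the fixed vector $w_{\mathrm{fix}}$ and the neutral generator $E^+$ carrying the accumulated constant $C_0$. Setting $w_n - w_{\mathrm{fix}} = \alpha_n^{(u)} E^u + \alpha_n^{(s)} E^s + \alpha_n^{(+)} E^+$, the hyperbolic projections satisfy $\alpha_{n+2}^{(u)} = \lambda_u \alpha_n^{(u)} + \varepsilon_n^u$ and $\alpha_{n+2}^{(s)} = \lambda_s \alpha_n^{(s)} + \varepsilon_n^s$ with $\varepsilon_n^{u,s}$ doubly exponentially small. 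Standard summation of the unstable recurrence identifies $c_u(f) := \lim_{p \to \infty} \lambda_u^{-p}\, \alpha_{2p}^{(u)}$ (which is finite and negative since $w_{2p}$ must keep $y_{1,\cdot}$ bounded), while $c_s(f)$ is the initial stable coordinate of $w_0$ and $c_+(f)$ is the coefficient of the neutral drift. The odd iterates are treated identically starting from $w_1$, producing the primed coefficients.

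\textbf{Step 3: Bootstrap of the error and distortion consequences.} Once $c_u(f)$ is identified, one has $\alpha_n \asymp \exp(c_u(f) \lambda_u^{p_n})$ from \textbf{Proposition~\ref{main Proposition}}, so substituting back into \textbf{Proposition~\ref{asymptotic distortion}} promotes the distortion estimates to the stated double-exponential form, and propagates through the recurrence to give the error $O((e^{c_u(f)\lambda_u^{p_{n-4}}})^{1/\overline{\ell}})$ (the lag of $4$ reflects that $\RN^2$ is used and two intermediate distortion factors enter the formula).

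\textbf{Main obstacle.} The heart of the argument is the explicit computation of $L$ and the verification of its eigenvalue structure uniformly in $(\ell_1,\ell_2) \in (1,2)^2$. The formulas involve the nonlinear combination $\varphi_n^{-1} \circ q_{s_n}^{-1}$, whose linearization at $S_{1,\cdot} = 0$ requires tracking the behavior of $s_n = S_{5,n}$; one must first show that the $y_5$-coordinate stabilizes enough that $q_{s_n}$ has a well-defined asymptotic Jacobian at the relevant points. Verifying $\lambda_u > 1$, $\lambda_s \in (0,1)$ by an explicit characteristic polynomial calculation — and excluding additional unit eigenvalues outside the expected neutral directions — is the delicate step on which the whole decomposition rests.
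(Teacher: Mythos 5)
Your overall strategy is the paper's: pass to the two-step operator, extract from \textbf{Proposition~\ref{renormalization S}} (in its asymptotic form, \textbf{Proposition~\ref{renormazation nth on S}} and \textbf{Proposition~\ref{recurrence on Wn}}) an affine recurrence $w_{n+2}=\overline{L}\,w_n+\overline{w}^*+\epsilon_n$ with $\epsilon_n$ doubly exponentially small by \textbf{Lemma~\ref{main lemma}} and \textbf{Proposition~\ref{asymptotic distortion}}, then decompose in the eigenbasis and sum the error projections, exactly as in \textbf{Lemma~\ref{wn en fct de cu,alpha0}}. The first genuine gap is the spectral structure, which you both misstate and leave unverified. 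Here $\overline{L}=L_{(\ell_{2},\ell_{1})}L_{(\ell_{1},\ell_{2})}$ is singular (a cofactor expansion along the third column gives $\det L_{(\ell_{1},\ell_{2})}=0$), and its spectrum is $\{\lambda_u,\lambda_s,1,0\}$: one neutral eigenvector $E^+=(0,0,1,0)^t$ (since $L_{(\ell_{1},\ell_{2})}E^+=-E^+$, hence $\overline{L}E^+=E^+$) and one kernel direction $E^0$, not a two-dimensional eigenspace for the eigenvalue $1$. The $E^0$ direction matters in the bookkeeping (only the last error projection survives along it), and since the statement itself asserts $\lambda_u>1$, $\lambda_s\in(0,1)$ uniformly in $(\ell_1,\ell_2)\in(1,2)^2$ — the paper records the explicit formulas for $\lambda_u,\lambda_s,E^u,E^s$ right after the statement — deferring this computation as an ``obstacle'' leaves the core of the claim unproved.

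The second gap is in your Step 3. You invoke \textbf{Proposition~\ref{main Proposition}} to get $\alpha_n\asymp e^{c_u(f)\lambda_u^{p_n}}$, but that proposition is downstream of the statement you are proving, so the bootstrap as written is circular. The paper closes the loop with \textbf{Lemma~\ref{alpha_n lambda u}}: one shows $1-x_{2,n}$ and $x_{4,n}$ are comparable, hence $\alpha_n=O(S_{2,n}S_{3,n})=O(e^{y_{2,n}+y_{3,n}})$, so the expansion of $w_n$ already obtained (with errors controlled by $\alpha_0$, via \textbf{Proposition~\ref{recurrence formula}}, not by $c_u$) yields $\alpha_n=O(e^{c_u(f)\lambda_u^{p_n}(e_2^u+e_3^u)})$; combined with $\alpha_n\to0$ this is what forces $c_u(f),c'_u(f)<0$, and only then can the error terms and the distortion bounds of \textbf{Proposition~\ref{asymptotic distortion}} be rewritten in the stated forms $O\bigl((e^{c_u(f)\lambda_u^{p_{n-4}}})^{1/\overline{\ell}}\bigr)$ and $O\bigl(e^{c_u(f)\lambda_u^{p_{n-2}}/\ell_1}\bigr)$, etc. Your justification of negativity (``$w_{2p}$ must keep $y_{1,\cdot}$ bounded'') does not substitute for this: $y_{1,n}$ is not a component of $w_n$, and its boundedness carries no sign information; what is needed is precisely the missing link between $\alpha_n$ and $y_{2,n}+y_{3,n}$. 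A smaller inaccuracy: $c_s(f)$ is not simply the initial stable coordinate of $v_0$; it is that coordinate corrected by the convergent series $\sum_k\epsilon_{s,2k}\lambda_s^{-(k+1)}$, whose convergence itself relies on the double-exponential smallness of the errors.
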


The rest of the section will be devoted largely to prove this proposition. In particular, we are going to show that:

\begin{equation*}
\begin{array}{cl}
1. &\lambda_u=\frac{1}{2\eg\ed}\left(\eg+\ed+1+\sqrt{\eg^2+(2-2\ed)\eg+\ed^2+2\ed+1}
\right),  \\
2. &\lambda_s= \frac{1}{2\eg\ed}\left(\eg+\ed+1-\sqrt{\eg^2+(2-2\ed)\eg+\ed^2+2\ed+1}
\right), \\
3. &  E^+=\left(\begin{array}{cccc}0&0&1&0\end{array}\right)^t, \\
4.  & E^s(\ell_{2},\ell_{1},\lambda_u-\lambda_s)=
\left(\begin{array}{c}e_2^s\\e_3^s(\ell_{2},\ell_{1},\lambda_u-\lambda_s)\\e_4^s(\ell_{2},\ell_{1},\lambda_u-\lambda_s)\\
e_5^s(\ell_{2},\ell_{1},\lambda_u-\lambda_s)\end{array}\right), \\
5. &E^u(\ell_{2},\ell_{1},\lambda_u-\lambda_s)=
\left(\begin{array}{c}e_2^u\\e_3^u(\ell_{2},\ell_{1},\lambda_u-\lambda_s)\\e_4^u(\ell_{2},\ell_{1},\lambda_u-\lambda_s)\\
e_5^u(\ell_{2},\ell_{1},\lambda_u-\lambda_s)\end{array}\right)
\end{array}
\end{equation*}
where
\begin{equation*}
e_2^s= e_2^u=1,
\end{equation*}
\begin{eqnarray*}
	e_3^s(\ell_{2},\ell_{1},-\lambda) &=&
	\frac{(\ell_{2}\ell_{1}-\ell_{2}-1)(\lambda + \ell_{1}-\ell_{2} +1)+ 2\ell_{2}}{(\ell_{1}+\ell_{1}+1)\lambda+(\ell_{1}+1)^{2} +(\ell_{2}+1)^{2}-1 }\\
	&=&	e_3^u(\ell_{2},\ell_{1},\lambda),
\end{eqnarray*}
\begin{eqnarray*}
	e_4^s(\ell_{2},\ell_{1},\lambda) &=&
	\frac{(\ell_{2}^{2}+\ell_{2})\lambda + ( 2 \ell_{1}\ell_{2} +1)(\ell_{2}-1)\ell_{1}\ell_{2}-\ell_{2}(\ell_{2}+1)^{2} }{((\ell_{2}-\ell_{2}^2)\ell_{1}^2+\ell_{1} +(\ell_{1}+1)^{2})\lambda +D}\\
	&=&	e_3^u(\ell_{2},\ell_{1},-\lambda)
\end{eqnarray*}
and 
\begin{equation*}
e_5^s(\ell_{2},\ell_{1},\lambda)=\ell_{2}\lambda_s
=e_5^u(\ell_{2},\ell_{1},-\lambda)
\end{equation*}
with
\begin{equation*}
\lambda= \sqrt{\eg^2+(2-2\ed)\eg+\ed^2+2\ed+1}\end{equation*}
and 
\begin{equation*} D=(\ell_{2}^3-\ell_{2} +(\ell_{2}^2-\ell_{2})\ell_{1}+1)\ell_{1}^2 -(\ell_{2}+2)\ell_{1} -(\ell_{2}+1)^{3}.
\end{equation*}

\subsubsection{ Asymptotic affine behavior of renormalization}
Let $f\in\mathcal{W}_{[1]}$. We are going to show that  $R^{n+1} f =L R^{n}f +C_0(f)$, for every $n$ large enough. We start with the following observation. 
\begin{lem}	\label{asymptotic coordonat in x}
	Let $ f\in\mathcal{W}_{[1]}$. We have:
	\begin{enumerate}
		\item $	\dfrac{x_{2,n+1}}{x_{1,n}}=O(\alpha_{n+1})$,
		\item $\dfrac{x_{3,n+1}}{x_{1,n}}=O(\alpha_{n+1})$,
		\item $\dfrac{x_{1,n}-x_{4,n+1}}{x_{1,n}}=O(\alpha_{n})$,
		\item $s_{n}=S_{5,n}=O(\alpha_{n})$.
	\end{enumerate}
\end{lem}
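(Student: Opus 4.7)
My plan is to apply the explicit renormalisation formulas of Proposition~\ref{first renormalization on x} specialised to the Fibonacci case ($a_{n+1}=1$), combined with the estimate $S_{1,n}=O(\alpha_{n+1})$ from Lemma~\ref{S1,n goes to zero}, the distortion bounds of Proposition~\ref{asymptotic distortion} (which make $\varphi_n$, $\varphi_n^l$ and $\varphi_n^r$ asymptotically $C^1$-close to the identity), and, whenever needed, the doubly-exponential decay of $\alpha_n$ from Lemma~\ref{main lemma}. I would treat item~(4) first, since $s_n$ then feeds into the shape of $q_{s_n}$ in the expressions for $x_{3,n+1}$ and $x_{4,n+1}$.

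For item~(4), the last component of the recursion in Proposition~\ref{renormalization S} reads $S_{5,n+1}=\varphi_n^l(S_{1,n})$. Because $\varphi_n^l:[0,1]\to[0,1]$ fixes $0$ and has distortion tending to $0$, one has $\varphi_n^l(t)\asymp t$ for $t$ small, whence $s_{n+1}=\varphi_n^l(S_{1,n})=O(S_{1,n})=O(\alpha_{n+1})$ by Lemma~\ref{S1,n goes to zero}. Shifting indices gives $s_n=O(\alpha_n)$.

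Items (1)--(3) are then read off the Fibonacci specialisation of Proposition~\ref{first renormalization on x}:
\[
x_{2,n+1}=\bigl[\varphi_n^l(S_{1,n})\bigr]^{\ell^{*}_n},\quad
x_{3,n+1}=1-\varphi_n^{-1}\!\circ\!q_{s_n}^{-1}\!\Bigl(\tfrac{x_{4,n}-x_{2,n}}{1-x_{2,n}}\Bigr),\quad
1-x_{4,n+1}=\varphi_n^{-1}\!\circ\!q_{s_n}^{-1}\!\Bigl(\tfrac{x_{3,n}-x_{2,n}}{1-x_{2,n}}\Bigr),
\]
with $\ell^{*}_n\in\{\ell_1,\ell_2\}$ according to the parity of~$n$. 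The numerators $x_{3,n}-x_{2,n}$ and $x_{4,n}-x_{2,n}$ are both of order $S_{1,n}$ up to bounded ratios, hence of order $\alpha_{n+1}$ by Lemma~\ref{S1,n goes to zero}. Using item~(4) to insert $s_n=O(\alpha_n)$, one picks the correct Taylor regime for $q_{s_n}^{-1}$ near $0$ (comparing the two small parameters $\alpha_{n+1}$ and $s_n^{\ell_2}$ via the doubly-exponential decay), applies $\varphi_n^{-1}$ (close to identity) and finally divides by $x_{1,n}$; this last step is handled through the identity $|x_{1,n}|=x_{2,n}/S_{4,n}$ from the change of variables (\ref{change variable x S}), combined with parity-wise control of $S_{4,n}$ obtained from Proposition~\ref{renormalization S} (together with the logarithmic Fibonacci recurrence $\ln|x_{1,n+1}|+\ln|x_{1,n}|=\ln x_{2,n}$ coming from $x_{1,n+1}=x_{2,n}/x_{1,n}$).

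The main obstacle, in my view, is the interplay between the two asymptotic scales $\alpha_{n+1}$ (the argument of $q_{s_n}^{-1}$) and $s_n\sim\alpha_n$ (the shape parameter of $q_{s_n}$): only with the precise asymptotics of $\ln\alpha_n$ furnished by Proposition~\ref{main Proposition} can one identify the right Taylor regime for $q_{s_n}^{-1}$ at $0$ and control $|x_{1,n}|$ sharply enough from below. Once this is done, the bounds $O(\alpha_{n+1})$ for items~(1)--(2) and $O(\alpha_n)$ for item~(3) follow by direct substitution in the renormalisation formulas.
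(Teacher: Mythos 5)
Your item~(4) is essentially the paper's own argument ($S_{5,n}=\varphi_{n-1}^l(S_{1,n-1})$, distortion of $\varphi^l$ close to the identity, and \textbf{Lemma~\ref{S1,n goes to zero}}), and is fine. For items~(1)--(3), however, your route has a genuine gap, and the main problem is circularity: you invoke \textbf{Proposition~\ref{main Proposition}} (equivalently the asymptotics of \textbf{Proposition~\ref{super formular}}) to choose the Taylor regime of $q_{s_n}^{-1}$ and to bound $|x_{1,n}|$ from below through $S_{4,n}$; but \textbf{Proposition~\ref{main Proposition}} is obtained from \textbf{Proposition~\ref{super formular}}, whose proof rests on \textbf{Proposition~\ref{recurrence on Wn}} and \textbf{Proposition~\ref{renormazation nth on S}}, and the proof of \textbf{Proposition~\ref{renormazation nth on S}} uses precisely points~(1) and~(3) of the lemma you are proving. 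At this stage only \textbf{Lemma~\ref{main lemma}} (proved independently in \S\ref{Section 6}) is available, and it does not supply the sharp lower bound on $|x_{1,n}|$ (nor the control of $S_{4,n}$) that your division step requires. There is also a false intermediate claim: $x_{4,n}-x_{2,n}$ is \emph{not} of order $S_{1,n}$, since $\frac{x_{4,n}-x_{2,n}}{1-x_{2,n}}=1-S_{2,n}$ is bounded away from zero (the flat interval $[x_{3,n},x_{4,n}]$ is comparable to the adjacent gap $(x_{4,n},1)$ by \textbf{Proposition~\ref{preimage and gaps adjacent}}); hence the argument of $q_{s_n}^{-1}$ in the formula for $x_{3,n+1}$ is near $1$, not near $0$, and the expansion at $0$ you describe does not apply there (compare the paper's later computation \textbf{(\ref{e42})}, which gets $x_{3,n+1}\approx S_{2,n}/\ell_2$ by the mean value theorem).

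The paper proves (1)--(3) by a different, purely dynamical argument that avoids both issues: by point~3 of \textbf{Remark~\ref{nth renormalization}}, each ratio is a ratio of lengths of intervals with endpoints in the orbit of $U$, e.g. $\frac{x_{2,n+1}}{x_{1,n}}=\frac{|(\underline{1},\underline{q_{n+2}+1})|}{|(\underline{1},\underline{q_{n}+1})|}$ and $\frac{x_{1,n}-x_{4,n+1}}{x_{1,n}}=\frac{|(\underline{-q_{n+1}+1},\underline{q_n+1})|}{|(\underline{1},\underline{q_{n}+1})|}$, and these are bounded by $\alpha_{n+1}$, resp.\ $O(\alpha_n)$, using only interval inclusions, \textbf{Proposition~\ref{preimage and gaps adjacent}} and the Koebe principle (\textbf{Proposition~\ref{koebe principle}}); no expansion of $q_{s_n}^{-1}$ and no lower bound on $|x_{1,n}|$ is needed. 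If you wish to keep an analytic proof via \textbf{Proposition~\ref{first renormalization on x}}, you must obtain the needed control of $S_{2,n}$, $S_{4,n}$ and $|x_{1,n}|$ by such dynamical comparisons rather than by citing \textbf{Proposition~\ref{main Proposition}} or \textbf{Proposition~\ref{super formular}}.
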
		
\begin{proof}
	The two first points become directly of point \textbf{2} of \textbf{Remark~\ref{nth renormalization}}. Indeed,
	\begin{equation*}
	\dfrac{x_{2,n+1}}{x_{1,n}}=\dfrac{|(\underline{1},\underline{q_{n+2}+1})|}{|(\underline{1},\underline{q_{n}+1})|}<\dfrac{|(\underline{1},\underline{-q_{n+1}+1})|}{|(\underline{1},\underline{-q_{n+1}+1}]|}<\alpha_{n+1}.
	\end{equation*}	
	\begin{equation*}
	\dfrac{x_{3,n+1}}{x_{1,n}}=\dfrac{|(\underline{1},\underline{-q_{n+1}+1})|}{|(\underline{1},\underline{q_{n}+1})|}<\dfrac{|(\underline{1},\underline{-q_{n+1}+1})|}{|(\underline{1},\underline{-q_{n+1}+1}]|}<\alpha_{n+1}.
	\end{equation*}	
	
For point \textbf{3}, observe that  \begin{equation*} (\underline{-q_{n+1}+1},\underline{q_{n}+1}) \subset (\underline{1},\underline{q_{n}+1}). \end{equation*}
Therefore,
	\begin{equation*}
	\dfrac{x_{1,n}-x_{4,n+1}}{x_{1,n}}=\dfrac{|(\underline{-q_{n+1}+1},\underline{q_n+1})|}{|(\underline{1},\underline{q_{n}+1})|}=O\left(\dfrac{|(\underline{-q_{n}+1},\underline{q_{n+1}+1})|}{|(\underline{q_{n-1}+1},\underline{q_{n+1}+1}]|}\right)=O(\alpha_{n})
	\end{equation*}
where we use  \textbf{Proposition~\ref{koebe principle}} and  \textbf{Proposition~\ref{preimage and gaps adjacent}}. 

		By \textbf{Lemma~\ref{S1,n goes to zero}} and  \textbf{Proposition~\ref{asymptotic distortion}}, we get
	\begin{equation*}
	s_{n}=S_{5,n}=S_{1,n-1}\dfrac{\varphi_n^l(S_{1,n-1})}{S_{1,n-1}}=O(\alpha_{n})\left(1+O(\alpha^{\ell_1}_{n-1})\right)=O(\alpha_{n})
	\end{equation*}
	and the lemma is shown.
\end{proof}

\begin{prop}\label{renormazation nth on S}
Let	$ f\in \mathcal{W}_{[1]} $. We have: 
	\begin{equation*}
	\begin{array}{cccc}
	1.& S_{1,n+1}&
	=&1-\frac{\ell_{2}S_{1,n}^{\ell_{1}}}{S_{2,n}}\left(1+O\left(\alpha_{n-1}^{1/\ell_{2}}\right)\right),\vspace{0.15cm}\\
	2.& S_{2,n+1}&
	=&\frac{S_{1,n}S_{2,n} S_{3,n}}{\ell_{2}S_{5,n}^{\ell_{2}-1}}\left(1+O\left(\alpha_{n-2}^{1/\ell_{1}}\right)\right),\vspace{0.15cm}\\
	3.& S_{3,n+1}&
	=&\frac{S_{5,n}^{\ell_{2}-1}}{S_{1,n} S_{3,n}}\left(1+O\left(\alpha_{n-2}^{1/\ell_{1}}\right)\right),\vspace{0.15cm}\\
	4.& S_{4,n+1}&
	=&\frac{S_{1,n}^{\ell_{1}}}{S_{4,n}}\left(1+O\left(\alpha_{n-1}^{1/\ell_{2}}\right)\right),\vspace{0.15cm}\\
	5.& S_{5,n+1}&
	=&S_{1,n}\left(1+O\left(\alpha_{n-1}^{1/\ell_{2}}\right)\right).
	\end{array}
	\end{equation*}
	The case $n$ odd is just a permutation between $\ell_{1} $ and $\ell_{2} $ in the previous equalities, as mentioned in point \textbf{1} of \textbf{Remark~\ref{critical exponent and  renormalization}}. 
\end{prop}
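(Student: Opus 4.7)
The plan is to substitute the exact recursion of Proposition~\ref{renormalization S} into each of the five coordinates and perform an asymptotic expansion, relying on three ingredients: the bounded distortion of $\varphi_n,\varphi_n^l,\varphi_n^r$ from Proposition~\ref{asymptotic distortion}, the smallness of $s_n=S_{5,n}=O(\alpha_n)$ from Lemma~\ref{asymptotic coordonat in x}, and the smallness of $S_{1,n}=O(\alpha_{n+1})$ from Lemma~\ref{S1,n goes to zero}.

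First I would record the elementary observation that any orientation-preserving diffeomorphism $\varphi$ of $[0,1]$ with distortion $\delta$ satisfies $\varphi(x)=x(1+O(\delta))$ and $1-\varphi^{-1}(1-x)=x(1+O(\delta))$ uniformly in $x\in[0,1]$, because $\int_0^1\varphi'=1$ forces $\varphi'$ to equal $1$ somewhere. Applied with Proposition~\ref{asymptotic distortion}, this turns $\varphi_n^l(S_{1,n})$ into $S_{1,n}(1+O(\alpha_{n-1}^{1/\ell_2}))$, and gives analogous identities for $\varphi_n$ and $\varphi_n^r$ with errors $O(\alpha_{n-2}^{1/\ell_1})$ and $O(\alpha_n^{1/\ell_2})$ respectively. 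The fifth and fourth identities of the proposition then follow by direct substitution.

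Second I would Taylor-expand $q_{s_n}$ near its two endpoints. Near $1$, $q_s(1-w)=1-\frac{\ell_2(1-s)}{1-s^{\ell_2}}w+O(w^2)$, so inverting and using $s_n=O(\alpha_n)$ yields $1-q_{s_n}^{-1}(1-S_{2,n})=\frac{S_{2,n}}{\ell_2}(1+O(S_{2,n}+s_n))$. Near $0$, the explicit form $q_s^{-1}(y)=\frac{[y(1-s^{\ell_2})+s^{\ell_2}]^{1/\ell_2}-s}{1-s}$ combined with the smallness of $S_{1,n}S_{2,n}S_{3,n}$ gives a leading-order expression proportional to $\frac{S_{1,n}S_{2,n}S_{3,n}}{\ell_2 s_n^{\ell_2-1}}$ times $1+o(1)$. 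Composing with $\varphi_n^{-1}$ absorbs an additional $O(\alpha_{n-2}^{1/\ell_1})$ factor via the first step. Substituting these expansions into the formulas of Proposition~\ref{renormalization S} yields the first, second and third identities by direct algebra.

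The last step is bookkeeping: one must check that every heterogeneous error appearing in the computation ($\alpha_{n-1}^{1/\ell_2}$, $\alpha_{n-2}^{1/\ell_1}$, $\alpha_n$, $S_{1,n}$, $S_{2,n}$, $s_n^{\ell_2-1}$, and so on) is dominated by the single error advertised on each line of the conclusion. This is where Lemma~\ref{main lemma}, the double-exponential decay of $\alpha_n$, is essential: it forces $\alpha_n\ll\alpha_{n-1}^{1/\ell_2}$ for large $n$ together with analogous comparisons, so every non-leading error can be absorbed into the announced one. The main obstacle I expect is precisely this absorption, together with tracking the parity-dependent roles of $\ell_1$ and $\ell_2$; by Remark~\ref{critical exponent and  renormalization} the odd-index version is obtained from the even-index version by swapping $\ell_1\leftrightarrow\ell_2$, so it suffices to carry out one parity in detail.
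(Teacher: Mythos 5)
Your route is essentially the paper's: its proof likewise combines the exact renormalization formulas (used there in $x$-coordinates via Proposition~\ref{first renormalization on x} together with the mean value theorem, which amounts to the same computation as substituting into Proposition~\ref{renormalization S} and Taylor-expanding $q_{s_n}$ at its two endpoints) with the distortion estimates of Proposition~\ref{asymptotic distortion} and the smallness estimates of Lemma~\ref{S1,n goes to zero} and Lemma~\ref{asymptotic coordonat in x}, the odd case being handled by the parity symmetry of Remark~\ref{critical exponent and  renormalization}. The one step where your sketch is thinner than what is actually needed is the expansion of $q_{s_n}^{-1}$ near $0$: the required smallness is of $S_{1,n}S_{2,n}S_{3,n}$ \emph{relative to} $s_n^{\ell_2}$ (equivalently $1-x_{4,n+1}\ll s_n$), not mere smallness of $S_{1,n}S_{2,n}S_{3,n}$, and the paper secures this through point 3 of Lemma~\ref{asymptotic coordonat in x} — this is the same bookkeeping you defer to your final absorption step, so it is a matter of detail rather than a different (or flawed) argument.
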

\begin{proof}
	Observe that
	\begin{equation*}
	f^{q_{n-1}}(( \underline{1},\underline{-q_{n+1}+1}))=(\underline{{q_{n-1}+1}},\underline{-q_{n}+1}).
	\end{equation*}
On the other hand, by \textbf{(\ref{explicite renormazation function 0})} we have
	\begin{equation*}
	\RN^nf_{|(x_{1,n}, 0)}=(1-x_{2,n})q_{s_n}\circ\varphi_n\left(\dfrac{x_{1,n}-x}{x_{1,n}}\right) +x_{2,n}.
	\end{equation*}
	Then by the value theorem there is $\theta_1, \theta_2\in(0,1)$ such that, 
	\begin{equation}\label{e41}
	\dfrac{|(\underline{{q_{n-1}+1}},\underline{-q_{n}+1})|}{|( \underline{1},\underline{-q_{n+1}+1})|}=(1-x_{2,n})\varphi'_n(\theta_1)q'_{s_n}(\theta_2).
	\end{equation}
	Remark that $q'_{s_n}(\theta_2):=Dq_{s_n}(\theta_2)=\ell_2O(1)$ and note that by \textbf{Proposition~\ref{asymptotic distortion}}
	$\varphi'_n(\theta_1)=1+O(\alpha^{1/{\ell_1}}_{n-2}).$ Thus, by point \textbf{1} of \textbf{Remark~\ref{nth renormalization}} and equality \textbf{( \ref{e41})}, we obtain
	\begin{equation*}
	S_{2,n}= \dfrac{1-x_{4,n}}{1-x_{2,n}}=\ell_2x_{3,n+1}(1+O(\alpha^{1/{\ell_1}}_{n-2})).
	\end{equation*}
	Therefore,
	\begin{equation}\label{e42}
	x_{3,n+1}	= \dfrac{S_{2,n}}{\ell_2}(1+O(\alpha^{1/{\ell_1}}_{n-2})).
	\end{equation}
	By \textbf{Proposition~\ref{first renormalization on x}} and  \textbf{Proposition~\ref{renormazation nth on S}} we obtain
	\begin{align}
	x_{2,n+1}	= (\varphi^l_n(S_{1,n}))^{\ell_1}= S^{\ell_1}_{1,n} \left(\dfrac{\varphi^l_n(S_{1,n})}{S_{1,n}}\right)^{\ell_1}
	=S^{\ell_1}_{1,n}(1+O(\alpha^{1/{\ell_2}}_{n-1})).\label{e43}
	\end{align} 
	It follows from (\textbf{\ref{e42}}) and (\textbf{\ref{e43}}) that
	\begin{align*}
	S_{1,n+1}	=1- \dfrac{	x_{2,n+1}}{	x_{3,n+1}}=   1- \ell_2\dfrac{S^{\ell_1}_{1,n}}{S_{2,n}}(1+O(\alpha^{1/{\ell_2}}_{n-1}).
	\end{align*}  
	By  \textbf{Proposition~\ref{first renormalization on x}} we have 
	\begin{equation*}
	(1-x_{2,n})q_{s_n}\circ\varphi_n(1-x_{4,n+1} ) =x_{3,n} -x_{2,n}.
	\end{equation*}
	That is,
	\begin{equation*}
	(1-x_{4,n+1})\dfrac{q_{s_n}\circ\varphi_n(1-x_{4,n+1} )}{1-x_{4,n+1}} =\dfrac{x_{3,n} -x_{2,n}}{1-x_{2,n}}.
	\end{equation*}
	Thus,  by point \textbf{3} of \textbf{Lemma~\ref{asymptotic coordonat in x}}  and  \textbf{Proposition~\ref{asymptotic distortion}}
we get
	\begin{align*}
	\ell_2S^{\ell_2-1}_{5,n}(1+O(\alpha^{1/{\ell_1}}_{n-2}))(1-x_{4,n+1} ) &=\dfrac{x_{3,n} -x_{2,n}}{1-x_{2,n}}\\
	&= S_{1,n}S_{2,n}S_{3,n}.
	\end{align*}		
	Therefore,
	\begin{equation}\label{e44}
	1-x_{4,n+1} =\dfrac{S_{1,n}S_{2,n}S_{3,n}}{\ell_2S^{\ell_2-1}_{5,n}}(1+O(\alpha^{1/{\ell_1}}_{n-2})).
	\end{equation}
	By point \textbf{1} of \textbf{Lemma~\ref{asymptotic coordonat in x}} 
	and  equality \textbf{(\ref{e44})}, we have
	\begin{align*}
	S_{2,n+1} =\dfrac{1-x_{4,n+1}}{1-x_{2,n+1}}=\dfrac{S_{1,n}S_{2,n}S_{3,n}}{\ell_2S^{\ell_2-1}_{5,n}}(1+O(\alpha^{1/{\ell_1}}_{n-2})).
	\end{align*}
	From (\textbf{\ref{e42}}) and (\textbf{\ref{e44}}) we obtain
	\begin{align*}
	S_{3,n+1}	=   \dfrac{	x_{3,n+1}}{	 1-x_{4,n+1}} =  \dfrac{S^{\ell_2-1}_{5,n}}{S_{1,n}S_{3,n}}(1+O(\alpha^{1/{\ell_1}}_{n-2})).
	\end{align*} 
	By \textbf{Proposition~\ref{first renormalization on x}} and equality \textbf{(\ref{e43})}, it follows that 
	\begin{align*}
	S_{4,n+1}	=   \dfrac{	x_{2,n+1}}{	 -x_{1,n+1}} 
	=   \dfrac{	x_{2,n+1}}{	 S_{4,n}}=  \dfrac{S^{\ell_1}_{1,n}}{S_{4,n}}(1+O(\alpha^{1/{\ell_2}}_{n-1}))
	\end{align*}
	and finally, by   \textbf{Proposition~\ref{renormalization S}} and \textbf{Proposition~\ref{asymptotic distortion}}, we have
	\begin{align*}
	S_{5,n+1}
	= S_{1,n} \left(\dfrac{\varphi^l_n(S_{1,n})}{S_{1,n}}\right)
	=S_{1,n}(1+O(\alpha^{1/{\ell_2}}_{n-1})).
	\end{align*} 	 
\end{proof}

\begin{cor}\label{corollary renormalization on S}
Let	$ f\in \mathcal{W}_{[1]} $. Then	\begin{equation*}
	\frac{\ell_{2}S_{1,2n}^{\ell_1}}{S_{2,2n}}= 1+O\left(\alpha_{2n-1}^{\frac{1}{ \ell_{2}}}\right)\quad \mbox{  and  } \quad \frac{\ell_{1}S_{1,2n+1}^{\ell_2}}{S_{2,2n+1}}= 1+O\left(\alpha_{2n}^{\frac{1}{ \ell_{1}}}\right).
	\end{equation*}
\end{cor}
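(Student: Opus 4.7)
The plan is to derive the corollary as a direct algebraic consequence of point 1 of \textbf{Proposition~\ref{renormazation nth on S}}, combined with the smallness of $S_{1,n}$ coming from \textbf{Lemma~\ref{S1,n goes to zero}} and the double exponential decay of $\alpha_n$ supplied by \textbf{Lemma~\ref{main lemma}}.

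First I would apply point 1 of \textbf{Proposition~\ref{renormazation nth on S}} with index shifted so the renormalization output lives at level $2n+1$. In the even case this reads
\begin{equation*}
S_{1,2n+1}=1-\frac{\ell_{2}S_{1,2n}^{\ell_{1}}}{S_{2,2n}}\Bigl(1+O\bigl(\alpha_{2n-1}^{1/\ell_{2}}\bigr)\Bigr),
\end{equation*}
which I rearrange into
\begin{equation*}
\frac{\ell_{2}S_{1,2n}^{\ell_{1}}}{S_{2,2n}}=\frac{1-S_{1,2n+1}}{1+O\bigl(\alpha_{2n-1}^{1/\ell_{2}}\bigr)}.
\end{equation*}

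Next I would control the numerator. By \textbf{Lemma~\ref{S1,n goes to zero}}, $S_{1,2n+1}=O(\alpha_{2n+2})$, and by \textbf{Lemma~\ref{main lemma}} the sequence $\alpha_{n}$ converges to zero double exponentially fast; in particular $\alpha_{2n+2}$ is dominated (for $n$ large) by any fixed positive power of $\alpha_{2n-1}$, so $\alpha_{2n+2}=o(\alpha_{2n-1}^{1/\ell_{2}})$. Therefore
\begin{equation*}
1-S_{1,2n+1}=1+O\bigl(\alpha_{2n-1}^{1/\ell_{2}}\bigr).
\end{equation*}
Expanding the denominator via $1/(1+x)=1-x+O(x^{2})$ with $x=O(\alpha_{2n-1}^{1/\ell_{2}})$ then absorbs every remaining term into the single error $O(\alpha_{2n-1}^{1/\ell_{2}})$, which yields the first identity of the corollary.

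The second identity is obtained by the parity symmetry of point \textbf{1} of \textbf{Remark~\ref{critical exponent and  renormalization}}: the odd-index renormalization is governed by the same formulas with $(\ell_{1},\ell_{2})$ replaced by $(\ell_{2},\ell_{1})$, so repeating the argument verbatim at level $2n+1\to 2n+2$ produces exactly $\ell_{1}S_{1,2n+1}^{\ell_{2}}/S_{2,2n+1}=1+O(\alpha_{2n}^{1/\ell_{1}})$. The only delicate point, and essentially the sole nontrivial step, is the comparison $\alpha_{2n+2}\ll\alpha_{2n-1}^{1/\ell_{2}}$ which must be justified from the double exponential decay in \textbf{Lemma~\ref{main lemma}}; everything else is routine rearrangement of the already-established asymptotic formula.
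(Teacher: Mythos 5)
Your proposal is correct and follows essentially the same route as the paper: the paper's proof is exactly the rearrangement of point \textbf{1} of \textbf{Proposition~\ref{renormazation nth on S}} combined with \textbf{Lemma~\ref{S1,n goes to zero}} to absorb $S_{1,n+1}$ into the error term, with the odd case handled by the parity symmetry of \textbf{Remark~\ref{critical exponent and  renormalization}}. Your extra care about the comparison $\alpha_{2n+2}\ll\alpha_{2n-1}^{1/\ell_2}$ is welcome (and in fact only needs $\alpha_k\in(0,1)$ together with the decay of $\alpha_n$, since $1/\ell_2<1$, rather than the full strength of "dominated by any fixed positive power"), so it merely makes explicit a step the paper leaves implicit.
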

\begin{proof}
 By  point \textbf{1} of \textbf{Proposition~\ref{renormazation nth on S}} and  \textbf{Lemma~\ref{S1,n goes to zero}}
	\begin{align*}
	\dfrac{\ell_2 S^{\ell_1}_{1,n}}{S_{2,n}} =	(1-S_{1,{n+1}})(1+O(\alpha^{1/{\ell_2}}_{n-1}))=1+O(\alpha^{1/{\ell_2}}_{n-1}).
	\end{align*} 	
\end{proof}

By \textbf{ Proposition~\ref{renormazation nth on S} } and  \textbf{Corollary~\ref{corollary renormalization on S}}, we have the following result.
\begin{prop}\label{recurrence on Wn}
	Let $ f\in \mathcal{W}_{[1]} $. For $n\in\N$ even, the following equality holds.
	\begin{equation*}
	w_{n+2}=L_{(\ell_{1},\ell_{2})}w_{n+1}+w^*_{(\ell_{1},\ell_{2})}+
	\underline{O}(n,\ell,\alpha)
	\end{equation*}
where
	\begin{equation*}
	L_{(\ell_{1},\ell_{2})}=\left(\begin{array}{cccc}
	1+\frac{1}{\ell_{1}}&1&0&1-\ell_{2}\\-\frac{1}{\ell_{1}}&-1 &0& \ell_{2}-1\\1&0&-1&0\\
	\frac{1}{\ell_{1}}&0 &0& 0
	\end{array}\right),
	\end{equation*}
	\begin{equation*}
	w^*_{(\ell_{1},\ell_{2})}=\left(\begin{array}{c}
	-\left(1+\frac{1}{\ell_{1}}\right) \log\ell_{2}\\\frac{1}{\ell_{1}}\log\ell_{2}\\
	-\log\ell_{2}\\
	-\frac{1}{\ell_{1}}\log\ell_{2}
	\end{array}\right)
	\end{equation*}
	and
	\begin{equation*}
	\underline{O}(n,\ell,\alpha)=\left(\begin{array}{cccc}
	O\left(\alpha_{n-2}^{1/\overline{\ell}}\right),&O\left(\alpha_{n-2}^{1/\overline{\ell}}\right),&
	O\left(\alpha_{n-2}^{1/\overline{\ell}}\right),&
	O\left(\alpha_{n-2}^{1/\overline{\ell}}\right)
	\end{array}\right)^t.
	\end{equation*}
\end{prop}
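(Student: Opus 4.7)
The plan is to turn the multiplicative recursion of \textbf{Proposition~\ref{renormazation nth on S}} into an affine recursion on the vector $w_n$ by taking logarithms, and then to close the resulting system on the four coordinates $y_{2,n},\ldots,y_{5,n}$ by eliminating the auxiliary quantity $\ln S_{1,n}$ via \textbf{Corollary~\ref{corollary renormalization on S}}.

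First I would take the natural logarithm of identities $2$--$5$ in \textbf{Proposition~\ref{renormazation nth on S}}. Since $\ln(1+O(\alpha_{n-k}^{1/\ell_i}))=O(\alpha_{n-k}^{1/\ell_i})$, each multiplicative error becomes an additive error of the same order, and I obtain, for $n$ even, four relations of the shape
\begin{equation*}
y_{i,n+1}=a_{i}\ln S_{1,n}+(\text{linear in }y_{2,n},\ldots,y_{5,n})-b_{i}\ln\ell_{2}+O(\alpha_{n-2}^{1/\overline{\ell}}),\qquad i=2,3,4,5,
\end{equation*}
where the scalar coefficients $a_i,b_i$ are read off directly from the exponents appearing in \textbf{Proposition~\ref{renormazation nth on S}} (so e.g.\ $a_{2}=1$, $a_{3}=-1$, $a_{4}=\ell_{1}$, $a_{5}=1$, and only $b_{2}=1$ is nonzero at this stage).

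Next I would eliminate $\ln S_{1,n}$. For $n$ even, \textbf{Corollary~\ref{corollary renormalization on S}} gives, after taking logs, $\ln S_{1,n}=\frac{1}{\ell_{1}}(y_{2,n}-\ln\ell_{2})+O(\alpha_{n-1}^{1/\ell_{2}})$. Substituting into each of the four logged equations and collecting the coefficients of $y_{2,n},y_{3,n},y_{4,n},y_{5,n}$ reproduces exactly the matrix $L_{(\ell_{1},\ell_{2})}$ and the constant vector $w^{*}_{(\ell_{1},\ell_{2})}$ displayed in the statement; as a sanity check, the first row yields $(1+\frac{1}{\ell_{1}})y_{2,n}+y_{3,n}+(1-\ell_{2})y_{5,n}-(1+\frac{1}{\ell_{1}})\ln\ell_{2}$, matching the top entries of $L$ and $w^{*}$. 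The odd case is obtained for free by the parity swap $\ell_{1}\leftrightarrow\ell_{2}$ stated in point 1 of \textbf{Remark~\ref{critical exponent and  renormalization}}.

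It remains to do the error bookkeeping: the logarithmic errors inherited from \textbf{Proposition~\ref{renormazation nth on S}} are $O(\alpha_{n-1}^{1/\ell_{2}})$ or $O(\alpha_{n-2}^{1/\ell_{1}})$, and the substitution of $\ln S_{1,n}$ contributes a further $O(\alpha_{n-1}^{1/\ell_{2}})$. Since $\alpha_n$ is monotonically decreasing (indeed double exponentially, by \textbf{Lemma~\ref{main lemma}}) and $\alpha^{1/\overline{\ell}}\geq\max(\alpha^{1/\ell_{1}},\alpha^{1/\ell_{2}})$ for $\alpha\in(0,1)$, all these terms are absorbed into the common bound $O(\alpha_{n-2}^{1/\overline{\ell}})$, which is precisely the vector $\underline{O}(n,\ell,\alpha)$. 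The main difficulty is not dynamical at all — it is purely algebraic substitution — but one must take care to invoke \textbf{Corollary~\ref{corollary renormalization on S}} with the parity matched to $n$, since that identity is itself asymmetric in $(\ell_{1},\ell_{2})$, and to track the indices of $\alpha$ correctly so that they all decay at least as fast as $\alpha_{n-2}^{1/\overline{\ell}}$.
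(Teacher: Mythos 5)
Your proposal is correct and is exactly the argument the paper intends: the paper's own proof consists of the single remark that the result follows from \textbf{Proposition~\ref{renormazation nth on S}} and \textbf{Corollary~\ref{corollary renormalization on S}}, and your computation (taking logarithms of the five identities, eliminating $\ln S_{1,n}$ via $\ln S_{1,n}=\frac{1}{\ell_{1}}\left(y_{2,n}-\ln\ell_{2}\right)+O\left(\alpha_{n-1}^{1/\ell_{2}}\right)$, and absorbing all errors into $O\left(\alpha_{n-2}^{1/\overline{\ell}}\right)$) supplies precisely that algebra and reproduces the displayed $L_{(\ell_{1},\ell_{2})}$ and $w^*_{(\ell_{1},\ell_{2})}$ row by row. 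Note only that what your substitution actually yields is the even-step relation $w_{n+1}=L_{(\ell_{1},\ell_{2})}w_{n}+w^*_{(\ell_{1},\ell_{2})}+\underline{O}(n,\ell,\alpha)$ for $n$ even; the indexing $w_{n+2}=L_{(\ell_{1},\ell_{2})}w_{n+1}+\cdots$ in the statement is an off-by-one slip of the paper (the subsequent definition of $\overline{L}_{(\ell_{1},\ell_{2})}=L_{(\ell_{2},\ell_{1})}L_{(\ell_{1},\ell_{2})}$ confirms your version), so your formulation, together with the parity swap you invoke, is the consistent one.
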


As noted in the introduction, we study the operator $R^2$ on $(f,\RN f)$ rather than 
$R$ on $f$. Thus,  we define
\begin{equation*}
\overline{L}=\overline{L}_{(\ell_{1},\ell_{2})}=L_{(\ell_{2},\ell_{1})}L_{(\ell_{1},\ell_{2})}
\end{equation*}
and
\begin{equation*}
\overline{w}^*_{(\ell_{1},\ell_{2})}=L_{(\ell_{2},\ell_{1})}w^*_{(\ell_{1},\ell_{2})}+
w^*_{(\ell_{2},\ell_{1})}.
\end{equation*}
We have
\begin{equation}\label{reccurence two pace}
w_{n+2}=\overline{L}_{(\ell_{1},\ell_{2})}w_{n}+\overline{w}^*_{(\ell_{1},\ell_{2})}+
\underline{O}(n,\overline{\ell},\alpha).
\end{equation}

The next step is to explore equality \text{(\ref{reccurence two pace})} in order to conclude the proof of \textbf{Proposition~\ref{super formular}}.

\subsubsection{Asymptotic in Y-Coordinates}
Let $w_{fix} $ be the fixed point of the equation $\overline{L}w+
\overline{w}^*_{(\ell_{1},\ell_{2})} = w$. We have the following result.
\begin{lem}\label{wn en fct de cu,alpha0}
Let	$ f\in \mathcal{W}_{[1]} $. Then,
	\begin{equation*}\label{formular of w_n}
	w_{n}(f)=c_u(f) \lambda_u^{p_n}E^u+c_s(f) \lambda_s^{p_n}E^s+c_+(f)
	E^+ +w_{fix}+O(n,\ell_1,\overline{\ell},\alpha_0);
	\end{equation*}
	\begin{equation*}\label{formular of w_n+1}
	w_{n+1}(f)=c'_u(f) \lambda_u^{p_n}E^u+c'_s(f) \lambda_s^{p_n}E^s+c'_+(f)
	E^+ +w_{fix}+O(n,\ell_2,\overline{\ell},\alpha_0)
	\end{equation*}
	where
	$O(n,\ell_1,\overline{\ell},\alpha_0)$ resp $O(n,\ell_2,\overline{\ell},\alpha_0)$ is the vector whose  components are equal to
	\begin{equation*}	0\left(\alpha_0^{\left(\frac{2}{\ell_1}\right)^{p_{n-4}}}\right)^{\frac{1}{\overline{\ell}}} \mbox{ resp   }\quad 0\left(\alpha_0^{\left(\frac{2}{\ell_2}\right)^{p_{n-4}}}\right)^{\frac{1}{\overline{\ell}}}.
	\end{equation*}
	
\end{lem}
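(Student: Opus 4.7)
The plan is to linearize the affine recurrence~(\ref{reccurence two pace}) around its fixed point and project onto the eigendirections of $\overline{L}$. Setting $v_n := w_n - w_{fix}$ and using the defining relation $\overline{L}\,w_{fix}+\overline{w}^*_{(\ell_1,\ell_2)}=w_{fix}$, equation~(\ref{reccurence two pace}) becomes the purely linear recurrence
\begin{equation*}
v_{n+2}=\overline{L}\,v_n+\underline{O}(n,\overline{\ell},\alpha).
\end{equation*}
The spectral analysis of $\overline{L}$ sketched after \textbf{Proposition~\ref{super formular}} provides eigenvectors $E^u,E^s,E^+$ with eigenvalues $\lambda_u>1$, $\lambda_s\in(0,1)$ and $1$ respectively; since $L_{(\ell_1,\ell_2)}$ is singular (its third column forces a zero eigenvalue), any component of $v_0$ outside $\mathrm{span}(E^u,E^s,E^+)$ is annihilated after one iteration and contributes only to the remainder. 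Decomposing $v_n=a_n E^u+b_n E^s+c_n E^+$ and projecting yields three uncoupled scalar recurrences
\begin{equation*}
a_{n+2}=\lambda_u a_n+\epsilon^u_n,\quad b_{n+2}=\lambda_s b_n+\epsilon^s_n,\quad c_{n+2}=c_n+\epsilon^+_n,
\end{equation*}
whose perturbations $\epsilon^{\iota}_n$ inherit the size of $\underline{O}(n,\overline{\ell},\alpha)$, hence are double exponentially small by \textbf{Lemma~\ref{main lemma}}.

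Each recurrence is then solved by the standard method. For the unstable direction I would set $\tilde a_k:=\lambda_u^{-k}a_{2k}$ and observe that $\tilde a_{k+1}-\tilde a_k=\lambda_u^{-(k+1)}\epsilon^u_{2k}$ is summable (a geometric weight times a double exponentially decaying term); the limit $c_u(f):=\lim_k \tilde a_k$ exists, and its tail is controlled by its leading term, giving $a_{2k}=\lambda_u^k c_u(f)+\text{remainder}$ of the announced order. For the stable direction, iterating forwards produces $b_{2k}=\lambda_s^k b_0+\sum_{j<k}\lambda_s^{k-1-j}\epsilon^s_{2j}$; after absorbing the summable portion of the perturbation into a redefinition of the free constant, this takes the form $b_{2k}=\lambda_s^k c_s(f)+\text{remainder}$. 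For the neutral direction, the telescoping $c_{2k}=c_0+\sum_{j<k}\epsilon^+_{2j}$ converges to the finite limit $c_+(f)$, with a tail of the same type. The odd subsequence $w_{2k+1}$ is handled identically after swapping $(\ell_1,\ell_2)\leftrightarrow(\ell_2,\ell_1)$ as allowed by point~1 of \textbf{Remark~\ref{critical exponent and  renormalization}}, producing the primed constants $c'_{\iota}(f)$ and the $\ell_2$-flavored remainder.

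The main obstacle is quantitative rather than conceptual: converting the bound supplied by \textbf{Lemma~\ref{main lemma}} (where $\alpha_n$ decays like $\alpha_0$ raised to a power growing exponentially in $\lambda_u$) into the precise shape $\alpha_0^{(2/\ell_1)^{p_{n-4}}/\overline{\ell}}$ that appears in the statement. This requires threading the $1/\overline{\ell}$ exponent already present in \textbf{Proposition~\ref{renormazation nth on S}} through each eigenprojection, combining it with the double exponential rate at which $\alpha_n$ vanishes, and absorbing the first few transient iterates of the renormalization into a uniform constant, which is what accounts for the index shift $p_{n-4}$ in the error term.
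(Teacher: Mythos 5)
Your proposal follows essentially the same route as the paper's proof: subtract the fixed point $w_{fix}$, write $v_{n+2}=\overline{L}v_n+\epsilon_n$, expand $v_0$ and the perturbations in the eigenbasis $\{E^u,E^s,E^+,E^0\}$, define $c_u(f),c_s(f),c_+(f)$ as the limits of the corresponding series, and bound the tails using the double exponential decay of $\alpha_n$ (the paper's key comparison being that the ratio of successive error terms is eventually below $\tfrac{1}{2}\lambda_u$, so each tail is dominated by its leading term). The only slip is cosmetic: the singularity of $L_{(\ell_1,\ell_2)}$ comes from columns $2$ and $4$ being proportional, not from the third column, but this does not affect the argument.
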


\begin{proof}
	Consider the sequence $(v_n)_{n\in\N}$ defined by
	$v_n=w_n-w_{fix} $.
	From equality \textbf{(\ref{reccurence two pace})} and  \textbf{Proposition~\ref{recurrence formula}}, it follows that,
	\begin{equation*}
	v_{n+2}=\overline{L}v_n+\epsilon_n
	\end{equation*}
	where $\epsilon_n=O(n,\ell_1,\overline{\ell},\alpha_0)$. we obtain
	\begin{equation}\label{vn en fct de rn qn et epsilon}
	v_{n}=\overline{L}^{p_n}v_{0}+\sum_{k=0}^{p_{n-2}}\overline{L}^{p_n-k-1}\epsilon_{2k}.
	\end{equation}
	By expressing $v_{0} $ and $\epsilon_n$ in the eigenbasis, we
	have:
	\begin{equation*}
	v_{0}=c_{u,0}E^u+c_{s,0}E^s+c_{+,0}E^++c_{0,0}E^0,
	\end{equation*}
	\begin{equation*}
	\epsilon_{n}=\epsilon_{u,n}E^u+\epsilon_{s,n}E^s+\epsilon_{+,n}E^++\epsilon_{0,n}E^0.
	\end{equation*}
	We consider the following quantities
	\begin{equation*}
	\begin{array}{cc}
	C_u(f) =& c_{u,0}+\sum_{k=0}^{\infty}\frac{\epsilon_{u,2k}}{\lambda_u^{k+1}}, \\
	C_s(f)  = &c_{s,0} +\sum_{k=0}^{\infty}\frac{\epsilon_{s,2k}}{\lambda_s^{k+1}}, \\
	C_+(f) = & c_{+,0} +\sum_{k=0}^{\infty}{\epsilon_{+,2k}}.\\
	\end{array}
	\end{equation*}
	By introducing $v_ {0} $ and $\epsilon_n$ in
	 \textbf{(\ref{vn en fct de rn qn et epsilon})} we get:
	\begin{align*}
	v_n=&\left(c_{u,0}+\sum_{k=0}^{p_{n-2}}\frac{\epsilon_{u,2k}}{\lambda_u^{k+1}}
	\right)\lambda_u^{p_n}E^u+
	\left(c_{s,0}+\sum_{k=0}^{p_{n-2}}\frac{\epsilon_{s,2k}}{\lambda_s^{k+1}} \right)\lambda_s^{p_n}E^s+\\&+
	\left(c_{+,0}+\sum_{k=0}^{p_{n-2}}{\epsilon_{+,2k}}\right)E^++\epsilon_{0,n-2}E^0\\
	=& c_u(f) \lambda_u^{p_n}E^u+c_s(f) \lambda_s^{p_n}E^s+c_+(f)
	E^++\\
	&+\left(\sum_{k=p_n}^{\infty}\frac{\epsilon_{u,n}}{\lambda_u^{k+1}}
	\right)\lambda_u^{p_n}E^u+
	\left(\sum_{k=p_n}^{\infty}\frac{\epsilon_{s,k}}{\lambda_s^{k+1}} \right)\lambda_s^{p_n}E^s+\\
	&+
	\left(\sum_{k=p_n}^{\infty}{\epsilon_{+,k}}\right)E^++\epsilon_{0,n-2}E^0\\
	=&c_u(f) \lambda_u^{p_n}E^u+c_s(f) \lambda_s^{p_n}E^s+c_+(f)
	E^++O(n-2,\ell_1,\overline{\ell},\alpha_0).
	\end{align*}
	
	In fact, the three sums are estimated by  
	\begin{equation*}	0\left(\alpha_0^{\left(\frac{2}{\ell_1}\right)^{p_{n-2}}}\right)^{\frac{1}{\overline{\ell}}}.
	\end{equation*}
	To convince oneself of that,
	it suffices   to remark that for $k$ large
	enough, the following inequality holds:
	\begin{equation*}
	\left(\frac{\alpha_0^{{\left(\frac{2}{\ell_1}\right)}^{k-1}}}
	{\alpha_0^{{\left(\frac{2}{\ell_1}\right)}^{k-2}}}\right)^{\frac{1}{\overline{\ell}}}<\frac{1}{2}\lambda_u.
	\end{equation*}
	Therefore,
	\begin{align*}
	\left|
	\sum_{k=p_n}^{\infty}\frac{\epsilon_{u,2k}}{\lambda_u^{k+1}}\right|
	\lambda_u^{p_n} = &0\left(\sum_{k=p_n}^{\infty}
	\frac{\left(\alpha_0^{{\left(\frac{2}{\ell_{1}}\right)}^{k-1}}\right)^{\frac{1}{\overline{\ell}}}}
	{\lambda_u^{k}}\lambda_u^{p_{n}-1} \right) \\
	=& 0\left(
	\frac{\left(\alpha_0^{{\left(\frac{2}{\ell_{1}}\right)}^{p_{n-2}}}\right)^{\frac{1}{\overline{\ell}}}}{\lambda_u^{p_n-1}}
	\lambda_u^{p_{n}}
	\sum_{k=p_n}^{\infty} \left(\frac{1}{2}\right)^{k-p_n} \right) \\
	=& 0\left( \left(\alpha_0^{{\left(\frac{2}{\ell_{1}}\right)}^{p_{n-2}}}\right)^{\frac{1}{\overline{\ell}}}\right).\\
	\end{align*}
	And by analogy, we have the other estimates:
\end{proof}
\begin{nota}\label{notation cus+}
	We can write $w_{n}$ (see \textbf{Lemma~\ref{wn en fct de cu,alpha0}}) as follows
	\begin{equation*}
	w_{n}(f)=c^{*}_u(f) \lambda_u^{p_n}E^u+c^{*}_s(f) \lambda_s^{p_n}E^s+c^{*}_+(f)
	E^+ +w_{fix}+O^*(n,\ell_1,\overline{\ell},\alpha_0)
	\end{equation*}
	with $c^{*}_u(f)=c_u(f)$ if $n$ is even and  $c^{*}_u(f)=c'_u(f)$ if $n$ is odd. Also, $c^{\overline{*}}_u(f)=c'_u(f)$ when  $c^{*}_u(f)=c_u(f)$ and vice-versa. We consider the same convention for  $c^{*}_s(f)$ and $c^{*}_+(f)$. $O^*(n,\ell_1,\overline{\ell},\alpha_0)$ is defined from \textbf{Lemma~\ref{wn en fct de cu,alpha0}}.  
\end{nota}
\begin{lem}\label{alpha_n lambda u}
	Let $ f\in \mathcal{W}_{[1]} $. For every $n:=2p_n\in\N$, we have
	\begin{equation*}
	\alpha_{n}=O\left(e^{c_u(f) \lambda_u^{{p_n}}(e_2^u+e_3^u)}\right),\; 	\alpha_{n+1}=O\left(e^{c'_u(f) \lambda_u^{{p_n}}(e_2^u+e_3^u)}\right)\; \mbox{and } c'_u(f),\,c_u(f)<0.\end{equation*}
\end{lem}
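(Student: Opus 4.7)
The plan is to combine Lemma~\ref{wn en fct de cu,alpha0}, which gives the eigendecomposition of $w_n(f)$ along the unstable, stable, and neutral directions, with a geometric identification of $\ln\alpha_n$ as a particular linear combination of the coordinates of $w_n$ (up to bounded error).

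First, I would identify the combination of components of $w_n$ that is equivalent to $\ln\alpha_n$ up to an additive $O(1)$ error. Recalling $y_{2,n}=\ln S_{2,n}$ and $y_{3,n}=\ln S_{3,n}$, a direct computation gives
\begin{equation*}
y_{2,n}+y_{3,n}=\ln(S_{2,n}S_{3,n})=\ln\frac{x_{3,n}}{1-x_{2,n}}.
\end{equation*}
Using the geometric interpretation of $(x_{1,n},x_{2,n},x_{3,n},x_{4,n})$ from point~3 of Remark~\ref{nth renormalization} — so that $x_{3,n}$ is an endpoint of $\underline{-q_n+1}$ and $x_{2,n}$ corresponds to $\underline{q_{n+1}+1}$ in the renormalized chart — together with the Koebe distortion control of Proposition~\ref{koebe principle} and the gap comparability from Proposition~\ref{preimage and gaps adjacent}, I would show that $S_{2,n}S_{3,n}\asymp \alpha_n$ uniformly in $n$. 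This yields $\ln\alpha_n=(y_{2,n}+y_{3,n})+O(1)$.

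Next, I invoke Lemma~\ref{wn en fct de cu,alpha0} componentwise. Because $E^+=(0,0,1,0)^t$ contributes $0$ to the second and third coordinates, the neutral part drops out of the sum, leaving
\begin{equation*}
y_{2,n}+y_{3,n}=c_u(f)\lambda_u^{p_n}(e_2^u+e_3^u)+c_s(f)\lambda_s^{p_n}(e_2^s+e_3^s)+(y_{\mathrm{fix},2}+y_{\mathrm{fix},3})+O(\cdot).
\end{equation*}
Since $\lambda_s\in(0,1)$, the stable contribution is bounded; combined with Step~1 this gives the claimed estimate $\alpha_n=O(e^{c_u(f)\lambda_u^{p_n}(e_2^u+e_3^u)})$. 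The odd-index statement follows by the same argument applied to $w_{n+1}$, yielding $c'_u(f)$ in place of $c_u(f)$, in accordance with point~1 of Remark~\ref{critical exponent and  renormalization}.

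Finally, for the negativity of $c_u(f)$ and $c'_u(f)$: by Lemma~\ref{main lemma} the ratios $\alpha_n$ tend to zero double-exponentially, so $\ln\alpha_n\to-\infty$ at rate $\lambda_u^{p_n}$. Combined with the closed forms $e_2^u=1$ and the explicit expression for $e_3^u$ in Proposition~\ref{super formular}, one checks that $e_2^u+e_3^u>0$ for all $(\ell_1,\ell_2)\in(1,2)^2$; the sign of the dominant term then forces $c_u(f)<0$ and similarly $c'_u(f)<0$. The main obstacle will be Step~1, the geometric identification $S_{2,n}S_{3,n}\asymp\alpha_n$: the two sides live in different coordinate systems (the original circle versus the $n$-times renormalized interval), and the rescaling by $h$ distorts absolute lengths, so one must carefully track which orbit segment pulls back to $[0,x_{3,n}]$ and $[x_{2,n},1]$ and control the pullback distortion by Koebe, using that $f^{q_n}$ is a diffeomorphism on the relevant pieces of the partition $\mathcal{P}_n$.
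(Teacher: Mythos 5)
Your proposal is correct and follows essentially the paper's route: identify $\ln\alpha_n$ with $y_{2,n}+y_{3,n}$ up to a bounded error (i.e.\ $\alpha_n=O(S_{2,n}S_{3,n})$), then feed this into \textbf{Lemma~\ref{wn en fct de cu,alpha0}}, noting that $E^+$ has zero entries in the first two coordinates and that the stable term is bounded. The only cosmetic differences are that the paper obtains the comparison by pure algebra in the renormalized chart (showing $1-x_{2,n}\asymp x_{4,n}$ and using $\alpha_n=x_{3,n}/x_{4,n}$) rather than by a Koebe argument, and that it leaves the negativity of $c_u(f),c'_u(f)$ implicit, which you justify correctly via \textbf{Lemma~\ref{main lemma}} and the sign of $e_2^u+e_3^u$.
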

\begin{proof}
	We claim that $1-x_{2,n} $ and $x_{4,n} $ are comparable.  Observe that:
	\begin{equation*}
	\dfrac{1-x_{2,n}}{x_{4,n}}=\dfrac{\dfrac{x_{3,n}-x_{2,n}}{x_{4,n}-x_{3,n}}+1+ \dfrac{1-x_{4,n}}{x_{4,n}-x_{3,n}}}{\dfrac{x_{3,n}}{x_{4,n}-x_{3,n}}+1}
	\end{equation*}
	and for $n$ large enough
	\begin{equation*}
	\alpha_n=\dfrac{x_{3,n}}{x_{4,n}},\; \dfrac{x_{3,n}-x_{2,n}}{x_{4,n}-x_{3,n}}
	<\dfrac{x_{3,n}}{x_{4,n}-x_{3,n}}<2\alpha_n.
	\end{equation*}
	Therefore, 
	\begin{equation*}
	\dfrac{1-x_{2,n}}{x_{4,n}}=0(1+\alpha_n).
	\end{equation*}
	On the other hand,
	\begin{equation*}
	\dfrac{1-x_{2,n}}{x_{4,n}}>\dfrac{1}{\dfrac{x_{3,n}}{x_{4,n}-x_{3,n}}+1}>
	\dfrac{1}{2\alpha_n+1}.
	\end{equation*}
	And we have the result. 
	
	Now we have 
	\begin{equation*}
	\alpha_n:=\dfrac{x_{3,n}}{x_{4,n}}=S_{2,n}S_{3,n}\dfrac{1-x_{2,n}}{x_{4,n}}=O(1)S_{2,n}S_{3,n}.
	\end{equation*}
	That is,
	\begin{equation}\label{alpha_n distortion}
	{	\alpha_n}=O({S_{2,n} S_{3,n}}).
	\end{equation}
	By combining equality \textbf{(\ref{alpha_n distortion})} and 
	\textbf{Lemma~\ref{wn en fct de cu,alpha0}})  the lemma  follows.
\end{proof}

\begin{proof}[Proof of \textbf{Proposition~\ref{super formular}}]
The first part of \textbf{Proposition~\ref{super formular}} comes from
\textbf{Proposition~\ref{recurrence formula}}, 
\textbf{Lemma~\ref{wn en fct de cu,alpha0}} and \textbf{Lemma~\ref{alpha_n lambda u}}, and the second part comes from
\textbf{Proposition~\ref{asymptotic distortion}} and \textbf{Lemma~\ref{alpha_n lambda u}}.	
\end{proof}

\begin{cor} $ f\in \mathcal{W}_c $. Let us put $\ell_{2k-1}:=\ell_{1}$ and $\ell_{2k}:=\ell_{2}; k\geq 1$.
	If the sequence
	\begin{equation*}
	\Psi_{0,n} =\dfrac{S^{\ell_{n}}_{1,n}}{S_{2,n}}
	\end{equation*}
	and the logarithm of  sequences 
	\begin{eqnarray*}
		\Psi_{1,n} &=&  \dfrac{S_{2,n}}{\ell_{n+1}(\varphi_{n}^{-1}\circ q^{-1}_{s_{n}}(1-S_{2,n}))}
		\cdot\dfrac{\left(\varphi_{n}^l(S_{1,n})\right)^{\ell_{n}}}{S^{{\ell_{n}}}_{1,n}}, \\
		\Psi_{2,n} &=&
		\dfrac{\ell_{n+1} s^{\ell_n-1}_{5,n}\varphi_{n}^{-1}\circ q^{-1}_{s_{n}}(S_{1,n} S_{2,n} S_{3,n})}{S_{1,n} S_{2,n} S_{3,n}}\cdot \dfrac{1}{1-\varphi_{n}^l(S_{1,n})}, \\
		\Psi_{3,n} &=&\dfrac{S_{1,n}
			S_{3,n}(1-\varphi_{n}^{-1}\circ q^{-1}_{s_{n}}(1-S_{2,n}))}
		{S^{\ell_{n+1}-1}_{5,n}\varphi_{n}^{-1}\circ q^{-1}_{s_{n}}(S_{1,n} S_{2,n} S_{3,n})}, \\
		\Psi_{4,n} &=& \dfrac{\varphi_{n}^l(S_{1,n})}{S_{1,n}},
	\end{eqnarray*}
	are uniformly bounded, then
	
	\begin{enumerate}
		\item $w_n(f)$ tends to infinity exponentially when $(\ell_1, \ell_{2})\in[1,2]^2\setminus\{(2,2)\} $,
		\item $w_n(f)$ is bounded if,  $\lambda_u<1$.
	\end{enumerate}
\end{cor}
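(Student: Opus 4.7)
The plan is to rerun the analysis of Subsection \ref{Sub Asym Renor}, replacing the decay estimates of Proposition \ref{asymptotic distortion} with the uniformly bounded corrections supplied by the hypothesis. From the exact formulas of Proposition \ref{renormalization S} one reads off identities such as $1-S_{1,n+1}=\Psi_{0,n}\Psi_{1,n}$ and $S_{5,n+1}=S_{1,n}\Psi_{4,n}$, together with analogous expressions for $S_{2,n+1},S_{3,n+1},S_{4,n+1}$ in which each $\Psi_{i,n}$ appears as a multiplicative correction factor to the leading monomial in $S_{\cdot,n}$. Taking logarithms in the $y$-coordinates and using the boundedness of $\Psi_{0,n}$ together with the boundedness of $\log\Psi_{i,n}$ for $i=1,\dots,4$ yields the affine recursion
\[
w_{n+1}=L_{(\ell_1,\ell_2)}w_n+w^*_{(\ell_1,\ell_2)}+\xi_n,\qquad\sup_n\|\xi_n\|<\infty.
\]
This is the direct analogue of Proposition \ref{recurrence on Wn}, with a bounded driving $\xi_n$ in place of the vanishing $\underline O(n,\ell,\alpha)$.

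Iterating two steps and setting $v_n=w_n-w_{\mathrm{fix}}$ gives $v_{n+2}=\overline L v_n+\widetilde\xi_n$ with $\widetilde\xi_n$ bounded. Expanding in the eigenbasis $\{E^u,E^s,E^+,E^0\}$ of $\overline L$ with eigenvalues $\lambda_u,\lambda_s,1,0$, the coordinates $c_\iota(n)$ of $v_n$ satisfy scalar recursions $c_\iota(n+2)=\lambda_\iota c_\iota(n)+\xi_\iota(n)$ with bounded $\xi_\iota(n)$. For part 1, an elementary evaluation of
\[
\lambda_u=\tfrac{1}{2\ell_1\ell_2}\bigl(\ell_1+\ell_2+1+\sqrt{\ell_1^2+(2-2\ell_2)\ell_1+\ell_2^2+2\ell_2+1}\bigr)
\]
shows $\lambda_u>1$ throughout $[1,2]^2\setminus\{(2,2)\}$ (with equality only at $(2,2)$); hence $c_u(n)\lambda_u^{p_n}E^u$ grows exponentially and dominates the other eigendirections, so $\|w_n\|\to\infty$ at rate $\lambda_u^{p_n}$. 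For part 2, $\lambda_u<1$ also forces $\lambda_s<1$, both exponential components absorb the bounded forcing, and $v_n$ remains bounded.

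The main obstacle lies in the neutral direction $E^+$ (eigenvalue $1$) arising in part 2: a priori the recursion $c_+(n+2)=c_+(n)+\xi_+(n)$ could accumulate a bounded forcing into linear drift. This is ruled out by the very definition of $w_{\mathrm{fix}}$, which has already absorbed the $E^+$-projection of $\overline w^*$; the residual fluctuations of $\xi_+(n)$ coming from $\Psi_{0,n},\dots,\Psi_{4,n}$ then cancel by the same algebraic identity used to solve $\overline L w+\overline w^*=w$, and the $E^0$-direction (eigenvalue $0$) causes no difficulty. A secondary point in part 1, namely that the unstable coefficient does not vanish identically, is handled as in the proof of Proposition \ref{super formular} via the generic position of the initial data $w_0$.
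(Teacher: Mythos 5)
Your overall route is the paper's: read off from Proposition~\ref{renormalization S} that each $S_{i,n+1}$ equals its leading monomial times one of the $\Psi$-factors, pass to the $y$-coordinates to obtain the affine recursion of Proposition~\ref{recurrence on Wn} with the decaying error replaced by a bounded driving term, and conclude through the eigen-decomposition of $\overline{L}$ as in Proposition~\ref{super formular} (the paper's own proof consists precisely of listing these identities and invoking Proposition~\ref{super formular}). Your evaluation of $\lambda_u$ is also right: $\lambda_u,\lambda_s$ are the roots of $\ell_1\ell_2\lambda^2-(\ell_1+\ell_2+1)\lambda+1$, which is negative at $\lambda=1$ on $[1,2]^2$ except where $1/\ell_1+1/\ell_2=1$, i.e.\ only at $(2,2)$.

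The genuine gap is your treatment of the neutral direction in part~2. The fixed point $w_{fix}$ absorbs only the \emph{constant} vector $\overline{w}^*_{(\ell_1,\ell_2)}$; it does nothing to the fluctuating part of the forcing, and there is no ``algebraic identity'' by which bounded fluctuations cancel: the scalar recursion $c_+(n+2)=c_+(n)+\xi_+(n)$ with merely bounded $\xi_+(n)$ can accumulate linear drift, so boundedness of $w_n$ does not follow from boundedness of the $\Psi$'s alone. What makes this harmless in the paper's setting is that the corrections are not just bounded but decay double exponentially (Lemma~\ref{main lemma} via Proposition~\ref{asymptotic distortion}), so the series $\sum_k\epsilon_{+,2k}$ defining $c_+(f)$ in Lemma~\ref{wn en fct de cu,alpha0} converges; to get part~2 under the corollary's hypotheses you must show the $E^+$-component of the forcing is summable (or vanishes), which your argument does not provide. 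Two secondary inaccuracies: the hypothesis bounds $\Psi_{0,n}$ itself, not $\log\Psi_{0,n}$, yet your bounded forcing uses $\log\Psi_{0,n}$ (it is needed to eliminate $\log S_{1,n}$ through $\ell_n\log S_{1,n}=\log S_{2,n}+\log\Psi_{0,n}$), so you implicitly require $\Psi_{0,n}$ bounded away from zero as well; and the non-vanishing of the asymptotic unstable coefficient in part~1 is not obtained in the paper by ``generic position of $w_0$'' --- $w_0$ is determined by $f$ and genericity is not at your disposal --- but from the sign information $c_u(f)<0$, which comes from $\alpha_n\to 0$ through Lemma~\ref{alpha_n lambda u}, an input you would have to re-derive in this setting.
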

\begin{proof}
	From  \textbf{Proposition~\ref{renormalization S}}, we have
\begin{eqnarray*}
	S_{1,1}&=&1-\dfrac{\ed S^{\eg}_1}{S_2}\cdot \left[\dfrac{S_2}{\ed(1-\varphi^{-1}\circ q^{-1}_s(1-S_2))}
	\cdot\dfrac{\left(\varphi^l(S_1)\right)^\eg}{S^{\eg}_1}\right], \\
	S_{2,1}  &=&\dfrac{S_1 S_2 S_3}{\ed s^{\ell_2-1}_5}\cdot
	\left[\dfrac{\ed s^{\ell_2-1}_5\varphi^{-1}\circ q^{-1}_s(S_1 S_2 S_3)}{S_1 S_2 S_3}\cdot \dfrac{1}{1-(\varphi^l(S_1))^{\ell_{1}}}\right], \\
	S_{3,1} &=&\dfrac{S^{\ell_2-1}_5}{S_1 S_3}\cdot\left[\dfrac{S_1 S_3(1-\varphi^{-1}\circ q^{-1}_s(1-S_2))}
	{S^{\ell_2-1}_5\varphi^{-1}\circ q^{-1}_s(S_1 S_2 S_3)}\right],\\
	S_{4,1} &=&\dfrac{S^\eg_1}{S_4}\cdot\left[\left(\dfrac{\varphi^l(S_1)}{S_1}\right)^\eg\right], \\
	S_{5,1}&=& S_1\cdot \left[\dfrac{\varphi^l(S_1)}{S_1}\right], \\
	\varphi&=&Z_{[S_1,1]}(\varphi^l),  \\
	\varphi^l_1 &=&\varphi^r\circ Z_{[\varphi^{-1}\circ q^{-1}_s(1-S_2),1]}(q_s\circ\varphi),  \\
	\varphi^r_1 &=&Z_{[0,S_1]}(\varphi^l)\circ Z_{[0,\varphi^{-1}\circ q^{-1}_s(S_1 S_2 S_3)]}(q_s\circ\varphi).
\end{eqnarray*}		
	 and the result follows by \textbf{Proposition~\ref{super formular}}.
\end{proof}

\subsection{Asymptotic in X-coordinates}
\begin{lem}\label{xn asymptotic coordinate}
	Let $(\ell_1,\ell_{2})\in(1,2)^2$ and $f\in \mathcal{W}_{[1]} $ with critical exponents  $(\ell_1,\ell_{2})$. Then for $n:=2p_n$,
	\begin{equation*}
	\begin{array}{rcl}-x_{1,n} &=&e^{c_u(f)\lambda_u^{p_{n}}(e^u_2+e^u_3-e^u_4) +c_s(f)\lambda_s^{p_{n}}(e^s_2+e^s_3-e^s_4)-c_++ 0((e^{c_u(f)\lambda_u^{p_{n-4}}(e^u_2+e^u_3)})^{1/\overline{\ell}})}\\
	x_{2,n} &=& e^{c_u(f)\lambda_u^{p_{n}}(e^u_2+e^u_3) +c_s(f)\lambda_s^{p_{n}}(e^s_2+e^s_3)+ 0((e^{c_u(f)\lambda_u^{p_{n}}(e^u_2+e^u_3)})^{1/\overline{\ell}})}\\
	x_{3,n} &=& e^{c_u(f)\lambda_u^{p_{n}}(e^u_2+e^u_3) +c_s(f)\lambda_s^{p_{n}}(e^s_2+e^s_3)+ 0((e^{c_u(f)\lambda_u^{p_{n-4}}(e^u_2+e^u_3)})^{1/\overline{\ell}})}\\
	1-x_{4,n}&=& e^{c_u(f)\lambda_u^{p_{n}}e^u_2 +c_s(f)\lambda_s^{p_{n}}e^s_2+ 0((e^{c_u(f)\lambda_u^{p_{n-4}}(e^u_2+e^u_3)})^{1/\overline{\ell}})}
	\end{array}
	\end{equation*}
\end{lem}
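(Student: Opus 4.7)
The plan is essentially to translate the $Y$-coordinate asymptotics of \textbf{Proposition~\ref{super formular}} (equivalently \textbf{Lemma~\ref{wn en fct de cu,alpha0}}) back into the original $X$-coordinates by means of the inverse change of variables \textbf{(\ref{change variable x S})} and \textbf{(\ref{change variable S y})}. Since the change of variables is explicit, the core of the argument is a careful bookkeeping of remainders.

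First, I would start by rewriting the four quantities of interest from \textbf{(\ref{change variable x S})} as
\begin{equation*}
-x_{1,n}=\frac{S_{2,n}S_{3,n}(1-S_{1,n})}{S_{4,n}\bigl(1+S_{2,n}S_{3,n}(1-S_{1,n})\bigr)},\quad
x_{2,n}=\frac{S_{2,n}S_{3,n}(1-S_{1,n})}{1+S_{2,n}S_{3,n}(1-S_{1,n})},
\end{equation*}
\begin{equation*}
x_{3,n}=\frac{S_{2,n}S_{3,n}}{1+S_{2,n}S_{3,n}(1-S_{1,n})},\qquad
1-x_{4,n}=\frac{S_{2,n}}{1+S_{2,n}S_{3,n}(1-S_{1,n})}.
\end{equation*}
Taking logarithms and using $y_{i,n}=\ln S_{i,n}$ for $i=2,3,4,5$, the four quantities become, up to the two nuisance terms $\ln(1-S_{1,n})$ and $\ln\bigl(1+S_{2,n}S_{3,n}(1-S_{1,n})\bigr)$, linear combinations of $y_{2,n},y_{3,n},y_{4,n}$ with integer coefficients: respectively $(1,1,-1)$, $(1,1,0)$, $(1,1,0)$, $(1,0,0)$.

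Next, I would dispose of the two nuisance terms. By \textbf{Lemma~\ref{S1,n goes to zero}} and \textbf{Lemma~\ref{main lemma}}, $S_{1,n}=O(\alpha_{n+1})$ decays double exponentially fast, so $\ln(1-S_{1,n})=O(S_{1,n})$ is dominated by the remainder in \textbf{Lemma~\ref{wn en fct de cu,alpha0}}. Likewise, using the asymptotics of $y_{2,n},y_{3,n}$ from \textbf{Proposition~\ref{super formular}} together with $c_u(f)<0$ (\textbf{Lemma~\ref{alpha_n lambda u}}) and the fact that $e^u_2+e^u_3>0$, we obtain $S_{2,n}S_{3,n}\to 0$ double exponentially, so $\ln\bigl(1+S_{2,n}S_{3,n}(1-S_{1,n})\bigr)=O(S_{2,n}S_{3,n})$ is again swallowed by the same error term. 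This is the step where I would need to be careful: I must verify that these corrections are indeed of the order $O\bigl((e^{c_u(f)\lambda_u^{p_{n-4}}(e^u_2+e^u_3)})^{1/\overline{\ell}}\bigr)$ claimed in the statement, which follows because they are controlled by $\alpha_{n+1}$ and $e^{y_{2,n}+y_{3,n}}$, both of which are dominated by the worst remainder appearing in $w_n$.

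Finally, I would substitute the asymptotic expansion of $w_n$ from \textbf{Lemma~\ref{wn en fct de cu,alpha0}} coordinate-wise. Since $E^+=(0,0,1,0)^t$ contributes $c_+(f)$ only to $y_{4,n}$, the combination $y_{2,n}+y_{3,n}-y_{4,n}$ appearing in $\ln(-x_{1,n})$ produces the $-c_+(f)$ in the exponent, while the combinations for $x_{2,n},x_{3,n},1-x_{4,n}$ contain no $c_+$ contribution; the coefficients of $c_u(f)\lambda_u^{p_n}$ and $c_s(f)\lambda_s^{p_n}$ read off directly as $e^u_2+e^u_3-e^u_4$, $e^u_2+e^u_3$, $e^u_2+e^u_3$, $e^u_2$ and analogously for the stable part, exactly as stated. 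The constant contribution coming from $w_{fix}$ is a bounded additive term in the exponent and can be absorbed into the $O(\cdot)$ remainder (or equivalently into $c_u(f),c_s(f)$ up to constants), which is consistent with how the statement is written.

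The only really delicate point is therefore the error estimate: I would double-check that the worst of the three contributing remainders, namely the one coming from $\ln(1+S_{2,n}S_{3,n}(1-S_{1,n}))$ (controlled by $S_{2,n}S_{3,n}$ hence by $e^{c_u(f)\lambda_u^{p_n}(e^u_2+e^u_3)}$) and from the remainder in \textbf{Lemma~\ref{wn en fct de cu,alpha0}} (of order $(e^{c_u(f)\lambda_u^{p_{n-4}}(e^u_2+e^u_3)})^{1/\overline{\ell}}$), together dominate the one from $\ln(1-S_{1,n})=O(\alpha_{n+1})$. This is exactly the form appearing in the statement, so no additional work is required beyond the substitution.
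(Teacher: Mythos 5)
Your proposal is correct and follows essentially the same route as the paper: invert the change of variables \textbf{(\ref{change variable x S})}, use the smallness of $S_{1,n}$ and of $S_{2,n}S_{3,n}$ to discard the nuisance factors, and substitute the $w_n$-asymptotics of \textbf{Proposition~\ref{super formular}}. The only cosmetic difference is that the paper quotes $S_{2,n}S_{3,n}=O(\alpha_n)$ from \textbf{Proposition~\ref{preimage and gaps adjacent}} while you deduce its decay from $y_{2,n}+y_{3,n}$ and $c_u(f)<0$; your write-up is in fact more detailed than the paper's three-line argument.
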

\begin{proof}
	
	Recall  that \textbf{(\ref{change variable x S})}
	\begin{equation*}
	\begin{cases}
	\begin{array}{lcl}x_{1,n} &=& \dfrac{S_{3,n}(1-S_{1,n})S_{2,n}}{(1+S_{3,n}(1-S_{1,n})S_{2,n})S_{4,n}} \\
	x_{2,n} &=& \dfrac{S_{3,n}(1-S_{1,n})S_{2,n}}{1+S_{3,n}(1-S_{1,n})S_{2,n}}\\
	x_{3,n} &=& \dfrac{S_{3,n}S_{2,n}}{1+S_{3,n}(1-S_{1,n})S_{2,n}}\\
	x_{4,n}&=& 1-\dfrac{S_{2,n}}{1+S_{3,n}(1-S_1)S_{2,n}}
	\end{array}
	\end{cases}
	\end{equation*}
	\textbf{Proposition~\ref{preimage and gaps adjacent}} implies that 	
	$S_{3,n}S_{2,n}=O(\alpha_n)$ and point 4 of \textbf{Lemma~\ref{asymptotic coordonat in x}} shows that $S_{1,n}=O(\alpha_{n+1})$. Thus, by \textbf{Proposition~\ref{super formular}} the lemma follows.
\end{proof}

\section{Rigidity}\label{Section 5}
 Let $f,g\in\mathcal{W}_{[1]}$ and let $h$ be a conjugacy between $f$ and $g$. Observe that $h(U_f)=U_g$, moreover the choice of $h$ inside $U_f$ could be arbitrary. However, $h|_{K_f} $ is uniquely defined. Being interested in the geometry of $K_f$, we will only study  $h|_{K_f} $ that we will denote $h$ yet. The main question treated in this section is: When do $K_f$ and $K_f$ have the same geometry? In other words, for fixed $f$ in $\mathcal{W}_{[1]}$, what is the geometry class of $f$ ($K_f$)? Before answering this question, we give bi-lipschitz class of $f$. This section is devoted to proving the following result:

\begin{theo}\label{class of h}
	Let $(\ell_{1},\ell_{2})\in(1,2)^2$ and let  $\beta=\dfrac{\overline{c}_u(f)}{\underline{c}_u(f)}\dfrac{(e_2^u+e_3^u)(\lambda_u-1)}{2\overline{\ell}\lambda^2_u}\in(0,1).$
	If $f,g\in\mathcal{W}_{[1]}$ with critical exponents $(\ell_{1},\ell_{2})$ and $h$ is the topological conjugation between $f$ and $g$, then
	\begin{align*}
	h \mbox{ is Holder homeo};\qquad &  \\
	h \mbox{ is a bi-lipschitz homeo}\Longleftrightarrow&  c^*_u(f)=c^*_u(f),  c_+(f)+c'_+(f)=c_+(g)+c'_+(g);\\
	h   \mbox{ is a } C^{1+\beta} \mbox{  diffeo} \Longleftrightarrow&  c^*_u(f)=c^*_u(g), c_+(f)+c'_+(f)=c_+(g)+c'_+(g),\\  & c^*_s(f)=c^*_s(g).
	\end{align*}
\end{theo}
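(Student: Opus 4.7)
The plan is to reduce all three regularity statements to size comparisons between corresponding elements of the dynamical partitions $\mathcal{P}_n^f$ and $\mathcal{P}_n^g$, and then to feed in the asymptotic formulas of \textbf{Proposition~\ref{super formular}}, \textbf{Lemma~\ref{wn en fct de cu,alpha0}}, and \textbf{Lemma~\ref{xn asymptotic coordinate}}. For close $x,y\in K_f$, I would pick $n=n(x,y)$ minimal with $x,y$ lying in distinct elements of $\mathcal{P}_n^f$. The conjugacy $h$ sends each element of $\mathcal{P}_n^f$ onto the corresponding element of $\mathcal{P}_n^g$, since $h$ respects the combinatorics of the partitions. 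Using \textbf{Proposition~\ref{preimage and gaps adjacent}}, both $|x-y|$ and $|h(x)-h(y)|$ are comparable, up to a universal constant, to the sizes of the level-$n$ elements containing them; and these sizes are of order $\prod_{k\le n}\alpha_k$ by the renormalization bookkeeping of \textbf{Lemma~\ref{xn asymptotic coordinate}}. Hence
\begin{equation*}
\ln\frac{|h(x)-h(y)|}{|x-y|} \;=\; \sum_{k\le n}\bigl(\ln\alpha_k^g-\ln\alpha_k^f\bigr)+O(1).
\end{equation*}

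Into this telescoping I would plug the main asymptotic
\begin{equation*}
\ln\alpha_k^g-\ln\alpha_k^f \asymp (c^{*}_u(g)-c^{*}_u(f))\lambda_u^{p_k}+(c^{*}_s(g)-c^{*}_s(f))\lambda_s^{p_k}+(c^{*}_+(g)-c^{*}_+(f)),
\end{equation*}
and read off the three contributions. Since $\lambda_u>1$ the unstable sum is a geometric series dominated by its last term $\lambda_u^{p_n}$; since $\lambda_s\in(0,1)$ the stable sum converges; the parity-alternating constant contribution produces a drift linear in $n$, which after pairing consecutive parities becomes a multiple of $(c_+(g)+c'_+(g))-(c_+(f)+c'_+(f))$.

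The three regularity regimes would then be extracted as follows. For the Holder bound, $|x-y|$ itself tends to zero double exponentially in $n$ with $\ln|x-y|\asymp c^{*}_u(f)\lambda_u^{p_n}$ by \textbf{Lemma~\ref{main lemma}}, so any polynomial-in-$\lambda_u^{p_n}$ blow-up of $\ln(|h(x)-h(y)|/|x-y|)$ gets absorbed into an inequality $|h(x)-h(y)|\le C|x-y|^{\sigma}$ for some $\sigma\in(0,1)$ depending on the ratio $\overline c_u(g)/\underline c_u(f)$. For the bi-Lipschitz equivalence, boundedness of the telescoped log-ratio forces the unstable part and the linear drift to vanish, i.e.\ $c^{*}_u(f)=c^{*}_u(g)$ together with $c_+(f)+c'_+(f)=c_+(g)+c'_+(g)$, while the stable sum is then bounded automatically; conversely, under these equalities the right-hand side of the telescoping is uniformly bounded. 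For the $C^{1+\beta}$ case, existence of $h'$ on $K_f$ upgrades boundedness to \emph{convergence} of the ratio, which requires the stable series to be Cauchy at a geometric rate, i.e.\ $c^{*}_s(f)=c^{*}_s(g)$. The residual errors $O\bigl((e^{c_u(f)\lambda_u^{p_{n-4}}})^{1/\overline\ell}\bigr)$ from \textbf{Lemma~\ref{wn en fct de cu,alpha0}} then dictate the Holder exponent of $h'$; balancing them against $|x-y|^{\beta}\asymp e^{\beta c^{*}_u(f)\lambda_u^{p_n}}$ yields precisely $\beta=\frac{\overline c_u(f)}{\underline c_u(f)}\cdot\frac{(e^u_2+e^u_3)(\lambda_u-1)}{2\overline\ell\,\lambda_u^2}$.

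The hard part is the $C^{1+\beta}$ direction, where one must pass from size-comparability of partition elements to genuine pointwise differentiability with a Holder derivative. This will require (i) an averaging step across consecutive parities to extract a single Holder exponent from the parity-split error terms of \textbf{Lemma~\ref{wn en fct de cu,alpha0}}, and (ii) an invocation of the Koebe bounds (\textbf{Proposition~\ref{koebe principle}}) combined with \textbf{Proposition~\ref{asymptotic distortion}} to convert comparability of $n$-th level intervals on $K_f$ and $K_g$ into Holder equivalence of derivatives along each renormalization branch. Once these two ingredients are in place, balancing the $\lambda_u^{p_n}$-size residuals against $|x-y|$ produces the announced exponent $\beta$.
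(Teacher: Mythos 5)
Your reduction hinges on the claim that, for the minimal $n$ separating $x,y\in K_f$ into distinct elements of $\mathcal{P}_n$, both $|x-y|$ and $|h(x)-h(y)|$ are comparable (uniformly in $n$) to the size of the level-$n$ elements, and that these sizes are all of order $\prod_{k\le n}\alpha_k$. Neither claim survives in this setting. \textbf{Proposition~\ref{preimage and gaps adjacent}} only compares a preimage of $U$ with its adjacent gaps; it does not make adjacent gaps comparable to each other, and since the geometry is degenerate (\textbf{Lemma~\ref{main lemma}}), neighbouring elements of $\mathcal{P}_n$ differ in size double exponentially: the types $A_n, B_n, C_n, D_n$ have lengths governed by $x_{2,n-1}$, $x_{3,n}$, $1-x_{4,n}$, which are not mutually comparable, so there is no single asymptotic $\prod_{k\le n}\alpha_k$ and the telescoping identity for $\ln(|h(x)-h(y)|/|x-y|)$ is unjustified. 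Moreover two points of $K_f$ can sit in distinct level-$n$ elements while being far smaller than either element, so minimality of $n$ gives no lower bound on $|x-y|$. This is exactly why the paper does not argue through a size comparison at a single scale: it builds the piecewise Jacobians $Dh_n$, proves the Cauchy estimate of \textbf{Lemma~\ref{Dhn cauchy}} by a case analysis over the four interval types, passes to the continuous limit $D$ and integrates (\textbf{(\ref{R3})}--\textbf{(\ref{R4'})}) to get the bi-Lipschitz direction, and obtains the needed lower bounds on $|x-y|$ only in carefully chosen configurations via \textbf{Lemma~\ref{cn and the other side}} together with \textbf{Lemma~\ref{I, minoration}}; the bi-Lipschitz necessity is read off from the specific orbit quantity $\prod_{k\le n}S_{4,k}$ in \textbf{(\ref{R1})}--\textbf{(\ref{R1'})}, not from a generic telescoping.

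There is also a concrete error in your $C^{1+\beta}$ necessity argument: you claim that differentiability upgrades boundedness to convergence ``which requires the stable series to be Cauchy at a geometric rate, i.e.\ $c^*_s(f)=c^*_s(g)$''. But the stable series $\sum(c^*_s(g)-c^*_s(f))\lambda_s^{p_k}$ converges geometrically no matter what, since $\lambda_s\in(0,1)$, so convergence alone forces nothing. What actually forces $c^*_s(f)=c^*_s(g)$ in the paper is a rate mismatch: the ratio $Dh(f^{q_{n+1}+1}(U_f))/Dh(f^{q_n+1}(U_f))$ deviates from $1$ by $(c^*_s(f)-c^*_s(g))\lambda_s^{p_n}(e^s_2+e^s_3-e^s_4)$, i.e.\ only exponentially fast, whereas the $C^{1+\beta}$ Holder modulus of $Dh$ evaluated on these points, whose distance is bounded by \textbf{Lemma~\ref{I, majoration}}, decays double exponentially; equality of the $c^*_s$ is the only way to reconcile the two. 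Finally, your sufficiency direction for $C^{1+\beta}$ is a statement of intent rather than a proof: the construction of $Dh=D$, its continuous extension across the critical orbit (matching of the one-sided limits $D_\pm(c_k)$), and the Holder estimate with the announced exponent all require the case analysis the paper carries out in \textbf{Proposition~\ref{h c1+beta diffeo}}, and the Holder continuity of $h$ itself (\textbf{Proposition~\ref{h Holder}}) likewise cannot be ``absorbed'' from the flawed comparability step.
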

\subsection{Preliminaries}
\begin{prop}\label{h is not Holder}
	Let $(\ell_{1},\ell_{2})\in(1,2)^2$. 
	If $f,g\in\mathcal{W}_{[1]}$ with different unstable eigenvalue. Then  $h$  the topological conjugation between $f$ and $g$ is not Holder.
\end{prop}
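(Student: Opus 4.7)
The plan is to derive a contradiction from the assumption that $h$ is H\"older by confronting it with the double-exponential asymptotics of the scaling ratios obtained in \textbf{Proposition~\ref{main Proposition}}. Up to swapping the roles of $f$ and $g$ (i.e.\ replacing $h$ by the conjugation $h^{-1}$ between $g$ and $f$, whose unstable eigenvalues are interchanged), I may assume without loss of generality that $\lambda_u(f) > \lambda_u(g)$; if the reverse inequality holds, the same argument applied to $h^{-1}$ shows that $h^{-1}$ fails to be H\"older, and hence $h$ is not a H\"older homeomorphism.

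Since $h$ is an equivariant conjugacy sending $U_f$ onto $U_g$, it also sends each pre-image $f^{-q_n}(U_f)=\underline{-q_n}$ of the flat piece onto the corresponding $\underline{-q_n}$ for $g$, and hence maps the gap $(\underline{-q_n},\underline{0})_f$ bijectively onto $(\underline{-q_n},\underline{0})_g$, with endpoints in the attractors $K_f$ and $K_g$ respectively. Combining the definition of $\alpha_n$ with \textbf{Proposition~\ref{preimage and gaps adjacent}} and the fact that $|[\underline{-q_n},\underline{0})|$ decays at most exponentially (\textbf{Proposition~\ref{qn go to zero}} together with the Koebe distortion estimates), the leading asymptotic of the gap length coincides with that of $\alpha_n$; specifically, \textbf{Proposition~\ref{main Proposition}} yields
\begin{equation*}
\ln |(\underline{-q_n},\underline{0})|_f \sim c^{*}_u(f)\,\lambda_u(f)^{p_n}, \qquad \ln |(\underline{-q_n},\underline{0})|_g \sim c^{*}_u(g)\,\lambda_u(g)^{p_n},
\end{equation*}
the stable part $c^*_s\lambda_s^{p_n}$ being bounded (since $\lambda_s\in(0,1)$), the neutral part $c^*_+$ constant, and the factor $|[\underline{-q_n},\underline{0})|$ of lower order.

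Suppose, for contradiction, that $h$ is H\"older of exponent $\alpha\in(0,1]$. Applying the H\"older bound to the two endpoints of this gap and taking logarithms, the above asymptotics force
\begin{equation*}
c^{*}_u(g)\,\lambda_u(g)^{p_n} - \alpha\, c^{*}_u(f)\,\lambda_u(f)^{p_n} \;\le\; O(1).
\end{equation*}
By \textbf{Lemma~\ref{alpha_n lambda u}}, both $c^{*}_u(f)$ and $c^{*}_u(g)$ are strictly negative, so the second term on the left is positive. Since $\lambda_u(f)>\lambda_u(g)>1$, this positive term dominates (double-exponentially) the negative contribution of the first, and the left-hand side tends to $+\infty$; this contradicts the uniform bound $O(1)$, proving that no $\alpha>0$ can work.

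The only delicate point I foresee is the reduction from gap lengths to $\alpha_n$ in the second paragraph, where one must keep track of the fact that the factor $|[\underline{-q_n},\underline{0})|$ is at most exponentially small and therefore becomes negligible against the double-exponential decay of $\alpha_n$; one must also check that the endpoints of $(\underline{-q_n},\underline{0})$ lie in $K_f$ (resp.\ $K_g$) so that the H\"older condition genuinely applies. Granting these, the heart of the argument is the elementary double-exponential comparison $-\alpha c^*_u(f)\lambda_u(f)^{p_n} \gg |c^*_u(g)|\lambda_u(g)^{p_n}$.
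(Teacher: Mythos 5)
Your overall strategy is the same comparison the paper makes: pit the two double-exponential rates $\lambda_u(f)^{p_n}$ and $\lambda_u(g)^{p_n}$ against each other through the H\"older inequality applied to corresponding orbit points. The paper does this with the distance $|f(U_f)-f^{q_{n+1}+1}(U_f)|=\prod_{k\le n}S_{4,k}(f)$, whose logarithm is read off directly from \textbf{Proposition~\ref{super formular}}; you instead use the gap $(\underline{-q_n},\underline{0})$ and try to transfer the asymptotics of $\alpha_n$ to its actual length. That transfer step contains a genuine gap. You justify it by claiming that $|[\underline{-q_n},\underline{0})|$ ``decays at most exponentially'', citing \textbf{Proposition~\ref{qn go to zero}}; but that proposition asserts the opposite direction (the lengths go to zero \emph{at least} exponentially fast, i.e.\ it is an upper bound), and the claim itself is false in the present setting: the content of \textbf{Lemma~\ref{main lemma}} and \textbf{Lemma~\ref{I, majoration}} is precisely that the geometry degenerates, so $|\underline{-q_n}|$, hence $|[\underline{-q_n},\underline{0})|$, is itself double-exponentially small, of order $e^{c\lambda_u^{p_n}}$ with $c<0$. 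Consequently your asserted asymptotics $\ln|(\underline{-q_n},\underline{0})|\sim c^*_u\lambda_u^{p_n}$ does not follow as stated (and the constant is in any case not $c^*_u$, since the reference scale contributes at the same order).

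The argument is repairable because only the rate, not the constant, matters. On the $f$-side the needed upper bound is immediate, since $|(\underline{-q_n},\underline{0})|\le\alpha_n(f)$ and \textbf{Proposition~\ref{main Proposition}} (or \textbf{Lemma~\ref{alpha_n lambda u}}) gives $\ln\alpha_n(f)\asymp c^*_u(f)\lambda_u(f)^{p_n}$ with $c^*_u(f)<0$. But on the $g$-side, what the H\"older inequality requires is a \emph{lower} bound of the form $|(\underline{-q_n},\underline{0})|_g\ge \frac1C e^{-c'\lambda_u(g)^{p_n}}$, and this must come from a genuine two-sided length estimate such as \textbf{Lemma~\ref{I, minoration}} combined with \textbf{Proposition~\ref{preimage and gaps adjacent}} (or, as the paper does, from the exact product formula $\prod_{k\le n}S_{4,k}$), not from an ``only exponentially small denominator'' shortcut. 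With that substitution your elementary comparison $\alpha\,|c_f|\lambda_u(f)^{p_n}\gg |c_g|\lambda_u(g)^{p_n}$ for $\lambda_u(f)>\lambda_u(g)$ goes through and gives the paper's conclusion; your reduction of the opposite ordering to $h^{-1}$ is the same symmetry the paper uses implicitly when it records that the limit is $0$ in one case and $\infty$ in the other.
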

\begin{proof}
	Observe that for even $n$,
	\begin{equation*}
	\dfrac{f(U_f)-f^{q_{n+1}+1}(U_f)}{ f^{q_{n}+1}(U_f)-f(U_f) }=\dfrac{x_{2,n}(f)}{- x_{1,n}(f) }=S_{4,n}(f).
	\end{equation*}
	Thus, by \textbf{Proposition~\ref{super formular}}  we obtain,
	\begin{equation}\label{R1}
	f(U_f)-f^{q_{n+1}+1}(U_f)=\prod_{k\leq n}^{}S_{4,k}(f)\sim e^{c_u(f)e^u_4(f)\lambda_u^{p_{n}}(f)+ \frac{n}{2}(c_+(f)+c'_+(f))}.
	\end{equation}
	In the same way
	\begin{equation}\label{R1'}
	g(U_g)-g^{q_{n+1}+1}(U_g)=\prod_{k\leq n}^{}S_{4,k}(g)\sim e^{c_u(g)e^u_4(g)\lambda_u^{p_{n}}(g)+ \frac{n}{2}(c_+(g)+c'_+(g))}.
	\end{equation}
	
	Let us remember that $h(U_f)=U_g$. Therefore, $h(f(U_f))=g(U_g)$.  That is, $h(f^n(U_f))=g^n(U_g)$, for all $n\in\N$. As a consequence, for all $\beta \in (0,1)$
	\begin{equation*}
	\lim\limits_{n\rightarrow \infty}\dfrac{g(U_g)-g^{q_{n+1}+1}(U_g)}{(f(U_f)-f^{q_{n+1}+1}(U_f))^{\beta} }=\begin{cases}
	\begin{array}{lcl}
	0 & \mbox{ si } & \lambda_u(f) <\lambda_u(g)\\
	\infty & \mbox{ si } & \lambda_u(f) >\lambda_u(g)
	\end{array}
	\end{cases}
	\end{equation*}
	and the proposition is shown.
\end{proof}

\subsection{Bi-lipschitz homeomorphism Conjugacy }
\begin{prop}\label{h bi-lipsch, c1 diffeo}
	Let $(\ell_{1},\ell_{2})\in(1,2)^2$. 
	If $f,g\in\mathcal{W}_{[1]}$ with critical exponents $(\ell_{1},\ell_{2})$ and if $h$ is the  topological conjugation between $f$ and $g$ then
	\begin{equation*}
	h \mbox{ is a bi-lipschitz homeo}  \Longrightarrow  c^*_u(f)=c^*_u(g),  c_+(f)+c'_+(f)=c_+(g)+c'_+(g).
	\end{equation*}
\end{prop}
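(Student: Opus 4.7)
The plan is to re-use the asymptotic identity $(\ref{R1})$ established in the proof of \textbf{Proposition~\ref{h is not Holder}}, but to exploit it in the opposite direction: namely, for every even $n$,
\[
|f(U_f)-f^{q_{n+1}+1}(U_f)| \sim \exp\!\Big(c_u(f)\,e^u_4\,\lambda_u^{p_n} + \tfrac{n}{2}(c_+(f)+c'_+(f))\Big),
\]
and likewise for $g$. Since $h$ conjugates $f$ to $g$, it sends $f^k(U_f)$ to $g^k(U_g)$ for every $k\ge 1$, and these orbit points all lie in $K_f$, resp.\ in $K_g$. Hence the bi-Lipschitz hypothesis furnishes uniform constants $C_1,C_2>0$ with
\[
C_1\,|f(U_f)-f^{q_{n+1}+1}(U_f)| \le |g(U_g)-g^{q_{n+1}+1}(U_g)| \le C_2\,|f(U_f)-f^{q_{n+1}+1}(U_f)|
\]
for every $n$. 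Taking logarithms, the difference of the two exponents is uniformly bounded in $n$.

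Plugging the asymptotic above into this bounded-difference inequality gives, for every even $n$,
\[
\big(c_u(f)-c_u(g)\big)\,e^u_4\,\lambda_u^{p_n} + \tfrac{n}{2}\big[(c_+(f)+c'_+(f))-(c_+(g)+c'_+(g))\big] = O(1).
\]
Since $\lambda_u>1$, the first term grows geometrically in $p_n$ while the second grows only linearly in $n$, so the first must vanish: $c_u(f)=c_u(g)$. Once that exponential term is removed, the residual linear term must also remain bounded, forcing $c_+(f)+c'_+(f)=c_+(g)+c'_+(g)$.

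For the primed invariants $c'_u$, I would invoke the parity symmetry from \textbf{Remark~\ref{critical exponent and  renormalization}}: running the identical argument on the renormalized pair $(\RN f,\RN g)$, which swaps $(\ell_1,\ell_2)$ and shifts all indices by one, reproduces $(\ref{R1})$ with the primed invariants in place of the unprimed ones and yields $c'_u(f)=c'_u(g)$. Combined with the even case, this is precisely $c^*_u(f)=c^*_u(g)$ in the sense of \textbf{Notation~\ref{notation cus+}}.

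The main subtlety I expect is controlling the error hidden in the symbol $\sim$ of $(\ref{R1})$: the super-formula of \textbf{Proposition~\ref{super formular}} carries at each step an error of order $O\big((e^{c_u\lambda_u^{p_{n-4}}})^{1/\overline{\ell}}\big)$, which could a priori accumulate when summed over $k\le n$ inside the product $\prod_{k\le n} S_{4,k}$. The double-exponential decay guaranteed by \textbf{Lemma~\ref{main lemma}} should keep the tail sum of these errors bounded, so it cannot absorb the exponential gap $(c_u(f)-c_u(g))e^u_4\lambda_u^{p_n}$; carefully verifying this domination is essentially the only nontrivial point, with the remainder reducing to elementary growth comparison between $\lambda_u^{p_n}$ and $n$.
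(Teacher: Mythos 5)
Your proposal is correct and is essentially the paper's own argument: the paper proves this proposition in one line by saying it "comes directly from" the asymptotics \textbf{(\ref{R1})} and \textbf{(\ref{R1'})}, i.e., precisely the comparison of $|f(U_f)-f^{q_{n+1}+1}(U_f)|$ with its $g$-counterpart under the bi-Lipschitz bound, with the exponential term $\lambda_u^{p_n}$ dominating the linear term $\tfrac{n}{2}(c_++c'_+)$. Your additional care about the accumulated errors in the product $\prod_{k\le n}S_{4,k}$ and the treatment of the primed invariants via parity/renormalization only spells out what the paper leaves implicit.
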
				
\begin{proof}
	The proposition comes directly from \textbf{(\ref{R1})} and \textbf{(\ref{R1'})}.
\end{proof}

In the following, we use the below notations. For all $n\in\N$,
\begin{description}
	\item[$\bullet$] $A_n(f)= (f(U_f)  , f^{q_{n}+1}(U_f))$;
	\item[$\bullet$]   $B_n(f)= (f^{-q_{n}+1}(U_f),f(U_f)   )$;
	\item[$\bullet$]  $C_n(f)=f^{-q_{n}+1}(U_f)$;
	\item[$\bullet$]  $D_n(f)= ( f^{q_{n-1}+1}(U_f),f^{-q_{n}+1}(U_f))$. 
\end{description}
And their iterates
\begin{description}
	\item[$\bullet$] $A^{i}_n(f)= f^{i}(A_n(f))$ for $0\leq i< q_{n-1} $;
	\item[$\bullet$]    $B^{i}_n(f)= f^{i}(B_n(f))$ for $0\leq i< q_{n} $;
	\item[$\bullet$]  $C^{i}_n(f)= f^{i}(C_n(f))$ for $0\leq i< q_{n}$;
	\item[$\bullet$]   $D^{i}_n(f)= f^{i}(D_n(f))$ for $0\leq i< q_{n}$. 
\end{description}
Observe that for all $n\in\N$, $$
\mathcal{P}_{n}=\{A^{i}_n(f), B^{j}_n(f), C^{j}_n(f), D^{j}_n(f)|\; 0\leq i< q_{n-1},\;0\leq j< q_{n} \} .$$ Since  $h(U_f)=U_g$, it follows that 

\begin{description}
	\item[$\bullet$] $h(A^{i}_n(f))= A^{i}_n(g)$ for $0\leq i< q_{n-1} $;
	\item[$\bullet$]  $h(B^{i}_n(f))= B^{i}_n(g)$ for $0\leq i< q_{n} $;
	\item[$\bullet$]   $h(C^{i}_n(f))= C^{i}_n(g)$ for $0\leq i< q_{n}$;
	\item[$\bullet$]  $h(D^{i}_n(f))= D^{i}_n(g)$ for $0\leq i< q_{n}$. 
\end{description}

Let $n\in\N$, $Dh_n: [0,1]\longrightarrow \R^+ $ is the function defined by
\begin{equation*}
Dh_n(x)= \dfrac{|h(I)|}{|I|}
\end{equation*}
with $I\in\mathcal{P}_n $ and $x\in  \stackrel{\circ}{I} $. Observe that if  $T=\cup I_i$ with $I_i\in \mathcal{P}_n $, then  for all $m\geq n$, 
\begin{equation}\label{R3}
h(T)= \int_{T}^{}Dh_m.
\end{equation}

\begin{lem}\label{I, minoration}
	There is $C>0$ such that for every interval $I\in \mathcal{P}_n $ the following inequality holds.
	\begin{equation*}
	|I|\geq \frac{1}{C}e^{\underline{c}_u\lambda_u^{p_{n}}e^u_2\frac{2\lambda_u}{\lambda_u-1}}
	\end{equation*}
	with $\underline{c}_u(f)=\min\{c_u(f),c'_u(f)\}.$
\end{lem}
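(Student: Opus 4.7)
The strategy is to reduce the bound on a generic $I\in\mathcal{P}_n$ to the physical size of the thinnest base interval at level $n$, and then to evaluate that size via a cumulative rescaling combined with \textbf{Lemma~\ref{xn asymptotic coordinate}}.

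First, I would reduce the problem to base intervals near the flat piece. By \textbf{Proposition~\ref{partition of f}}, every $I\in\mathcal{P}_n$ is a forward iterate $f^{i}(J)$ with $0\le i<q_n$ of one of the finitely many base intervals $A_n^{0},B_n^{0}$ adjacent to $\underline{1}$. Since the orbit $(f^{j}(J))_{0\le j\le i}$ avoids $\overline{U}$ by the partition property, the Koebe principle (\textbf{Proposition~\ref{koebe principle}}) together with \textbf{Proposition~\ref{preimage and gaps adjacent}} shows that $|I|$ differs from $|J|$ by at most a uniform multiplicative constant. It therefore suffices to bound $|J|$ from below, where $J$ is a base interval at level $n$.

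Second, I would transfer the level-$n$ coordinate size to the physical scale. Each renormalization step rescales the circle by a factor of order $|x_{1,k}|$, so that a base interval at level $n$ whose coordinate length is $\ell_n$ has physical length
\begin{equation*}
|J|\;\asymp\;\ell_n\cdot\prod_{k=0}^{n-1}|x_{1,k}|,
\end{equation*}
up to a distortion controlled by \textbf{Proposition~\ref{asymptotic distortion}}. Choosing $J$ so that its level-$n$ coordinate length is comparable to $1-x_{4,n}$, \textbf{Lemma~\ref{xn asymptotic coordinate}} yields $\ell_n\asymp\exp(c_u^{\ast}(f)\,\lambda_u^{p_n}\,e_2^u)$ up to the stable part. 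For the cumulative product, \textbf{Lemma~\ref{xn asymptotic coordinate}} gives
\begin{equation*}
\log\prod_{k=0}^{n-1}|x_{1,k}|\;\asymp\;c_u^{\ast}(f)\,(e_2^u+e_3^u-e_4^u)\sum_{k=0}^{n-1}\lambda_u^{p_k},
\end{equation*}
and since $p_k=\lfloor k/2\rfloor$ the geometric sum is $\asymp\frac{2\lambda_u^{p_n}}{\lambda_u-1}$. After a telescoping argument that uses the recursions of \textbf{Proposition~\ref{renormazation nth on S}} to rewrite each $|x_{1,k}|$ in terms of the coordinates of the previous level, the $e_3^u-e_4^u$ contributions cancel across consecutive levels and the combined exponent reduces to $\underline{c}_u\,\lambda_u^{p_n}\,e_2^u\cdot\frac{2\lambda_u}{\lambda_u-1}$, where taking $\underline{c}_u=\min\{c_u(f),c'_u(f)\}$ handles the worst parity coming from the $(\ell_1,\ell_2)\leftrightarrow(\ell_2,\ell_1)$ alternation of point~1 of \textbf{Remark~\ref{critical exponent and  renormalization}}.

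The main obstacle is the telescoping in the last step: obtaining the pure $e_2^u$ coefficient, rather than a mixture with $e_3^u$ and $e_4^u$, requires showing that the product of the rescalings $|x_{1,k}|$ collapses up to distortion to a contribution depending only on $e_2^u$. This rests on the algebraic consistency between the coordinate recursions of \textbf{Proposition~\ref{renormazation nth on S}} and the eigenvector data of \textbf{Proposition~\ref{super formular}}, and is where one must carefully control the accumulation of the error terms $\underline{O}(n,\ell,\alpha)$ from \textbf{Proposition~\ref{recurrence on Wn}} so that they remain subdominant with respect to the leading unstable contribution.
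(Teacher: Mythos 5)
Your proposal diverges from the paper's argument, and the two places where it diverges are exactly where it breaks. First, the reduction step: you claim that for $I=f^{i}(J)\in\mathcal{P}_n$ with $J$ a base interval adjacent to $\underline{1}$, the Koebe principle together with \textbf{Proposition~\ref{preimage and gaps adjacent}} gives $|I|\asymp|J|$ with a constant independent of $n$ and $i$. The Koebe principle (\textbf{Proposition~\ref{koebe principle}}) only bounds the ratio $Df^{i}(x)/Df^{i}(y)$ for $x,y$ in the interval, i.e.\ it controls the \emph{shape} of the image, not the absolute scale $|f^{i}(J)|/|J|$; nothing in the quoted tools bounds $|Df^{i}|$ itself away from $0$ and $\infty$ along the orbit, and in this degenerate-geometry setting the lengths of the iterates of a level-$n$ base interval are not uniformly comparable. (You also drop the elements $C_n^{i}$ and $D_n^{i}$ of $\mathcal{P}_n$; the $C_n^{i}$ are preimages of $U$ and need the separate comparison $|C_n^{i}|\geq K'|B_n^{i}|$ from \textbf{Proposition~\ref{preimage and gaps adjacent}}.) The paper avoids this problem entirely: it never compares an interval with its preimage under $f^{i}$, but instead bounds the scale-invariant ratio $|I|/|J|$ for nested $I\in\mathcal{P}_n$, $J\in\mathcal{P}_{n-1}$ from below by $(1+O(\alpha_{n-2}^{1/\ell_1,1/\ell_2}))\min\{1-x_{4,n+1},\,x_{3,n+1},\,S_{1,n}\}\geq e^{\underline{c}_u e_2^u\lambda_u^{p_{n+1}}-K\lambda_s^{p_n}}$ and multiplies these per-level bounds down the hierarchy, which is what produces the geometric-sum factor $\frac{2\lambda_u}{\lambda_u-1}$ uniformly in the position of $I$.

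Second, the "telescoping cancellation" you invoke to turn $\sum_{k<n}(e_2^u+e_3^u-e_4^u)\lambda_u^{p_k}$ plus the level-$n$ coordinate length into the pure exponent $\underline{c}_u e_2^u\frac{2\lambda_u}{\lambda_u-1}\lambda_u^{p_n}$ is asserted, not proved, and as an identity it is false. In the symmetric case $\ell_1=\ell_2=\ell$ one computes from \textbf{Proposition~\ref{recurrence on Wn}} that the unstable eigendata satisfy $\ell\mu^2=\mu+1$ (with $\lambda_u=\mu^2$), $e_2^u=1$, $e_4^u=\frac{1}{1+\mu}$, $e_3^u=\frac{\ell-1-\mu}{\ell\mu(1+\mu)}$, hence $e_2^u+e_3^u-e_4^u=\frac{1}{\mu(1+\mu)}$, which is not $\lambda_u e_2^u$ (nor anything that makes your cumulative exponent collapse to $e_2^u\frac{2\lambda_u}{\lambda_u-1}\lambda_u^{p_n}$); the exponent produced by your product formula simply differs from the one in the statement. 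So even for base intervals near $U$ your route does not yield the claimed bound by the argument you describe, and for the intervals far from $U$ — which are most of $\mathcal{P}_n$ — you have no mechanism at all. To repair the proof you should follow the structure of the paper: establish the per-level relative lower bound via \textbf{Proposition~\ref{asymptotic distortion}}, \textbf{Corollary~\ref{corollary renormalization on S}}, \textbf{Lemma~\ref{xn asymptotic coordinate}} and \textbf{Proposition~\ref{super formular}}, iterate it over the levels $3,\dots,n+1$ to get the sum $\sum_i\lambda_u^{p_i}\asymp\frac{2\lambda_u}{\lambda_u-1}\lambda_u^{p_n}$, and treat the preimages $C_n^{i}$ by comparison with the adjacent gaps.
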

\begin{proof} Suppose that  $I\neq C_n^i$.
	Let $J\in\mathcal{P}_{n-1} $ and $I\subset J$. By the construction of $\mathcal{P}_{n} $ from $\mathcal{P}_{n-1} $, it follows that
	\begin{equation*}
	\dfrac{|I|}{|J|}\geq (1+O(\alpha_{n-2}^{\frac{1}{\ell_{1}}, \frac{1}{\ell_{2}} }))\min\{1-x_{4,n+1}, x_{3,n+1}, S_{1,n}\}
	\end{equation*}
	where we used the mean value theorem and \textbf{Proposition~\ref{asymptotic distortion}}. 
	
	Also, from \textbf{Corollary~\ref{corollary renormalization on S}}, \textbf{Lemma~\ref{xn asymptotic coordinate}}  and \textbf{Proposition~\ref{super formular}} we get:
	\begin{description}
		\item[] $S_{1,n}\geq e^{\frac{c*_u(f)}{\underline{\ell}}\lambda_u^{p_{n}}e^u_2(f) -K\lambda_s^{p_{n}}};$
		\item[] $x_{3,n+1}\geq e^{c^{\underline{*}}_u(f)\lambda_u^{p_{n+1}}(e^u_2(f)+e^u_3(f)) -K\lambda_s^{p_{n}}};$
		\item[]	$1-x_{4,n+1}\geq e^{c^{\underline{*}}_u(f)\lambda_u^{p_{n+1}}e^u_2(f) -K\lambda_s^{p_{n}}}.$
	\end{description}
	
	Thus, from points \textbf{ 1}, \textbf{2}, \textbf{4} and \textbf{5} of \textbf{Proposition~\ref{super formular}}, we obtain
	\begin{equation*}
	\dfrac{|I|}{|J|}\geq e^{\underline{c}_u(f)e^u_2(f)\lambda_u^{p_{n+1}} -K\lambda_s^{p_{n}}}.
	\end{equation*}
	Therefore, if  $J\neq C_{n-1}^j$, we can repeat these estimates  and we get 
	\begin{equation*}
	|I|\geq e^{\underline{c}_u(f)e_2^u(f)\sum\limits_{i=3}^{n+1}\lambda_u^{p_i}-K\sum\limits_{i=1}^{n}\lambda_s^{p_i} }\geq e^{2\underline{c}_u(f)e_2^u(f)\sum\limits_{i=1}^{p_{n+1}}\lambda_u^{p_{2i}}-K\sum\limits_{i=1}^{n}\lambda_s^{p_i} }.
	\end{equation*}
	Moreover,  by \textbf{Proposition~\ref{preimage and gaps adjacent}} we have $ C_{n}^j> K' B^j_{n}$ for some $K'>0$. And the lemma follows.
\end{proof}
\begin{lem}\label{I, majoration}
	There is $C>0$ such that for every interval $I\in \mathcal{P} $ the following inequality holds.
	\begin{equation*}
	|I|\leq \dfrac{1}{C}e^{\overline{c}_u(f)(e_2^u(f)+e_3^u(f))\lambda_u^{p_n}}
	\end{equation*}
	with $\overline{c}_u(f)=\max\{c_u(f),c'_u(f)\}.$
\end{lem}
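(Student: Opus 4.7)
The strategy mirrors \textbf{Lemma~\ref{I, minoration}}, but with $\max$ in place of $\min$ at each step. Fix $I\in\mathcal{P}_n$ and assume first that $I\neq C_n^i$ and that the parent $J\in\mathcal{P}_{n-1}$ containing $I$ satisfies $J\neq C_{n-1}^j$. Applying the mean value theorem to the branches of $f$, together with the distortion control of \textbf{Proposition~\ref{asymptotic distortion}}, one obtains
\begin{equation*}
\frac{|I|}{|J|}\ \leq\ \bigl(1+O(\alpha_{n-2}^{1/\ell_{1},\,1/\ell_{2}})\bigr)\max\bigl\{\,1-x_{4,n+1},\ x_{3,n+1},\ S_{1,n}\,\bigr\}.
\end{equation*}
By \textbf{Lemma~\ref{xn asymptotic coordinate}}, \textbf{Lemma~\ref{S1,n goes to zero}} and \textbf{Lemma~\ref{alpha_n lambda u}}, each of the three quantities inside the maximum is bounded above by an expression of the form $\exp(\overline{c}_u(f)\,\mu\,\lambda_u^{p_{n+1}}+O(\lambda_s^{p_n}))$ with $\mu\in\{e_2^u,\,e_2^u+e_3^u\}$. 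Writing the resulting estimate with $\overline{c}_u=\max\{c_u,c'_u\}<0$ takes care of both parities simultaneously.

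Iterating this ratio bound from level $n$ down to a fixed base level telescopes into a finite product whose logarithm is controlled by the geometric series $\overline{c}_u(f)\,e_2^u\sum_{k\leq n+1}\lambda_u^{p_k}\asymp\overline{c}_u(f)\,e_2^u\,\tfrac{2\lambda_u^2}{\lambda_u-1}\,\lambda_u^{p_n}$. Since $\overline{c}_u<0$ and an elementary computation shows $e_2^u\,\tfrac{2\lambda_u^2}{\lambda_u-1}\geq e_2^u+e_3^u$ for $\lambda_u>1$ and $e_3^u\in(0,1)$ (as in \textbf{Proposition~\ref{super formular}}), this quantity is further dominated by $\overline{c}_u(f)(e_2^u+e_3^u)\lambda_u^{p_n}$, which is precisely the exponent of the target estimate. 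The distortion factors $\prod_k(1+O(\alpha_{k-2}^{1/\overline{\ell}}))$ converge to a finite limit by the double-exponential decay of $\alpha_k$ supplied by \textbf{Lemma~\ref{main lemma}}, so they are absorbed into the overall constant $C$.

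The remaining cases in which $I=C_n^i$ is a preimage of the flat piece (or some intermediate parent is such a preimage) are reduced to the generic situation above by \textbf{Proposition~\ref{preimage and gaps adjacent}}, which yields a uniform two-sided comparison between a preimage of $U$ and its adjacent gap. The main technical obstacle is the careful bookkeeping of parity (since the renormalization operator swaps $\ell_{1}$ and $\ell_{2}$) and of the accumulated distortion across the telescoping; both are kept under control by \textbf{Proposition~\ref{asymptotic distortion}} and \textbf{Lemma~\ref{main lemma}}.
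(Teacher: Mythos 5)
Your route is not the paper's and it breaks at its first step. The paper's proof of \textbf{Lemma~\ref{I, majoration}} is direct and one-level: by the mean value theorem and \textbf{Proposition~\ref{asymptotic distortion}} one has $|A_n^i|=x_{2,n-1}(1+O(\alpha_{n-2}^{1/\ell_1,1/\ell_2}))$, $|B_n^i|=x_{3,n}(1+O(\cdot))$, $|D_n^i|=(1-x_{4,n})(1+O(\cdot))$, and then \textbf{Lemma~\ref{xn asymptotic coordinate}} bounds these three quantities by $K e^{c^{*}_u e_2^u\lambda_u^{p_{n-1}}}$, $K e^{c^{\overline{*}}_u(e_2^u+e_3^u)\lambda_u^{p_n}}$, $K e^{c^{\overline{*}}_u e_2^u\lambda_u^{p_n}}$; the comparison $\lambda_u e_2^u>e_2^u>e_2^u+e_3^u$ (with $\overline{c}_u<0$) then yields the claim. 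Your telescoping of the ratio bound $|I|/|J|\leq(1+o(1))\max\{1-x_{4,n+1},\,x_{3,n+1},\,S_{1,n}\}$ is false: when $\mathcal{P}_{n-1}$ is refined to $\mathcal{P}_n$, some gaps pass to the next level unchanged (e.g.\ $B_{n+1}^{i+q_{n-1}}=D_n^i$, so the ratio is $1$), and when a gap is genuinely subdivided the dominant child is the new preimage of $U$, which occupies almost all of the parent precisely because the geometry degenerates. The $\min$ version used in \textbf{Lemma~\ref{I, minoration}} tolerates ratios equal to $1$; the $\max$ version does not. Concretely, your telescoped bound $\exp\bigl(\overline{c}_u e_2^u\tfrac{2\lambda_u^2}{\lambda_u-1}\lambda_u^{p_n}\bigr)$ is strictly smaller than the actual size of a $B$-type gap, $|B_n^i|\asymp x_{3,n}\asymp\exp\bigl(c^{\overline{*}}_u(e_2^u+e_3^u)\lambda_u^{p_n}\bigr)$, and it is incompatible with the lower bound of \textbf{Lemma~\ref{I, minoration}}; so the intermediate estimate you rely on cannot be true, even though the weaker inequality you deduce from it afterwards is.

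A second error: \textbf{Proposition~\ref{preimage and gaps adjacent}} gives only a one-sided comparison ($|A|/|B|$ bounded away from zero, where $A$ is a preimage of $U$), not the ``uniform two-sided comparison'' you invoke; in the degenerate regime a preimage of $U$ is typically much larger than its adjacent gaps, and indeed no bound of the stated form can hold for the $C$-intervals, since $C_n^{q_n-1}=U$ itself has fixed length. The paper sidesteps this by proving (and later using) the estimate only for the $A$, $B$, $D$ intervals; your reduction of the $C$-case to the generic case does not work.
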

\begin{proof}
	By mean value theorem and \textbf{Lemma~\ref{asymptotic distortion}} we get 
	\begin{description}
		\item[1.] $|A^{i}_n|= x_{2,n-1}(1+O(\alpha_{n-2}^{\frac{1}{\ell_{1}}, \frac{1}{\ell_{2}}}))$;
		\item[2.]  $|B^{i}_n|=x_{3,n}(1+O(\alpha_{n-2}^{\frac{1}{\ell_{1}}, \frac{1}{\ell_{2}}}))$;
		\item[2.]  $|D^{i}_n|=(1-x_{4,n})(1+O(\alpha_{n-2}^{\frac{1}{\ell_{1}}, \frac{1}{\ell_{2}}}))$. 
	\end{description}
	
	And by  \textbf{Lemma~\ref{xn asymptotic coordinate}}  there exists $K>0$ such that
	
	\begin{description}
		\item[1.] $ x_{2,n-1}\leq Ke^{{c^*_u}(f)e_2^u(f)\lambda_u^{p_{n-1}}} $;
		\item[2.]  $x_{3,n}\leq Ke^{{c^{\overline{*}}_u}(f)(e_2^u(f)+e_2^u(f))\lambda_u^{p_{n}}}$;
		\item[3.]  $(1-x_{4,n})\leq Ke^{{c^{\overline{*}}_u}(f)e_2^u(f)\lambda_u^{p_{n}}} $. 
	\end{description}
	Since $ \lambda_u e_2^u(f)>e_2^u(f)>e_2^u(f)+e_3^u(f)$, we have the lemma.
\end{proof}

\begin{lem}\label{Dhn cauchy}
	Let $(\ell_{1},\ell_{2})\in(1,2)^2$. If $f,g\in\mathcal{W}_{[1]}$ with critical exponent
	$(\ell_{1},\ell_{2})$ such  $c^*_u(f)=c^*_u(g)$ and $c_+(f)+c'_+(f)=c_+(g)+c'_+(g)$ then 
	\begin{equation*}
	\log\dfrac{Dh_{n+1}}{Dh_{n}}=O\left(\alpha_{n-2}^{\frac{1}{\ell_{1}},\frac{1}{\ell_{2}}}+|c^*_s(g)-c^*_s(f)|\lambda_s^{p_n}\right)
	\end{equation*}
\end{lem}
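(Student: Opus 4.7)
The plan is to control, for each $I\in\mathcal{P}_n$ and each child $J\in\mathcal{P}_{n+1}$ with $J\subset I$, the ratio
\begin{equation*}
\frac{Dh_{n+1}(x)}{Dh_n(x)}=\frac{|h(J)|/|h(I)|}{|J|/|I|},\qquad x\in\stackrel{\circ}{J},
\end{equation*}
and show that its logarithm is $O\bigl(\alpha_{n-2}^{1/\ell_1,1/\ell_2}+|c^*_s(g)-c^*_s(f)|\lambda_s^{p_n}\bigr)$, uniformly over the choice of $I,J$. Since $h$ sends the combinatorial pieces of $f$ to those of $g$ bijectively, the denominator $|J|/|I|$ computed for $f$ has an exact structural counterpart $|h(J)|/|h(I)|$ computed for $g$, so the two are compared branch by branch.

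First, I would classify $I$ by its combinatorial type ($A^i_n$, $B^i_n$, $C^i_n$, $D^i_n$) and use the fact that $f^{q_n}$ or $f^{q_{n-1}}$ is a diffeomorphism on the mother-interval of $I$ that avoids $\overline{U}$ for the relevant iterates. Combining the Koebe principle (\textbf{Proposition~\ref{koebe principle}}) with the distortion bounds of \textbf{Proposition~\ref{asymptotic distortion}}, the ratio $|J|/|I|$ equals a ratio of the coordinates $x_{i,n+1}$, $x_{j,n}$ (appearing in the fundamental renormalized picture) up to a multiplicative factor $1+O(\alpha_{n-2}^{1/\ell_1,1/\ell_2})$; the same structural identity holds for $|h(J)|/|h(I)|$ with the $x$-coordinates of $g$.

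Second, I would feed each such ratio into \textbf{Lemma~\ref{xn asymptotic coordinate}}, writing $\ln |J|-\ln |I|$ as a linear combination of $c^*_u(f)\lambda_u^{p_n}$, $c^*_s(f)\lambda_s^{p_n}$, $c^*_+(f)$, $c^{\overline{*}}_+(f)$ with coefficients depending only on $(\ell_1,\ell_2)$ and on the combinatorial type, plus the remainder $O((e^{c_u(f)\lambda_u^{p_{n-4}}(e^u_2+e^u_3)})^{1/\overline{\ell}})$, which by \textbf{Lemma~\ref{alpha_n lambda u}} is absorbed into $O(\alpha_{n-2}^{1/\ell_1,1/\ell_2})$. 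Subtracting the same expression for $g$ yields
\begin{equation*}
\log\frac{Dh_{n+1}}{Dh_n}=\kappa_u\bigl(c^*_u(f)-c^*_u(g)\bigr)\lambda_u^{p_n}+\kappa_s\bigl(c^*_s(f)-c^*_s(g)\bigr)\lambda_s^{p_n}+\delta_++O\!\left(\alpha_{n-2}^{1/\ell_1,1/\ell_2}\right),
\end{equation*}
with $\kappa_u,\kappa_s$ depending only on the combinatorial type and $\delta_+$ collecting the contributions of the neutral coefficients.

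The term in $c^*_u$ vanishes by the first hypothesis. The main obstacle I anticipate is to show $\delta_+=0$ under the hypothesis $c_+(f)+c'_+(f)=c_+(g)+c'_+(g)$. Since $E^+=(0,0,1,0)^t$, each $c^*_+$ enters only through $y_{4,n}=\ln S_{4,n}=\ln(-x_{2,n}/x_{1,n})$; hence in the $x$-coordinates it appears only via $\ln(-x_{1,n})$. The transition $n\to n+1$ therefore pairs $c_+(f)$ (even index) with $c'_+(f)$ (odd index), and the explicit recursion $S_{4,n+1}S_{4,n}=(\varphi_n^l(S_{1,n}))^{\ell_1,\ell_2}/(S_{4,n-1})\cdot(1+O)$ coming from point~4 of \textbf{Proposition~\ref{renormazation nth on S}} makes the neutral contribution collapse to the symmetric combination $c_+(f)+c'_+(f)$, which by hypothesis equals $c_+(g)+c'_+(g)$, so $\delta_+=0$. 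Once this cancellation is verified, only the stable term and the distortion remainder survive, giving the claimed bound.
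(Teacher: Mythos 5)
Your overall strategy is the same as the paper's: split according to the combinatorial type of the interval of $\mathcal{P}_{n+1}$ inside its $\mathcal{P}_n$-parent, use \textbf{Proposition~\ref{asymptotic distortion}} (plus the mean value theorem/Koebe) to replace each ratio $|J|/|I|$ by a ratio of renormalization coordinates up to $1+O(\alpha_{n-2}^{1/\ell_1,1/\ell_2})$, note that $c^*_u(f)=c^*_u(g)$ makes $\alpha_n(f)$ and $\alpha_n(g)$ comparable via \textbf{Lemma~\ref{alpha_n lambda u}}, and then read off the surviving stable term from \textbf{Lemma~\ref{xn asymptotic coordinate}}. Up to that point your plan matches the paper's proof.

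The gap is in your treatment of the neutral term $\delta_+$. First, the identity you invoke is misquoted: point 4 of \textbf{Proposition~\ref{renormazation nth on S}} gives $S_{4,n+1}S_{4,n}=S_{1,n}^{\ell_1}\bigl(1+O(\alpha_{n-1}^{1/\ell_2})\bigr)$; there is no division by $S_{4,n-1}$. Second, and more seriously, the proposed mechanism cannot work: in a single step $n\to n+1$ the coordinate carrying the neutral direction is $S_{4,n}$ of a fixed parity (since $E^+=(0,0,1,0)^t$ acts only on $y_{4,n}=\ln S_{4,n}=\ln(-x_{2,n}/x_{1,n})$), so if a neutral term appeared in $\log(Dh_{n+1}/Dh_n)$ it would be proportional to $c^*_+(f)-c^*_+(g)$ for \emph{one} parity, and the hypothesis $c_+(f)+c'_+(f)=c_+(g)+c'_+(g)$ does not force that difference to vanish; the sum hypothesis only controls telescoped products over many scales, as in the bi-Lipschitz estimate via $\prod_k S_{4,k}$. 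The statement is nevertheless true, but for a different reason, which you have not established: with the parent/child pairings the paper uses ($A^i_{n+1}$ compared with $B^i_n$, while $B^i_{n+1}$, $C^i_{n+1}$, $D^i_{n+1}$ are compared with $A^i_n$ or coincide with intervals already present), every single-step ratio reduces, up to $1+O(\alpha_{n-2}^{1/\ell_1,1/\ell_2})$, to one of $1-S_{1,n}$, $x_{3,n+1}$, $1-x_{4,n+1}$, $S_{1,n+1}$, $1-\alpha_{n+1}$, and by \textbf{Lemma~\ref{xn asymptotic coordinate}} none of these quantities contains the neutral coefficient at all (the normalization $|x_{1,n}|$, the only carrier of $c^*_+$, cancels exactly because $A_n$ is precisely the return interval being rescaled). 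So $\delta_+\equiv 0$ term by term and no cancellation via the sum hypothesis is needed; your proof needs this coordinate-level verification (and the explicit check of the trivial cases where the child coincides with its parent, and of the $C$-intervals handled through $1-\alpha_{n+1}$) in place of the claimed collapse to $c_+(f)+c'_+(f)$.
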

\begin{proof}
	From  \textbf{Lemma~\ref{alpha_n lambda u}} it follows that, if $c^*_u(f)=c^*_u(g)$,  then $\alpha_n(f)=\alpha_n(g)$. Thus, by 
	 \textbf{Lemma~\ref{asymptotic distortion}} and  \textbf{Lemma~\ref{asymptotic coordonat in x}} we get 
	\begin{align*}
	\dfrac{{Dh_{n+1}}_{|A^{i}_{n+1}(f)}}{{Dh_{n}}_{|A^{i}_{n+1}(f)}} = \dfrac{\dfrac{|A^{i}_{n+1}(g)|}{|A^{i}_{n+1}(f)|}}{\dfrac{|B^{i}_{n}(g)|}{|B^{i}_{n}(f)|}}&=\dfrac{\dfrac{|A^{i}_{n+1}(g)|}{|B^{i}_{n}(g)|}}{\dfrac{|A^{i}_{n+1}(f)|}{|B^{i}_{n}(f)|}}\vspace{0.2cm}\\
	&= \dfrac{\dfrac{|A_{n+1}(g)|}{|B_{n}(g)|}}{\dfrac{|A_{n+1}(f)|}{|B_{n}(f)|}}(1+O(\alpha_{n-2}^{\frac{1}{\ell_1}, \frac{1}{\ell_2}}))\vspace{0.2cm}\\
	&= \dfrac{\dfrac{x_{2,n}(g)}{x_{3,n}(g)}}{\dfrac{x_{2,n}(f)}{x_{3,n}(f)}}(1+O(\alpha_{n-2}^{\frac{1}{\ell_1}, \frac{1}{\ell_2}}))\vspace{0.2cm}\\
	&= \dfrac{1-S_{1,n}(g)}{1-S_{1,n}(f)}(1+O(\alpha_{n-2}^{\frac{1}{\ell_{1}}, \frac{1}{\ell_2}}))\vspace{0.2cm}\\
	&= (1+O(\alpha_{n-2}^{\frac{1}{\ell_{1}}, \frac{1}{\ell_{2}}}))
	\end{align*}
	
	Observe that for $i\geq q_{n-1} $, $B^{i}_{n+1}(f)=D^{i-q_{n-1}}_{n+1}(f)$. Therefore, 
	\begin{align*}
	\dfrac{{Dh_{n+1}}_{|B^{i}_{n+1}(f)}}{{Dh_{n}}_{|B^{i}_{n+1}(f)}} &=1.
	\end{align*}
	For $i<q_{n-1} $, with a similar calculation as before, we have
	\begin{align*}
	\dfrac{{Dh_{n+1}}_{|B^{i}_{n+1}(f)}}{{Dh_{n}}_{|B^{i}_{n+1}(f)}}
	&= \dfrac{\dfrac{|B_{n+1}(g)|}{|A_{n}(g)|}}{\dfrac{|B_{n+1}(f)|}{|A_{n}(f)|}}(1+O(\alpha_{n-2}^{\frac{1}{\ell_1}, \frac{1}{\ell_2}}))\vspace{0.2cm}\\
	&= \dfrac{x_{3,n+1}(g)}{x_{3,n+1}(f)}(1+O(\alpha_{n-2}^{\frac{1}{\ell_1}, \frac{1}{\ell_2}}))\vspace{0.2cm}\\
	&= (1+O(\alpha_{n-2}^{\frac{1}{\ell_{1}}, \frac{1}{\ell_2}}))e^{(c^*_s(g)-c^*_s(g)) (e_2^s+e_3^s)\lambda_s^{p_n} }\vspace{0.2cm}\\
	&= 1+O(\alpha_{n-2}^{\frac{1}{\ell_{1}},\frac{1}{\ell_{2}}}+|c^*_s(g)-c^*_s(f)|\lambda_s^{p_n})
	\end{align*} 
	where we used \textbf{Lemma~\ref{xn asymptotic coordinate}}.
	
	Let us remark also that for  $i\geq q_{n-1} $, $C^{i}_{n+1}(f)=C^{i-q_{n-1}}_{n+1}(f)$. Thus 
	\begin{align*}
	\dfrac{{Dh_{n+1}}_{|C^{i}_{n+1}(f)}}{{Dh_{n}}_{|C^{i}_{n+1}(f)}} &=1
	\end{align*}
	Now, when $i<q_{n-1} $, with a similar calculation as before we obtain
	\begin{align*}
	\dfrac{{Dh_{n+1}}_{|C^{i}_{n+1}(f)}}{{Dh_{n}}_{|C^{i}_{n+1}(f)}}
	&= \dfrac{\dfrac{|C_{n+1}(g)|}{|A_{n}(g)|}}{\dfrac{|C_{n+1}(f)|}{|A_{n}(f)|}}(1+O(\alpha_{n-2}^{\frac{1}{\ell_1}, \frac{1}{\ell_2}}))\vspace{0.2cm}\\
	&=\dfrac{ x_{4,n+1}(g)-x_{3,n+1}(g)}{x_{4,n+1}(f)-x_{3,n+1}(f)}(1+O(\alpha_{n-2}^{\frac{1}{\ell_1}, \frac{1}{\ell_2}}))\vspace{0.2cm}\\
	&=\dfrac{1- [x_{3,n+1}(g)+(1-x_{4,n+1}(g))]}{1- [x_{3,n+1}(f)+(1-x_{4,n+1}(f))]}(1+O(\alpha_{n-2}^{\frac{1}{\ell_1}, \frac{1}{\ell_2}}))\vspace{0.2cm}\\
	&=\dfrac{1-\alpha_{n+1}(g)}{1-\alpha_{n+1}(f)}(1+O(\alpha_{n-2}^{\frac{1}{\ell_1}, \frac{1}{\ell_2}}))\vspace{0.2cm}\\
	&= 1+O(\alpha_{n-2}^{\frac{1}{\ell_{1}},\frac{1}{\ell_{2}}})
	\end{align*}
	where we used \textbf{Lemma~\ref{asymptotic coordonat in x}} and \textbf{Lemma~\ref{alpha_n lambda u}}.
	
	Let $\Delta_n(f)$ be defined by $\Delta_n(f)=(f^{-q_{n}+1}(U), f^{q_{n+1}+1}(U))$ and its iterates given by $\Delta^i_n(f)=f^i(\Delta_n(f))$, $i<q_{n} $. Observe that for  $i\geq q_{n-1} $, $D^{i}_{n+1}(f)=\Delta_n^{i-q_{n-1}}(f)$. Thus, since $S_{1,n}=O(\alpha_{n+1})$ (see point \textbf{4} of \textbf{Lemma~\ref{asymptotic coordonat in x}}), it follows that
	\begin{align*}
	\dfrac{{Dh_{n+1}}_{|D^{i}_{n+1}(f)}}{{Dh_{n}}_{|D^{i}_{n+1}(f)}}
	&= \dfrac{\dfrac{|\Delta_n(g)|}{|B_n(g)|}}{\dfrac{|\Delta_n(f)|}{|B_n(f)|}}(1+O(\alpha_{n-2}^{\frac{1}{\ell_1}, \frac{1}{\ell_2}}))\vspace{0.2cm}\\
	&=\dfrac{S_{1,n+1}(g)}{S_{1,n+1}(f)}(1+O(\alpha_{n-2}^{\frac{1}{\ell_1}, \frac{1}{\ell_2}}))\vspace{0.2cm}\\
	&= 1+O(\alpha_{n-2}^{\frac{1}{\ell_{1}},\frac{1}{\ell_{2}}}).
	\end{align*}
	For $i<q_{n-1} $, we obtain
	\begin{align*}
	\dfrac{{Dh_{n+1}}_{|D^{i}_{n+1}(f)}}{{Dh_{n}}_{|D^{i}_{n+1}(f)}}
	&= \dfrac{\dfrac{|D_{n+1}(g)|}{|A_n(g)|}}{\dfrac{|D_{n+1}(g)(f)|}{|A_n(f)|}}(1+O(\alpha_{n-2}^{\frac{1}{\ell_1}, \frac{1}{\ell_2}}))\vspace{0.2cm}\\
	&=\dfrac{1-x_{4,n+1}(g)}{1-x_{4,n+1}(f)}(1+O(\alpha_{n-2}^{\frac{1}{\ell_1}, \frac{1}{\ell_2}}))\vspace{0.2cm}\\
	&= (1+O(\alpha_{n-2}^{\frac{1}{\ell_{1}}, \frac{1}{\ell_2}}))e^{(c^*_s(g)-c^*_s(g)) e_2^s\lambda_s^{p_n} }\vspace{0.2cm}\\
	&= 1+O(\alpha_{n-2}^{\frac{1}{\ell_{1}},\frac{1}{\ell_{2}}}+|c^*_s(g)-c^*_s(f)|\lambda_s^{p_n}).
	\end{align*}
	This ends the proof of the lemma.
\end{proof}

Note that, every boundary point of an interval in $\mathcal{P}_n$ is in the orbit of the critical points $o(f(U))$. So, if $x\not\in o(f(U) $ then $Dh_n$ is well defined. \textbf{Lemma~\ref{Dhn cauchy}} implies that
\begin{equation*}
D(x)=\lim\limits_{n\rightarrow\infty}Dh_n(x)
\end{equation*}
exists. Moreover,  \begin{equation}\label{R00}0<\inf_xD(x)<\sup_xD(x)<\infty.\end{equation}

\begin{lem}\label{Dhn go to D}
	Let $(\ell_{1},\ell_{2})\in(1,2)^2$. If $f,g\in\mathcal{W}_{[1]}$ with critical exponents
	$(\ell_{1},\ell_{2})$ and $c^*_u(f)=c^*_u(g)$, then 
	\begin{equation*}
	D: [0,1]\setminus o(f(U))\longrightarrow (0,\infty)
	\end{equation*}
	is continuous. In particular, for all $n\in\N$,
	\begin{equation*}
	\log\dfrac{D(x)}{Dh_n(x)}=O(\alpha_{n-2}^{\frac{1}{\ell_{1}},\frac{1}{\ell_{2}}}+|c^*_s(g)-c^*_s(f)|\lambda_s^{p_n}). 
	\end{equation*}
\end{lem}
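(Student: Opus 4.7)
The plan is to get both claims (the quantitative bound and the continuity of $D$) out of the telescoping identity
\begin{equation*}
\log\frac{D(x)}{Dh_n(x)}=\sum_{k=n}^{\infty}\log\frac{Dh_{k+1}(x)}{Dh_{k}(x)},
\end{equation*}
which makes sense whenever $x\notin o(f(U))$ because then $x$ lies in the \emph{interior} of an element of $\mathcal{P}_k$ for every $k$, so each $Dh_k(x)$ is well defined and, by the Cauchy-type estimate of \textbf{Lemma~\ref{Dhn cauchy}}, the series converges absolutely to $\log D(x)-\log Dh_n(x)$ (this is exactly the existence argument already used for $D(x)$, combined with \textbf{(\ref{R00})}).

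I would then bound the tail term by term using \textbf{Lemma~\ref{Dhn cauchy}}, writing
\begin{equation*}
\left|\log\frac{D(x)}{Dh_n(x)}\right|\leq \sum_{k=n}^{\infty}\left|\log\frac{Dh_{k+1}(x)}{Dh_{k}(x)}\right|=\sum_{k=n}^{\infty}O\!\left(\alpha_{k-2}^{\frac{1}{\ell_{1}},\frac{1}{\ell_{2}}}\right)+|c^*_s(g)-c^*_s(f)|\sum_{k=n}^{\infty}O(\lambda_s^{p_k}).
\end{equation*}
The second sum is geometric (since $\lambda_s\in(0,1)$) and is bounded by a uniform multiple of $\lambda_s^{p_n}$. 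For the first sum the key remark is \textbf{Lemma~\ref{main lemma}}: the scaling ratios $\alpha_k$ decay double-exponentially, so $\alpha_{k-2}^{1/\overline{\ell}}/\alpha_{k-3}^{1/\overline{\ell}}\to 0$ and the tail is dominated (up to a uniform constant) by its first term $\alpha_{n-2}^{1/\overline{\ell}}$. Combining the two estimates gives the advertised bound on $\log(D(x)/Dh_n(x))$; this step is the main technical moment, but it is essentially a geometric-series calculation once the double-exponential decay of $\alpha_k$ is invoked.

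For the continuity statement, fix $x_0\in [0,1]\setminus o(f(U))$ and $\varepsilon>0$. Choose $n=n(\varepsilon)$ so large that the right-hand side of the bound above is $<\varepsilon/3$. Since $x_0$ does not lie in $o(f(U))$, $x_0$ is in the interior of some interval $I_n\in\mathcal{P}_n$; take $\delta>0$ with $(x_0-\delta,x_0+\delta)\subset I_n$ and $(x_0-\delta,x_0+\delta)\cap o(f(U))$ avoiding the endpoints. For any $y\in(x_0-\delta,x_0+\delta)\setminus o(f(U))$, both $Dh_n(y)$ and $Dh_n(x_0)$ equal the constant value $|h(I_n)|/|I_n|$, and applying the bound at both $y$ and $x_0$ gives
\begin{equation*}
\left|\log\frac{D(y)}{D(x_0)}\right|\leq \left|\log\frac{D(y)}{Dh_n(y)}\right|+\left|\log\frac{Dh_n(x_0)}{D(x_0)}\right|<\tfrac{2\varepsilon}{3}.
\end{equation*}
In view of \textbf{(\ref{R00})}, $\log$ is bi-Lipschitz on the range of $D$, so this yields $|D(y)-D(x_0)|<\varepsilon$ for $y$ close enough to $x_0$, which is continuity at $x_0$. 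The only subtlety to check is that the modulus of continuity is controlled uniformly on compact subsets of $[0,1]\setminus o(f(U))$, which follows because on such a compact set one can pick a single $n$ for which every point is in the interior of its $\mathcal{P}_n$-interval.
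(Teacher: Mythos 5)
Your proposal is correct and follows essentially the same route as the paper: telescoping $\log(D(x)/Dh_n(x))$ over the Cauchy estimates of \textbf{Lemma~\ref{Dhn cauchy}}, summing the tail so it is dominated by the first term, and then obtaining continuity from the fact that nearby points off $o(f(U))$ share the same partition interval and hence the same value of $Dh_n$. Your extra details (the geometric series for the $\lambda_s^{p_k}$ part, the appeal to the decay of $\alpha_k$ for the other part, and the $\varepsilon$--$\delta$ conversion via \textbf{(\ref{R00})}) only make explicit what the paper leaves implicit.
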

\begin{proof}
	Fix $n_0\in\N$, then by \textbf{Lemma~\ref{Dhn cauchy}} we have 
	\begin{align*}
	\left|\log\dfrac{D(x)}{Dh_{n_0}(x)}\right| &\leq \sum\limits_{k\geq 0}^{}\log\dfrac{Dh_{n_0+k+1}(x)}{Dh_{n_0+k}(x)}\\
	&\leq \sum\limits_{k\geq 0}^{} O(\alpha_{k-2}^{\frac{1}{\ell_{1}},\frac{1}{\ell_{2}}}+|c^*_s(g)-c^*_s(f)|\lambda_s^{p_k})\\ &\leq
	O(\alpha_{n_0-2}^{\frac{1}{\ell_{1}},\frac{1}{\ell_{2}}}+|c^*_s(g)-c^*_s(f)|\lambda_s^{p_{n_0}}).
	\end{align*}
	Therefore, if $x,y\in I\in\mathcal{P}_{n_0} $, we have
	\begin{align*}
	\left|\log\dfrac{D(x)}{D(y)}\right| &\leq \left|\log\dfrac{D(x)}{Dh_{n_0}(x)}\right| +\left| \log\dfrac{D(y)}{Dh_{n_0}(x)}\right| \\
	&\leq
	O(\alpha_{n_0-2}^{\frac{1}{\ell_{1}},\frac{1}{\ell_{2}}}+|c^*_s(g)-c^*_s(f)|\lambda_s^{p_{n_0}})
	\end{align*}
	which means that $D$ is continuous. 
\end{proof}

Let $T$ be an interval in $[0,1]$, then by formula \textbf{(\ref{R3})},
\begin{equation}
|h(T)|=\lim\limits_{n\rightarrow \infty}|h_n(T)|=\lim\limits_{n\rightarrow \infty}\int_{T}Dh_n\leq \int_T\sup_xD(x)\leq \sup_xD(x)|T|.\label{R4}
\end{equation}
Also,
\begin{equation}
|h(T)|=\lim\limits_{n\rightarrow \infty}|h_n(T)|=\lim\limits_{n\rightarrow \infty}\int_{T}Dh_n\geq \int_T\inf_xD(x)\geq \inf_xD(x)|T|.\label{R4'}
\end{equation}

\begin{prop}\label{h homeo diffeo}
	Let $(\ell_{1},\ell_{2})\in(1,2)^2$. Let $f,g\in\mathcal{W}_{[1]}$ with critical exponents
	$(\ell_{1},\ell_{2})$. Then, 
	\begin{equation*}
	h \mbox{ is a bi-lipschitz homeo} \Longleftarrow  c^*_u(f)=c^*_u(g), c_+(f)+c'_+(f)=c_+(g)+c'_+(g).
	\end{equation*}
\end{prop}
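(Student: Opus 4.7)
The plan is to use Lemmas~\ref{Dhn cauchy}--\ref{Dhn go to D} to upgrade the formal derivative $D(x)=\lim Dh_n(x)$ to a bounded, bounded-away-from-zero function on $[0,1]\setminus o(f(U))$, and then deduce the bi-Lipschitz property from the integration bounds (\ref{R4}) and (\ref{R4'}) already recorded above.

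First I would invoke Lemma~\ref{Dhn cauchy}: the two hypotheses $c_u^*(f)=c_u^*(g)$ and $c_+(f)+c'_+(f)=c_+(g)+c'_+(g)$ give
\[
\log\frac{Dh_{n+1}(x)}{Dh_n(x)}=O\bigl(\alpha_{n-2}^{1/\ell_1,\,1/\ell_2}+|c_s^*(g)-c_s^*(f)|\lambda_s^{p_n}\bigr),
\]
uniformly in $x\in[0,1]\setminus o(f(U))$. Since $\alpha_n\to 0$ doubly exponentially by Lemma~\ref{main lemma} and $\lambda_s\in(0,1)$, the right-hand side is absolutely summable in $n$. Hence the infinite product $\prod_n Dh_{n+1}/Dh_n$ converges absolutely and uniformly, so $D(x):=\lim_n Dh_n(x)$ exists, and the telescoping bound of Lemma~\ref{Dhn go to D} holds. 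Combining that bound (which controls $D/Dh_{n_0}$ uniformly) with the fact that $Dh_{n_0}$ is itself a bounded function on the finite partition $\mathcal{P}_{n_0}$, I would conclude the uniform estimate $0<\inf_x D(x)\le\sup_x D(x)<\infty$, i.e.\ property (\ref{R00}).

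Second, for any interval $T$ whose endpoints avoid the countable orbit $o(f(U))$, I would approximate $T$ at each scale $m$ by a union of elements of $\mathcal{P}_m$ together with small boundary pieces of length controlled by Lemma~\ref{I, majoration}, and pass the identity (\ref{R3}) to the limit (dominated convergence being justified by the uniform bound on $Dh_n$) to obtain $|h(T)|=\int_T D\,dx$. Applying (\ref{R4}) and (\ref{R4'}) then yields
\[
(\inf_x D(x))\,|T|\le |h(T)|\le (\sup_x D(x))\,|T|.
\]
For an arbitrary interval one approximates the endpoints from outside $o(f(U))$ and invokes continuity of $h$ together with the already established lower and upper bounds.

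Finally, since $h$ is an orientation-preserving homeomorphism of $[0,1]$, $|h(y)-h(x)|=|h([x,y])|$ for all $x<y$, so the preceding inequality is precisely the bi-Lipschitz estimate on $h$. The main obstacle is the uniform positivity $\inf D>0$: this is exactly where the hypothesis $c_+(f)+c'_+(f)=c_+(g)+c'_+(g)$ is needed, to kill the multiplicative drift of $Dh_{n+1}/Dh_n$ along the neutral direction $E^+$ that would otherwise accumulate across scales and drive $D$ to $0$ or $\infty$, while the hypothesis $c_u^*(f)=c_u^*(g)$ is what makes the two scaling sequences $\alpha_n(f)$ and $\alpha_n(g)$ match at the asymptotic level required in Lemma~\ref{Dhn cauchy}.
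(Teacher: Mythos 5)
Your argument is correct and follows essentially the same route as the paper, whose proof is simply the combination of \textbf{Lemma~\ref{Dhn cauchy}}/\textbf{Lemma~\ref{Dhn go to D}}, the bound \textbf{(\ref{R00})}, and the integration estimates \textbf{(\ref{R3})}--\textbf{(\ref{R4'})}. You merely spell out the summability, the passage to $|h(T)|=\int_T D$, and where each hypothesis enters, which the paper leaves implicit.
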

\begin{proof} If $c^*_u(f)=c^*_u(g)$  and $c_+(f)+c'_+(f)=c_+(g)+c'_+(g)$ then by  inequalities \textbf{(\ref{R4})}, \textbf{(\ref{R4'})},  \textbf{Lemma~\ref{Dhn go to D}} and   inequalities in \textbf{(\ref{R00})}, $h$  is a bi-Lipschitz.
\end{proof}

As consequence of \textbf{Proposition~\ref{h homeo diffeo}}, we have:
\begin{cor}
Fix $(\ell_{1},\ell_{2})\in(1,2)^2$.
Let $f,g\in\mathcal{W}_{[1]}$ with critical exponents $(\ell_{1},\ell_{2})$ and let $h$ be the topological conjugation between $f$ and $g$. If
	$c^*_u(f)=c^*_u(f)$  and $c_+(f)+c'_+(f)=c_+(g)+c'_+(g)$, then $K_f$ and $K_g$ have the same Hausdorff dimension\footnote{\begin{equation*}
		Dim_HK_f:=\inf_{s\geq0}\left\{H_s(K_f):=\inf_{\delta>0}\left\{\sum |U_i|^{s},\; K_f\subset\cup U_i\,|\;|U_i|<\delta\right\}=0\right\}.
		\end{equation*}}.
\end{cor}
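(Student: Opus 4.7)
The plan is to deduce the equality of Hausdorff dimensions as an immediate consequence of the fact that bi-Lipschitz homeomorphisms preserve Hausdorff dimension, once we have the bi-Lipschitz conjugacy furnished by \textbf{Proposition~\ref{h homeo diffeo}}.

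First, under the hypotheses $c^*_u(f)=c^*_u(g)$ and $c_+(f)+c'_+(f)=c_+(g)+c'_+(g)$, \textbf{Proposition~\ref{h homeo diffeo}} gives that the topological conjugacy $h$ between $f$ and $g$ is a bi-Lipschitz homeomorphism on $K_f$. In particular, there exists $L\geq 1$ such that
\begin{equation*}
\tfrac{1}{L}|x-y|\leq |h(x)-h(y)|\leq L|x-y|,\qquad \forall\, x,y\in K_f.
\end{equation*}
Next, I would note that because $h$ is the conjugacy, $h(f^{-i}(U_f))=g^{-i}(U_g)$ for every $i\geq 0$, and $h(U_f)=U_g$; therefore $h(K_f)=K_g$ by the very definition of these attractors as complements of the grand orbits of the flat pieces.

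Then I would invoke the classical property that any bi-Lipschitz bijection preserves Hausdorff dimension. The argument is short: given any cover $\{U_i\}$ of $K_f$ by sets of diameter $<\delta$, the family $\{h(U_i)\}$ covers $K_g$ with diameters $<L\delta$, and $|h(U_i)|^s\leq L^s|U_i|^s$, so
\begin{equation*}
H_s(K_g)\leq L^s H_s(K_f),
\end{equation*}
and symmetrically, using that $h^{-1}$ is also $L$-Lipschitz, $H_s(K_f)\leq L^s H_s(K_g)$. Consequently $H_s(K_f)=0$ if and only if $H_s(K_g)=0$, so
\begin{equation*}
\mathrm{Dim}_H(K_f)=\inf\{s\geq 0\mid H_s(K_f)=0\}=\inf\{s\geq 0\mid H_s(K_g)=0\}=\mathrm{Dim}_H(K_g).
\end{equation*}

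There is essentially no obstacle here; the entire content is concentrated in \textbf{Proposition~\ref{h homeo diffeo}}, which was established in the preceding pages via the distortion control on $Dh_n$ and the sandwich estimates \textbf{(\ref{R4})}, \textbf{(\ref{R4'})}. The only small point worth stating carefully is the identity $h(K_f)=K_g$, which uses the dynamical characterization of $K_f$ as the complement of $\bigcup_{i\geq 0}f^{-i}(U_f)$ together with the conjugacy relation $h\circ f=g\circ h$; the rest is the textbook fact that Hausdorff dimension is a bi-Lipschitz invariant.
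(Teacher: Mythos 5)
Your proposal is correct and follows the same route the paper intends: the corollary is stated there as an immediate consequence of \textbf{Proposition~\ref{h homeo diffeo}}, the bi-Lipschitz conjugacy, combined with the standard fact that bi-Lipschitz bijections preserve Hausdorff dimension. Your write-up merely makes explicit the covering estimate and the identity $h(K_f)=K_g$, which the paper leaves implicit.
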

\subsection{ $C^{1+\beta} $  diffeomorphism Conjugacy}
The rigidity class is described as:
\begin{prop}\label{h c1+beta diffeo}
	Let $(\ell_{1},\ell_{2})\in(1,2)^2$ and let 
	$\beta= \dfrac{\underline{c}_u(f)}{\overline{c}_u(f)}\dfrac{(e_2^u+e_3^u)(\lambda_u-1)}{2\overline{\ell}\lambda_u}\in(0,1).$ 
	If $f,g\in\mathcal{W}_{[1]}$ with critical exponents $(\ell_{1},\ell_{2})$ and $h$ is the topological conjugation between $f$ and $g$, then
	
	\begin{align*}
	h   \mbox{ is a } C^{1+\beta} \mbox{  diffeo}  \Longleftrightarrow&  c^*_u(f)=c^*_u(g), c_+(f)+c'_+(f)=c_+(g)+c'_+(g)\\& \mbox{ and }  c^*_s(f)=c^*_s(g).
	\end{align*}
\end{prop}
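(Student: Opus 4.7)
The proof splits into the two implications, both pivoting on the fine control of $Dh_n$ from Lemmas~\ref{Dhn cauchy} and~\ref{Dhn go to D}, and on the size estimates of $\mathcal{P}_n$-intervals in Lemmas~\ref{I, minoration} and~\ref{I, majoration}.

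For the implication $(\Longrightarrow)$, any $C^{1+\beta}$ diffeomorphism is in particular bi-Lipschitz, so Proposition~\ref{h bi-lipsch, c1 diffeo} already furnishes $c^*_u(f)=c^*_u(g)$ and $c_+(f)+c'_+(f)=c_+(g)+c'_+(g)$. Suppose toward contradiction that $c^*_s(f)\neq c^*_s(g)$. The computation in Lemma~\ref{Dhn cauchy} (for instance in the $B^i_{n+1}$ case with $i<q_{n-1}$) shows
\[
\log\frac{Dh_{n+1}(x)}{Dh_n(x)} \asymp (c^*_s(g)-c^*_s(f))(e_2^s+e_3^s)\lambda_s^{p_n}
\]
on a definite family of level-$(n+1)$ sub-intervals. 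Picking $x,y$ in two adjacent such sub-intervals of a common $I\in\mathcal{P}_n$ therefore produces $|D(x)-D(y)|\gtrsim \lambda_s^{p_n}$, while Lemma~\ref{I, majoration} forces $|x-y|\leq |I|\lesssim e^{\overline{c}_u(f)(e_2^u+e_3^u)\lambda_u^{p_n}}$. A Holder-$\beta$ bound would then require
\[
\lambda_s^{p_n}\lesssim e^{\beta\,\overline{c}_u(f)(e_2^u+e_3^u)\lambda_u^{p_n}},
\]
which is impossible for large $n$, since the right-hand side decays doubly exponentially while the left-hand side decays only single exponentially. Hence $c^*_s(f)=c^*_s(g)$.

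For the implication $(\Longleftarrow)$, assume the three equalities. Proposition~\ref{h homeo diffeo} gives $h$ bi-Lipschitz with $D(x):=\lim_n Dh_n(x)$ continuous, positive and bounded, and $h(T)=\int_T D$ for every interval $T$. The equality $c^*_s(f)=c^*_s(g)$ collapses the estimate of Lemma~\ref{Dhn go to D} to
\[
\log\frac{D(x)}{Dh_n(x)}=O\!\left(\alpha_{n-2}^{1/\overline{\ell}}\right).
\]
For $x\neq y$, let $n$ be the largest integer such that $x,y$ lie in a common $I\in\mathcal{P}_n$; since $Dh_n$ is constant on $I$, the triangle inequality yields $|D(x)-D(y)|=O(\alpha_{n-2}^{1/\overline{\ell}})$. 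Combining the asymptotics $\alpha_{n-2}\lesssim e^{\overline{c}_u(f)(e_2^u+e_3^u)\lambda_u^{p_n-1}}$ from Lemma~\ref{alpha_n lambda u} (using $\lambda_u^{p_{n-2}}=\lambda_u^{p_n-1}$) with the lower bound
\[
|I|\geq C\,e^{\underline{c}_u(f)\,\frac{2\lambda_u}{\lambda_u-1}\,e_2^u\,\lambda_u^{p_n}}
\]
from Lemma~\ref{I, minoration}, and balancing the two exponentials, produces the inequality $|D(x)-D(y)|\leq K|x-y|^\beta$ with precisely the exponent $\beta$ displayed in the statement.

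The main obstacle lies in the reverse direction, where one has to correctly relate $|x-y|$ to $|I|$: a priori $x$ and $y$ may cluster near the common boundary of two $\mathcal{P}_{n+1}$ sub-intervals, and so be arbitrarily small despite living in distinct level-$(n+1)$ elements. The remedy is the standard scale-selection procedure---fix the scale $n$ by matching $|x-y|$ to the uniform lower bound on $\mathcal{P}_n$-intervals supplied by Lemma~\ref{I, minoration}, and then balance the competing exponentials coming from $\alpha_{n-2}^{1/\overline{\ell}}$ and $|I|^\beta$ to read off the precise value of $\beta=\dfrac{\underline{c}_u(f)}{\overline{c}_u(f)}\dfrac{(e_2^u+e_3^u)(\lambda_u-1)}{2\overline{\ell}\lambda_u}$.
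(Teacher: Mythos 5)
Your overall skeleton (use the $Dh_n$ machinery, the Cauchy-type estimates, and the interval bounds, then balance exponentials) is the right one, but both decisive steps are missing. In the direction $(\Longrightarrow)$, you try to extract a discrepancy $|D(x)-D(y)|\gtrsim\lambda_s^{p_n}$ from the one-step ratio $Dh_{n+1}/Dh_n$; however, the passage from $Dh_{n+1}$ to the limit $D$ is controlled only by \textbf{Lemma~\ref{Dhn go to D}}, whose error at level $n+1$ is itself $O\left(|c^*_s(g)-c^*_s(f)|\lambda_s^{p_{n+1}}\right)$ with $p_{n+1}\in\{p_n,p_n+1\}$ -- the same order as your claimed leading term, with untracked constants -- so the lower bound on $|D(x)-D(y)|$ does not follow from the cited lemmas (the tail could cancel the level-$(n+1)$ discrepancy), and you would also need $e_2^s+e_3^s\neq 0$, which the paper never asserts. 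The paper's proof avoids this entirely: it evaluates the ratio of derivatives of $h$ along the critical orbit \emph{exactly}, via the conjugacy and the chain rule,
\begin{equation*}
\dfrac{Dh(f^{q_{n+1}+1}(U_f))}{Dh(f^{q_{n}+1}(U_f))}=\dfrac{Dg^{q_{n-1}}(g^{q_{n}}(0))}{Df^{q_{n-1}}(f^{q_{n}}(0))},
\end{equation*}
and shows by \textbf{Proposition~\ref{super formular}} that this equals $e^{(c^*_s(f)-c^*_s(g))(e^s_2+e^s_3-e^s_4)\lambda_s^{p_n}}$ up to a double-exponentially small error, with $e^s_2+e^s_3-e^s_4\neq 0$; a Holder $Dh$ bounded away from zero cannot accommodate this because of \textbf{Lemma~\ref{I, majoration}}.

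In the direction $(\Longleftarrow)$, the obstacle you flag is exactly the crux, and "match $|x-y|$ to the uniform lower bound of \textbf{Lemma~\ref{I, minoration}}" is an assertion, not an argument: after such a matching you still must show that $D$ oscillates by at most the corresponding level's amount over $[x,y]$, and handle the case where $x,y$ straddle several partition elements or a point of $o(f(U))$, where $D$ is defined only through one-sided limits (the $C^1$ step $D_-(c_k)=D_+(c_k)$, which the paper proves first and you omit). The paper's actual remedy is different: it takes $x,y\in K_f$, rules out $I=C_n^i$ and $I=D_n^i$ (maximality plus $D_n^i=B_{n+1}^{i+q_{n-1}}$), and in the remaining cases $I=A_n^i$ or $I=B_n^i$ uses that points of $K_f$ lying in distinct next-level elements are separated by a preimage of $U$ of length comparable to the adjacent intervals (\textbf{Lemma~\ref{cn and the other side}}, \textbf{Proposition~\ref{preimage and gaps adjacent}}), yielding $|x-y|\geq K|A_m^i|\geq \frac{K}{C}e^{\underline{c}_u e^u_2\frac{2\lambda_u}{\lambda_u-1}\lambda_u^{p_m}}$ at a suitably chosen scale ($m=n+1$, or $m=n+2k$ maximal in the $B$-case); only with this genuine lower bound on $|x-y|$ does the balancing against $O\bigl(e^{\frac{1}{\overline{\ell}}\overline{c}_u(e^u_2+e^u_3)\lambda_u^{p_{m-2}}}\bigr)$ produce the stated exponent $\beta$. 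So the proposal identifies the right ingredients but leaves both the necessity of $c^*_s(f)=c^*_s(g)$ and the Holder estimate for $Dh$ unproved at their critical points.
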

\begin{proof}
	Suppose that $	h   \mbox{ is a } C^{1+\beta} \mbox{  diffeo} $, then by \textbf{Proposition~\ref{h bi-lipsch, c1 diffeo}}, $c^*_u(f)=c^*_u(g)$ and $c_+(f)+c'_+(f)=c_+(g)+c'_+(g)$. Thus, by \textbf{Proposition~\ref{super formular}} and \textbf{Lemma~\ref{alpha_n lambda u}}
	we get
	\begin{align*}
	\dfrac{Dh(f^{q_{n+1}+1}(U_f))}{Dh(f^{q_{n}+1}(U_f))} &=\dfrac{Dg^{q_{n-1}}(g^{q_{n}}(0))}{Df^{q_{n-1}}(f^{q_{n}}(0))}\vspace{0.2cm}\\
	&=\dfrac{Dq_{s_n(g)(0)}\dfrac{1-x_{2,n}(g)}{x_{1,n}(g)}}{Dq_{s_n(g)(0)}\dfrac{1-x_{2,n}(g)}{x_{1,n}(g)}}(1+O(\alpha_{n-2}^{\frac{1}{\ell_{1}},\frac{1}{\ell_{2}}}))\vspace{0.2cm}\\
	&=\dfrac{s^{\frac{1}{\ell_{1}},\frac{1}{\ell_{2}}}_n(g)(1+O(s_n(g))) \dfrac{1-x_{2,n}(g)}{x_{1,n}(g)}}{s^{\frac{1}{\ell_{1}},\frac{1}{\ell_{2}}}_n(f)(1+O(s_n(f)))\dfrac{1-x_{2,n}(g)}{x_{1,n}(g)}}(1+O(\alpha_{n-2}^{\frac{1}{\ell_{1}},\frac{1}{\ell_{2}}}))\vspace{0.2cm}\\
	&=e^{(c^*_s(f)-c^*_s(g))\lambda_s^{p_{n}}(e^s_2+e^s_3-e^s_4)+ 0((e^{c^*_u(f)\lambda_u^{p_{n-4}}(e^u_2+e^u_3)})^{1/\overline{\ell}})}
	\end{align*}
	Let us note that, by point \textbf{5} of \textbf{Proposition~\ref{super formular}}
	$e^s_2+e^s_3-e^s_4\neq 0$.
	Moreover, because $h   \mbox{ is a } C^{1+\beta} \mbox{  diffeo} $, then there exists $K>0$ such that
	\begin{equation*}
	\dfrac{Dh(f^{q_{n+1}+1}(U_f))}{Dh(f^{q_{n}+1}(U_f))}\leq K|(f^{q_{n+1}+1}(U_f),f^{q_{n}+1}(U_f))|^{\beta}\leq \dfrac{2K}{C}e^{\beta\overline{c}_u(f)(e_2^u+e_3^u)\lambda_u^{p_n}} 
	\end{equation*}
	where we used \textbf{Lemma~\ref{I, majoration}}. 
	The two above estimates imply that $c^*_s(f)=c^*_s(g)$.
	
	Let us suppose that  $c^*_s(f)=c^*_s(g)$, $c^*_u(f)=c^*_u(g)$ and  $c_+(f)+c'_+(f)=c_+(g)+c'_+(g)$.
	
	We are going to  show first that, under conditions $c^*_u(f)=c^*_u(g)$ and  $c_+(f)+c'_+(f)=c_+(g)+c'_+(g)$,  $h$ is $C^1$ diffeo.
	
	Since by \textbf{(\ref{R4})} $h$ is differentiable with $Dh(x)=D(x)$, for all $x$, then it remains to prove that $D$ can be extended to a continuous function. This is possible, if only if, for all $k\geq 0$ 
	\begin{equation*}
	\lim\limits_{x\rightarrow c_k^-}D(x)=\lim\limits_{x\rightarrow c_k^+}D(x)
	\end{equation*}
	where $c_k=f^k(f(U))$. 
	
	Let us fix $k\geq 0$ and let $n\in\N$ big enough such that $q_{n-1}>k$. Observe that
	
	\begin{equation*}
	D_-(c_k):=\lim\limits_{x\rightarrow c_k^-}D(x)=\lim\limits_{n\rightarrow \infty}\dfrac{|B^k_{2n}(g)|}{B^k_{2n}(f)|}
	\end{equation*}
	and 
	\begin{equation*}
	D_+(c_k):=\lim\limits_{x\rightarrow c_k^+}D(x)=\lim\limits_{n\rightarrow \infty}\dfrac{|A^k_{2n}(g)|}{A^k_{2n}(f)|}
	\end{equation*}
	By \textbf{Proposition~\ref{asymptotic distortion}} and  \textbf{Lemma~\ref{xn asymptotic coordinate}}, we get
	\begin{align*}
	\dfrac{D_-(c_k)}{D_+(c_k)}=\lim\limits_{n\rightarrow \infty} \dfrac{\dfrac{|B^{k}_{2n}(g)|}{|A^{k}_{2n}(g)|}}{\dfrac{|B^{k}_{2n}(f)|}{|A^{k}_{2n}(f)|}}
	=\lim\limits_{n\rightarrow \infty} \dfrac{\dfrac{|B_{2n}(g)|}{|A_{2n}(g)|}}{\dfrac{|B_{2n}(f)|}{|A_{2n}(f)|}}\vspace{0.2cm}
	=\lim\limits_{n\rightarrow \infty} \dfrac{\dfrac{x_{3,2n}(g)}{-x_{1,2n}(g)}}{\dfrac{x_{3,2n}(f)}{-x_{1,2n}(f)}}=\lim\limits_{n\rightarrow \infty}e^{\lambda_s^n}=1
	\end{align*}
	
	Now, let $x,y\in K_f$ and  choose $n$ maximal such that there exists $I\in\mathcal{P}_n $ and $I$ contains $[x,y]$. Since 
	$B_{n+1}^{i+q_{n-1}}=D_n^i$, then by maximality of $n$, $I\neq D_n^i$. Also, as $x,y\in K_f$, then $I\neq C_n^i$. So, either $I=A_n^i$ or  $I=B_n^i$. 
	In the case where $I=A_n^i$, then we can assume that  $x\in D^{i}_{n+1} $ and $y\in B^{i}_{n+1} $. Thus,  by \textbf{Lemma~\ref{cn and the other side}} and by \textbf{Lemma~\ref{I, minoration}} we obtain
	\begin{equation}\label{R5}
	|x-y|\geq K| A_n^i|\geq \frac{K}{C}e^{\underline{c}_u\lambda_u^{p_{n}}e^u_2\frac{2\lambda_u}{\lambda_u-1}}.
	\end{equation}
	Observe that by \textbf{Lemma~\ref{Dhn go to D}} and \textbf{Lemma~\ref{alpha_n lambda u}} we have 
	\begin{equation}\label{R6}
	\left|\log\dfrac{ Dh(x)}{ Dh(y)}\right|=O(e^{\frac{1}{\overline{\ell}}c^*_u\lambda_u^{p_{n-2}}(e^u_2+e^u_3)})
	=O(e^{\frac{1}{{\overline{\ell}}}\overline{c}_u\lambda_u^{p_{n-2}}(e^u_2+e^u_3)}).
	\end{equation}
	By (\textbf{\ref{R5}}) and (\textbf{\ref{R6}}), we have
	\begin{equation*}
	\dfrac{\left| Dh(x)-Dh(y)\right|}{| x-y|^{\beta}}=O\left(e^{\lambda_u^{p_{n-2}}\left[\frac{1}{\overline{\ell}}\overline{c}_u(e^u_2+e^u_3)-\beta\underline{c}_ue^u_2\frac{2\lambda^2_u}{\lambda_u-1}\right]}\right)
	=O(1)
	\end{equation*}
	for $0<\beta<\dfrac{\overline{c}_u(e^u_2+e^u_3)(\lambda_u-1)}{2\underline{c}_u\overline{\ell}e^u_2 \lambda^2_u}=\dfrac{\overline{c}_u(1+e^u_3)(\lambda_u-1)}{2\underline{c}_u\overline{\ell} \lambda^2_u} $, see point \textbf{5} of  \textbf{Proposition~\ref{super formular}}.
	
	For $I=B_n^i$, then we can choose $(x,y)$ such that $x\in A^{i}_{n+1} $  and 
	$y\in B^{i+q_{n-1}}_{n+1} $.
	
	If $x\in(f^{q_{n+3}+i+1}(U_f), f^{i+1}(U_f)) $, then for  $m=n+1$, \textbf{Lemma~\ref{cn and the other side}} and by \textbf{Lemma~\ref{I, minoration}} imply that
	\begin{equation}\label{R7}
	|x-y|\geq K| A_{m}^i|\geq \frac{K}{C}e^{\underline{c}_u\lambda_u^{p_{m}}e^u_2\frac{2\lambda_u}{\lambda_u-1}}.
	\end{equation}
	
	Otherwise, let $k>0$, $m=n+2k $  maximal such $x\in A^{i+q_{n+1}}_{m} $ and $y\in B^{i+q_{n+1}}_{m} $. By \textbf{Lemma~\ref{I, minoration}}, we get
	\begin{equation}\label{R8}
	|x-y|\geq\frac{1}{2} \min\{ | A^{i+q_{n+1}}_{m}|, |  B^{i+q_{n+1}}_{m}|\} \geq\frac{1}{C}e^{\underline{c}_u\lambda_u^{p_{m}}e^u_2\frac{2\lambda_u}{\lambda_u-1}}.
	\end{equation}
	
On the other hand, for all $x,y\in B_n^i$, 
 	\begin{equation}\label{R9}
	\left|\log\dfrac{ Dh(x)}{ Dh(y)}\right|=O(e^{\frac{1}{\overline{\ell}}c^*_u\lambda_u^{p_{m-2}}(e^u_2+e^u_3)})
	=O(e^{\frac{1}{{\overline{\ell}}}\overline{c}_u\lambda_u^{p_{m-2}}(e^u_2+e^u_3)})
	\end{equation}
where we use	\textbf{Lemma~\ref{Dhn cauchy}} and \textbf{Lemma~\ref{alpha_n lambda u}}. 

Thus, by \textbf{(\ref{R7})} (or \textbf{(\ref{R8})}) and \textbf{(\ref{R9})} we get
	\begin{equation*}
	\dfrac{\left| Dh(x)-Dh(y)\right|}{| x-y|^{\beta}}=O\left(e^{\lambda_u^{p_{m-2}}\left[\frac{1}{\overline{\ell}}\overline{c}_u(e^u_2+e^u_3)-\beta\underline{c}_ue^u_2\frac{2\lambda^2_u}{\lambda_u-1}\right]}\right)
	=O(1); 
	\end{equation*}
	for $0<\beta<\dfrac{\overline{c}_u(e^u_2+e^u_3)(\lambda_u-1)}{2\underline{c}_u\overline{\ell}e^u_2 \lambda^2_u}=\dfrac{\overline{c}_u(1+e^u_3)(\lambda_u-1)}{2\underline{c}_u\overline{\ell} \lambda^2_u} $, see point \textbf{5} of  \textbf{Proposition~\ref{super formular}}.
\end{proof}

\subsection{Holder Conjugacy }
\begin{prop}\label{h Holder}
	Let $(\ell_{1},\ell_{2})\in(1,2)^2$ and $\beta=\dfrac{\overline{c}_u(g)}{\underline{c}_u(f)}\dfrac{(e^u_2+e^u_3)(\lambda_u-1)}{2e^u_2 \lambda_u} $.
	If $f,g\in\mathcal{W}_{[1]}$ with critical exponents $(\ell_{1},\ell_{2})$,  then $h$  the topological conjugation between $f$ and $g$ is $C^{\beta}. $
\end{prop}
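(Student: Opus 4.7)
The plan is to adapt the case analysis already used in the proof of \textbf{Proposition~\ref{h c1+beta diffeo}} and compare the sizes of corresponding partition elements under $h$ directly, instead of going through a derivative. Given $x,y\in K_f$, I would choose $n$ maximal such that $[x,y]$ lies inside a single interval $I\in\mathcal{P}_n(f)$. Since $h$ is a conjugacy with $h(U_f)=U_g$ and therefore $h\circ f^i(U_f)=g^i(U_g)$ for every $i$, the image $h(I)$ is the corresponding element of the $n$th dynamical partition of $g$. This single observation will let the Hölder estimate be read off from the two one-sided bounds already proved in Section~\ref{Section 5}.

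First I would apply \textbf{Lemma~\ref{I, majoration}} to $h(I)\in\mathcal{P}_n(g)$, obtaining
\begin{equation*}
|h(x)-h(y)|\leq |h(I)|= O\!\left(\exp\!\big(\overline{c}_u(g)(e^u_2+e^u_3)\lambda_u^{p_n}\big)\right).
\end{equation*}
Next, the maximality of $n$, combined with \textbf{Lemma~\ref{cn and the other side}} and \textbf{Lemma~\ref{I, minoration}} (applied at depth $n+2k$ in the sub-case $I=B_n^i$, exactly as was done in the $C^{1+\beta}$ argument), forces
\begin{equation*}
|x-y|\geq \frac{1}{C}\exp\!\left(\underline{c}_u(f)\,e^u_2\,\frac{2\lambda_u}{\lambda_u-1}\,\lambda_u^{p_n}\right).
\end{equation*}
Raising this lower bound to the power $\beta$ and dividing gives an exponential in $\lambda_u^{p_n}$ with coefficient $\overline{c}_u(g)(e^u_2+e^u_3)-\beta\,\underline{c}_u(f)\,e^u_2\,\tfrac{2\lambda_u}{\lambda_u-1}$, and the stated value of $\beta$ is precisely the one that makes this coefficient vanish. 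Because $\overline{c}_u(g),\underline{c}_u(f)<0$ by \textbf{Lemma~\ref{alpha_n lambda u}}, the ratio $\overline{c}_u(g)/\underline{c}_u(f)$ is positive and $\beta>0$, so the resulting bound on $|h(x)-h(y)|/|x-y|^{\beta}$ is uniform in $x,y\in K_f$ and in $n$.

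To pass from $K_f$ to the whole circle it is enough to remark that every gap of $K_f$ is a pre-image of the flat piece $U_f$, and on each such gap $h$ is controlled by its values at the two endpoints (the freedom inside $U_f$ being bi-Lipschitz), so the Hölder inequality at endpoints transfers to interior points without loss. The step I expect to be the most delicate is the sub-case $I=B_n^i$: as in the $C^{1+\beta}$ proof, one must descend to $\mathcal{P}_{n+2k}$ for the correct $k$ before the refinement separates $x$ and $y$, and one has to make sure that the minoration exponent is not diluted in the process, because the balance between $\overline{c}_u(g)$ and $\underline{c}_u(f)$ that appears in the definition of $\beta$ is tight and a wrong choice of depth would produce a strictly smaller admissible exponent.
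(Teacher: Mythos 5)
Your proposal follows essentially the same route as the paper's proof: the same choice of maximal $n$, the same dichotomy $I=A_n^i$ versus $I=B_n^i$ with descent to depth $m=n+2k$, and the same combination of \textbf{Lemma~\ref{cn and the other side}}, \textbf{Lemma~\ref{I, minoration}} and \textbf{Lemma~\ref{I, majoration}} balanced so that the exponent vanishes exactly at the stated $\beta$. The only point to tighten is the one you flag yourself: in the sub-case $I=B_n^i$ the minoration carries $\lambda_u^{p_m}$ rather than $\lambda_u^{p_n}$, so the majoration must likewise be taken at depth $m$ (on the adjacent level-$m$ intervals containing $x$ and $y$), which is how the paper's computation closes the estimate.
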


\begin{proof}
	Let $x,y\in K_f$ and  choose $n$ maximal such that there exists $I(f)\in\mathcal{P}_n $ and $I(f)$ contains $[x,y]$. Since 
	$B_{n+1}^{i+q_{n-1}}(f)=D_n^i(f)$, then by maximality of $n$, $I(f)\neq D_n^i(f)$. Also, as $x,y\in K_f$, then $I(f)\neq C_n^i(f)$. So, either $I(f)=A_n^i(f)$ or  $I(f)=B_n^i(f)$. The assumptions on $Dh$ in the previous section are the same on $ h$ in this section.  Thus, the proof is done as in the previous section where we show that $Dh$ the derivative of $h$ is Holder. For more details, we have:
	
	In the case where $I(f)=A_n^i(f)$, then we can assume that  $x\in D^{i}_{n+1}(f) $ and $y\in B^{i}_{n+1}(f) $. Thus,  by \textbf{Lemma~\ref{cn and the other side}} and  \textbf{Lemma~\ref{I, minoration}}, we obtain
	\begin{equation}\label{R10}
	|x-y|\geq K| A_n^i(f)|\geq \frac{K}{C}e^{\underline{c}_u(f)\lambda_u^{p_{n}}e^u_2\frac{2\lambda_u}{\lambda_u-1}}.
	\end{equation}
	Also, by \textbf{Lemma~\ref{I, majoration}} we get
	\begin{equation}\label{R11}
	|h(x)-h(y)|\leq |h(I(f))|=|h(I(g))| 
	=O(e^{\frac{1}{{\overline{\ell}}}\overline{c}_u(g)\lambda_u^{p_{n}}(e^u_2+e^u_3)}).
	\end{equation}
	By \textbf{(\ref{R10})} and \textbf{(\ref{R11})} we have
	\begin{equation*}
	\dfrac{|h(x)-h(y)|}{| x-y|^{\beta}}=O(e^{\lambda_u^{p_{n}}\left[\overline{c}_u(g)(e^u_2+e^u_3)-\beta\underline{c}_u(f)e^u_2\frac{2\lambda_u}{\lambda_u-1}\right]})
	=O(1)
	\end{equation*}
	for \begin{equation*}0<\beta<\dfrac{ \overline{c}_u(g)}{\underline{c}_u(f)}\dfrac{(e^u_2+e^u_3)(\lambda_u-1)}{2e^u_2 \lambda_u}=\dfrac{ \overline{c}_u(g)}{\underline{c}_u(f)}\dfrac{(1+e^u_3)(\lambda_u-1)}{2 \lambda_u},\end{equation*} see point \textbf{5} of  \textbf{Proposition~\ref{super formular}}.
	
	Now for $I=B_n^i$, then we can choose $(x,y)$ such that $x\in A^{i}_{n+1} $  and 
	$y\in B^{i+q_{n-1}}_{n+1} $.
	
	If $x\in(f^{q_{n+3}+i+1}(U_f), f^{i+1}(U_f)) $ (Let us put $m:=n+1$\mbox{ )} then by  \textbf{Lemma~\ref{cn and the other side}} and  \textbf{Lemma~\ref{I, minoration}}, we obtain
	\begin{equation}\label{R12}
	|x-y| \geq K| A_{m}^i|\geq \frac{K}{C}e^{\underline{c}_u\lambda_u^{p_{m}}e^u_2\frac{2\lambda_u}{\lambda_u-1}}.
	\end{equation}
	
	Otherwise, let $k>0$ $\mbox{(}m:=n+2k\mbox{)} $ maximal such $x\in A^{i+q_{n+1}}_{m}(f) $ and $y\in B^{i+q_{n+1}}_{m}(f) $. By \textbf{Lemma~\ref{I, minoration}} we get
	\begin{equation}\label{R13}
	|x-y|\geq\frac{1}{2} \min\{ | A^{i+q_{n+1}}_{m}(f)|, |  B^{i+q_{n+1}}_{m}(f)|\} \geq\frac{1}{C}e^{\underline{c}_u(f)\lambda_u^{p_{m}}e^u_2\frac{2\lambda_u}{\lambda_u-1}}.
	\end{equation}
	Thus, in the both  cases, by   \textbf{Lemma~\ref{Dhn cauchy}} and \textbf{Lemma~\ref{alpha_n lambda u}} we get
	\begin{equation}\label{R14}
	|h(x)-h(y)|\leq |h(I(f))|=|h(I(g))| 
	=O(e^{\overline{c}_u(g)\lambda_u^{p_{n}}(e^u_2+e^u_3)})
	\end{equation}	
	and by \textbf{(\ref{R12})}  (or \textbf{(\ref{R13})}) and \textbf{(\ref{R14})} we obtain
	\begin{equation*}
	\dfrac{|h(x)-h(y)|}{| x-y|^{\beta}}=O(e^{\lambda_u^{p_{m}}\left[\overline{c}_u(g)(e^u_2+e^u_3)-\beta\underline{c}_u(f)e^u_2\frac{2\lambda_u}{\lambda_u-1}\right]})
	=O(1)
	\end{equation*}
	for \begin{equation*}0<\beta<\dfrac{ \overline{c}_u(g)}{\underline{c}_u(f)}\dfrac{(e^u_2+e^u_3)(\lambda_u-1)}{2e^u_2 \lambda_u}=\dfrac{ \overline{c}_u(g)}{\underline{c}_u(f)}\dfrac{(1+e^u_3)(\lambda_u-1)}{2 \lambda_u},\end{equation*} see point \textbf{5} of  \textbf{Proposition~\ref{super formular}}.

	This concludes the proof. 
	
\end{proof}

\section{ Proof of  Lemma~\ref{main lemma}}\label{Section 6}
We use the formalism presented in \cite{GJSTV} where the authors find a transition between the degenerate geometry (i.e, $\alpha_n$ goes to zero) and the bounded geometry (i.e, $\alpha_n$ is bounded away from zero) for the critical exponents $(\ell,\ell); \ell>1$. These results were generalized in \cite{NTB} for some $(\ell_1,\ell_2)$ belonging to $[1,\infty[$ under assumption $k_1=k_2$, the coefficients before the powers of $x$ in the asymptotics of a map $f$  our class near the ends of the flat interval (see point 1 of \textbf{Remark~\ref{add assump 2}}). More precisely, in \cite{NTB} the author uses this condition in the lemma 2 (p. 658) whose the \textbf{Lemma~\ref{use additional condition}} is a reformulation for Fibonacci circle map with a flat piece.  In the generic case, the condition $k_1=k_2$ will fail for the first return maps, so it does not hold for  infinitely renormalizable maps. This is the first reason why we resume the study of the asymptotic behavior of $\alpha_n$ in our case. This proof can simply be adapted in \cite{NTB}. Other results on the geometry of circle maps with a flat interval can be found in \cite{Gra} and \cite{LB}.

Let us put together  sequences which are frequently used in this section.
\begin{equation*}
\alpha_n=\dfrac{|(\underline{-q_n}, \underline{0})|}{|[\underline{-q_n}
	, \underline{0})|},\;
\sigma_n=	\dfrac{|( \underline{0},\underline{q_n})|}{|( \underline{q_{n-1}}, \underline{0})|}  \mbox{ and } s_n:=\dfrac{|[\underline{-q_{n-2}}  \underline{0}]|}{|\underline{0}|}
\end{equation*}

\subsection{Preliminaries}
\subsubsection{Cross-Ratio Inequalities }		
\begin{nota}	
	We denote by $\R_<^4$, the subset of  $\R^4$ defined by		
	\begin{equation*}
	\R_<^4:= \{ (x_1,x_2,x_3,x_4)\in\R^4,\; such\; that\;x_1<x_2<x_3<x_4 \}.
	\end{equation*}	
\end{nota}
The   following result comes from \textbf{1.5 Lemma} in \cite{de Melo and Van Strien 89}.
\begin{prop}\label{CRI}	The Cross-Ratio Inequality.
	
	 	Let $f\in\mathscr{L}^X$ with $U$ as its flat piece.
Let $(a,b,c,d)\in \R_<^4$. The   cross-ratio \po\, is defined by
	\begin{equation*}
	\po\:(a,b,c,d):=\dfrac{|d-a||b-c|}{|c-a||d-b|}.
	\end{equation*}	
	
	The distortion of the   cross-ratio \po\,  are given  by
	\begin{equation*}
	\D\po\:(a,b,c,d):=\dfrac{\po\:(f(a),f(b),f(c),f(d))}{\po\:(a,b,c,d)}.
	\end{equation*}

	Let us consider a set of $n+1$ quadruples $\{a_i, b_i, c_i, d_i \} $ with the following properties:
	
	\begin{enumerate}
		\item Each point of the circle belongs to at most $k$ intervals $(a_i, d_i)$.
		\item The intervals $(b_i, c_i)$ do not intersect $U$.
	\end{enumerate}
	Then 
	\begin{equation*}
	\prod_{i=0}^{n}\D\po\:(a_i, b_i, c_i, d_i)\geq 1.
	\end{equation*} 
\end{prop}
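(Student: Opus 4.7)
The plan is to reduce the product inequality to an infinitesimal Schwarzian estimate, following the classical de~Melo--van~Strien argument that the proposition directly cites.

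The cornerstone is the pointwise statement: if $g$ is a $C^3$ orientation-preserving diffeomorphism with $Sg<0$ on $[a,d]$, and $a<b<c<d$ are four points there, then $\D\po(a,b,c,d;g)\geq 1$. I would derive this by interpolating between $\mathrm{Id}$ and $g$ through a one-parameter family $g_t$ and computing $\tfrac{d}{dt}\log\po(g_t(a),g_t(b),g_t(c),g_t(d))$. The standard expansion writes this derivative as an integral over $[a,d]$ whose sign is governed by $-Sg_t$; hence $Sg_t<0$ forces the log-distortion to be non-decreasing in $t$, giving $\D\po\geq 1$ at $t=1$. Equivalently, M\"obius transformations fix $\po$ exactly, and the Schwarzian measures deviation from M\"obius in precisely this direction.

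To handle the flat piece, the hypothesis $(b_i,c_i)\cap\overline U=\emptyset$ guarantees that $f$ is a $C^3$ diffeomorphism with $Sf<0$ on a neighbourhood of $[b_i,c_i]$. For quadruples whose outer intervals $(a_i,b_i)$ and $(c_i,d_i)$ also avoid $\overline U$, the pointwise estimate applies immediately and yields a factor $\geq 1$. For quadruples where $\overline U$ sits inside $(a_i,b_i)$ or inside $(c_i,d_i)$, I would approximate $f$ by $C^3$ diffeomorphisms $f_\varepsilon$ obtained by steepening the flat piece into a monotone branch with negative Schwarzian, apply the pointwise estimate to $f_\varepsilon$, and pass to the limit $\varepsilon\to 0$; the collapse of $U$ to a point in the limit only squeezes $|f(a_i)-f(b_i)|$ or $|f(c_i)-f(d_i)|$, which acts in the direction that preserves $\D\po\geq 1$.

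The bounded overlap hypothesis is what upgrades these pointwise estimates to the global product. Each circle point belongs to at most $k$ intervals $(a_i,d_i)$, so when one sums the logarithms of $\D\po$ across all quadruples, the total integrated Schwarzian weight is comparable to $k\int_{\Ce}|Sf|$, a finite quantity, and no single quadruple can drag the product below $1$. The main obstacle and hardest step is the book-keeping for quadruples whose $(a_i,d_i)$ straddles $\overline U$: \emph{a priori} the squeezing at $U$ could push individual factors below $1$, but $(b_i,c_i)\cap\overline U=\emptyset$ confines any such loss to the outer subintervals, and the overlap budget absorbs it in the product. Since this statement is precisely Lemma~1.5 of \cite{de Melo and Van Strien 89}, the detailed computation is available there.
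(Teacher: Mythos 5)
The paper contains no internal proof of this proposition: it is stated with the single remark that it ``comes from 1.5 Lemma'' of \cite{de Melo and Van Strien 89} (and, as the paper itself says in Section~3, the flat-interval version is really the one of \cite{2}, revisited in \cite{Gra}). So your closing appeal to the same reference matches the paper; what must be judged is the sketch you wrap around it, and there is a genuine gap in it. Your second and third paragraphs are inconsistent about where hypothesis (1) enters. If, as your second paragraph claims, every admissible quadruple individually satisfies $\D\po\geq 1$ (negative Schwarzian on a neighbourhood of the middle interval plus ``the collapse at $U$ only helps''), then the product inequality is immediate and the bounded-overlap hypothesis is never used. Conversely, if single factors could drop below $1$ because of the flat piece, a bound on the overlap number $k$ could not ``absorb'' this: $k$ limits how many quadruples cover a point, not how small one factor can be. The proposed budget $k\int_{\Ce}|Sf|$ is not even well defined: $Sf$ does not exist on $U$ and blows up non-integrably at $\partial U$ because of the exponents $\ell_1,\ell_2>1$. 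In the classical non-Schwarzian versions the overlap hypothesis controls accumulated nonlinearity of $f$ along the middle intervals and yields only a constant $C(k)>0$, not $1$; with $Sf<0$ the Schwarzian enters with a favourable sign and no budget argument is needed. So the mechanism offered in your last paragraph is not a valid fallback.

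The second gap is the flat-piece step itself, which is asserted rather than proved. Approximating $f$ by diffeomorphisms $f_\varepsilon$ with $Sf_\varepsilon<0$ is not routine: mollification and adding a small linear term both destroy the sign of the Schwarzian, and gluing a steep monotone branch across $U$ to the two power-law ends must be checked to keep $S<0$. The cleaner route is to factor $f$ on $[a_i,d_i]$ as the collapse of $U\cap[a_i,d_i]$ (an elementary computation shows this increases $\po$ when the collapsed set lies in an outer interval, which is the correct version of your ``squeezing'' remark) followed by an injective map which has negative Schwarzian except at one interior point where its derivative vanishes; one must then prove that such a map still expands $\po$ when that degenerate point lies in an \emph{outer} interval. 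That is precisely the content that hypothesis (2) makes available, and it cannot be waved through: if the degenerate point sits in the middle interval the expansion fails outright (for $x\mapsto x^{3}$ and a symmetric quadruple around the origin, $\po$ is contracted), so an actual argument, not a direction-of-monotonicity heuristic, is required at this point. As it stands your proposal reduces, like the paper, to the external citation, but the surrounding sketch would not survive being written out.
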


\begin{rem}	Let $I$ and $J$ be two intervals of finite and nonzero lengths such that $\bar{I}\cap \bar{J}=\emptyset$. We assume that, $J$ is on the right of $I$ and we put $I:=[a,b]$ and $J:=[c,d]$. Then	
	\begin{equation*}
	\po\:(I,J):=\dfrac{|(I,J)||[I,J]|}{|[I,J)||(I,J]|}=\po\:(a,b,c,d).
	\end{equation*}
\end{rem}
\subsubsection{Basic Lemmas}
\begin{lem}\label{2qn and qn} The ratio
	$$ \dfrac{|( \underline{2q_{n-1}}, \underline{q_{n-1}})|}{|( \underline{2q_{n-1}},\underline{0})|}$$  is uniformly bounded away from zero.
\end{lem}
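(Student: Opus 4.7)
The plan is to apply the Cross-Ratio Inequality to $f^{q_{n-1}}$ in order to establish that $f^{q_{n-1}}$ has bounded distortion on the interval $M := (\underline{q_{n-1}}, \underline{0})$ between $\underline{q_{n-1}}$ and the nearest endpoint of $U$. Since the forward iterates $f^{j}(M) = (\underline{q_{n-1}+j}, \underline{j})$ for $0 \le j < q_{n-1}$ are disjoint long intervals of $\mathcal{A}_n$ by \textbf{Proposition~\ref{partition of f}}, they all miss $U$; by \textbf{Remark~\ref{diffeo on an interval}}, $f^{q_{n-1}}|_{M}$ is then a diffeomorphism onto $(\underline{2q_{n-1}}, \underline{q_{n-1}})$, and moreover $\sum_{j=0}^{q_{n-1}-1}|f^{j}(M)| \le 1$.

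To obtain the bounded distortion, I would apply the Cross-Ratio Inequality (\textbf{Proposition~\ref{CRI}}) to a quadruple $(a,b,c,d)$ whose middle interval $(b,c)$ coincides with $M$, so that $(b,c)\cap U = \emptyset$, and whose outer points $a,d$ are chosen in adjacent elements of the dynamical partition $\mathcal{P}^{n}$ lying on the side of $\underline{q_{n-1}}$ opposite to $U$. By \textbf{Proposition~\ref{preimage and gaps adjacent}} these adjacent elements have length comparable to $|M|$, and the bounded-multiplicity hypothesis of CRI holds because the iterates $\{f^{j}((a,d)): 0 \le j < q_{n-1}\}$ are all contained in consecutive long intervals of $\mathcal{A}_n$. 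CRI then yields a universal lower bound on the image Poincaré cross-ratio of $(f^{q_{n-1}}a, f^{q_{n-1}}b, f^{q_{n-1}}c, f^{q_{n-1}}d)$, and reapplying \textbf{Proposition~\ref{preimage and gaps adjacent}} on the image side gives $|f^{q_{n-1}}(M)| \ge c\,|M|$ for a universal constant $c>0$. The lemma follows from
\[
\frac{|(\underline{2q_{n-1}}, \underline{q_{n-1}})|}{|(\underline{2q_{n-1}}, \underline{0})|} \;=\; \frac{|f^{q_{n-1}}(M)|}{|f^{q_{n-1}}(M)| + |M|} \;\ge\; \frac{c}{c+1} \;>\; 0,
\]
where we used that the two intervals meet only at the single point $\underline{q_{n-1}}$, which has zero length.

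The delicate point will be the choice of the outer points $a$ and $d$. Because $M$ abuts the flat piece $U$ on one side, the standard two-sided Koebe principle cannot be directly invoked, and the CRI quadruple must be selected so that both the diffeomorphism property of $f^{q_{n-1}}$ on $(a,d)$ and the bounded-multiplicity hypothesis survive simultaneously. The Fibonacci identity $2q_{n-1} = q_{n} + q_{n-3}$ implies in particular that $(\underline{3q_{n-1}},\underline{2q_{n-1}})$ does not lie in $\mathcal{A}_n$ but in $\mathcal{A}_{n+1}$, so the image-side application of \textbf{Proposition~\ref{preimage and gaps adjacent}} requires some extra combinatorial bookkeeping to keep track of which partition contains the relevant intervals.
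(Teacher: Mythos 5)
Your reduction is sound as far as it goes: writing $M=(\underline{q_{n-1}},\underline{0})$, noting that $f^{q_{n-1}}(M)=(\underline{2q_{n-1}},\underline{q_{n-1}})$, that the ratio in the lemma equals $|f^{q_{n-1}}(M)|/(|f^{q_{n-1}}(M)|+|M|)$, and that the iterates $f^{j}(M)$, $1\le j<q_{n-1}$, are pairwise disjoint long gaps of $\mathcal{P}_n$ avoiding $U$, is all correct. But the paper offers no argument of its own here (it refers to the proof of Lemma 1.2 in \cite{GJSTV}), so your sketch must stand alone, and its central analytic step does not close.

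The gap is that you never actually derive $|f^{q_{n-1}}(M)|\ge c\,|M|$ from the Cross-Ratio Inequality. First, ``bounded distortion of $f^{q_{n-1}}$ on $M$'' is both unavailable and beside the point: $M$ abuts the flat piece, so there is no Koebe space on that side, and already the single map $f$ has unbounded distortion on $M$ (by \textbf{Fact~\ref{fact}} the derivative vanishes like $|y-r(U)|^{\ell-1}$ at the endpoint, and $|f(M)|\asymp|M|^{\ell}\ll|M|$, so the first step contracts $M$ by an unbounded factor); even if bounded distortion held, it would only control ratios of subintervals of $M$, not the total contraction $|f^{q_{n-1}}(M)|/|M|$, which is exactly what the lemma asserts is bounded. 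Second, the comparabilities you feed into \textbf{Proposition~\ref{CRI}} are not provided by \textbf{Proposition~\ref{preimage and gaps adjacent}}: that proposition bounds $|A|/|B|$ from below only when $A$ is a \emph{preimage of $U$} in $\mathcal{P}^{n}$ and $B$ a gap adjacent to it; it says nothing about two neighbouring gaps being comparable, nor about $M$ (which is not an element of $\mathcal{P}^{n}$) being comparable to its neighbours, and in the degenerate-geometry regime of this paper adjacent gaps are in fact wildly non-comparable. Hence neither the claimed universal lower bound on the initial Poincar\'e cross-ratio of your quadruple nor the upper bound you would need on the image side is justified; the ``delicate choice of $a$ and $d$'' you postpone is precisely where the whole proof lives. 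Note also that with middle interval $M$ the outer points cannot both lie on the side of $\underline{q_{n-1}}$ opposite to $U$: one of them must lie in $\overline{U}$ or beyond it, and if it lies in $\overline{U}$ the image quadruple degenerates (both left points map to $\underline{q_{n-1}}$), so the cross-ratio inequality only yields the vacuous statement that the initial cross-ratio is at most $1$. As written the argument therefore does not prove the lemma; the correct cross-ratio argument is the specific one in the proof of Lemma 1.2 of \cite{GJSTV}, which this paper simply cites.
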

The proof can be found in \cite{GJSTV} (proof of \textbf{Lemma 1.2}). 
\begin{lem}\label{sigma go a away}
	The  sequence 
	\begin{equation*}	
		\dfrac{|( \underline{1},\underline{q_n+1})|}{|( \underline{q_{n-1}+1}, \underline{1}|}	
	\end{equation*}	
	is bounded.	\end{lem}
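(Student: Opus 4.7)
The strategy is to pull the ratio back via an iterate of $f$ to a coarser scale where the already-established estimates apply. Consider the interval $T := (\underline{q_{n-1}+1}, \underline{q_n+1})$, which contains $\underline{1} = f(U)$ in its interior and whose two halves are precisely the intervals appearing in the statement. Apply $f^{q_{n-1}-1}$, which sends $\underline{1} \mapsto \underline{q_{n-1}}$, the long half $(\underline{q_{n-1}+1}, \underline{1})$ to $(\underline{2q_{n-1}}, \underline{q_{n-1}})$, and the short half $(\underline{1}, \underline{q_n+1})$ to $(\underline{q_{n-1}}, \underline{q_{n+1}})$. The image intervals are respectively the long and short intervals of the dynamical partition $\mathcal{P}_n$ at the index $i = q_{n-1}-1$.

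The first step is to verify, via \textbf{Remark~\ref{diffeo on an interval}} together with the combinatorial structure supplied by \textbf{Proposition~\ref{partition of f}}, that the intermediate iterates $f^i(T)$ for $0 \leq i \leq q_{n-1}-2$ avoid $\overline{U}$, so that $f^{q_{n-1}-1}$ is a diffeomorphism on $T$. I then enlarge $T$ to an interval $\widetilde T$ for which the Koebe principle (\textbf{Proposition~\ref{koebe principle}}) applies with constants uniform in $n$: the images $\{f^i(\widetilde T)\}$ lie in specific elements of the dynamical partitions, which bounds $\sum_{i=0}^{q_{n-1}-2}|f^i(\widetilde T)|$ uniformly, and the boundary pieces $\widetilde T \setminus T$ are chosen so that the Koebe $\alpha$-ratio stays controlled. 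This yields a bounded distortion of $f^{q_{n-1}-1}$ on $T$, so it suffices to prove that $|(\underline{q_{n-1}}, \underline{q_{n+1}})|/|(\underline{2q_{n-1}}, \underline{q_{n-1}})|$ is bounded.

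For the denominator I invoke \textbf{Lemma~\ref{2qn and qn}} to get $|(\underline{2q_{n-1}}, \underline{q_{n-1}})| \asymp |(\underline{2q_{n-1}}, \underline{0})|$. For the numerator, since $\underline{q_{n+1}}$ is a closest return and tends to $\underline{0}$ (at least exponentially fast) by \textbf{Proposition~\ref{qn go to zero}}, $|(\underline{q_{n-1}}, \underline{q_{n+1}})|$ differs from $|(\underline{q_{n-1}}, \underline{0})|$ by a negligible term. A final comparison of $|(\underline{q_{n-1}}, \underline{0})|$ with $|(\underline{2q_{n-1}}, \underline{0})|$—using \textbf{Lemma~\ref{cn and the other side}} or applying \textbf{Proposition~\ref{preimage and gaps adjacent}} to the preimage partition and transferring the estimate across the short orbit segment connecting the two forward iterates—closes the argument.

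The main obstacle is securing the Koebe hypotheses with constants uniform in $n$: controlling $\sum_i|f^i(\widetilde T)|$ rests on the disjointness of the iterates encoded in \textbf{Proposition~\ref{partition of f}}, while ensuring the $\alpha$-ratio stays bounded requires anchoring the enlargement $\widetilde T$ to partition elements of comparable size on either side of $T$. Once these two uniform estimates are in place, the rest of the argument is a routine chain of comparisons.
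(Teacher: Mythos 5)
Your reduction via $f^{q_{n-1}-1}$ uses the same forward iterate as the paper, and the final chain of comparisons can be repaired: you only need the trivial inequality $|(\underline{q_{n-1}},\underline{q_{n+1}})|\le|(\underline{q_{n-1}},\underline{0})|$, after which \textbf{Lemma~\ref{2qn and qn}} alone gives $|(\underline{q_{n-1}},\underline{0})|\le|(\underline{2q_{n-1}},\underline{0})|=O(|(\underline{2q_{n-1}},\underline{q_{n-1}})|)$ (your appeal to \textbf{Proposition~\ref{qn go to zero}} for a ``negligible difference'' is not a valid inference, since both lengths tend to zero, but it is also not needed). The genuine gap is the bounded-distortion step. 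You assert that $T=(\underline{q_{n-1}+1},\underline{q_n+1})$ can be enlarged to $\widetilde T$ so that the Koebe principle applies with constants uniform in $n$, but that is exactly the hard point, and it is not delivered by \textbf{Proposition~\ref{partition of f}} or \textbf{Proposition~\ref{preimage and gaps adjacent}}. On the right of $T$ one can indeed extend monotonically up to $\underline{q_{n-2}+1}$, whose final image is the partition element containing $U$, so that side gives ample space. On the left, however, the orbit of the adjacent long element $(\underline{2q_{n-1}+1},\underline{q_{n-1}+1})$ meets $U$ at time $q_{n-2}-1$, so the maximal monotone extension of $f^{q_{n-1}-1}$ stops at the preimage of $\partial U$, and the image space available on that side is only $(\underline{q_{n-3}},\underline{2q_{n-1}})$. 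To run Koebe you must show that this interval is at least a definite fraction of $|f^{q_{n-1}-1}(T)|\asymp|(\underline{2q_{n-1}},\underline{q_{n-1}})|$, and that comparison is a geometric estimate of the same nature and difficulty as the lemma itself; since the geometry here is degenerate ($\alpha_n\to 0$ double exponentially), no a priori bounded geometry can be invoked to obtain it for free.

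This is precisely why the paper argues with the cross-ratio inequality instead of Koebe: it bounds the reciprocal ratio from below by $\cro\,((\underline{q_{n-1}+1},\underline{1}),\underline{-q_{n-1}+1})$ and applies \cri\ with the same iterate $f^{q_{n-1}-1}$, which sends the preimage interval $\underline{-q_{n-1}+1}$ onto $U$; cross-ratio expansion requires only monotonicity and bounded overlap of the iterates, no space condition, and the conclusion then follows from \textbf{Lemma~\ref{2qn and qn}}. Until you either prove the left-hand space estimate or replace the Koebe step by such a cross-ratio argument, your proposal has a gap at its central step.
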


\begin{proof}
	We will show that the sequence $$\dfrac{|( \underline{q_{n-1}+1}, \underline{1}|}{|( \underline{1},\underline{q_n+1})|}	$$ is uniformly bounded away from zero.	
	Let us observe that, the previous ratio is larger than 
	$$\cro\,(( \underline{q_{n-1}+1}, \underline{1}), \underline{-q_{n-1}+1} )$$
	which by \cri\, with $f^{q_{n-1}-1}$ is greater than
	$$ \dfrac{|( \underline{2q_{n-1}}, \underline{q_{n-1}})|}{|( \underline{2q_{n-1}},\underline{0})|}$$ times a constant. This last ratio is uniformly bounded away from zero, see \textbf{Lemma~\ref{2qn and qn}}.
\end{proof}	

\subsection{A priori Bounds of $\alpha_n$ }
\begin{prop}\label{asymptotically}
	Let $n\in\N$ and $(\ell_1,\ell_2)\in(1,2)^2$. 
	\begin{description}
		\item For all $\alpha_n$, 
		\begin{equation*}
		\alpha_{n}^{\frac{\ell_1}{2},\frac{\ell_2}{2}}<0.55;
		\end{equation*}
		\item for at least every other $\alpha_n$
		\begin{equation*}
		\alpha_{n}^{\frac{\ell_1}{2},\frac{\ell_2}{2}}\leq 0.3.
		\end{equation*}
		\item	If 		\begin{equation*}\alpha_{n-1}^{\frac{\ell_1}{2},\frac{\ell_2}{2}}>0.3 ,
		\end{equation*}	
		then either,
		\begin{equation*}
		\alpha_{n-1}^{\frac{\ell_1}{2},\frac{\ell_2}{2}} <0.44 \quad \mbox{or}\quad \alpha_{n}^{\frac{\ell_1}{2},\frac{\ell_2}{2}}<0.16.
		\end{equation*}	
		
	\end{description}	
\end{prop}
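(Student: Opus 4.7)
The plan is to derive a functional inequality that relates $\alpha_{n}^{\ell_i/2}$ to $\alpha_{n-1}^{\ell_j/2}$ (with $\{i,j\}=\{1,2\}$ determined by parity), and then extract the three numerical bounds by analyzing this one-dimensional recursion. The key input will be the Poincaré form of the Cross-Ratio Inequality (Proposition~\ref{CRI}) applied to a quadruple of pre-images of $U$ adjacent to the flat piece, combined with the power-law asymptotics $|f(l_i(U))-f(y)| \asymp |l_i(U)-y|^{\ell_i}$ from point~1 of Fact~\ref{fact}.

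First, I would fix a quadruple whose points are chosen among $\{\underline{-q_{n-2}}, \underline{-q_n}, \underline{0}, \underline{q_{n-1}}\}$ (the choice of ordering depending on parity, since under the Fibonacci combinatorics the preimage $\underline{-q_{n-2}}$ lies on the opposite side of $U$ from $\underline{-q_n}$). Pushing this quadruple forward by $f^{q_{n-1}}$, which acts as a diffeomorphism on the relevant intervals by Remark~\ref{diffeo on an interval} and Proposition~\ref{partition of f}, and noting that each point of the circle meets boundedly many iterates of the enclosing interval, Proposition~\ref{CRI} yields $\prod \D\po \geq 1$. The two extremal intervals of the quadruple are carried by $f^{q_{n-1}}$ to intervals abutting $U$ on the same side; applying Fact~\ref{fact} at that final step introduces the power $\ell_i$, while the intermediate pushforwards have bounded distortion by Proposition~\ref{preimage and gaps adjacent} and Lemma~\ref{sigma go a away}. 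Writing everything in terms of $\alpha_n$ and $\alpha_{n-1}$, one obtains an inequality of the schematic form
\begin{equation*}
\bigl(1 - \alpha_n^{\ell_i/2,\ell_j/2}\bigr)\bigl(1 - \alpha_{n-1}^{\ell_j/2,\ell_i/2}\bigr) \;\geq\; \tfrac{1}{2} \quad \text{(up to a universal factor)},
\end{equation*}
which is the functional inequality that drives the whole argument.

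Once this inequality is in hand, the three claims reduce to elementary analysis of it. The universal bound $\alpha_n^{\ell_i/2,\ell_j/2}<0.55$ follows by specializing $\alpha_{n-1}$ to its largest possible value (which is $<1$ by Lemma~\ref{S1,n goes to zero} and Proposition~\ref{preimage and gaps adjacent}, combined with Lemma~\ref{sigma go a away}) and solving the resulting one-variable inequality. The every-other bound $\leq 0.3$ is obtained by iterating the inequality twice: if both $\alpha_{n-1}^{\ell_j/2,\ell_i/2}$ and $\alpha_n^{\ell_i/2,\ell_j/2}$ exceed $0.3$, the product $(1-0.3)(1-0.3)=0.49$ violates the inequality, so they cannot both sit above $0.3$. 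The dichotomy of the third item follows similarly: in the regime $\alpha_{n-1}^{\ell_j/2,\ell_i/2}\in(0.3,0.55)$, substituting into the inequality forces either $\alpha_{n-1}^{\ell_j/2,\ell_i/2}<0.44$ (the lower sub-range, where the recursion gives enough room) or, when $\alpha_{n-1}^{\ell_j/2,\ell_i/2}\in[0.44,0.55)$, it drives $\alpha_n^{\ell_i/2,\ell_j/2}<0.16$.

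The main obstacle will be the careful numerical bookkeeping: because the constants $0.55$, $0.44$, $0.3$, $0.16$ are sharp, one cannot afford losses from the distortion of $\varphi_n$, $\varphi_n^l$, $\varphi_n^r$ or from the bounded Cross-Ratio factors accumulated in the pushforward, so the implicit constants in Fact~\ref{fact} and Proposition~\ref{preimage and gaps adjacent} must be normalized by using \emph{initial} iterates (where the flat piece has not yet squeezed the geometry) and by invoking Lemma~\ref{2qn and qn} together with Lemma~\ref{sigma go a away} to control the neighboring gap ratios. The hypothesis $(\ell_1,\ell_2)\in(1,2)^2$ (as opposed to the boundary case $\ell_i=2$) enters precisely here: it ensures that the exponents in the $\alpha_n^{\ell_i/2,\ell_j/2}$ notation stay strictly below $1$, which is what makes the functional inequality above contractive enough to propagate the numerical bounds rather than degenerate into an identity.
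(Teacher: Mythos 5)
There is a genuine gap: your key functional inequality is asserted, not derived, and in the form you state it cannot do the job. What the Cross-Ratio Inequality actually yields here (the paper's Lemma~\ref{use additional condition}) is a \emph{three-scale} estimate
\begin{equation*}
\dfrac{(\alpha^{\ell_1,\ell_2}_{n}+ \alpha^{\ell_1,\ell_2}_{n-1}\gamma_n )(1+\gamma_n )}{(1+ \alpha^{\ell_1,\ell_2}_{n-1}\gamma_n)(\alpha^{\ell_1,\ell_2}_{n}+\gamma_n )}\;\leq\; s_n\,\alpha_{n-2},
\end{equation*}
whose left side contains a nuisance variable $\gamma_n$ (the ratio of the distances of $\underline{-q_n}$ and $\underline{-q_{n-1}}$ to $U$, raised to the critical exponents and carrying the unknown constants $k_1,k_2$ of Fact~\ref{fact}), and whose right side is the dynamical quantity $s_n\alpha_{n-2}$, not a constant. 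Two steps you omit are exactly what make the numerology possible: first, one minimizes the left side over $\gamma_n$ (the minimum is at $\gamma_n^2=\alpha_n^{\ell}/\alpha_{n-1}^{\ell}$), which simultaneously eliminates the uncontrolled constants $k_1,k_2$ and produces the \emph{halved} exponents, giving $\bigl(\tfrac{a+b}{1+ab}\bigr)^2\le s_n\alpha_{n-2}$ with $a=\alpha_n^{\ell_1/2,\ell_2/2}$, $b=\alpha_{n-1}^{\ell_1/2,\ell_2/2}$ -- your plan to ``normalize'' the implicit constants of Fact~\ref{fact} by using initial iterates does not achieve this, and without it any derived inequality carries a genuine multiplicative loss, which is fatal since the constants $0.55,0.3,0.44,0.16$ leave no room. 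Second, because the right side is $s_n\alpha_{n-2}$ rather than $1/2$, a bound like $(1-a)(1-b)\ge \tfrac12$ is only a \emph{consequence} of already knowing $\alpha_{n-2}$ is small; it cannot be extracted directly from Proposition~\ref{CRI}, since a priori $\alpha_{n-2}$ could be close to $1$ and then the CRI gives nothing (note $\tfrac{a+b}{1+ab}<1$ always). The paper closes this circularity with a bootstrap that your proposal has no substitute for: the function $h_n(z)=\sqrt{s_n z}/(1+\sqrt{1-s_n z})$ moves points to the left for $z\ge 0.3$ (Lemma~\ref{function h}), which first produces a subsequence, containing at least every other index, with $\alpha_n^{\ell_1/2,\ell_2/2}\le 0.3$ (Lemma~\ref{subsequence}); feeding this back into $\tfrac{a+b}{1+ab}\le\sqrt{s_n\alpha_{n-2}^{\ell_1/2,\ell_2/2}}\le\sqrt{0.3}$ (using $\alpha_{n-2}\le\alpha_{n-2}^{\ell_1/2,\ell_2/2}$, which is where $\ell_i\le 2$ enters -- not the ``contractivity'' you invoke) gives the universal bound $\sqrt{0.3}<0.55$ and, together with $\min\{a,b\}\le 0.3$, the $0.44$/$0.16$ dichotomy.

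Your elementary analysis of the schematic inequality (both $>0.3$ forces the product below $0.49<\tfrac12$, and $b\ge 0.44$ forces $a<0.16$) would be fine \emph{if} the inequality held with the exact constant $\tfrac12$; but since you never derive it, and since the derivation would have to pass through the minimization-over-$\gamma_n$ and bootstrap structure above, the proof as proposed does not go through.
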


\begin{proof}
Let 
\begin{equation*}
\gamma_{1,n}=|(\underline{-q_{n}}, \underline{0})|,\quad  \gamma_n = \dfrac{\gamma_{1,n}}{\gamma_{1,n-1}},
\end{equation*}
\begin{equation*}\gamma_n^{(\ell_1|\ell_2)}:=\frac{k_2}{k_1}\cdot
\dfrac{\gamma^{\eg}_{1,n}}{\gamma^{\ed}_{1,n-1}}  \mbox{  if }  n\in 2\Z \mbox{ and }\gamma_n^{(\ell_1|\ell_2)}:=\frac{k_1}{k_2}\cdot\dfrac{\gamma^{\ed}_{1,n}}{\gamma^{\eg}_{1,n-1}}
\mbox{ if }  n\in 2\Z +1
\end{equation*}
where $k_1$ and $k_2$ come from \textbf{Fact~\ref{fact}}.
			
	These notations simplify the formalization of the following lemma which will play an important (essential) role in the proof of \textbf{Proposition~\ref{asymptotically}}.
	\begin{lem}\label{use additional condition}
		For every $n\in\N$ large enough, the following inequality holds  
		\begin{equation}\label{lemma inequality 1}
		\dfrac{(\alpha^{\ell_1,\ell_2}_{n}+ \alpha^{\ell_1,\ell_2}_{n-1}\gamma_n^{(\ell_1|\ell_2)}  )(1+\gamma_n^{(\ell_1|\ell_2)}  )}{(1+ \alpha^{\ell_1,\ell_2}_{n-1}\gamma_n^{(\ell_1|\ell_2)})(\alpha^{\ell_1,\ell_2}_{n}+\gamma_n^{(\ell_1|\ell_2)} )  }\leq s_n\alpha_{n-2}.
		\end{equation}		
	\end{lem}
	\begin{proof} Let  $n\in 2\N+1$  large enough.
	By point 1 of \textbf{Remark~\ref{add assump 2}}, the left hand side of 	\textbf{(\ref{lemma inequality 1})} is equal to the cross-ratio 	
		\begin{equation*}
		\po\,(\underline{-q_{n}+1}, \underline{-q_{n-1}+1}).  
		\end{equation*}
		Applying $f^{q_{n-1}-1} $, by the expanding cross-ratio property, we get the inequality.
	\end{proof}

	The left hand side  is a function of the three	variables $\alpha^{\ell_1,\ell_2}_{n} $, $\alpha^{\ell_1,\ell_2}_{n-1} $ and $\gamma_n^{(\ed|\eg)}.$ 
	Observe that the function increases monotonically with each of the first two variables. However, relatively to the third variable, the function reaches a minimum. To see this, take the logarithm of the function and
	check that the first derivative is equal to zero only when	
	\begin{equation*}
	(\gamma_n^{(\ed|\eg)})^2=\dfrac{\alpha^{\ell_1,\ell_2}_{n}}{\alpha^{\ell_1,\ell_2}_{n-1}}.
	\end{equation*}	
	By substituting this for $\gamma_n^{(\ed|\eg)}$ we get that
	\begin{equation}\label{quadradic inquality}
	\left(\dfrac{\alpha^{\frac{\ell_1}{2},\frac{\ell_2}{2}}_{n}+ \alpha^{\frac{\ell_1}{2},\frac{\ell_2}{2}}_{n-1}}{ 1+\alpha^{\frac{\ell_1}{2},\frac{\ell_2}{2}}_{n} \alpha^{\frac{\ell_1}{2},\frac{\ell_2}{2}}_{n-1}}\right)^2\leq s_n\alpha_{n-2}.
	\end{equation}
	Let be the sequence ${y'}_{n} $ defined by
	\begin{equation*} {y'}_{n}=\min\{\alpha^{\frac{\ell_1}{2},\frac{\ell_2}{2}}_{n}, \alpha^{\frac{\ell_1}{2},\frac{\ell_2}{2}}_{n-1}    \}	.	\end{equation*}
	
	Since $\alpha_{n-2}\leq \alpha^{\frac{\ell_1}{2},\frac{\ell_2}{2}}_{n-2}$. Substituting the above variable into
	inequality \textbf{(\ref{quadradic inquality})} gives rise to a quadratic inequality ${y'}_{n} $ whose only root in the interval $(0,1)$ is given by
	\begin{equation*} \dfrac{ \sqrt{s_n \alpha^{\frac{\ell_1}{2},\frac{\ell_2}{2}}_{n-2} }}{1+\sqrt{1-s_n \alpha^{\frac{\ell_1}{2},\frac{\ell_2}{2}}_{n-2} }}.	\end{equation*}
	That is,
	\begin{equation}\label{inequation min} {y'}_{n}=\min\{\alpha^{\frac{\ell_1}{2},\frac{\ell_2}{2}}_{n}, \alpha^{\frac{\ell_1}{2},\frac{\ell_2}{2}}_{n-1}    \}\leq\dfrac{ \sqrt{s_n \alpha^{\frac{\ell_1}{2},\frac{\ell_2}{2}}_{n-2} }}{1+\sqrt{1-s_n \alpha^{\frac{\ell_1}{2},\frac{\ell_2}{2}}_{n-2} }}.	\end{equation}
	We use the elementary lemma (\textbf{Lemma 3.4} in \cite{GJSTV}).	
	\begin{lem} \label{function h}
		The function 
		\begin{equation*} 
		h_n(z)=\dfrac{ \sqrt{s_n z }}{1+\sqrt{1-s_n z }}		
		\end{equation*}
		moves points to the left, $h(z)<z$, if $z\geq 0.3$ and $n$ is large enough. 
	\end{lem}

	\begin{lem}\label{subsequence}
		There is a subsequence of $\alpha_n$ including at least every other $\alpha_n$,
		such that 	
		\begin{equation*}	\limsup \alpha_{n}^{\frac{\ell_1}{2},\frac{\ell_2}{2}}\leq 0.3.	\end{equation*}	
		
	\end{lem}
	\begin{proof}
		We select the subsequence.
		\begin{enumerate}
			\item \textbf{The initial term:}
			There exists $n-2\in\N$, such that $\alpha_{n-2}^{\frac{\ell_1}{2},\frac{\ell_2}{2}}\leq 0.3$. This comes directly from the properties of the function $h_n$ (\textbf{Lemma~\ref{function h}}) and inequality \textbf{(\ref{inequation min})}. 
			\item \textbf{The next element}. Suppose  that $\alpha_{n-2} $ has been selected. If 
			\begin{equation*}
			{y'}_{n}=\min\{\alpha_{n}^{\frac{\ell_1}{2},\frac{\ell_2}{2}}, \alpha_{n-1} ^{\frac{\ell_1}{2},\frac{\ell_2}{2}}   \}=\alpha_{n-1} ^{\frac{\ell_1}{2},\frac{\ell_2}{2}}\quad\mbox{or}\quad \alpha_{n-1} ^{\frac{\ell_1}{2},\frac{\ell_2}{2}} \leq 0.3,	\end{equation*}
			then, we select $\alpha_{n-1} $ as the next term. Otherwise, $\alpha_{n} $
			is the next term. Thus, the sequence is constructed.
		\end{enumerate}
		
	\end{proof}
	
	\begin{cor}
		For the whole sequence $\alpha_n$ we have 
		\begin{equation*}	\limsup \alpha_{n}^{\frac{\ell_1}{2},\frac{\ell_2}{2}}\leq (0.3)^{\frac{1}{2}}. 	\end{equation*}	
		Moreover, 
		if $\alpha_{n-1} $ does belong to the subsequence $\alpha_n$  defined by 	\textbf{Lemma~\ref{subsequence}} then either
		\begin{equation*}	\alpha_{n-1}^{\frac{\ell_1}{2},\frac{\ell_2}{2}}<0.44\quad \mbox{or}\quad 	\alpha_{n}^{\frac{\ell_1}{2},\frac{\ell_2}{2}}<0.16. 	\end{equation*}		
	\end{cor}
	\begin{proof}
		Observe that the function 
		\begin{equation*}
		H:(s,t)\in\R_+^2\mapsto F(s,t)=\dfrac{s+t}{1+st}
		\end{equation*}
		is symmetric and for fixed $s$, the function $F(s,\cdot)$
		reaches its minimum at zero by taking the value $s$. Therefore, for every $s,t\geq 0$,
		\begin{equation*}
		s,t\leq\dfrac{s+t}{1+st}. 
		\end{equation*}
		So,
		\begin{equation} \label{inquality alpha_n alpha_n-1}
		\alpha_{n}^{\frac{\ell_1}{2},\frac{\ell_2}{2}}, \;\alpha_{n-1}^{\frac{\ell_1}{2},\frac{\ell_2}{2}}\leq \dfrac{\alpha_{n}^{\frac{\ell_1}{2},\frac{\ell_2}{2}}+ \alpha_{n-1}^{\frac{\ell_1}{2},\frac{\ell_2}{2}}}{ 1+\alpha_{n}^{\frac{\ell_1}{2},\frac{\ell_2}{2}} \alpha_{n-1}^{\frac{\ell_1}{2},\frac{\ell_2}{2}}}.
		\end{equation}
		Thus, according to that $\alpha_{n-2} $ is an element of the sequence of \textbf{Lemma~\ref{subsequence}}, it follows from \textbf{(\ref{quadradic inquality})}  that the right member of \textbf{(\ref{inquality alpha_n alpha_n-1})}  is estimated as follows
		\begin{equation}\label{estimation ratio of quadratic}
		\dfrac{\alpha_{n}^{\frac{\ell_1}{2},\frac{\ell_2}{2}}+ \alpha_{n-1}^{\frac{\ell_1}{2},\frac{\ell_2}{2}}}{ 1+\alpha_{n}^{\frac{\ell_1}{2},\frac{\ell_2}{2}} \alpha_{n-1}^{\frac{\ell_1}{2},\frac{\ell_2}{2}}}\leq\sqrt{s_n\alpha_{n-2}^{\frac{\ell_1}{2},\frac{\ell_2}{2}}}\approx \sqrt{\alpha_{n-2}^{\frac{\ell_1}{2},\frac{\ell_2}{2}}} \leq\sqrt{ 0.3}.
		\end{equation}
		
		Now, suppose that $\alpha_{n-1} $ does not belong  to the    subsequence chosen in the proof
		of \textbf{Lemma~\ref{subsequence}}, then 
		\begin{equation}\label{alphan is min}
		\min\{\alpha_{n}^{\frac{\ell_1}{2},\frac{\ell_2}{2}}, \alpha_{n-1} ^{\frac{\ell_1}{2},\frac{\ell_2}{2}}   \}=\alpha_{n}^{\frac{\ell_1}{2},\frac{\ell_2}{2}}\leq 0.3.	\end{equation}
		Thus, if  $\alpha_{n}^{\frac{\ell_1}{2},\frac{\ell_2}{2}}\geq 0.16,  $
		then by combining this with \textbf{(\ref{estimation ratio of quadratic})} and \textbf{(\ref{alphan is min})}, we obtain the desired estimate.   \end{proof}
	
	This ends the prove of  \textbf{Proposition~\ref{asymptotically}}.
\end{proof}

\subsection{Recursive formula of  $\alpha_n$ }

\begin{prop}\label{recurrence formula}
Fix $\ell_{1},\,\ell_{2}>1$.	Let $n $ be an integer large enough,
 ,  we have  
		\begin{equation} \label{inequation recurrence formula}
		\alpha_{2n}^{\ell_{1}}\leq M_{2n}(\ell_{1})\alpha^{2}_{2n-2}\quad\mbox{and} \quad
		\alpha_{2n+1}^{\ell_{2}}\leq M_{2n+1}(\ell_{2})\alpha^{2}_{2n-1}
		\end{equation}		
		where\begin{equation*}
		M_n(\ell)=s^2_{n-1}\cdot \dfrac{2}{\ell}\cdot \dfrac{1}{1+\sqrt{1-\dfrac{2(\ell-1)}{\ell}s_{n-1}\alpha_{n-1}}}
		\cdot \dfrac{1}{1-\alpha_{n-2}}\cdot\frac{\sigma_{n}}{\sigma_{n-2}} .
		\end{equation*}
\end{prop}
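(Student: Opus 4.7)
The proof proceeds by combining \textbf{Lemma~\ref{use additional condition}} (the cross-ratio inequality that has just been proved) with an algebraic extraction of $\alpha_n^{\ell_1}$ from the resulting quadratic, and then replacing the factor $s_n \alpha_{n-2}$ on the right by a product essentially of the form $s_{n-1}^2\alpha_{n-2}^2(\sigma_n/\sigma_{n-2})/(1-\alpha_{n-2})$ via one further application of the Cross-Ratio Inequality of \S\ref{Section 3} together with bounded distortion estimates. I would treat the even case $n=2m$; the odd case is then immediate by the parity symmetry of point~\textbf{1} of \textbf{Remark~\ref{critical exponent and  renormalization}}.

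First I would specialize \textbf{Lemma~\ref{use additional condition}} at the even index $n$: clearing the denominators of
$$\frac{(\alpha_n^{\ell_1}+\alpha_{n-1}^{\ell_2}\gamma_n)(1+\gamma_n)}{(1+\alpha_{n-1}^{\ell_2}\gamma_n)(\alpha_n^{\ell_1}+\gamma_n)} \leq s_n\alpha_{n-2},$$
where $\gamma_n=\gamma_n^{(\ell_1|\ell_2)}$, produces a quadratic inequality of the form
$$A\,(\alpha_n^{\ell_1})^2 + B\,\alpha_n^{\ell_1} \leq C,$$
with $A,B,C>0$ explicit rational functions of $\alpha_{n-1}^{\ell_2}$, $\gamma_n$ and $s_n\alpha_{n-2}$. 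Applying the quadratic formula yields an upper bound
$$\alpha_n^{\ell_1} \leq \frac{2C/B}{1+\sqrt{1+4AC/B^{2}}}.$$
After identifying the coefficients and normalizing the factor $B$, the leading constant becomes $2/\ell_1$ and the discriminant simplifies (to leading order in the small parameter $s_{n-1}\alpha_{n-1}$) to $1-\tfrac{2(\ell_1-1)}{\ell_1}s_{n-1}\alpha_{n-1}$, thereby producing the two middle factors of $M_n(\ell_1)$. Positivity of the discriminant for $n$ large is ensured by the a priori bounds of \textbf{Proposition~\ref{asymptotically}}.

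Second, to convert the $s_n\alpha_{n-2}$ that remains on the right into $s_{n-1}^2\alpha_{n-2}^2$ multiplied by $\sigma_n/\sigma_{n-2}$ and $1/(1-\alpha_{n-2})$, I would unpack the definitions
$$s_n=\frac{|[\underline{-q_{n-2}},\underline{0}]|}{|\underline{0}|},\qquad \sigma_n=\frac{|(\underline{0},\underline{q_n})|}{|(\underline{q_{n-1}},\underline{0})|},\qquad 1-\alpha_{n-2}=\frac{|\underline{-q_{n-2}}|}{|[\underline{-q_{n-2}},\underline{0})|},$$
and then pull the ratio $|(\underline{-q_{n-2}},\underline{0})|/|\underline{0}|$ back to one involving $|(\underline{-q_{n-4}},\underline{0})|/|\underline{0}|$ by applying the \cri{} to the branch of $f^{q_{n-2}}$ on the appropriate element of the dynamical partition $\mathcal{P}^{n-2}$ (\textbf{Proposition~\ref{partition of f}}); the bounded-geometry estimate \textbf{Proposition~\ref{preimage and gaps adjacent}} controls the distortion. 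Combining the two steps gives the announced $\alpha_n^{\ell_1}\leq M_n(\ell_1)\alpha_{n-2}^{2}$.

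The main obstacle is the second step: matching constants \emph{precisely} to obtain the announced expression for $M_n(\ell_1)$ — in particular producing exactly $s_{n-1}^{2}$ (rather than $s_n$ or some mixed product) and exactly the ratio $\sigma_n/\sigma_{n-2}$. This requires a careful accounting of the distortion introduced when pulling back through the high iterate $f^{q_{n-2}}$, and it is here that the monotonicity of the scaling ratios and the bound of \textbf{Proposition~\ref{preimage and gaps adjacent}} must be used quantitatively. The first step, by contrast, is essentially routine algebra once the correct identification of the quadratic coefficients has been made, with the only delicate point being the emergence of the coefficient $\tfrac{2(\ell_1-1)}{\ell_1}$ in the discriminant, which tracks the way the critical exponent $\ell_1$ enters $\gamma_n^{(\ell_1|\ell_2)}$.
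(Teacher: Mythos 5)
There is a genuine gap, and it lies in both halves of your plan. First, the inequality of \textbf{Lemma~\ref{use additional condition}} is a fractional \emph{linear} (M\"obius) function of $\alpha_n^{\ell_1}$: both the numerator $(\alpha_n^{\ell_1}+\alpha_{n-1}^{\ell_2}\gamma_n)(1+\gamma_n)$ and the denominator $(1+\alpha_{n-1}^{\ell_2}\gamma_n)(\alpha_n^{\ell_1}+\gamma_n)$ are affine in $\alpha_n^{\ell_1}$, so clearing denominators gives a linear inequality in $\alpha_n^{\ell_1}$, not a quadratic $A(\alpha_n^{\ell_1})^2+B\alpha_n^{\ell_1}\le C$. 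There is no discriminant to extract, so the factor $\frac{2}{\ell_1}\bigl(1+\sqrt{1-\tfrac{2(\ell_1-1)}{\ell_1}s_{n-1}\alpha_{n-1}}\bigr)^{-1}$ cannot come from that lemma; the quadratic structure in the paper lives in a different variable (see below). Moreover, without precise knowledge of $\gamma_n^{(\ell_1|\ell_2)}$, the most that Lemma~\ref{use additional condition} yields for $\alpha_n^{\ell_1}$ alone is $\alpha_n^{\ell_1}\le s_n\alpha_{n-2}$ (the left side, minimized over $\gamma_n^{(\ell_1|\ell_2)}$, is still $\ge\alpha_n^{\ell_1}$), i.e.\ only the first power of $\alpha_{n-2}$. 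Your second step, which is supposed to upgrade $s_n\alpha_{n-2}$ to $s_{n-1}^2\,\alpha_{n-2}^2\cdot\frac{\sigma_n}{\sigma_{n-2}}\cdot\frac{1}{1-\alpha_{n-2}}$, cannot be achieved by one more application of \cri\ plus bounded distortion: since $\alpha_{n-2}\to0$, the missing factor is of order $\alpha_{n-2}$ and is not a distortion constant; it has to be produced by an explicit dynamical estimate, which your outline does not contain.

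The paper's proof is structured differently. One applies $f$ to the ratio defining $\alpha_n$ to obtain $\alpha_n^{\ell_1}$, bounds it by the cross-ratio $\po\,(\underline{-q_n+1},(\underline{1},\underline{-q_{n-1}+1}])$ and expands under $f^{q_{n-1}-1}$; then one multiplies and divides by $\alpha_{n-2}^{2}$, which introduces the two compensating ratios $\nu_{n-2}$ and $\mu_{n-2}=|(\underline{-q_{n-2}},\underline{q_{n-1}})|/|(\underline{-q_{n-2}},\underline{0})|$. The ratio $\nu_{n-2}$ is estimated directly and gives $\frac{1}{\sigma_{n-1}\sigma_{n-2}}\cdot\frac{1}{1-\alpha_{n-2}}$. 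The smallness and the exact shape of the remaining factor come from $\mu_{n-2}$: applying $f$ and the elementary inequality of \textbf{Lemma~\ref{rn7}} (this is where the critical exponent and the coefficient $\frac{\ell_1(\ell_1-1)}{2}$ enter), then expanding a second cross-ratio under $f^{q_{n-2}}$, yields the quadratic inequality $\mu_{n-2}\bigl(\ell_1-\frac{\ell_1(\ell_1-1)}{2}\mu_{n-2}\bigr)\le s_{n-1}\sigma_n\sigma_{n-1}$, whose resolution gives $\mu_{n-2}<\frac{2}{\ell_1}\frac{s_{n-1}\sigma_n\sigma_{n-1}}{1+\sqrt{1-\frac{2(\ell_1-1)}{\ell_1}s_{n-1}\sigma_n\sigma_{n-1}}}$, and $\sigma_n\sigma_{n-1}<\alpha_{n-1}$ then produces exactly $M_n(\ell_1)$. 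Your proposal contains neither the decomposition into $\nu_{n-2},\mu_{n-2}$ nor the power inequality of Lemma~\ref{rn7}, and these are precisely the ingredients that generate the factor $\alpha_{n-2}^{2}$ and the square-root term; Lemma~\ref{use additional condition} alone cannot supply them.
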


\begin{proof} We treat the case even $n$ and the case odd $n$  is treated in a similar way. Recall that, 
	\begin{equation*}
	\alpha_n=\dfrac{|(\underline{-q_n}, \underline{0})|}{|[\underline{-q_n}
		, \underline{0})|}.
	\end{equation*}
	For every even $n$  and  large enough,  applying $f$ to the equality we have
	\begin{equation*}
	\alpha^{\ell_{1}}_n=\dfrac{|(\underline{-q_n+1}, \underline{1})|}{|[\underline{-q_n+1}
		, \underline{1})|}
	\end{equation*}
	which is certainly less than the cross-ratio
	\begin{equation*}
	\po(\underline{-q_n+1}, (\underline{1},\underline{-q_{n-1}+1}]).
	\end{equation*}
	Since the cross-ratio \po\, is expanded by $f^{q_{n-1}-1} $, then
	\begin{equation}\label{rn4}
	\alpha^{\ell_{1}}_n< \delta_n(1)s_n(1)
	\end{equation}
	with
	\begin{equation*}
	\delta_n(k):=\dfrac{|(\underline{-q_{n}+kq_{n-1}}, \underline{kq_{n-1}})|}{|[\underline{-q_{n}+kq_{n-1}}
		, \underline{kq_{n-1}})|}
	\end{equation*}
	and 
	\begin{equation*}
	s_n(k):=\dfrac{|[\underline{-q_{n}+kq_{n-1}}, \underline{0}]|}{|(\underline{-q_{n}+kq_{n-1}}
		, \underline{0}]|}.
	\end{equation*}
	By multiplying and dividing the right member of \textbf{(\ref{rn4})} by 
	$\alpha^2_{n-2} ,$ we get.
	\begin{equation}
	\alpha^{\ell_{1}}_n\leq s_n\nu_{n-2} \mu_{n-2} \alpha^2_{n-2}\label{rn5}
	\end{equation}
	with
	\begin{equation*}
	\nu_{n-2}:=\dfrac{|[\underline{-q_{n-2}}, \underline{0})|}{|(\underline{q_{n-1}}
		, \underline{0})|} \cdot \dfrac{|[\underline{-q_{n-2}}, \underline{0})|}{|[\underline{-q_{n-2}}
		, \underline{q_{n-1}})|}
	\end{equation*}
	and
	\begin{equation*}
	\mu_{n-2} :=\dfrac{|(\underline{-q_{n-2}}, \underline{q_{n-1}})|}{|(\underline{-q_{n-2}}
		, \underline{0})|}.
	\end{equation*}
	It remains to estimate $\nu_{n-2}$ and $\mu_{n-2}$ to end this part.
	For $\nu_{n-2}$, observe that
	\begin{equation*}
	|(\underline{-q_{n-2}}, \underline{0})|\leq |(\underline{q_{n-3}}, \underline{0})|
	\end{equation*}
	so that,
	\begin{equation}\label{rn6}
	\nu_{n-2}=\dfrac{    1  }{\sigma_{n-1} \sigma_{n-2}}\cdot \dfrac{1}{1-\alpha_{n-2}}
	\end{equation}	
	The estimate $\mu_{n-2}$ we use the following  lemma (\textbf{Lemma 3.1}in \cite{GJSTV}).
	\begin{lem}\label{rn7}
		Let $\ell\in(1,2)$. For all numbers $x>y$, we have the following inequality:
		\begin{equation*}
		\dfrac{x^\ell-y^\ell}{x^\ell}\geq \left( \dfrac{x-y}{x}\right)\left[\ell- \dfrac{\ell(\ell-1)}{2}\left(\dfrac{x-y}{x}\right)\right].
		\end{equation*}
		
	\end{lem}
	
	Now, apply $f$ into the intervals defining the ratio $\mu_{n-2} $. By \textbf{Lemma~\ref{rn7}}, the resulting ratio is larger than

	\begin{equation*}
	\mu_{n-2}( \ell_{1}-\dfrac{\ell_{1}(\ell_{1}-1)}{2} \mu_{n-2}).
	\end{equation*}
	The cross-ratio \po
	\begin{equation*}
	\po\:(-q_{n-2}+1, (\underline{q_{n-1}+1}, \underline{1})).
	\end{equation*}
	That is,
	\begin{equation*}
	\dfrac{|(\underline{-q_{n-2}+1},  \underline{q_{n-1}+1})| |[\underline{-q_{n-2}+1}, \underline{1})|}
	{|[\underline{-q_{n-2}+1},  \underline{q_{n-1}+1})| |(\underline{-q_{n-2}+1}, \underline{1})|}
	\end{equation*}
	is larger again. Thus, by expanding cross-ratio  property on $f^{q_{n-2}} $, we obtain:
	\begin{equation*}\label{rn8}
	\mu_{n-2}( \ell_{1}-\dfrac{\ell_{1}(\ell_{1}-1)}{2} \mu_{n-2})\leq s_{n-1}\sigma_{n}\sigma_{n-1}.
	\end{equation*}
	By solving this quadratic inequality, we obtain
\begin{equation}\label{rn11}
\mu_{n-2}<\dfrac{2}{\ell_{1}}\cdot \dfrac{1}{1+\sqrt{1-\dfrac{2(\ell_{1}-1)}{\ell_{1}} s_{n-1}\sigma_{n}\sigma_{n-1}}}
s_{n-1}\sigma_{n}\sigma_{n-1}.
\end{equation}
Since, $\sigma_{n}\sigma_{n-1}<\alpha_{n-1} $, the first inequality in \textbf{(\ref{inequation recurrence formula})}  follows  by combining  inequalities	
	\textbf{(\ref{rn5})}, \textbf{(\ref{rn6})} and \textbf{(\ref{rn11})}.
\end{proof}

  $\alpha_n$ goes to zero

Technical reformulation of \textbf{Proposition~\ref{recurrence formula}}. Let $W_n$ be a sequence  defined by
\begin{equation*}
M_n(\ell)=W_n(\ell)\dfrac{\sigma_{n}}{\sigma_{n-2}}.
\end{equation*}
Let 
\begin{equation*}
M'_n(\ell): =M_n(\ell)\alpha_{n-2}^{2-\ell} \quad\mbox{and}\quad W'_n(\ell): =W_n(\ell)\alpha_{n-2}^{2-\ell}.
\end{equation*}
For every $n$ even large enough, the recursive formula \textbf{(\ref{inequation recurrence formula})} can be written  in the form:
\begin{equation*}
\alpha_{n}^{\ell_1} \leq W'_n(\ell_1)\dfrac{\sigma_{n}}{\sigma_{n-2}}\alpha_{n-2}^{\ell_1}.
\end{equation*}
so, 
\begin{equation*}
\alpha_{n}^{\ell_1} \leq \prod_{k=2}^{k=n} W'_k(\ell_1)\dfrac{\sigma_{n}}{\sigma_{0}}\alpha_{0}^{\ell_1}.
\end{equation*}
\subsubsection*{ $\prod_{k=2}^{k=n} W'_k(\ell_1)$ goes to zero.}
Observe that the size of $	W'_n(\eg)$ is given by the study of the function	
\begin{equation*}
W'_n(x,y,\eg)= \dfrac{1}{	\dfrac{\eg}{2}+	\dfrac{\eg}{2}\sqrt{1-\dfrac{2(\eg-1)}{\eg}x^{\frac{2}{\ed}}}}
\cdot \frac{y^{\frac{4}{\eg}-2}}{1-y^{\frac{2}{\eg}}}.
\end{equation*}
The meaning of variation of $W'_k(x,y,\ell_1)$ relative to the third variable is given by the following lemma (\textbf{Lemma 3.2} in \cite{GJSTV}).
\begin{lem}\label{lemma for conclusion}
	For any $0<y<\frac{1}{\sqrt{e}}$, $x\in(0,1)$ and $\eg\in(1,2]$  the function $	W'_n(x,y,\eg)$ is increasing with respect to $\eg$.
\end{lem}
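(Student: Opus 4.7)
The plan is to prove this monotonicity by computing the logarithmic derivative of $W'_n(x,y,\eg)$ with respect to $\eg$ and verifying it is strictly positive on the prescribed parameter region. Factor $W'_n = F_1 \cdot F_2$ with
$$F_1(\eg) = \frac{2}{\eg\bigl(1 + \sqrt{1 - 2a(1 - 1/\eg)}\bigr)}, \qquad F_2(\eg) = \frac{y^{4/\eg - 2}}{1 - y^{2/\eg}},$$
where $a := x^{2/\ed} \in (0,1)$. Set $g(\eg) := 2/F_1(\eg) = [\eg + h(\eg)]/2$ with $h(\eg) = \sqrt{\eg^2(1-2a) + 2a\eg}$. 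A direct differentiation, using $\log y < 0$, produces
$$\frac{d}{d\eg}\log W'_n(x,y,\eg) = -\frac{g'(\eg)}{g(\eg)} + \frac{|\log y|}{\eg^2} \cdot \frac{2(2 - y^{2/\eg})}{1 - y^{2/\eg}},$$
so the task reduces to showing the second (positive) term dominates the first.

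First I would bound the positive contribution from below, uniformly in the parameters. For any $y \in (0,1)$ one has $(2 - y^{2/\eg})/(1 - y^{2/\eg}) = 1 + 1/(1-y^{2/\eg}) \geq 2$ by elementary algebra, and the hypothesis $y < 1/\sqrt{e}$ gives $|\log y| > 1/2$. Combining these shows that the positive term is bounded below by $2/\eg^2$, uniformly in $(x,y) \in (0,1)\times (0,1/\sqrt{e})$.

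The main obstacle is to bound $g'(\eg)/g(\eg)$ from above and match it against this lower bound. A short computation yields
$$\frac{g'(\eg)}{g(\eg)} = \frac{1 + h'(\eg)}{\eg + h(\eg)}, \qquad h'(\eg) = \frac{\eg(1-2a) + a}{h(\eg)},$$
and the elementary identity $\eg^2 h'(\eg)^2 - h(\eg)^2 = -a\eg[\eg(1-2a)+a\cdot\text{(something)}]$, or more directly the observation that $\eg\,h'(\eg) \leq h(\eg)$ whenever the left-hand side is nonnegative (as this reduces to $a\eg \geq 0$ after squaring), gives the uniform upper bound $g'(\eg)/g(\eg) \leq 1/\eg$; when $g'/g$ is negative, the first term $-g'/g$ is itself positive and the inequality is trivial. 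The proof concludes by the arithmetic observation that $2/\eg^2 \geq 1/\eg$ for all $\eg \in (1,2]$. The threshold $y = 1/\sqrt{e}$ is precisely the one making $|\log y| \geq 1/2$ sufficient to close this chain of estimates in the worst case $\eg \to 2$.
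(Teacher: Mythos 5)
Your proof is correct, but it takes a genuinely different route from the paper: the paper does not prove this lemma at all, it simply quotes it as Lemma 3.2 of \cite{GJSTV} (where it is established for the symmetric critical exponent), whereas you give a self-contained calculus argument. Your computation checks out: writing $2/F_1=\ell_1+h(\ell_1)$ with $h(\ell_1)=\sqrt{\ell_1^2(1-2a)+2a\ell_1}$, one has the exact identity $h-\ell_1 h'=a\ell_1/h>0$, so $g'/g\le 1/\ell_1$ in every case (your ``after squaring'' phrasing is unnecessary --- multiplying by $h>0$ suffices, and the subcase $h'<0$ with $g'>0$ is trivial since then $g'/g\le 1/(\ell_1+h)<1/\ell_1$); the positive term is indeed $\frac{|\log y|}{\ell_1^2}\cdot\frac{2(2-y^{2/\ell_1})}{1-y^{2/\ell_1}}\ge \frac{2}{\ell_1^2}$ because $(2-t)/(1-t)>2$ for $t\in(0,1)$ and $|\log y|>1/2$ exactly when $y<1/\sqrt{e}$, and $\frac{2}{\ell_1^2}-\frac{1}{\ell_1}=\frac{2-\ell_1}{\ell_1^2}\ge 0$ on $(1,2]$. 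Two small blemishes, neither fatal: the normalization $g:=2/F_1=[\ell_1+h]/2$ has a factor-of-two slip (in fact $2/F_1=\ell_1+h$), which is harmless since only the logarithmic derivative $g'/g$ enters; and the displayed ``elementary identity'' with ``(something)'' is vague and should simply be replaced by the clean computation $h-\ell_1h'=a\ell_1/h$. One more point worth flagging: you treat $a=x^{2/\ell_2}$ as constant in $\ell_1$, which matches the formula as printed in this paper; if the exponent were intended to be $2/\ell_1$ (as in the symmetric case of \cite{GJSTV}), the extra term $da/d\ell_1>0$ only increases $h-\ell_1h'$, so your bound $-g'/g\ge-1/\ell_1$ and hence the whole argument survive unchanged. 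What your approach buys is a proof of exactly the asymmetric statement used here, rather than an appeal to the symmetric-case lemma in the literature.
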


\subsubsection*{Analyse the asymptotic size of $W'_i(2)$. }
Since the hypotheses of \textbf{Lemma~\ref{lemma for conclusion}} are satisfied (\textbf{Proposition~\ref{asymptotically}}),
it enough to verify that the convergence  of $\prod_{i=1}^{n}W'_i(2)$. 

\begin{description}
	\item[\textbf{-}]If $\alpha_{ n-2} < (0.3)^{\eg}$, then 
	$W'(2)<W'(0.55,0.16,2)<0,9$.
	\item[\textbf{-}] If not, then by \textbf{Proposition~\ref{asymptotically}}, 	$W'(2)<W'(0.3,0.44,2)<0,98$
	or else,  $W_{n+1}'(2)W_{n}'(2)<W'(0.55,0.16,2)W'(0.16,0.55,2)<0,85$.
\end{description}
As a consequence, we have
\begin{cor}\label{alpha_n go to zero double exponentially}
	Let $\ell_1,\ell_2\in (1,2)$. Then
	$\alpha_{n} $  go to zero least double exponentially fast. 
\end{cor}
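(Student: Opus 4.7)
My plan is to derive the double exponential decay directly from the recursive inequality established in \textbf{Proposition~\ref{recurrence formula}}. Working in the even subsequence (the odd case being identical after permuting $\ell_1\leftrightarrow\ell_2$ by point 1 of \textbf{Remark~\ref{critical exponent and  renormalization}}), I would first rewrite $\alpha_{2n}^{\ell_1}\leq M_{2n}(\ell_1)\,\alpha_{2n-2}^2$ in the telescoping form
\begin{equation*}
\alpha_{2n}^{\ell_1}\leq \left(\prod_{k=1}^{n} W'_{2k}(\ell_1)\right)\frac{\sigma_{2n}}{\sigma_0}\,\alpha_0^{\ell_1},
\end{equation*}
exactly as set up in the discussion following the proposition. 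The first step is then to bound the accumulated factor $\prod_k W'_{2k}(\ell_1)$ uniformly in $n$: by \textbf{Lemma~\ref{lemma for conclusion}} the function $W'(x,y,\cdot)$ is increasing in the third argument, so it suffices to estimate it at $\ell=2$ using the \emph{a priori} bounds from \textbf{Proposition~\ref{asymptotically}}. The two-case analysis spelled out just before the corollary (either $\alpha_{n-2}^{\ell_1/2}<0.3$ and $W'(2)<0.9$, or otherwise a two-step geometric gain $W'_{n+1}(2)W'_n(2)<0.85$) shows that $\prod_{k=1}^n W'_{2k}(\ell_1)$ decays at least geometrically in $n$, and in particular is bounded by a constant $C<\infty$.

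The second step is to dispose of the remaining factor $\sigma_{2n}/\sigma_0$. By \textbf{Lemma~\ref{sigma go a away}} the sequence $\sigma_n$ is uniformly bounded above (and \textbf{Proposition~\ref{qn go to zero}} shows it is summable in an exponential sense), so $\sigma_{2n}/\sigma_0$ contributes at worst a uniform constant. Combining these two reductions, I obtain an inequality of the form
\begin{equation*}
\alpha_{2n}^{\ell_1}\leq C'\,\alpha_0^{\ell_1}\qquad\text{and more precisely}\qquad \alpha_{2n}^{\ell_1}\leq K\,\alpha_{2n-2}^{2}
\end{equation*}
for some uniform constants $K,C'>0$, with the analogous bound $\alpha_{2n+1}^{\ell_2}\leq K\,\alpha_{2n-1}^{2}$ for the odd indices.

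The third step converts this into double exponential decay by taking logarithms. Setting $u_n:=-\log\alpha_{2n}$ and $\kappa:=2/\ell_1>1$ (recall $\ell_1\in(1,2)$), the recursion reads $u_n\geq \kappa\, u_{n-1}-\log K/\ell_1$, which iterates to $u_n\geq \kappa^n u_0 - O(1)$ once $u_0$ is large enough (which can be arranged by starting the estimate at some $n_0$ where $\alpha_{2n_0}$ is already small, using \textbf{Proposition~\ref{asymptotically}}). Hence $\alpha_{2n}\leq \exp(-c\kappa^n)$ for some $c>0$, i.e.\ double exponential decay; the same argument with $\kappa':=2/\ell_2>1$ applies to the odd subsequence.

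The main technical obstacle will be justifying that the product $\prod W'_k$ really stays bounded (or even shrinks) uniformly, since the two-case dichotomy only gives geometric gain every other step in the worst case, and the estimates require the \emph{a priori} bounds of \textbf{Proposition~\ref{asymptotically}} to hold for all sufficiently large $n$; this is why the proof is stated for $n$ large enough, and a small amount of care is needed to ensure that the base step $\alpha_{2n_0}$ can be taken small enough that the constant $-\log K/\ell_1$ is absorbed into a slightly smaller geometric rate $\kappa-\epsilon$. Once that bookkeeping is done the double exponential conclusion is immediate.
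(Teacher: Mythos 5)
Your route coincides with the paper's up to and including the telescoped bound $\alpha_{2n}^{\ell_1}\le\bigl(\prod_{k}W'_{2k}(\ell_1)\bigr)\frac{\sigma_{2n}}{\sigma_0}\alpha_0^{\ell_1}$ and the control of the product via \textbf{Lemma~\ref{lemma for conclusion}} and the case analysis from \textbf{Proposition~\ref{asymptotically}}; that part is fine. The genuine gap is your intermediate claim ``$\alpha_{2n}^{\ell_1}\le K\alpha_{2n-2}^{2}$ with a uniform constant $K$'', on which the whole logarithmic recursion $u_n\ge\kappa u_{n-1}-\log K/\ell_1$ rests. By \textbf{Proposition~\ref{recurrence formula}} the constant in front of $\alpha_{2n-2}^{2}$ is $M_{2n}(\ell_1)$, which contains the factor $\sigma_{2n}/\sigma_{2n-2}$. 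The paper provides only an upper bound for $\sigma_n$ (\textbf{Lemma~\ref{sigma go a away}}), no lower bound, and in fact $\sigma_n\to 0$ (a posteriori, double exponentially); so $\sigma_{2n}/\sigma_{2n-2}$ is a quotient of two quantities tending to zero, and its uniform boundedness is of the same nature as the decay you are trying to prove — assuming it here is circular. Neither of your two reductions yields it: the $\sigma$'s cancel only in the telescoped product, not termwise, which is precisely why the paper introduces $W'_n$ and telescopes instead of bounding $M_n$ pointwise.

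The step can be repaired without any new estimate by staying with the telescoped inequality, which is what the paper's ``as a consequence'' implicitly does. Write $u_m=-\log\alpha_m$ and note that $-\log W'_{2k}(\ell_1)\ge(2-\ell_1)u_{2k-2}-C$, since only $W_{2k}$ (not $M_{2k}$) must be bounded, and it is ($s_{n-1}$ bounded, $\alpha_{n-2}$ bounded away from $1$ by \textbf{Proposition~\ref{asymptotically}}). Together with $\sigma_{2n}\le C$ the telescoped bound gives
\begin{equation*}
u_{2n}\;\ge\;\frac{2-\ell_1}{\ell_1}\sum_{k=1}^{n}u_{2k-2}\;-\;C'n\;-\;C''.
\end{equation*}
Since the geometric decay of $\prod W'_{2k}$ already gives $u_m\to\infty$ at least linearly, the sum eventually dominates the linear error, and setting $S_n=\sum_{k=1}^{n}u_{2k-2}$ one gets $S_{n+1}\ge\bigl(1+\tfrac{2-\ell_1}{2\ell_1}\bigr)S_n$ for $n$ large, hence $u_{2n}\ge c\bigl(1+\tfrac{2-\ell_1}{2\ell_1}\bigr)^{n}$, i.e.\ the double exponential rate; the odd subsequence is handled identically after exchanging $\ell_1$ and $\ell_2$. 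This uses $\ell_1,\ell_2<2$ exactly where you do, but avoids the unproved uniform bound on $M_{2n}$.
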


\textbf{Acknowledgements:}

I would sincerely thank Prof. M. Martens and Prof. Dr. L. Palmisano for introducing me to the subject of this paper, his valuable advice and helpful discussions.

\end{document}